\newcounter{theoremcount}
\newtheorem{theorem}[theoremcount]{Theorem}
\newtheorem{proposition}{Proposition}[section]
\newtheorem{corollary}[proposition]{Corollary}
\theoremstyle{definition}
\newtheorem{lemma}[proposition]{Lemma}
\newtheorem{definition}[proposition]{Definition}
\newtheorem{remark}[proposition]{Remark}
\newtheorem{example}[proposition]{Example}
\def\BN{\mathbb N}
\def\BZ{\mathbb Z}
\def\BQ{\mathbb Q}
\def\BR{\mathbb R}
\def\BC{\mathbb C}
\def\BP{\mathbb P}
\def\BF{\mathbb F}
\def\BK{\mathbb K}
\def\bM{\mathbf{M}}
\def\calT{\mathcal T}
\def\calP{\mathcal P}
\def\calB{\mathcal B}
\def\calH{\mathcal H}
\def\HH{\mathbf H}
\def\s{\sigma}
\def\SL{\mathrm{SL}}
\def\pt{\partial}
\def\Res#1{\underset{#1}{\mathrm{Res}}\,}
\def\a{\alpha}
\def\ve{\varepsilon}
\def\th{\theta}
\def\Li{\mathrm{Li}}
\def\PSL{\mathrm{PSL}}
\def\Im{\mathrm{Im}}
\def\be{\begin{equation}}
\def\ee{\end{equation}}
\def\z{\zeta}
\def\e{\bold e}
\def\diag{\mathrm{diag}}
\def\sma#1#2#3#4{\bigl(\smallmatrix#1&#2\\#3&#4\endsmallmatrix\bigr)} 
\def\calO{\mathcal{O}}
\def\={\;=\;}
\def\inn{\;\in\;}
\def\cm{\;\equiv\;}
\def\+{\,+\,}
\def\-{\,-\,}
\def\Tran{\mathrm{T}}
\def\sym{\mathrm{sym}}
\def\FGI{\mathrm{FGI}}
\def\fr{\mathrm{\varphi}_p}
\def\CS{\Omega^{\FGI}}
\def\Phitof{f}
\renewcommand\thepart{\@Roman\c@part}%
\renewcommand\part{%
   \if@noskipsec \leavevmode \fi
   \par
   \addvspace{6.7ex}%
   \@afterindentfalse
   \secdef\@part\@spart}
\def\@part[#1]#2{%
    \ifnum \c@secnumdepth >\m@ne
      \refstepcounter{part}%
      \addcontentsline{toc}{part}{Part~\thepart.\ #1}%
    \else
      \addcontentsline{toc}{part}{#1}%
    \fi
    {\parindent \z@ \raggedright
     \interlinepenalty \@M
     \normalfont
     \ifnum \c@secnumdepth >\m@ne
       \centering\large\scshape \partname~\thepart.%
       \hspace{1ex}%
     \fi%
     \large\scshape #2%
     \markboth{}{}\par}%
    \nobreak
    \vskip 4.7ex
    \@afterheading}
  \def\@spart#1{
  \refstepcounter{part}%
  \addcontentsline{toc}{part}{#1}%
    {\parindent \z@ \raggedright
     \interlinepenalty \@M
     \normalfont
     \centering\large\scshape #1\par}%
     \nobreak
     \vskip 4.7ex
     \@afterheading}
\renewcommand*\l@part[2]{%
  \ifnum \c@tocdepth >-2\relax
    \addpenalty\@secpenalty
    \addvspace{0.75em \@plus\p@}%
    \begingroup
      \parindent \z@ \rightskip \@pnumwidth
      \parfillskip -\@pnumwidth
      {\leavevmode
       \normalsize \bfseries #1\hfil \hb@xt@\@pnumwidth{\hss #2}}\par
       \nobreak
       \if@compatibility
         \global\@nobreaktrue
         \everypar{\global\@nobreakfalse\everypar{}}%
      \fi
    \endgroup
  \fi}
\def\l@subsection{\@tocline{2}{0pt}{2pc}{6pc}{}}
\begin{document}
\title[The Habiro ring of a number field]{
       The Habiro ring of a number field}
\author{Stavros Garoufalidis}
\address{
  International Center for Mathematics, Department of Mathematics \\
  Southern University of Science and Technology \\
  Shenzhen, China \newline
  {\tt \url{http://people.mpim-bonn.mpg.de/stavros}}}
\email{stavros@mpim-bonn.mpg.de}
\author{Peter Scholze}
\address{Max Planck Institute for Mathematics \\
         Bonn, Germany \newline
         {\tt \url{http://people.mpim-bonn.mpg.de/scholze}}}
\email{scholze@mpim-bonn.mpg.de}
\author{Campbell Wheeler}
\address{Institut des Hautes \'Etudes Scientifiques \\
         Le Bois-Marie, Bures-sur-Yvette, France 
         \newline
         {\tt \url{https://www.ihes.fr/~wheeler}}}
\email{wheeler@ihes.fr}

\author{Don Zagier}
\address{Max Planck Institute for Mathematics \\
        Bonn, Germany
\newline
\indent $\mathrm{and}$ International Centre for Theoretical Physics, Trieste, Italy
\newline 
         {\tt \url{http://people.mpim-bonn.mpg.de/zagier}}}
\email{dbz@mpim-bonn.mpg.de}

\thanks{
  {\em Key words and phrases}: Habiro ring, number fields, Frobenius endomorphism,
  infinite Pochhammer symbol, algebraic $K$-theory,
  Bloch group, $p$-adic dilogarithm, admissible series, $q$-de Rham cohomology,
  quantum invariants, \hbox{3-manifolds}, Kashaev invariant, descendants, perturbative
  Chern--Simons theory.
}

\date{13 August 2025}

\begin{abstract}
  We introduce the Habiro ring of a number field $\BK$ and modules over it graded by
  $K_3(\BK)$. Elements of these modules are collections of power series at each
  complex root of unity that arithmetically glue with each other after applying
  a Frobenius endomorphism, and after dividing at each prime by a collection of
  series that depends solely on an element of the Bloch group.

  The main theorems of this paper concern number fields, their algebraic
  $K$-theory and its regulator maps (Borel, $p$-adic and \'etale), whereas the explicit
  collections of series are defined by a careful algebraic analysis of the infinite
  Pochhammer symbol at roots of unity. 
  The origin of the above
  mentioned power series comes from perturbative Chern--Simons theory and by
  expansions of the admissible series of Kontsevich--Soibelman, both ultimately related
  to the infinite Pochhammer symbol.
  This link suggests that some
  Donaldson-Thomas invariants have arithmetic meaning and that some elements of
  the Habiro ring of a number field have enumerative meaning.
\end{abstract}

\maketitle

{\footnotesize
\tableofcontents
}


\section{Introduction}
\label{sec.intro}

\subsection{Prologue}
\label{sub.prologue}

This paper introduces a Habiro ring of a number field $\BK$ and constructs modules
over this ring graded by the third algebraic $K$-theory group $K_3(\BK)$. Roughly
speaking, this ring consists of collections of power series around each root
of unity that are integral and $p$-adically glue, after a
Frobenius twist, whereas the modules consist of collection of power series around
each root of unity that are non-integral and only after a $p$-adic modification, they
$p$-adically glue. Due to the Frobenius twist, the nontriviality of the Habiro
ring of a number field and of explicit elements of the the corresponding modules is
not an easy consequence and in fact it forms our main results.

As such, the main theorems of the paper concern number fields, their algebraic
$K$-theory and its regulator maps (Borel, $p$-adic and \'etale), and the explicit
series alluded to above are defined by a careful algebraic analysis of the infinite
Pochhammer symbol at roots of unity. 

Although the theorems of our paper concern number fields, the origin of the above
mentioned power series comes from experiments in perturbative
Chern--Simons theory by two of the authors~\cite{GZ:kashaev,GZ:qseries}, and 
also by expansions of the admissible series of Kontsevich--Soibelman concerning
Donaldson--Thomas invariants of quivers~\cite{KS:cohomological}.

This indicates that our newly introduced rings, their modules and their elements
carry non-trivial information about number fields, and at the same
time are useful in other areas of mathematics and mathematical physics. 

\subsection{Integrality and congruence experiments}
\label{sub.intexp}

Early experiments by two of the authors on perturbative
invariants of Chern--Simons theory revealed unexpected integrality and gluing
properties of functions near roots of unity. The explanation of this phenomenon was
the original motivation for this work.
One of the experiments in question involved the power series $\Phi(h)$ that appear
in the asymptotic expansion of the Kashaev invariant of the $4_1$ knot, whose first
few terms are given (after multiplication by an overall eighth root of unity)
by~\cite[Eqn.(3)]{GZ:kashaev} 
\be
\label{as41}
\Phi(h)\=  \frac1{\sqrt[4]{-3}}\, 
\Bigl(1 + \frac{11}{24 \cdot 3 \sqrt{-3}}\,h + \frac{697}{1152 \cdot
  (3\sqrt{-3})^2}\,h^2
 + \frac{724351}{414720 \cdot (3 \sqrt{-3})^3}\,h^3 +\cdots\Bigr)\,. 
\ee


\noindent
At the time, 100 terms of the series $\Phi(h)$ were known, and the denominator of
the coefficient of $h^{100}$ was given by
\begin{small}
\begin{equation*}
  2^{397}\!\! \cdot 3^{298}\!\! \cdot 5^{40}\!\! \cdot 7^{22}\!\! \cdot 11^{12}\!\!
  \cdot 13^9 \!\cdot 17^6\!\cdot 19^5\!\cdot 23^4\!\cdot 29^3\!\cdot 31^3\!\cdot 37^2
  \!\cdot 41^2\!\cdot 43^2\!\cdot 47^2\!\cdot 53 \cdot 59 \cdot 61 \cdot 67 \cdot 71
  \cdot 73 \cdot 79 \cdot 83 \cdot 89 \cdot 97 \!\cdot\! 101
\end{equation*}
\end{small}

\noindent
in agreement with Theorem 9.1 of~\cite{GZ:kashaev}. 
However, the symmetrised series $\Phi(h)\Phi(-h)$, after a change of variable $q=e^h$
starts with 
\be
\label{Phi41sym}
\Phi(h)\Phi(-h) \= \frac1{\sqrt{-3}}\,
\big(1 - \frac{1}{3^3} (q-1)^2 + \frac{1}{3^3} (q-1)^3 - \frac{4}{3^5} (q-1)^4
- \frac{1}{3^5} (q-1)^5 + O((q-1)^6) \big)\,,
\ee
with all coefficients being integral away from $3$,
e.g. the denominator of the coefficient of $(q-1)^{100}$ being $3^{146}$.

But that's not all. The asymptotics of the Kashaev invariant at other complex roots
of unity $\z$ (other than $1$ discussed above) involve a power series $\Phi_\z(h)$
discussed in~\cite{GZ:kashaev}, whose constant term is in the ring $\BZ[\z_6,\z,1/3]$.
(These series were denoted by $\Phi_\a(h)$ in~\cite{GZ:kashaev}, where
$\z=e^{2 \pi i \a}$, with $\a \in \BQ$ and $\z_6$ a primitive 6th root of unity).
On the other hand the series~\eqref{Phi41sym}, which lies in
$\BZ[1/\sqrt{-3}][\![q-1]\!]$, can be evaluated when $q=\z_p$ is a primitive $p$-th
root of unity (with $p$ a prime $p \neq 3$), the result being a well-defined element
of $\BZ_p[1/\sqrt{-3},\z_p]$, because $\z_p-1$ is $p$-adically small. These two
numbers matched, up to sign, with the constant term
of $\Phi_{\z_p}(h) \Phi_{\z_p}(-h)$. The sign depended on the prime $p$ and
was equal to $1$ when $p=1 \bmod 3$ and $-1$ when $p=2 \bmod 3$. In more invariant
terms, the sign was given by the Legendre symbol $(\tfrac{p}{3})$. 

Both parts of the experiment were reminiscent of properties that elements of the
Habiro ring have, and this was a motivation for the current paper. It turned out that
the symmetrised series considered above are concrete elements of an abstractly defined
Habiro ring of a number field (in this case $\mathbb{Q}(\sqrt{-3})$) whose definition
was originally motivated by the question
whether $q$-de Rham cohomology admits a definition over the Habiro ring. In this
general definition, the unexpected Legendre symbol originates from the $p$-Frobenius
endomorphism of a $p$-completed ring.

This answer pushed the question further to find a home for the unsymmetrised
collections of power series at roots of unity, which after all is the output of
perturbative Chern--Simons theory. It turned out that such collections are elements
of rank one modules over the Habiro ring of the number field indexed by the third
algebraic $K$-group of the number field.

Quite unexpectedly, another source of elements of these modules comes from
the admissible series introduced and studied by
Kontsevich--Soibelman~\cite{KS:cohomological}. These are generating series of
cohomological Hall algebras that appear in enumerative algebraic geometry such
as Donaldson--Thomas theory.

These two sources of elements gave two complementary clues about the definition of
the rank one modules. Combining both views with a correction coming from $p$-adic
analysis gave the definition of the promised modules.

A corollary of our work is that the Donaldson-Thomas invariants have arithmetic
meaning and that the elements of the Habiro ring of a number field have enumerative
meaning, both being related to the comparison of the $K$-theory of local and global
fields and to the ideas of $q$-de Rham cohomology.

\subsection{The original Habiro ring}
\label{sub.habZ}
The Habiro ring, given by the remarkably short definition~\cite{Habiro:completion}
\be
\label{habdefZ}
\calH \;:=\; \varprojlim_n \;\BZ[q]/(q;q)_n\BZ[q]
\ee
(where $(q;q)_n=(1-q) \dots (1-q^n)$ is the $q$-Pochhammer symbol),
originated in Quantum Topology as a natural home for quantum invariants of knots
and 3-manifolds such as the Kashaev invariant of a knot~\cite{HuL} or the
Witten-Reshetikhin-Turaev
invariant of an integer homology 3-sphere~\cite{Habiro:WRT}. A detailed discussion
of this ring is given in the recent work~\cite{GL:skein}. 

Let us recall some basic properties of $\calH$ whose proofs can be found
in Habiro~\cite{Habiro:completion}. From its very definition, it follows that
it consists of elements of the form
\be
\label{eq:elts.hab}
f(q)\=\sum_{n=0}^\infty P_{n+1}(q) (q;q)_{n}, \qquad P_{n}(q) \inn \BZ[q]\,.
\ee
(The shift of index by $1$ will be very convenient in Section~\ref{sec.hab} and
reflect multiplicative properties of $n$.)
This makes it easy to construct elements of $\calH$. In fact, those quantum
invariants of knots and three-manifolds often arise as expressions of this form.
An example coming from the Kashaev invariant of the trefoil ($3_1$) knot is given by
~\cite[Eqn.(4.7)]{K94}
\be
\label{eq:KZ}
f_{3_1}(q) \= \sum_{n=0}^{\infty}(q;q)_{n}\inn\calH\,.
\ee
This particular element of $\calH$ was studied years later by Kontsevich and Zagier
(cf.~\cite{Za:strange}), without knowing its topological provenance, from the point
of view of its modularity properties (with respect to $\tau$, where $q=e^{2\pi i\tau}$).

A different property of the Habiro ring will play the central role in this paper.
Observe that if $q=\z_m$
is a primitive $m$-th root of unity, then $(q;q)_n=0$ for all $m >n$; thus an
element $f(q) \in \calH$ can be evaluated at $q=\z_m$ and the result is an element
of $\BZ[\z_m]$. More generally, when $q=\z_m+x$, we have
$(q;q)_n \in x^{\lfloor{n/m\rfloor}}\BZ[\z_m][x]$.
This implies that if $f(q)$ is an element of the Habiro ring, then
$f(\z_m+x) \in \BZ[\z_m][\![x]\!]$, giving rise to a ring homomorphism 
$\iota_m: \calH \to \BZ[\z_m][\![x]\!]$. Putting these homomorphisms together,
we obtain a map 
\be
\label{iota}
\iota: \calH \to \prod_{m\geq1}\BZ[\z_m][\![x]\!]\,,
\quad\text{such that}\quad
\iota(f) \= (f_m(x))_{m \geq 1}\,,
\qquad f_m(x)\;:=\;f(\z_m+x) \,,
\ee
where $\z_m$ for each $m$ is a fixed primitive $m$-th root of unity. We will
always suppose that the collection $(\z_m)$ is a compatible collection of roots
of unity of order $m$, that is a collection satisfying
\be
\label{zmdef}
\z_{mm'} \=\z_m\z_{m'}, \qquad (m,m')\=1, \qquad
(\z_{p^r})^p \=\z_{p^{r-1}}
\ee
for all positive integers $m$ and $m'$, primes $p$ and positive integers $r$.
For example, one can think of $\z_m=\exp(2\pi i\sum_{p\text{ prime}}p^{-v_{p}(m)})$,
with $v_p(m)$ being the $p$-adic valuation of $m$. 
We note that our choice of compatible collection differs from the traditional
choice $\omega_m=e^{2\pi i/m}$, which does not satisfy~\eqref{zmdef}, but instead
satisfies $\omega_{mm'}^m=\omega_{m'}$. However, our choice satisfies the important
property that $v_p(\z_{pm}-\z_m)>0$ for all positive integers $m$ and all
primes~$p$, and this will be more convenient for us.

For a ring $R$, the map $\iota$ motivates the notion of Galois invariant
$R$-valued ``functions near roots of unity''. By this, we mean elements of the ring 
\be
\label{PR}
\calP_{R}
\=
\Big(\prod_{\z\in\mu_{\infty}} R[\z]
\llbracket x\rrbracket\Big)^{\mathrm{Gal}(\BQ^{\mathrm{ab}}/\BQ)}
\;\cong\; \prod_{m\geq 1} R[\z_m]\llbracket x\rrbracket\,.
\ee
(Here the $f_m$ suffice to expand $F(q)$
around any root of unity, and not just around our distinguished collection of
roots of unity, using Galois invariance.)
We will denote a typical element of $\calP_R$ by $f(q)=(f_m(x))$, where $q=\z_m+x$,
rather than the more cumbersome $(f_m(x))_{m \geq 1}$.
We see that equation~\eqref{iota} now
gives a map $\iota:\calH\to\calP_\BZ$. Note that $f_m(x)$ are formal series with
no convergence properties assumed. In fact, in examples where $R\subseteq\BC$
coming from perturbative knot invariants, the associated series
will be factorially divergent series over the complex numbers, whereas at the same
time they are always $p$-adically convergent on a disc of radius $p^{-p/((p-1)(p-2))}$,
as follows from~\cite[Thm.9.1, Thm.9.2]{GZ:kashaev}.
The series $f_m(x)$ are related to ``functions near $\BQ$'' defined in the
previous work~\cite{GZ:kashaev} and more specifically the series $\Phi_{\alpha}(h)$,
where $\z_m+x=e^{2\pi i\alpha-h/m}$ and $\alpha \in \BQ$.

In~\cite{Habiro:completion} Habiro proved that the map~\eqref{iota} (and in fact,
each of its factors $\iota_m:\calH_\BZ\rightarrow\BZ[\z_m][\![x]\!]$)
is injective and that its image consists of the collection of power series
$f_m(x)=(\iota_m f)(x)$ that arithmetically ``glue'', generalising earlier
work of Ohtsuki~\cite{Ohtsuki:poly}. Let us explain how gluing works. 
The map~\eqref{iota} satisfies the formal substitution property
\be
\label{glue0}
f_m(x + \z_{pm} - \z_m) \= f_{pm}(x) 
\ee
for all positive integers $m$ and all prime numbers $p$. The only issue is that this
equation requires one to re-expand the left hand side, which is a power series in $x$,
after shifting $x$ to $x + \z_{pm} -\z_m$. This is possible using the binomial
theorem on $x$ and $\z_{pm}-\z_m$ if the shift
$\z_{pm} -\z_m$ is \emph{small}, and indeed it is in the completion
$\BZ[\z_{pm}]^\wedge_p$ of $\BZ[\z_{pm}]$, since $\z_{pm} -\z_m$ has positive
$p$-valuation. 
Here $R^{\wedge}_p=\varprojlim_n R/(p^n)$, which is isomorphic to $R\otimes \BZ_p$ and
will often be denoted simply by $R_p$, denotes the $p$-completion
of a ring $R$. Then, equation~\eqref{glue0} holds as an identity in the ring
$\BZ[\z_{pm}]^\wedge_p[\![x]\!]$, and this compatibility property is what we meant
by ``gluing''.

Summarising, an element $f(q)$ of the Habiro ring can be identified with a
collection of power series $(f_m(x))_{m \geq 1} \in \prod_{m \geq 1} \BZ[\z_m][\![x]\!]$
that satisfy the gluing property~\eqref{glue0} for all primes~$p$.

The gluing property implies
that the entire collection $(f_m(x))_{m \geq 1}$ is uniquely determined by any one
of its coordinate functions $f_m(x)$, indeed in many different ways, which of
course are consistent with each other. For instance $f_6(x)$ can be determined from
$f_1(x)$ via the chain $f_1(x) \to f_3(x)=f_1(x+\z_3-1) \to f_6(x)=f_3(x+\z_6-\z_3)$ 
or via the chain $f_1(x) \to f_2(x)=f_1(x+\z_2-1) \to f_6(x)=f_2(x+\z_6-\z_2)$:
\be
\begin{tiny}
\begin{aligned}
\begin{tikzpicture}
\draw (0,1) node (p1) {$f_1(x)$};
\draw (-3,0) node (p2) {$f_3(x) = f_1(x+\z_3-1)$};
\draw (3,0) node (p3) {$f_1(x+\z_2-1) = f_2(x)$};
\draw (0,-1) node (p4) {$f_3(x+\z_6-\z_3) = f_6(x) = f_2(x+\z_6-\z_2)$};
\draw[->] (p1) -- (p2);
\draw[->] (p1) -- (p3);
\draw[->] (p2) -- (p4);
\draw[->] (p3) -- (p4);
\end{tikzpicture}
\end{aligned}\,.
\end{tiny}
\ee
Notice, however, that the equalities here hold in different completions.
We can phrase this gluing using the
language of
$p$-adic power series and their corresponding functions.  
Namely, an element of the Habiro ring consists of a collection of series
$f_m(x) \in \BZ[\z_m][\![x]\!]$ that satisfy the following property. For every
prime $p$, $f_m(x)$ is convergent on the open disc $|x|_p < 1$ that contains
$\z_{pm} -\z_m$, and its re-expansion $f_m(x+\z_{pm} -\z_{m})$, which is therefore
defined but a~priori has potentially transcendental coefficients in $\BZ_p[\z_{pm}]$,
has algebraic coefficients (in fact, in $\BZ[\z_{pm}]$) and agrees with $f_{pm}(x)$.
In other words, for each fixed prime $p$, the
collection of functions $(f_m(x))$ can be used to define a single $p$-adic
analytic function on the \emph{disconnected} domain 
$\{q\in\BC_p\;|\;|q|_p=1\}$, which is the union of the (not all distinct) open unit
discs centred at roots of unity.
(Of course this set is totally disconnected anyway in the $p$-adic topology,
but here we mean that no two of the discs in question are $p$-adically close, so that
the analytic functions on them cannot
be continued
from one to the other.)
The remarkable fact is that, given these series come from
something global, this ``disconnectedness'' disappears once
we start varying the prime $p$, in the sense that the small
neighbourhoods of any two roots of unity can be connected by neighbourhoods
overlapping with respect to varying $p$-adic metrics.
It is this property that makes the Habiro ring work.

\subsection{The Habiro ring of a number field}
\label{sub.hab}

An abstract definition of the Habiro ring of a number field was motivated by
the ideas of $q$-de Rham cohomology in~\cite{Scholze:canonical}. Our first task
is to give a power series realisation of this ring.
As in the case of the Habiro ring, the new definition involves $\calO_{\BK}$-valued
functions near the roots of unity, where $\calO_{\BK}$ is the ring of integers of a
number field $\BK$. However, there are two new features that emerge,
both of which already appeared in the numerical experiments described in
Section~\ref{sub.intexp}, namely:
\begin{itemize}
\item
  one has to invert the primes dividing some integer $\Delta$,
\item
  the gluing now involves a Frobenius twist.
\end{itemize}

Both features cause subtleties, as we will see shortly. If $\Delta \neq 1$,
the Habiro ring of $\calO_\BK[1/\Delta]$ is no longer an integral domain, while the
twist makes it
unclear how to construct elements that glue in the required way.
\footnote{We remark that both problems disappear when $\BK$ is abelian over $\BQ$.
Indeed, in this case we can define an $R$-module
structure on $\calH_R$ from the embedding $R\to\calH_R$ given by
$a\mapsto(\varphi_{m}a)_m$,
where $\varphi_m=\prod_p\fr^{v_p(m)}$ is the product of lifts from $\BK_p$ to $\BK$
of the Frobenius automorphisms $\fr$\thinspace,
which exists for fields $\BK$ abelian over $\BQ$.
This embedding also gives a canonical isomorphism
$\calH_R\cong\calH_{\BZ[1/\Delta]}\otimes_{\BZ} R$ for such~$\BK$.}

We now give the promised definition. Let $\BK$ be a number field with ring of integer
$\calO_{\BK}$ and $\Delta$ a positive integer divisible by the discriminant of
$\BK$.\footnote{One could also let $\Delta$ be a non-zero element of $\calO_{\BK}$
  divisible by all prime ideals whose square divides the discriminant, but for
  simplicity we will always choose $\Delta$ in $\BZ$.}
Below, we usually take $\Delta$ to also be divisible by $6$.
The Habiro ring\footnote{ 
Here, we use $\calH_{R}$ rather than the more accurate
$\calH_{\BZ[1/\Delta] \to \calO_{\BK}[1/\Delta]}$. We will also refer to this ring
as the Habiro ring of the field $\BK$, rather than of the ring $R$.} $\calH_{R}$
associated to the finite \'etale map
\be
\label{Fetale}
\BZ[1/\Delta] \to R, \qquad R \= \calO_{\BK}[1/\Delta]
\ee
is defined as follows:

\begin{definition}
\label{def.hab}
The Habiro ring $\calH_{R}$ is the subset of $\calP_R$ consisting of elements
\be
\label{habf}
f(q)\=(f_m(x))_{m\in\BZ_{>0}} \inn \calP_R
\ee
that satisfy the gluing property
\be
\label{gluef}
f_m(x+\z_{pm}-\z_m) \= (\fr f_{pm})(x)
\inn R^\wedge_p[\z_{pm}]\llbracket x \rrbracket
\ee
for primes $p$ and all positive integers $m$, where $\fr$ is the Frobenius
endomorphism of $R_p^{\wedge}$, lifted to an endomorphism of
$R^\wedge_p[\z_{pm}]\llbracket x \rrbracket$ fixing both $\z_{pm}$ and $x$.
\end{definition}

\noindent Note that if $p$ divides $\Delta$,
the above equation is trivial since the completed ring is trivial.
We also define $\calH_{R}|_{\Delta}$ to be the same ring where we restrict to
$m$ prime to $\Delta$,
and more generally $\calH_{R}|_{\gamma}$ to be the ring where we restrict to
$m$ prime to any positive integer~$\gamma$.

Note that the two equations~\eqref{glue0} and~\eqref{gluef} are nearly identical,
the crucial difference being the presence of $\fr$ in~\eqref{gluef}.
The reason this was not seen in the previous case was that $\fr=\mathrm{id}$
for $R=\BZ[1/\Delta]$.
The Frobenius twist makes it unclear if there is any natural $R$-module
structure on $\calH_{R}$, and moreover how to construct nontrivial elements in
$\calH_{R}$.
However, one can equivalently define $\calH_R$ as a finite \'etale
$\calH_{\BZ[1/\Delta]}$-algebra by gluing the naive base changes
$R[\zeta_m][\![q-\zeta_m]\!]$ over $\BZ[\zeta_m][\![q-\zeta_m]\!]$ along Frobenius
twists after $p$-adic completion. This abstract definition of the Habiro ring
implies in particular that $\calH_{R}$ is a finitely generated projective
$\calH_{\BZ[1/\Delta]}$-module of rank $r=[\BK:\BQ]$.
(This follows from the corresponding fact after $p$-completion, which itself follows
from the fact that $R_p[\z_{m}]\llbracket x \rrbracket$ has rank $r$ over
$\BZ_p[\z_{m}]\llbracket x \rrbracket$.)

There is another way to view these conditions involving $p$-completions.
Specifically, we have natural ring isomorphisms
\be
\label{eq.habp}
(\calH_R)_p \cong \Big\{
\begin{matrix}\text{collections of $R^\wedge_p[\z_m][\![x]\!]$-series for
$m \geq 1$ that}\\
\text{glue w.r.t. re-expansion in $x\mapsto x+\z_{mp}-\z_m$} \end{matrix} \Big\} \cong
\prod_{\underset{(m,p)=1}{m \geq 1}} R^\wedge_p [\z_{m}][\![x]\!]
\cong \calH_{R_p} \,, 
\ee
because there are no gluing constraints between the expansions
around different roots of unity of order prime to $p$. 
Moreover, for every prime $p$, we have a commutative diagram
\be
\label{Hviapcomp}
\begin{array}{ccc}
  \calH_R & \longrightarrow & \calH_{R^\wedge_p}\\
  \cap &  & \cap\\
  \calP_R & \overset{\varphi}{\longrightarrow} & \calP_{R^\wedge_p}
\end{array},
\ee
where the bottom map is the composite of the obvious inclusion with the Frobenius
endomorphism $\fr^{v_p(m)}$ acting on $R^\wedge_p [\z_{m}][\![x]\!]$,
and the Habiro ring is the universal ring associated to the combinations over $p$
of these diagrams, i.e.,
if we combine all of the $p$-completions $R_p^{\wedge}$ into
$\widehat{R}=\varprojlim_{N}R/MR\cong\prod_{p\;\text{prime}}R_p^{\wedge}$, then 
$\calH_{\widehat{R}}\cong\prod_{p}\calH_{R_p^\wedge}$ and $\calH_R$ is the intersection
in $\calP_{\widehat{R}}$ of $\calP_R$ and
$\calH_{\widehat{R}}\cong\calH_{\BZ[1/\Delta]}\otimes_{\BZ}\widehat{R}$.

\begin{remark}
\label{rem.domain}
Note that the ring $\calH_R$ is not an integral domain, since the gluing
property~\eqref{gluef} is nontrivial only for primes $p$ not dividing $\Delta$,
otherwise $R^\wedge_p=0$. Instead, $\calH_R$ is a product of (in general infinitely
many) integral domains indexed
by the equivalence classes of $\BN$ under the equivalence relation generated by
$m \sim_\Delta pm$
for primes $p$ not dividing $\Delta$. Among those
integral domains is $\calH_{R}|_\Delta$ (corresponding to the equivalence class of
all positive integers prime to $\Delta$), whose elements are collections of
power series at roots of unity of order prime to $\Delta$ that satisfy the
gluing condition~\eqref{gluef} for all primes $p$ not dividing $\Delta$.
It is still possible that there is a definition of $\calH_R$ when $R$ is equal to
$\calO_\BK$ rather than to $\calO_\BK[1/\Delta]$ that is an integral domain,
although the definition could not be exactly along the lines of the definition
for $\calO_\BK[1/\Delta]$.
\end{remark}

\subsection{\texorpdfstring{Modules over the Habiro ring of $\BK$ indexed
    by $K_3(\BK)$}{Modules over the Habiro ring of K indexed by K\_3(K)}}
\label{sub.habmod}

In this section we define a collection of modules $\calH_{R,\xi}$ over the Habiro ring
$\calH_{R}$ (with $R$ given in~\eqref{Fetale}) labelled by elements of
$\xi\in K_3(\BK)$. (Philosophically, the modules ought to be labelled by $K_3(R)$.
However, given the isomorphism $K_{2r-1}(R) \simeq K_{2r-1}(\BK)$ for $r>1$
(see e.g., Sec.5.2 of Weibel~\cite{Weibel} as well as~\cite[Eqn.(33)]{CGZ}), we
choose to label our modules by $K_3(\BK)$.)
These modules turn out to be the home of perturbative
quantum knot invariants, as we will see in Section~\ref{sub.results}.

Before explicitly defining these modules,
we recall the original motivation for their existence.
In Section~\ref{sub.intexp}, we recalled from~\cite{GZ:kashaev} a series~$\Phi(h)$
coming from the figure eight knot~$4_1$.
Its properties become much better when one passes to a ``completion'' given by
$\widehat{\Phi}(h)=\exp(i\mathrm{Vol}(4_1)/h)\,\Phi(h)$, where
$\mathrm{Vol}(4_1)$ is given by $2\thinspace\Im(\Li_{2}(e^{\pi i/3}))=2.0299\cdots$.
This dilogarithm is actually equal to a particular value of a regulator, which is a
function from $K_3(\BQ(\sqrt{-3}))$ to $\BC/(2\pi i)^2\BZ$.
This already indicates a link between this series and algebraic $K$-theory of the
field $\BQ(\sqrt{-3})$,
but work of Calegari and two of the authors~\cite{CGZ} showed that this link is
even stronger.
The asymptotics of the same Kashaev invariant at different roots of unity gives
similar series, but now the constant term at $q=\z_m$ (at least if $3|m$) contains
the $m$-th root of a unit in $\BQ(\sqrt{-3},\z_m)$ which up to $m$-th powers depends
only on the class $\xi\in K_3(\BQ(\sqrt{-3})$ giving the volume.
We want our module $\calH_{R,\xi}$ to contain an element whose expansion at
$q=e^{-h}$ is $\Phi(h)$.
In this section, we will give such a definition. Its elements have power series
expansions at roots of unity with subtle integrality properties that the series
$\Phi(h)$ (and all the other series $\Phi_\z(h)$) indeed satisfy.
To achieve this, since our point of view is $p$-adic for varying $p$, we will have
to replace the completion $\widehat{\Phi}(h)$ by a $p$-adic version in which
the usual dilogarithm is replaced with a $p$-adic dilogarithm. This motivates the
Definition~\ref{def.HRxi} given below.

The elements of $\calH_{R,\xi}$ will give collections of power
series at roots of unity whose constant terms contain the
$m$-th root of the above mentioned unit $\ve_m(\xi)$ defined
\hbox{in \cite[Thm.1.5]{CGZ}}. This unit comes from the Chern class map $c_{\z_m}$
of algebraic $K$-theory
\be
\label{emdef}
\ve_m\=c_{\z_m}^2 : K_3(\BK) \to \BK(\z_m)^\times/(\BK(\z_m)^\times)^m
\ee
and satisfies the property that
$(\s_\gamma \ve_m(\xi))^\gamma/\ve_m(\gamma)$ is \emph{canonically} an
$m$-th power for every integer $\gamma$ prime to $m$, where $\s_\gamma$ is the
automorphism sending $\z_m$ to $\z_m^\gamma$.
In~\cite{CGZ} this property is called $\chi^{-1}$-equivariance.
In more invariant terms, this is a
map from the set $K_3(\BK)$ towards the groupoid of $\mu_m$-torsors over $\BK(\z_m)$
(with this $\chi^{-1}$-equivariance datum); in fact, these~\hbox{$\mu_m$-torsors}
extend (necessarily uniquely) to $\calO_{\BK}[\zeta_m]$. Indeed, the mod $m$ \'etale
Chern character gives a canonical map
\be
\begin{small}
\begin{aligned}
\label{cm}
c_{\z_m}: K_3(\BK)\cong K_3(\calO_{\BK})\to H^1(\calO_{\BK},\BZ/m(2))
\to H^1(\calO_{\BK}[\zeta_m],\BZ/m(2))^{(\BZ/m\BZ)^\times}
\cong H^1(\calO_{\BK}[\zeta_m],\mu_m)^{\chi^{-1}}\!\!\! ,
\end{aligned}
\end{small}
\ee
where the last isomorphism is induced by cup product with
$\zeta_m\in H^0(\calO_{\BK}[\zeta_m],\mu_m)^{\chi}$. In particular, the units
$\ve_m(\xi)$ can be chosen to be integral at any given finite set of places, and we
will always implicitly fix such a choice at the relevant places.
We remind the reader that $K_3(\BK)$, at least after tensoring with $\BQ$, is
isomorphic to the Bloch group $B(\BK)$ and that in this interpretation $\xi$ is
represented as a formal linear combination $\sum_i[x_i]$, with
$x_i\in\BK^{\times}\smallsetminus\{1\}$, satisfying
$\sum_{i}x_i\wedge(1-x_i)=0\in\wedge^{2}\,\BK^{\times}$.
We will use both points of views in this paper.


To give a first impression of the modules $\calH_{R,\xi}$ over $\calH_R$, we note
that their base change to $\BK[\z_m][\![x]\!]$ is induced by the $\mu_m$-torsor
described above (along the map from $\mu_m$-torsors to $\mathbb G_m$-torsors,
i.e.~line bundles),
noting that the $\mu_m$-torsor deforms uniquely to the power series ring. In concrete
terms, this is the collection of power series
\be
f_{m}(x)\inn\varepsilon_{m}(\xi)^{1/m}\,\BK[\z_m][\![x]\!]
\ee
whose leading term contains the $m$-th root $\ve_m(\xi)^{1/m}$ as a factor.

This determines the line bundle (described as usual by its collection of power
series) over~$R\otimes_{\BZ}\BQ=\BK$. In order to descend to $R$, we need to have
a $p$-adic description. For this we use the $p$-adic dilogarithm map
\be
\label{Dpdef2}
D_p:K_3(\BK_p) \to \BK_p\,,\qquad\BK_{p} \= R^{\wedge}_p[\tfrac{1}{p}]
\= \BK\otimes_{\BQ} \BQ_p
\ee
of Coleman~\cite{Coleman}, which coincides with the $p$-adic regulator map
as shown in Besser--de Jeu~\cite[Thm.1.6(2)]{BJ:syntomic} and is discussed in detail
in Section~\ref{sub.Dp}. With these maps we can define $\calH_{R^\wedge_p,\xi}$
via globally invertible group-like sections,
which can be thought of as an upgrading to power series of the collection of units
defined in~\cite{CGZ}.

\begin{definition}
\label{def.HRxi}
Fix $\xi \in K_3(\BK)$ and a prime $p$. An invertible $L_p(\xi)$-section is a
collection
\be
\label{fcyclic.loc}
f(q) \=(f_{m}(x))_{m\geq1,(m,p)=1}, \qquad
f_{m}(x)\inn \varepsilon_{m}(\xi)^{1/m}\,
(R^\wedge_p[\z_m]^{\times} + x\BK_p[\z_m][\![x]\!])
\ee
of power series (here $x=q-\z_m$ as usual) that satisfies
\be
\label{fxidef}
\log\bigg(\frac{\fr \widehat{f}(q^p)}{\widehat{f}(q)^p}\bigg)\;
\in \prod_{m \geq1\,,\,(m,p)=1} \frac{p}{x}R^\wedge_p[\z_m][\![x]\!]\, .
\ee
Here $\widehat{f}(q)=(\widehat{f}_m(x))_m$ is the formal completion of $f$ defined by
\be
\label{hatfm}
\log(\widehat{f}_m(x))\=\frac{D_{p}(\xi)}{m^2\log(q)}+\log(f_m(x))
\;\;\,\in\;\; x^{-1}\thinspace\BK_p[\z_m][\![x]\!]\, .
\ee
We denote the $\calH_{R_p^\wedge}$-span of invertible $L_p(\xi)$-sections by
$\calH_{R_p^\wedge,\xi}$.
\end{definition}

The construction of such invertible sections can be done via telescoping sums.
Note also that the condition~\eqref{fxidef} is a logarithmic version of that used in
Dwork's lemma (see Lemma~\ref{lem:dwork} of Section~\ref{sub.habp}).
The existence of invertible sections and Dwork's lemma makes it easy to see that
$\calH_{R_p^\wedge,\xi}$ is a rank one module over $\calH_{R_p^\wedge}$.
This is summarised in the following theorem, whose proof will be given in
Section~\ref{sub.habp}.

\begin{theorem}
\label{thm.locals}
Let $\BK$ be a number field, and $\Delta$ a number divisible by its discriminant
and by~$6$. For $\xi \in K_3(\BK)$ and $p$ a prime with $(p,\Delta)=1$, there exists
invertible $L_p(\xi)$-sections. Moreover, $\calH_{R^\wedge_p,\xi}$ is a free rank~$1$
module over $\calH_{R^\wedge_p}$.
\end{theorem}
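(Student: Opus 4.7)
The plan is to split the theorem into two parts: existence of at least one invertible $L_p(\xi)$-section, and rank-one freeness obtained by showing that any two such sections differ by an element of $\calH_{R_p^\wedge}$.

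For the existence, for each $m$ coprime to $p$ I would fix an $m$-th root $\varepsilon_m(\xi)^{1/m}$ of the integral representative of the class in~\eqref{emdef}; Definition~\ref{def.HRxi} forces this to be the constant term of $f_m(x)$. I then build the higher-order terms by $p$-adic successive approximation: set $f_m^{(0)}(x):=\varepsilon_m(\xi)^{1/m}$ and inductively define
\be
f_m^{(n)}(x)\=f_m^{(n-1)}(x)\,\bigl(1+p^{n-1}u_m^{(n)}(x)\bigr), \qquad u_m^{(n)}\inn R_p^\wedge[\zeta_m][\![x]\!],
\ee
choosing $u_m^{(n)}$ to cancel the residual $p^n$-order obstruction to~\eqref{fxidef}. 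The $p$-adic limit $f_m:=\lim_n f_m^{(n)}$ then lives in the correct space and satisfies~\eqref{fxidef} exactly. Solvability of each step reduces to the $p$-divisibility of the zeroth-order discrepancy, which is the content of the Besser--de Jeu identification~\cite{BJ:syntomic} of Coleman's $p$-adic dilogarithm $D_p$ with the $p$-adic syntomic regulator: the correction $D_p(\xi)/(m^2\log q)$ built into $\widehat f$ in~\eqref{hatfm} is precisely what compensates, at leading order, the $K$-theoretic obstruction coming from $\fr$ acting on $\varepsilon_m(\xi)$.

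For the rank-one freeness, let $f^{(0)}$ denote the section just constructed and let $f$ be any other invertible $L_p(\xi)$-section. Set $g_m:=f_m/f_m^{(0)}$; the $\varepsilon_m(\xi)^{1/m}$ factors cancel, leaving each $g_m$ with unit constant term in $R_p^\wedge[\zeta_m]$ and a priori higher-order coefficients only in $\BK_p[\zeta_m]$. Because the singular $D_p(\xi)/(m^2\log q)$ part is common to both $\widehat f$ and $\widehat{f^{(0)}}$, one has $\log\widehat f-\log\widehat{f^{(0)}}=\log g$, a regular power series. Subtracting the two instances of~\eqref{fxidef} yields
\be
\log\!\bigl(\fr g(q^p)/g(q)^p\bigr)\;\in\;\tfrac{p}{x}R_p^\wedge[\zeta_m][\![x]\!],
\ee
and regularity of $\log g$ forces the $x^{-1}$-coefficient to vanish, so in fact $\log(\fr g(q^p)/g(q)^p)\in pR_p^\wedge[\zeta_m][\![x]\!]$. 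Dwork's lemma (Lemma~\ref{lem:dwork}) then upgrades each $g_m$ to an element of $R_p^\wedge[\zeta_m][\![x]\!]$, which by the product description~\eqref{eq.habp} says exactly that $g\in\calH_{R_p^\wedge}$. Hence $\calH_{R_p^\wedge,\xi}=\calH_{R_p^\wedge}\cdot f^{(0)}$ is free of rank one.

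The main obstacle is the base case of the telescoping construction: checking that the leading mismatch coming from $f_m=\varepsilon_m(\xi)^{1/m}$ is already $p$-divisible in~\eqref{fxidef}. This is the only place where the $K$-theoretic structure of $\xi$ genuinely enters, and it rests on matching the global unit $\varepsilon_m(\xi)$ with the local analytic input $D_p(\xi)$ through the syntomic regulator identity. Once this base case is secured, the remaining steps are a routine $p$-adic iteration combined with Dwork's lemma.
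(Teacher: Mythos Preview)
Your rank-one freeness argument---take the ratio $g=f/f^{(0)}$, observe the polar parts cancel, and apply Dwork's lemma---is correct and is exactly what the paper does via Corollary~\ref{cor.diagram}.

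Your existence argument, however, is built on a misidentification of the key input and an iteration that never needs to run. Condition~\eqref{fxidef} is not a tower of $p^n$-congruences to be solved successively; it is a single $p$-divisibility condition on the Dwork quotient, and the constant collection $f_m^{(0)}=\varepsilon_m(\xi)^{1/m}$ already satisfies it. Indeed its discrepancy in~\eqref{fxidef} is
\[
\frac{(\fr-p^2)\,D_p(\xi)}{m^2\,p\,\log(1+x/\zeta_m)}
\;+\;\frac{1}{m}\log\!\frac{\fr\sigma_p\,\varepsilon_m(\xi)}{\varepsilon_m(\xi)^{p}},
\]
and the two pieces are handled \emph{independently}: the polar term lies in $\tfrac{p}{x}R_p^\wedge[\zeta_m][\![x]\!]$ because $D_p(\xi)\in p^2\calO_{\BK_p}$ (this is Lemma~\ref{lem.pint}, an elementary $p$-adic estimate for $\Li_2$ near roots of unity), while the constant term lies in $pR_p^\wedge[\zeta_m]$ simply because $\fr\sigma_p$ reduces to the $p$-th power map modulo~$p$. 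There is no ``matching of $\varepsilon_m$ with $D_p$ through a syntomic regulator identity''; Besser--de~Jeu is invoked in the paper only to identify Coleman's $D_p$ on the Bloch group with the regulator on $K_3$, a statement about well-definedness rather than any compatibility with $\varepsilon_m$.

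The paper's primary route to existence is genuinely different from yours. It first proves (Theorem~\ref{thm.xiz}, using Proposition~\ref{prop.xiz}) that $K_3(\BK_p;\BZ_p)$ is $\BZ_p$-spanned by symbols $[\zeta]$ for $\zeta\in\mu(\BK_p)$, and then builds an explicit section $\Psi_{\hat\xi,p}$ out of infinite Pochhammer symbols $(q^{1/2}\zeta;q)_\infty$ (Definition~\ref{def.psi}, Theorem~\ref{thm.Psi}), checking~\eqref{fxidef} via Proposition~\ref{prop.poclip}. This buys a canonical, computable generator that is reused throughout the paper. The paper also records a second construction by a closed-form telescoping sum, which is closer in spirit to what you had in mind but written as a one-shot formula rather than an inductive approximation.
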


In Section~\ref{sub.habp} we will construct some natural and explicit
invertible sections.
These sections will depend on an expression of $\xi$ as a $\BZ_p$-linear combination of
symbols associated to roots of unity $\z$ in $\BK_p$, and on the expansion of the
infinite Pochhammer symbol $(q^{1/2}\z;q)_\infty$.
While these collections are only defined at $\z_m$ with $m$ prime to $p$,
they have a unique extension to all roots of unity satisfying integrality
and gluing conditions, as shown in Corollary~\ref{cor:unique.loc} of
Section~\ref{sub.habp}.

We can patch together these modules over the $p$-completed ring to define modules
over the global one.
This is completely analogous to the description of the global Habiro ring from
the $p$-completed Habiro ring via the diagram~\eqref{Hviapcomp}.
These collections are motivated and modelled on series coming from
perturbative Chern--Simons theory at roots of unity. 

\begin{definition}
\label{def.HRmod}
Fix $\xi \in K_3(\BK)$. The $\calH_{R}$-module $\calH_{R,\xi}$
consists of collections
\be
\label{fcyclic}
f(q) \=(f_{m}(x))_{m\geq1}, \qquad
f_{m}(x)\inn \varepsilon_{m}(\xi)^{1/m}\,
\BK[\z_m][\![x]\!]
\ee
such that under the canonical map $\BK\to\BK_p$ we have
$(f_{m}(x))_{m\geq 1,(m,p)=1}\in\calH_{R^\wedge_p,\xi}$, and the following
gluing condition for $\gamma\in\BZ_{>0}$ is satisfied:
\be
\label{eq:gluing}
f(q^\gamma)^\gamma f(q^{-1})\inn \calH_{R[1/\gamma]}|_{\gamma}\,,
\ee
where $\calH_{R[1/\gamma]}|_{\gamma}$ denotes the restriction of $\calH_{R[1/\gamma]}$
to collections of power series at roots of unity $\z_m$ with $m$ prime to $\gamma$,
as introduced after Definition~\ref{def.hab}.
\end{definition}
Note that the function $(\gamma^* f)(q):=f(q^\gamma)$ behaves like an element of
the module $\calH_{R[1/\gamma],\xi/\gamma}|_{\gamma}$ (where again $|_\gamma$ denotes
the restriction to roots of unity of order prime to $\gamma$).
The issue is that one must make sense of $\xi/\gamma$ as an element of $K_3$.
We also note that the $\chi^{-1}$-equivariance of $\ve_m$ makes the a priori
ambiguity of their $m$-th roots disappear in equation~\eqref{eq:gluing}. For the
moment we assume that $p$ is prime to $6$ (cf.~the remark in Section~\ref{sub.admFGI}),
because only then does the above condition yield the correct gluing condition:
one needs that there is some $\gamma$ prime to $p$ such that
$\gamma^2-1$ is also prime to $p$ (see the end of the proof of Lemma~\ref{lem:ftild}
of Section~\ref{sub.habp}).
Moreover, notice that $f(q^{\gamma})^{\gamma}f(q^{-1})$ determines
$f(q)$ for $q$ near all roots of unity of order prime to $6$ as we vary
$\gamma\in\BZ_{>0}$. More generally, one can consider the combinations
$f(q^{\gamma_1})\cdots f(q^{\gamma_n})f(q^{-\gamma_1'})\cdots f(q^{-\gamma_{n'}'})$,
where $\sum_{k}\gamma_k^{-1}-\sum_k\gamma_{k}'^{-1}=0$,
the previous case being $n=\gamma$, $\gamma_i=\gamma$, $n'=1$ and $\gamma_1'=1$.
The important property of these combinations is that including the polar
part of equation~\eqref{hatfm} has no effect on the
expansions at roots of unity with order prime to $\prod_k\gamma_k\prod_k\gamma_k'$.
For example, taking $q=\z_m+x$ with $(m,\gamma)=1$ we find that
$f(q^\gamma)^\gamma f(q^{-1})=\widehat{f}(q^\gamma)^\gamma \widehat{f}(q^{-1})$.

The module $\calH_{R,\xi}$ associated to the global ring is not necessarily free,
but it is of rank one and locally free, i.e., invertible.
This was observed numerically in our early experiments. In these experiments, we
compared power series coming from two knots giving the same class in $K_3(\BK)$ (in
particular, the $5_2$ and $(-2,3,7)$-pretzel knots with the field $\BK$ being the
cubic field of discriminant $-23$ with $R$ being its ring of integers with $23$
inverted). We observed that these series seemed to satisfy a linear dependence
over $\calH_R$. This numerical observation is indeed true and is implied by
Theorem~\ref{thm.2} below and the following:

\begin{theorem}
\label{thm.HR}
We have $\calH_{R,0}=\calH_R$. In general, multiplication gives a canonical isomorphism
\be
\calH_{R,\xi}\otimes_{\calH_R} \calH_{R,\xi'}
\;\cong\;
\calH_{R,\xi+\xi'}\,.
\ee
In particular $\calH_{R,\xi}$ is an invertible $\calH_R$-module, and
$\xi\mapsto \calH_{R,\xi}$ defines a homomorphism of abelian groups
\be
\label{eq:K3Pic}
K_3(\BK)\;\to\;
\mathrm{Pic}(\calH_{R})\,.
\ee
\end{theorem}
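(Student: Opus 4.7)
The plan is to reduce all the assertions---namely (a) $\calH_{R,0}=\calH_R$, (b) the tensor-product identity, (c) invertibility, and (d) the homomorphism property---to two formal inputs: the multiplicativity of the Chern-class map $\varepsilon_m$ and the additivity of the $p$-adic regulator $D_p$ on $K_3(\BK_p)$, combined with the rank-one freeness $p$-completely established in Theorem~\ref{thm.locals}.

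For (a), when $\xi=0$ one may canonically take $\varepsilon_m(0)^{1/m}=1$, and $D_p(0)=0$ gives $\widehat f=f$ in Definition~\ref{def.HRxi}. The logarithmic condition~\eqref{fxidef} then reads $\log(\fr f(q^p)/f(q)^p)\in(p/x)R^\wedge_p[\z_m][\![x]\!]$, which by Dwork's lemma is equivalent to the gluing~\eqref{gluef} defining $\calH_{R^\wedge_p}$. The integrality $f_m\in R[\z_m][\![x]\!]$ (rather than merely $\BK[\z_m][\![x]\!]$) comes from combining the local conditions at all $p\nmid\Delta$ with the identity $R[\z_m]=\BK[\z_m]\cap\bigcap_{p\nmid\Delta}R^\wedge_p[\z_m]$, and the auxiliary condition~\eqref{eq:gluing} is automatic for $\xi=0$ because $\calH_R$ is stable under the substitution operators $q\mapsto q^\gamma$ and $q\mapsto q^{-1}$. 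Via the patching diagram~\eqref{Hviapcomp}, this identifies $\calH_{R,0}$ with $\calH_R$.

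For (b), I would show that the pointwise multiplication $(f,g)\mapsto fg$ sends $\calH_{R,\xi}\times\calH_{R,\xi'}$ into $\calH_{R,\xi+\xi'}$ and induces an isomorphism on the $\calH_R$-tensor product. Multiplicativity of $\varepsilon_m$ lets one take $\varepsilon_m(\xi+\xi')^{1/m}=\varepsilon_m(\xi)^{1/m}\varepsilon_m(\xi')^{1/m}$, placing $fg$ in the required coset; additivity of $D_p$ gives $\log\widehat{fg}=\log\widehat f+\log\widehat g$, making the logarithmic condition~\eqref{fxidef} additive in $\xi$; and the auxiliary condition~\eqref{eq:gluing} is multiplicative because $\calH_{R[1/\gamma]}|_\gamma$ is a ring and, as remarked after Definition~\ref{def.HRmod}, the polar part of $\widehat f_m$ contributes trivially to the combinations $f(q^\gamma)^\gamma f(q^{-1})$ at roots of unity of order prime to $\gamma$. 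To upgrade the bilinear pairing to an isomorphism of tensor products, I check $p$-locally: Theorem~\ref{thm.locals} gives free rank-one generators $f^{(\xi)},f^{(\xi')}$, and the additivity arguments above show $f^{(\xi)}f^{(\xi')}$ is an invertible $L_p(\xi+\xi')$-section, hence a generator of $\calH_{R^\wedge_p,\xi+\xi'}$. Thus multiplication sends a generator of the tensor product to a generator, yielding an isomorphism $p$-completely and therefore globally via~\eqref{Hviapcomp}.

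Claims (c) and (d) then follow formally: by (a) and (b) the composition $\calH_{R,\xi}\otimes_{\calH_R}\calH_{R,-\xi}\cong\calH_{R,0}=\calH_R$ exhibits $\calH_{R,-\xi}$ as an inverse in $\mathrm{Pic}(\calH_R)$, and the assignment $\xi\mapsto[\calH_{R,\xi}]$ is a group homomorphism by exactly the tensor-product identity of (b). The main obstacle I anticipate is the compatibility of the multiplicative structure with~\eqref{eq:gluing} across different $\gamma$: one must check that the additive correction $D_p(\xi)/(m^2\log q)$ in $\widehat f$ cancels cleanly in all the combinations $f(q^{\gamma_1})\cdots f(q^{-\gamma'_{n'}})$ with $\sum\gamma_k^{-1}-\sum\gamma'_k{}^{-1}=0$, so that well-definedness and gluing survive the multiplication. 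This cancellation is exactly what the additivity of $D_p$ on $K_3$ and the $\chi^{-1}$-equivariance of $\varepsilon_m$ are designed to guarantee, and it is the arithmetic content that makes the formal tensor-product manipulations produce a genuine homomorphism into $\mathrm{Pic}(\calH_R)$.
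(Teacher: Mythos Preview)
Your overall strategy matches the paper's: reduce everything to the multiplicativity of $\ve_m$, the additivity of $D_p$, and the $p$-local rank-one freeness of Theorem~\ref{thm.locals}. Parts (b), (c), (d) are handled correctly and in the same spirit as the paper's (very terse) argument.

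There is, however, a gap in your treatment of part (a), specifically the inclusion $\calH_{R,0}\subseteq\calH_R$. You argue that condition~\eqref{fxidef} at $\xi=0$ is ``by Dwork's lemma equivalent to the gluing~\eqref{gluef} defining $\calH_{R^\wedge_p}$.'' This is not quite right for two reasons. First, condition~\eqref{fxidef} is a condition on \emph{invertible} $L_p(0)$-sections, not on arbitrary elements of the span $\calH_{R^\wedge_p,0}$; a general $f\in\calH_{R^\wedge_p,0}$ may vanish and need not satisfy any logarithmic Dwork identity. The correct reason that $\calH_{R^\wedge_p,0}=\calH_{R^\wedge_p}$ is simply that the constant collection $1$ is an invertible $L_p(0)$-section and, by Theorem~\ref{thm.locals}, a free generator. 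Second, and more importantly, the $p$-local conditions (i) in Definition~\ref{def.HRmod} only see $f_m$ for $(m,p)=1$, so intersecting over all $p$ gives integrality $f_m\in R[\z_m][\![x]\!]$ but says nothing about the re-expansion identity~\eqref{gluef} relating $f_m$ and $f_{pm}$. That gluing must be extracted from condition~\eqref{eq:gluing}, which you only invoke in the easy direction ($\calH_R$ satisfies it). The paper closes this gap by appealing to Corollary~\ref{cor:unique.loc}: the invertible $L_p(0)$-section $1$ has a \emph{unique} extension to all roots of unity satisfying integrality and the $\sigma$-gluing, and condition~\eqref{eq:gluing} forces the given $f_m$ at $p\mid m$ to agree with that extension, placing $f$ in $\calH_{R^\wedge_p}$ with gluing, hence in $\calH_R$ as $p$ varies. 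You should incorporate this uniqueness step.
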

\noindent Here, $\mathrm{Pic}(\calH_{R})$ denotes as usual the abelian group of
line bundles on $\mathrm{Spec}(\calH_R)$, i.e., the group under tensor product of
invertible $\calH_R$-modules.
This map is actually a homomorphism from the abelian group $K_3(\BK)$ (in the category
of sets) to the abelian group of line bundles on~$\calH_{R}$ (in the category of
groupoids), i.e., to every element in $K_3(\BK)$ we obtain an invertible
\hbox{$\calH_R$-module} as opposed to just an isomorphism class.

We observe that, just as for $\calH_R$, the definition of the module $\calH_{R,\xi}$
becomes simpler when $\BK$ is abelian over $\BQ$. Indeed, in that case the field
$\BK$ can be embedded in $\BQ(\z)$ for some root of unity $\z$. Assume that
$\xi\in \calB(\BK)$ can be represented as a $\mathrm{Gal}(\BQ(\z)/\BK)$-invariant
combination $\xi=\sum_j n_j[\z^j]$. (We do not know whether this is always the case,
at least after tensoring with $\BQ$, but it certainly holds in many examples, e.g.,
the example coming from the figure eight knot, where $\xi=2[\z_6]$.)
Then $\calH_{R,\xi}$ contains the element $\prod_j(q^{1/2}\z^j;q)_\infty^{n_j}$ and is
freely generated by this element as an $\calH_R$-module.
Compare Theorem~\ref{thm.Psi} of Section~\ref{sub.habp}.

The next proposition summarises some of the properties of these $\calH_R$-modules
and the operation, defined after Definition~\ref{def.HRmod}, $f\mapsto \gamma^{*}f$.
Its proof, along with the proof of Theorem~\ref{thm.HR}, are given in
Section~\ref{sub.proofmain}.

\begin{proposition}
\label{prop.HRxi}
We have: 
\newline
\emph{(}a\emph{)}
  If $f \in \calH_{R,\xi}$, then $\tau f \in \calH_{R,-\xi}$, where
  $\tau f := (-1)^* f$ is the involution $f(q) \mapsto f(q^{-1})$.
\newline
\emph{(}b\emph{)}
  If $f,g \in \calH_{R,\xi}$, then $f \cdot\tau g \in \calH_{R}$. Moreover,
  there exist $a, b \in \calH_{R}$ such that $af=bg$.
\newline
\emph{(}c\emph{)}
  If $f\in\calP_R$ and $\gamma,\gamma'\in\BZ$ are non-zero then $(\gamma \gamma')^* f
  \= \gamma^* (\gamma'^* f)$.
\newline
\emph{(}d\emph{)}
  If $f \in \calH_{R,\xi}$ and $\gamma$ a positive integer, then
  $(\gamma^* f)^\gamma \in \calH_{R[\gamma^{-1}],\xi}|_{\gamma}$
  (see Definitions~\ref{def.hab}~and~\ref{def.HRmod}).
\newline
\emph{(}e\emph{)}
If $f\in\calH_{R,\xi}$ and $m\in\BZ_{>0}$ and $f_m(x)=0$ then $f_{pm}(x)=0$
for all primes $p$ not diving $\Delta$.
\newline
\emph{(}f\emph{)}
  If $f(q) \in \calH_{R,\xi}$, then 
  $f_m(0) \in R[\z_m,\ve_m^{1/m}]$ for $(m,\Delta)=1$. 
\end{proposition}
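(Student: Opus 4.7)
The plan is to handle the six parts in sequence, using Theorem~\ref{thm.HR} (the multiplication isomorphism), the $\chi^{-1}$-equivariance of the units $\ve_m(\xi)$, the involution $\tau$, and the gluing identity~(\ref{eq:gluing}). For (a), I would verify that $\tau$ sends each defining ingredient of $\calH_{R,\xi}$ to the corresponding one for $\calH_{R,-\xi}$: the torsor factor $\ve_m(\xi)^{1/m}$ maps to $\ve_m(-\xi)^{1/m}$ via the Galois action $\sigma_{-1}$ combined with $\chi^{-1}$-equivariance; the logarithmic Dwork condition~(\ref{fxidef}) is preserved because the $p$-adic dilogarithm $D_p$ is odd, so the polar term $D_p(\xi)/(m^2\log q)$ in~(\ref{hatfm}) changes sign under $q\mapsto q^{-1}$ to match $D_p(-\xi)=-D_p(\xi)$; and the relation~(\ref{eq:gluing}) is manifestly symmetric under $q\mapsto q^{-1}$. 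Part (b) then follows: by Theorem~\ref{thm.HR} and (a), $f\cdot\tau g\in\calH_{R,\xi}\cdot\calH_{R,-\xi}=\calH_R$; for the linear dependence, fix any $h\in\calH_{R,\xi}$ (nonzero since $\calH_{R,\xi}$ is invertible), set $a=g\cdot\tau h$ and $b=f\cdot\tau h$ (both in $\calH_R$ by the same reasoning), and observe $af=bg$ by associativity.

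Part (c) is immediate from the definition $(\gamma^*f)(q):=f(q^\gamma)$, once one checks compatibility of re-expansions at each Galois conjugate of the base point. For (d), apply~(\ref{eq:gluing}) to $f$ to obtain $(\gamma^*f)^\gamma\cdot\tau f\in\calH_{R[1/\gamma]}|_\gamma$, and recall from (a) that $\tau f\in\calH_{R,-\xi}$. I then verify the conditions of Definition~\ref{def.HRmod} for $(\gamma^*f)^\gamma\in\calH_{R[1/\gamma],\xi}|_\gamma$ directly. The torsor condition follows from $\sigma_\gamma(\ve_m^{1/m})^\gamma=\ve_m^{1/m}\cdot(\text{unit})$ by $\chi^{-1}$-equivariance. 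Locally at each prime $p\nmid\gamma\Delta$, writing $f=sh$ with $s$ an invertible $L_p(\xi)$-section gives $(\gamma^*f)^\gamma=(\gamma^*s)^\gamma(\gamma^*h)^\gamma$; the first factor remains an invertible section because $q^\gamma-\z_m^\gamma=\gamma\z_m^{\gamma-1}x(1+O(x))$ is a unit multiple of $x$ once $\gamma$ is inverted, preserving~(\ref{fxidef}), while the second factor lies in $\calH_{R_p^\wedge}$. The remaining global gluing conditions at other $\gamma'$ follow by combining instances of~(\ref{eq:gluing}) for $f$ at $\gamma\gamma'$ and at $\gamma$, together with (c).

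For (e), pair $f$ with any nonzero $g\in\calH_{R,-\xi}$ to produce $fg\in\calH_R$ with $(fg)_m=f_mg_m=0$. By Remark~\ref{rem.domain}, $\calH_R$ is a product of integral domains indexed by equivalence classes of $\BN$ under $m\sim pm$ for $p\nmid\Delta$; the extension of Habiro's injectivity theorem for $\iota_m$ to each component forces the class-component of $fg$ containing $m$ to vanish entirely, so $(fg)_{pm}=0$. Choosing $g$ with $g_{pm}\ne 0$, possible since the restriction of $\calH_{R,-\xi}$ to the class of $pm$ remains a nonzero invertible module, then yields $f_{pm}=0$.

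For (f), set $c:=f_m(0)/\ve_m(\xi)^{1/m}\in\BK[\z_m]$ and show $v_\mathfrak p(c)\ge 0$ at every prime $\mathfrak p$ of $R[\z_m]$ of residue characteristic $p\nmid\Delta$. Fix $\mathfrak p$ over $p$ and pick a sequence of integers $\gamma\equiv 1\pmod m$ with $\gamma\to\infty$ and $(\gamma,p)=1$ (such $\gamma$ form an infinite progression, automatically when $p\mid m$). Since $\sigma_\gamma$ acts trivially on $\BK(\z_m)$ and $\ve_m$ can be chosen as an integral unit in $R[\z_m]^\times$ by the remark following~(\ref{cm}), evaluation of~(\ref{eq:gluing}) at $q=\z_m$ shows that the $\ve_m^{1/m}$-contributions from $f(\z_m)^\gamma$ and $f(\z_m^{-1})$ combine into a unit in $R[\z_m]^\times$ via $\chi^{-1}$-equivariance, leaving $c^\gamma\sigma_{-1}(c)\in R[1/\gamma][\z_m]$. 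This gives $\gamma v_\mathfrak p(c)+v_{\sigma_{-1}(\mathfrak p)}(c)\ge 0$, and letting $\gamma\to\infty$ forces $v_\mathfrak p(c)\ge 0$, hence $c\in R[\z_m]$. The main obstacle is (d), where several conditions of Definition~\ref{def.HRmod} must be verified simultaneously while tracking $m$-th root ambiguities under the Galois action.
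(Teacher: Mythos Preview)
Your approach is correct, and for (a)--(d) it matches the paper's (very terse) proof: the paper also invokes $\chi^{-1}$-equivariance of~$\ve_m$ for (a) and (d), Theorem~\ref{thm.HR} for (b), and triviality of the definition for (c).

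For (e) and (f) you take genuinely different routes. The paper argues (e) via the fact that, as the parameters vary, the combinations $f(q^{\gamma_1})\cdots f(q^{\gamma_n})\,f(q^{-\gamma_1'})\cdots f(q^{-\gamma_{n'}'})$ with $\sum\gamma_k^{-1}=\sum\gamma_k'^{-1}$ determine $f(q)$ (or alternatively via the meromorphic-continuation remark after Corollary~\ref{cor:unique.loc}). Your reduction---pair with $g\in\calH_{R,-\xi}$, push the vanishing into~$\calH_R$ where the gluing~(\ref{gluef}) gives $(fg)_m=0\Rightarrow(fg)_{pm}=0$ directly, then pick $g$ with $g_{pm}\neq0$---is a clean alternative that leans on Theorem~\ref{thm.HR} rather than the determination-by-combinations principle. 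For (f), the paper simply reads off $f_m(0)\in\ve_m^{1/m}R_p^\wedge[\z_m]$ from the local condition of Definition~\ref{def.HRxi} and intersects over all $p$ with $(mp,\Delta)=1$. Your valuation argument with $\gamma\equiv1\pmod m$, $\gamma\to\infty$ in~(\ref{eq:gluing}) is more laborious, but it treats primes $p\mid m$ and $p\nmid m$ uniformly without needing the extension of Corollary~\ref{cor:unique.loc} to roots of order divisible by~$p$; that is a mild advantage in self-containment.
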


\begin{remark}
The operation $\tau$ corresponds geometrically to the orientation reversal of an
oriented manifold, i.e., the quantum invariants of an oriented three-manifold with
the opposite orientation are equal to the operation $\tau$ applied to the original
invariants.
\end{remark}

\subsection{Admissible series and formal Gaussian integration}
\label{sub.admFGI}
It was conjectured by Nahm that the modularity of certain $q$-hypergeometric
functions is intimately related to the vanishing of certain associated classes in
algebraic $K$-theory, as was indeed later seen in a variety of
works~\cite{Nahm,Zagier:dilog,GZ:asymptotics,CGZ}.
The asymptotics of these $q$-hypergeometric functions as $q$ appraoches roots of
unity can be defined using formal Gaussian integration~\cite{Zagier:dilog,DG,DG2}.
Modularity would then imply some triviality of these asymptotic series.
We will see that this link goes even deeper, and that the
integrality of the formal series (as series in $q-\z$) is also linked to the
vanishing of the same classes in algebraic $K$-theory.

As we will see, there are two sources of these collections of power series at roots
of unity, with complementary properties.
One of these is the ``admissible series''
of Kontsevich--Soibelman~\cite[Sec.6]{KS:cohomological}, which arise as generating
series of cohomological Hall algebras that appear in enumerative algebraic geometry
such as Donaldson--Thomas theory.

\begin{definition}
For $N\in\BZ_{>0}$ and $t=(t_1,\dots,t_N)$, an admissible series is a power series
$F(t,q)\in\BQ(q)[\![t]\!]$ satisfying $F(t,q)=1+\mathrm{O}(t)$ and
\be
\label{logF}
\log F(t,q) \= -\!\!\!\!\sum_{0\neq n\in(\BZ_{\geq0})^N}\sum_{\ell \geq 1}
\frac{L_n(q^\ell)}{\ell\thinspace(1-q^\ell)}
t_1^{\ell n_1}\cdots t_N^{\ell n_N}\,,\qquad
\text{where}\qquad
L_n(q)\inn\BZ[q^{\pm1}].
\ee
\end{definition}

\noindent An equivalent way to describe admissible series is via their factorisation
into Pochhammer symbols. In particular, a series $F(t,q)$ is admissible if and only
if the unique integers $c_{n,i}$ (generalised Donaldson--Thomas invariants), defined
by the equality
\be
\label{defgamma}
F(t,q) \= \prod_{0\neq n\in(\BZ_{\geq0})^N}
\prod_{i \in \BZ} (q^i t_1^{n_1}\cdots t_N^{n_N};q)_\infty^{c_{n,i}},
\ee
vanish for all but finitely
many $i$ for each fixed $n$.
Here, as usual $(x;q)_n = \prod_{j=0}^{n-1}(1-q^j x)$
and
$(x;q)_{\infty}^{-1}=\sum_{k=0}^{\infty}x^k/(q;q)_k$.
The equivalence follows from the elementary but key identity~\eqref{logpoc}
for the infinite Pochhammer symbol, which implies the relation
\be
  L_n(q) \= \sum_{i \in \BZ} c_{n,i} \, q^i \inn \BZ[q^{\pm 1}]
\ee
between the exponents $c_{n,i}$ and the Laurent polynomials $L_{n}(q)$.

An admissible series $F(t,q)$ can be expanded at each root of unity and defines
a collection of series in $t^{1/m}=(t_1^{1/m},\dots,t_N^{1/m})$ given by
\be
\label{Phidef}
\Phitof_m(t,x) \= F(t^{1/m},\z_m+x) \,. 
\ee
The properties of these collections of series at roots of unity are explained in
detail in Section~\ref{sub.admissible} below.

Certain admissible series are easy to construct, compute and analyse.
A main theorem of~\cite[Sec.\thinspace6.1, Thm.\thinspace9]{KS:cohomological} (see
also Efimov~\cite{Efimov}) is that $q$-hypergeometric Nahm
sums\footnote{The original Nahm sums were given for positive definite $A$ by the
  special case when $t=(-1)^{\diag(A)}q^{b}$ for some $b\in\BZ^{N}$.}
\be
\label{FAdef}
F_A(t,q) \= \sum_{n \in \BZ^N_{\ge0}} \frac{
(-1)^{\mathrm{diag}(A)\cdot n}\,
q^{\frac{1}{2}(n^t A n + \mathrm{diag}(A)\cdot n)} }{(q;q)_{n_1}
\dots (q;q)_{n_N}} t_1^{n_1} \dots t_N^{n_N}
\ee
defined from a symmetric, integral $N \times N$ matrix $A$,
are always admissible series. These series encode the Poincar\'e polynomials of
representations of a quiver with symmetric potential and arbitrary dimension vector.

An elementary proof of the admissibility of $F_A(t,q)$ that uses the system of linear
\hbox{$q$-difference} equations~\eqref{PhiAshift} is given in Section~\ref{sub.DT}.
The proof also leads to a notion of a level $m$ admissible series defined in
Section~\ref{sub.madmissible} (abbreviated by $m$-admissible series throughout
this paper), the prototypical example being given by exactly the same sums as in
equation~\eqref{FAdef} with $n$ restricted to an $m$-congruence class
$k\in\{0,\dots,m-1\}^N$
\be
\label{FAmdef}
F_{A,m,k}(t,q) \;=\!\!\!\!
\sum_{n \,\in\, k+m\BZ^N_{\ge0}}\!\!\!\!\!\! \frac{
(-1)^{\mathrm{diag}(A)\cdot (n-k)}
q^{\frac{1}{2}(n^t A n-k^t A k)
  +\frac{1}{2}\mathrm{diag}(A)\cdot (n-k)} }{(q^{k_1+1};q)_{n_1-k_1}
\dots (q^{k_N+1};q)_{n_N-k_{N}}}\,t_1^{n_1-k_1}\!\!\cdots
t_N^{n_N-k_N}\!\!.
\ee

On the other hand, we also have a collection of power series at roots of unity
$\Phitof^\FGI_{A,m}(t,x)$ associated, by formal Gaussian integration, to an integer
symmetric matrix $A$. These series were previously defined
in~\cite{Zagier:dilog,DG,DG2} for the special case $t=(1,\dots,1)$ giving series
$\Phi_{A,m}(h)=\Phitof^\FGI_{A,m}(1,x)$, expressed there with respect to the
variable $h=\log(1+\z_m^{-1}x)$. This definition mimics the perturbation
expansions of complex Chern--Simons theory given in terms of state-integrals, and is
discussed in detail in Section~\ref{sub.FGI}.
Both collections $\Phitof_{A,m}(t,x)$ and $\Phitof^\FGI_{A,m}(t,x)$ are
in $\BQ[\z_m](\!(x)\!)[\![t^{1/m}]\!]$, for every positive integer $m$. 

Our two main results are that these two seemingly independent collections of
power series agree and that their values at $t=(1,\dots,1)$ belong to explicit
Habiro modules. In fact, these series
can be defined by the admissible series of Kontsevich--Soibelman and this gives
a new perspective to the arithmetic properties of DT-invariants and the
enumerative properties of the elements of the Habiro ring.

\begin{remark}
\label{rem:coprimeto6}
The primes $p=2$ and $3$ require special consideration. This is true for the
Bloch group itself, which has a uniform definition up to $2$-torsion and agrees
with the algebraic third $K$ group of a field away from some primes that always
include $2$ and $3$. There are further difficulties with the primes $2$ and $3$
due to the presence of a $\tfrac{1}{24}$ whose origin can be seen to
come from the Dedekind $\eta$-function and the famous equation of Euler
$\Li_{2}(1)=-\tfrac{1}{24}(2\pi i)^2$. Therefore, throughout the paper we will
only consider primes $p>3$ and always assume that $2,3|\Delta$ unless otherwise
stated. However, a more refined approach should be able to cover the remaining cases
as well.
\end{remark}

\subsection{The main theorems}
\label{sub.results}

We now state our main results. The first result identifies the two collections
$\Phitof_{A,m}(t,x)$ and $\Phitof^\FGI_{A,m}(t,x)$ of power series at roots of unity and
draws complementary conclusions about them. A key ingredient is the fact that
both collections are solutions to the $q$-holonomic system of equations
\be
\label{PhiAshift}
F_A(t,q) - F_A(\s_jt,q) \= (-1)^{A_{j,j}} t_{j} q^{A_{j,j}}
F_A\Big(\prod_{i=1}^N \s_i^{A_{i,j}}t,q\Big), \qquad j=1,\dots,N\,,
\ee
where $\s_jt$ shifts $t_j$ to $q t_j$ and keeps $t_{j'}$ for $j' \neq j$ fixed.

After setting $q=1$ and replacing the operators $\s_j$ by $z_j(t)$, the
equations~\eqref{PhiAshift} become the $t$-deformed Nahm equations~\eqref{zjt}
\be
\label{zjt}
1-z_j(t) \= (-1)^{A_{j,j}} t_j \prod_{i=1}^N z_j(t)^{A_{i,j}},
\qquad j\=1,\dots,N\,,\qquad
z_j(0)\=1\,.
\ee
These equations in turn define the ring 
\be
\label{Sdef}
S\= \BZ[t^{\pm 1}, z^{\pm 1}(t), \delta(t)^{-1/2}]\big/
(1-z(t)-(-1)^A t \,z(t)^A)\,,
\ee
whose relations are a shorthand of the equations~\eqref{zjt},
where $z(t)=(z_1(t),\dots,z_N(t))$ and
\be
\label{taulambda}
\delta(t)\;:=\;\prod_{j=1}^Nz_j(t)^{-A_{jj}} \det(\mathrm{diag}(1-z(t)) A +
\mathrm{diag}(z(t)))
\ee
is the discriminant of the $t$-deformed Nahm equations, so that (after inverting $2$)
$S$ is an \'etale $\BZ[t]$-algebra.
The equations~\eqref{zjt} are a $t$-deformation of Nahm's equations and appear
both in the admissible series (see Section~\ref{sub.admissible}) as well as in
the formal Gaussian integration series (see Section~\ref{sub.FGI}), and
play a key role in identifying the two collections of series. 

More generally, for a positive integer $m$, we define
\be
\label{RA}
S^{(m)}\=
\BZ[\z_m,t^{\pm 1/m}, z^{\pm 1}(t),\delta(t)^{-1/2}]\big/
(1-z(t)-(-1)^A t \,z(t)^A)
\ee
and note that $S^{(1)}=S$.

\begin{theorem}
\label{thm.1}
For every symmetric matrix $A$ with integer entries we have:
\be
\label{eq.thm1}
\Phitof_{A}(t,q) \= \Phitof_{A}^\FGI(t,q)\,.
\ee
\end{theorem}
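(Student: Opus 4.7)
The strategy is to show that both $\Phitof_A(t,q)$ and $\Phitof_A^{\FGI}(t,q)$ are the unique solution, in $\BQ[\z_m](\!(x)\!)[\![t^{1/m}]\!]$ for each $m$, of the common $q$-holonomic system~\eqref{PhiAshift} normalised to equal $1$ at $t=0$. The first step is to verify directly that $F_A(t,q)$ satisfies~\eqref{PhiAshift}: the difference $F_A(t,q) - F_A(\sigma_jt,q)$ is, term by term, $(1 - q^{n_j})$ times the original summand, hence supported on $n_j \geq 1$; this factor cancels the last factor of $(q;q)_{n_j}$ in the denominator, and the substitution $n_j \mapsto n_j - 1$ together with the identity $(n+\mathbf{e}_j)^t A (n+\mathbf{e}_j) = n^t A n + 2(An)_j + A_{jj}$ (using $A_{ij}=A_{ji}$) reassembles the result into $(-1)^{A_{jj}} t_j q^{A_{jj}} F_A(\prod_i\sigma_i^{A_{ij}}t,q)$. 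Passing to the expansion at $\z_m + x$ and substituting $t \mapsto t^{1/m}$ transfers the same identity to the collection $\Phitof_{A,m}(t,x)$.

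Second, I would show that $\Phitof_{A,m}^{\FGI}(t,x)$ satisfies the identical system. By construction, recalled in Section~\ref{sub.FGI}, the FGI is a formal saddle-point expansion about the unique critical point $z(t) \in S^{(m)}$ of the $t$-deformed Nahm equations~\eqref{zjt}, which is precisely~\eqref{PhiAshift} specialised at $q=1$. The $q$-deformed version is implemented by shifting the formal integration variable inside the Gaussian integrand; translation invariance of formal Gaussian integration, combined with an explicit rearrangement of the action and the Hessian determinant $\delta(t)^{-1/2}$, should reproduce~\eqref{PhiAshift} exactly, with the factors $(-1)^{A_{jj}}$, $t_j^{1/m}$, and $q^{A_{jj}}$ matching on both sides.

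Third, I would establish uniqueness. Any solution $\Phitof \in \BQ[\z_m](\!(x)\!)[\![t^{1/m}]\!]$ of the $q$-difference system with $\Phitof(0,x) = 1$ is uniquely determined: expanding order by order in $t^{1/m}$, the equation for each $j$ recursively computes the coefficient of each positive-degree monomial from strictly lower-degree data, giving a triangular recursion with a unique solution. Since both $\Phitof_{A,m}(t,x)$ (by the $n=0$ summand of $F_A$) and $\Phitof_{A,m}^{\FGI}(t,x)$ (by normalisation of the Gaussian measure) have constant term $1$ at $t=0$, they must coincide.

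The main obstacle will be step two: while step one is essentially combinatorial and step three a standard triangular-recursion argument, extracting~\eqref{PhiAshift} from the FGI construction requires a careful translation-of-variables argument inside a formal Gaussian integral with non-constant Hessian $\delta(t)$. The combinatorial bookkeeping — matching Jacobians, quadratic normalisations, and exponential prefactors of $(-1)^{A_{jj}}t_j q^{A_{jj}}$ across a $q$-shift of the $t$-argument, all the while working with series in $t^{1/m}$ at a fixed root of unity $\z_m$ — is where the technical heart of the proof lies.
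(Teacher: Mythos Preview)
Your proposal is correct and follows essentially the same approach as the paper: both sides satisfy the $q$-holonomic system~\eqref{PhiAshift} with initial value $1$ at $t=0$, and uniqueness of such a solution in $\BQ[\z_m](\!(x)\!)[\![t^{1/m}]\!]$ forces equality. The paper carries this out via the refined congruence-sum version (Theorem~\ref{thm.fgi.is.admis}), and the one structural point to watch in your step two is that the translation of the Gaussian variable applied to a single summand $I_{A,m,k}$ produces~\eqref{SAshift} with a shift $k\mapsto k-\delta_j$ in the congruence index, so it is only after summing over $k\in(\BZ/m\BZ)^N$ (using the $m$-periodicity of $I_{A,m,k}$ in $k$) that~\eqref{PhiAshift} is recovered for $\Phitof_{A,m}^{\FGI}$.
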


\begin{theorem}
\label{thm.FGI2}
Fix a symmetric matrix $A$ with integer entries, a positive integer $m$ and
an $m$-congruence class $k \in\{0,\dots,m-1\}^N$. Then, for every prime $p$ with
$(m,p)=1$, we have:
\be
\label{dphiA}
\log(F_{A,m,k}(t^{p/m},q^p))
-p\log(F_{A,m,k}(t^{1/m},q))
\inn \frac{p}{x} S^{(m)}[z(t)^{1/m}]^{\wedge}_p [\![x]\!]\,,\qquad
q\=\z_{m}+x \,.
\ee
\end{theorem}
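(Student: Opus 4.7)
The plan is to reduce the Dwork-type congruence~\eqref{dphiA} to an elementary cancellation in the logarithmic expansion of each Pochhammer factor in a product factorisation of $F_{A,m,k}$. First I would invoke the level-$m$ admissibility theorem (the refinement of the Kontsevich--Soibelman factorisation~\cite{KS:cohomological} obtained in Section~\ref{sub.madmissible} from the $q$-holonomic equations~\eqref{PhiAshift}) to write
\[
F_{A,m,k}(t^{1/m}, q) \= \prod_{(n, i) \in (\BZ_{\geq 0}^N \setminus\{0\}) \times \BZ}
  (q^{i} Z_{n}; q)_\infty^{c_{n, i}},
\]
where the $Z_n$ are monomials of positive total degree in $t^{1/m}$ with coefficients in $S^{(m)}[z(t)^{1/m}]$, and for each $n$ only finitely many of the integers $c_{n, i}$ are non-zero. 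Taking logarithms via $\log(y;q)_\infty = -\sum_{\ell \geq 1} y^\ell/(\ell(1-q^\ell))$ then gives
\[
\log F_{A,m,k}(t^{1/m}, q) \= -\sum_{\substack{n \neq 0 \\ \ell \geq 1}} \frac{L_n(q^\ell)\, Z_n^\ell}{\ell(1 - q^\ell)},
\qquad L_n(q) := \sum_i c_{n, i}\, q^i \inn \BZ[q^{\pm 1}].
\]

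Second I would perform the Dwork cancellation: writing the analogous expansion for $\log F_{A,m,k}(t^{p/m}, q^p)$ (with $q \mapsto q^p$, $Z_n \mapsto Z_n^p$) and splitting the series for $p\, \log F_{A,m,k}(t^{1/m}, q)$ according to whether $p \mid \ell$, one verifies that the $p \mid \ell$ sub-series (after substituting $\ell = p\ell'$) cancels $\log F_{A,m,k}(t^{p/m}, q^p)$ exactly, leaving
\[
\log F_{A,m,k}(t^{p/m}, q^p) \,-\, p \log F_{A,m,k}(t^{1/m}, q) \= p \sum_{\substack{n \neq 0 \\ \ell \geq 1,\, p \nmid \ell}} \frac{L_n(q^\ell)\, Z_n^\ell}{\ell(1-q^\ell)}.
\]

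Third I would verify that each summand, after substituting $q = \z_m + x$, lies in $\tfrac{1}{x}\, S^{(m)}[z(t)^{1/m}]^\wedge_p[\![x]\!]$; combined with the explicit factor of $p$ in front, this gives the desired containment. Since $p \nmid \ell$, the factor $1/\ell$ is a $p$-adic unit. If $m \nmid \ell$, then $1 - \z_m^\ell$ is a $p$-adic unit (as $(m, p) = 1$), so $1/(1 - q^\ell) \in R^\wedge_p[\z_m][\![x]\!]^{\times}$; if $m \mid \ell$, then $1 - q^\ell = -\ell\, \z_m^{-1} x (1 + O(x))$, so $1/(1 - q^\ell)$ has exactly a simple pole at $x = 0$, absorbed by the $1/x$ allowed by the target ring. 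Convergence of the double sum as a formal power series in $t^{1/m}$ and $x$ is automatic from the positivity of the $t^{1/m}$-degree of each $Z_n$ with $n \neq 0$.

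The hard part I expect is the level-$m$ admissible factorisation in Step~1: the original Kontsevich--Soibelman admissibility places the $Z_n$ in $\BZ[t]$, but in the level-$m$ setting $F_{A,m,k}$ naturally calls for an enlarged ring containing $t^{1/m}$ and the $t$-deformed Nahm roots $z(t)^{1/m}$, reflecting the fact that the congruence classes mod $m$ break the original admissibility and force one onto the $m$-fold cover defined by~\eqref{zjt}. Once that factorisation is in hand, the Dwork cancellation of Step~2 and the integrality verification of Step~3 are essentially formal bookkeeping.
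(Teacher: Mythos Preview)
There is a genuine gap in your Step~1. The level-$m$ admissibility of Theorem~\ref{thm.zm.admis} does \emph{not} give a Pochhammer factorisation with $L_n(q)\in\BZ[q^{\pm1}]$ and arguments $Z_n$ lying in $S^{(m)}[z(t)^{1/m}]$. What it gives (see Definition~\ref{def.zmadmis} and Lemma~\ref{lem.madm}) is an expansion of the form~\eqref{madmis} with $L_n(q)$ merely rational in $q$ (poles at roots of unity of order prime to $m$) and with plain monomials $t^n$ rather than anything involving $z(t)^{1/m}$. Nowhere in the paper is a factorisation of the type you posit established, and there is no reason to expect one: the appearance of $z(t)^{1/m}$ comes from an entirely different source, namely the formal Gaussian integral~\eqref{Ikdef}.

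A second gap is in Step~3. Even with the $m$-admissible expansion actually available, your infinite sum over $(n,\ell)$ converges only $t$-adically, so it lands in $\tfrac{p}{x}\BZ_{(p)}[t^{1/m},\z_m][\![t,x]\!]$ (this is exactly Lemma~\ref{lem.dwork.zm.admis}), not in $\tfrac{p}{x}S^{(m)}[z(t)^{1/m}]^\wedge_p[\![x]\!]$. Passing from formal power series in $t$ to the finite \'etale ring $S^{(m)}_p[z^{1/m}]$ is not ``automatic bookkeeping''; it requires knowing independently that each $x$-coefficient is algebraic. The paper obtains this second piece of information from the FGI side: Theorem~\ref{thm.fgi.is.admis} identifies $F_{A,m,k}$ with $\CS_{A,m,k}$, and Lemma~\ref{lem:dwork.diff.VU} shows the Dwork difference of the latter has coefficients in $S^{(m)}_p[\tfrac1p,z^{1/m}]$ (algebraic, but a~priori with $p$ in the denominator). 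The heart of the proof is then the intersection argument~\eqref{eq:int.vs.comp}, showing $\BZ[t^{1/m},\z_m][\![t]\!]\cap S^{(m)}_p[\tfrac1p,z^{1/m}]\subseteq S^{(m)}_p[z^{1/m}]$. In short, the theorem needs both complementary views---integrality from admissibility and algebraicity from FGI---and your plan collapses them into a single step that neither source alone provides.
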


These theorems are proved in Section~\ref{sub.synthesis}.

Our main theorem below, which follows from the above theorems by specialisation
to $t=1$, states that the perturbative complex Chern--Simons invariants of knots
and 3-manifolds are elements of the Habiro modules. The $t=1$ specialisation refers
to the ring homomorphism
\be
\begin{array}{c}
S \= \BZ[t^{\pm}, z^{\pm 1}(t), \delta(t)^{-1/2}]
\big/(1-z(t)-(-1)^A t \,z(t)^A)_{\phantom{2}_{\phantom{2}_{\phantom{2}_{
\phantom{2}_{\phantom{2}_{\phantom{2}}}}}}}\\
\big\downarrow_{\phantom{2}_{\phantom{2}_{\phantom{2}_{\phantom{2}}}}}\\
R[\delta^{-1/2}] \= \BZ[z^{\pm 1}, \delta^{-1/2}]\big/(1-z-(-1)^A \,z^A)\,,
\end{array}
\ee
where we again use shorthand in the definition of $R[\delta^{-1/2}]$ with
$z=(z_1,\dots,z_N)$ and the relations given by the Nahm equations
\be
\label{zjt=0}
1-z_j \= (-1)^{A_{j,j}} \prod_{i=1}^N z_j^{A_{i,j}},
\qquad j\=1,\dots,N \,.
\ee
The specialisation $t=1$ is compatible with the Frobenius endomorphism $\fr(t)=t^p$
of the $p$-completions of the above rings. Below, we will fix an irreducible
component of the equations~\eqref{zjt=0} that is non-degenerate, i.e., an isolated
solution $z$ with $\delta \neq 0$.

Such data gives rise to an element
$\xi=\sum_{j=1}^N [z_j] \in B(\BK)$ in the Bloch group of the number field $\BK$
generated by $z$. In geometry, the data $(A,z)$ comes from an ideal triangulation
$\calT$ of a 3-manifold with torus boundary components, together with a solution
$z$ of the gluing (i.e., the Neumann--Zagier~\cite{NZ}) equations, as was explained
in~\cite{DG}. Thus, we denote the specialisation of $\Phitof_{A}(t,q)$ to $t=1$, after
removing the principle part of the logarithm, by $\Phitof_{A,z}(q)$.

Fixing a prime $p$ prime to $\Delta$, $V(t)$ specialises under $t=1$ to $D_p(\xi)$.
When $t=1$, $V(t^p)=\fr V(t)$ specialises
to $D_p(\fr \xi) = \fr D_p(\xi)$, where $\fr$ now reduces to the Frobenius
endomorphism of $\BK_p$.

To state the next theorem we need to introduce some technical assumptions regarding
primes of bad reduction. Recall that in Definition~\ref{def.HRmod} of
Section~\ref{sub.habmod} we used collections of series at all roots of unity.
However, below we will restrict to the subset of roots of unity with order prime
to $\Delta$. To this end, extending the notation of Definition~\ref{def.hab} of
Section~\ref{sub.hab} and the remark
of Section~\ref{sub.hab} we define $\calH_{R,\xi}|_\Delta$ to denote the set
of collections of series indexed by roots of unity of order
prime to $\Delta$ that satisfy~\eqref{gluef} for all positive integers $m$ and
primes $p$ such that $(pm,\Delta)=1$. When $\xi=0$, we have
$\calH_{R,0}|_\Delta = \calH_{R}|_\Delta$.

\begin{theorem}
\label{thm.2}
Fix a symmetric matrix $A$ with integer entries and a non-degenerate solution
$z$ of the Nahm equations with associated $\xi$. Then we have:
\be
\label{eq.thm2}
\Phitof_{A,z}(q) \inn \calH_{R[\delta^{-1/2}],\xi}|_{\Delta} \,.
\ee
\end{theorem}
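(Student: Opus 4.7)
Set $f:=\Phitof_{A,z}(q)$. The plan is to verify the defining conditions of $\calH_{R[\delta^{-1/2}],\xi}$ from Definitions~\ref{def.HRxi} and~\ref{def.HRmod} for $f$, using Theorem~\ref{thm.1} to identify $f$ with the non-polar part of $\Phitof^{\FGI}_A(1,q)$ and Theorem~\ref{thm.FGI2} to produce the required $p$-adic congruences. The structural observation underpinning the whole argument is that the ``hat'' $\widehat{f}$ defined by~\eqref{hatfm} is precisely $\Phitof^{\FGI}_A(1,q)$ with its polar part reinstated, because the principal part $V(t)/(m^2\log q)$ of $\log\Phitof^{\FGI}_A$ specialises at $t=1$ to $D_p(\xi)/(m^2\log q)$, matching the correction in~\eqref{hatfm} via the identification $V(1)=D_p(\xi)$ recorded before the statement.

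The first step is the leading-term analysis. The formal Gaussian expansion at the non-degenerate saddle $z$ produces an explicit leading coefficient $f_m(0)$ as a product of Pochhammer-type factors depending only on the symbols $[z_j]$; by the construction of the \'etale Chern class~\eqref{cm} applied to $\xi=\sum_j[z_j]$, following~\cite{CGZ}, this coefficient is, up to a unit in $\calO_{\BK}[\zeta_m,\delta^{-1/2}]$, an $m$-th root of $\varepsilon_m(\xi)$. Hence $f_m(x)\in\varepsilon_m(\xi)^{1/m}\,\BK[\zeta_m,\delta^{-1/2}][\![x]\!]$, as required by~\eqref{fcyclic}.

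Next, for each prime $p$ coprime to $\Delta$, I would specialise Theorem~\ref{thm.FGI2} at $k=0$ and $t=(1,\dots,1)$. By Theorem~\ref{thm.1}, the left-hand side of~\eqref{dphiA} becomes $\log(\fr\widehat{f}(q^p)/\widehat{f}(q)^p)$, and after pulling the prefactor $\varepsilon_m(\xi)^{1/m}$ out of the logarithm the auxiliary $z^{1/m}$-extension appearing on the right of~\eqref{dphiA} is absorbed (using the $\chi^{-1}$-equivariance of $\varepsilon_m$), so the congruence descends to $\frac{p}{x}R[\delta^{-1/2}]^{\wedge}_p[\zeta_m][\![x]\!]$; this verifies~\eqref{fxidef}. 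The global gluing condition~\eqref{eq:gluing} follows by the same mechanism: for $\gamma\in\BZ_{>0}$ the product $f(q^\gamma)^\gamma f(q^{-1})$ is unaffected by the polar correction at roots of unity of order coprime to $\gamma$ (cf.~the discussion after Definition~\ref{def.HRmod}), and its $p$-adic compatibility for each prime $p\nmid\gamma\Delta$ again follows from Theorem~\ref{thm.FGI2} applied with suitable congruence classes $k$ tracking the effect of $q\mapsto q^\gamma$.

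The main obstacle is the precise identification, used in the previous paragraph, between the auxiliary $z^{1/m}$-extension appearing in Theorem~\ref{thm.FGI2} and the prefactor $\varepsilon_m(\xi)^{1/m}$ demanded by the module definition: one must verify that the $m$-th root of the unit produced by formal Gaussian integration at $z$ agrees with $\varepsilon_m(\xi)^{1/m}$ up to an $m$-th power in $R[\delta^{-1/2}]^{\wedge}_p[\zeta_m]$, so that dividing it out lets the congruence descend cleanly. This compatibility rests on the Bloch-group description of the Chern class $c_{\zeta_m}$ from~\cite{CGZ} together with the identification $V(1)=D_p(\xi)$, and is the key ingredient converting the admissibility statement of Theorem~\ref{thm.FGI2} into the module-theoretic statement of Theorem~\ref{thm.2}.
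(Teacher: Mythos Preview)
Your overall architecture is right: verify \eqref{fcyclic} via~\cite{CGZ}, verify \eqref{fxidef} via the Dwork-type congruence of Theorem~\ref{thm.FGI2} specialised at $t=1$, and check the gluing~\eqref{eq:gluing} separately. But two of the steps, as written, do not go through.

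For \eqref{fxidef}: taking only $k=0$ in Theorem~\ref{thm.FGI2} gives you the Dwork congruence for $F_{A,1,0}=F_A$ at $q=1$, not for $\Phitof_A$ at a general $\zeta_m$. At $q=\zeta_m+x$ the series $\Phitof^{\FGI}_{A,m}(1,x)$ is a sum over \emph{all} congruence classes $k\in(\BZ/m\BZ)^N$ of the $\CS_{A,m,k}$ (see \eqref{PhiFGIdef} and \eqref{FGIcong}), and one must apply Theorem~\ref{thm.FGI2} to each $\CS_{A,m,k}$ separately, getting the Dwork congruence with coefficients in the auxiliary extension $S^{(m)}_p[z^{1/m}]$. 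The descent to $R[\delta^{-1/2}]^\wedge_p$ is then \emph{not} via $\chi^{-1}$-equivariance of $\varepsilon_m$ (which concerns the cyclotomic Galois action, a different group), but via the invariance of the full sum $\sum_k\CS_{A,m,k}$ under $z_j^{1/m}\mapsto\zeta_m z_j^{1/m}$, which is Lemma~\ref{lem.PhiA}. Your identification of the $z^{1/m}$-extension with the $\varepsilon_m^{1/m}$ prefactor conflates two distinct mechanisms.

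For the gluing~\eqref{eq:gluing}: Theorem~\ref{thm.FGI2} is a congruence at a \emph{fixed} root of unity $\zeta_m$ relating $q$ and $q^p$; it does not by itself give the re-expansion compatibility between $\zeta_m$ and $\zeta_{pm}$ that \eqref{gluef} demands. The paper closes this gap by a uniqueness argument you do not mention: the product $\psi^{(\gamma)}_{A,\mu,\nu}(t,q)=F_A(q^{\mu_1}t,q^\gamma)\cdots F_A(q^{\mu_\gamma}t,q^\gamma)F_A(q^\nu t,q^{-1})$ is the \emph{unique} power-series solution of an explicit $q$-holonomic system~\eqref{eq:qdiff.proofs} with initial value $1$. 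Integrality in $S^{(m)}[\![x]\!]$ comes from Theorem~\ref{thm.FGI2}, and then, because the re-expanded series at $\zeta_{pm}$ satisfies the same system with $t\mapsto t^p$ (i.e., after Frobenius), uniqueness forces the Frobenius-twisted equality. This $q$-holonomic uniqueness is the missing engine in your gluing step.
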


\begin{remark}
A stronger statement of the above theorem is probably true in
$\calH_{R[\delta^{-1/2}],\xi}$ without including $\Delta$,
and in fact follows if Theorem~\cite[Thm.~1.6]{CGZ} holds for any $m\in\BZ$.
Moreover, under the action of the Galois automorphism of $R[\delta^{-1/2}]$
sending $\sqrt{\delta}$ to $-\sqrt{\delta}$, the element $\Phitof_{A,z}$ lives in
the $-1$-eigenspace.

All the results of this section can be strengthen to rational $A$ by extending the
proofs using $m$-admissible series, where $m$ will include factors from the
denominator of $A$. This will consequentially introduce more factors in $\Delta$.
\end{remark}

The above theorem combined with part (f) of Proposition~\ref{prop.HRxi} of
Section~\ref{sub.habmod} implies:

\begin{corollary}
\label{cor.cint}  
The constant term of the expansion of $\Phitof_{A,z}(q)$ at $m$-th roots of unity
$\z_m$ of order prime to $\Delta$ is in $R[\z_m]$.
\end{corollary}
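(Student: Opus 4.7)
The plan is to obtain the corollary as a direct combination of Theorem~\ref{thm.2} with part~(f) of Proposition~\ref{prop.HRxi}, with only a small amount of bookkeeping. First, I would unpack Theorem~\ref{thm.2}: it places $\Phitof_{A,z}(q)$ in the restricted Habiro module $\calH_{R[\delta^{-1/2}],\xi}|_{\Delta}$. By Definition~\ref{def.HRmod}, this means that for every positive integer $m$ with $(m,\Delta)=1$, the $m$-th component $\Phitof_{A,z,m}(x)$ is an element of $\varepsilon_{m}(\xi)^{1/m}\,\BK[\z_m][\![x]\!]$ satisfying the local $L_p(\xi)$-section condition of Definition~\ref{def.HRxi} at every prime $p \nmid \Delta$, and the multiplicative gluing~\eqref{eq:gluing}.

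Next, I would apply Proposition~\ref{prop.HRxi}(f) with $R$ replaced by $R[\delta^{-1/2}]$: this asserts that for any $f \in \calH_{R[\delta^{-1/2}],\xi}$ and any $m$ with $(m,\Delta)=1$, the constant term $f_m(0)$ lies in $R[\delta^{-1/2}][\z_m,\varepsilon_m(\xi)^{1/m}]$. Setting $x=0$ in $\Phitof_{A,z,m}(x)$ therefore yields an element of this integral ring, which is the desired conclusion (the factor $\varepsilon_m(\xi)^{1/m}$ being the implicit $m$-th root of the $K$-theoretic unit built into the very definition of $\calH_{R[\delta^{-1/2}],\xi}$).

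The only substantive bookkeeping point is that Theorem~\ref{thm.2} produces an element of the restricted module $\calH_{R[\delta^{-1/2}],\xi}|_{\Delta}$, whereas part~(f) is stated for the unrestricted module $\calH_{R,\xi}$. This is not a genuine obstacle: the integrality in (f) is established locally at each prime $p \nmid \Delta$ via the local description~\eqref{fcyclic.loc} and the $p$-adic diagram~\eqref{Hviapcomp}, so it depends only on the components indexed by $m$ coprime to $\Delta$, which is precisely what the restriction $|_{\Delta}$ retains. Hence the same proof of (f) applies verbatim to elements of $\calH_{R[\delta^{-1/2}],\xi}|_{\Delta}$, and the corollary follows by specialising $x=0$. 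Since both inputs are already established, I do not anticipate any hard step.
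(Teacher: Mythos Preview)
Your proposal is correct and follows exactly the paper's own approach: the paper derives the corollary in a single sentence by combining Theorem~\ref{thm.2} with part~(f) of Proposition~\ref{prop.HRxi}, and your extra remark that the restriction~$|_\Delta$ causes no trouble (since the proof of~(f) is local at each prime $p\nmid\Delta$) is a valid and useful clarification that the paper leaves implicit.
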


\noindent This is by no means an obvious
fact. In the case of the series associated to the $4_1$ knot, it asserts
the integrality of the following sum (see, e.g., ~\cite[Eqn.(95)]{GZ:kashaev})
\be
\label{cherry41}
\frac{1}{\sqrt{m}}
\sum_{k \in \BZ/m\BZ}
(\z_m \th;\z_m)_k (\z_m^{-1} \th^{-1};\z_m^{-1})_k \inn \BZ[\z_{6m}],
\qquad \th^m\=\z_6\,,
\qquad\text{for }(m,6)\=1\,.
\ee

The above theorems allow us to construct elements of the ring $\calH_R$ by
considering torsion elements of the Bloch group, or by symmetrising elements of
$\calH_{R,\xi}$ under $q\mapsto q^{-1}$.

\begin{corollary}
\label{cor.3}
Fix $A$ and $z$ as in Theorem~\ref{thm.2}.
\newline
\emph{(}a\emph{)} We have
\be
\label{symhab}
\Phitof_{A,z}(q)\thinspace \Phitof_{A,z}(q^{-1}) \inn \calH_{R} \,.
\ee
\emph{(}b\emph{)} If $r \, \xi =0 \in K_3(\BK)$ for some positive integer $r$, then
$\Phitof_{A,z}(q)^{r} \in \calH_{R[\delta^{-1/2}]}$.
\end{corollary}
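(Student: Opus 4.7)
The plan is to deduce both parts of Corollary~\ref{cor.3} directly from Theorem~\ref{thm.2} combined with the module-theoretic machinery of Section~\ref{sub.habmod}: Theorem~\ref{thm.HR} (describing the multiplicative behaviour of the collection $\{\calH_{R,\xi}\}_\xi$ under addition of the $K_3$-class) together with Proposition~\ref{prop.HRxi}(a),(b) (describing the involution $\tau$ and its interaction with multiplication). Neither part requires any new analytic or $p$-adic input.

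For part (a), Theorem~\ref{thm.2} places $f:=\Phitof_{A,z}(q)$ in $\calH_{R[\delta^{-1/2}],\xi}|_\Delta$. Applying Proposition~\ref{prop.HRxi}(b) with $f=g$ immediately gives
\[
\Phitof_{A,z}(q)\thinspace\Phitof_{A,z}(q^{-1}) \= f\cdot\tau f \inn \calH_{R[\delta^{-1/2}]}|_\Delta.
\]
To descend the coefficient ring from $R[\delta^{-1/2}]$ down to $R$, I would invoke the remark following Theorem~\ref{thm.2}: since $\Phitof_{A,z}$ lies in the $-1$-eigenspace of the Galois involution $\sqrt{\delta}\mapsto-\sqrt{\delta}$, the symmetrised product $f\cdot\tau f$ lies in the $+1$-eigenspace, which is exactly $\calH_R|_\Delta$.

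For part (b), the hypothesis $r\xi=0$ in $K_3(\BK)$ combined with the group homomorphism~\eqref{eq:K3Pic} of Theorem~\ref{thm.HR} yields, by iteration of the canonical multiplication isomorphism, an identification
\[
\calH_{R[\delta^{-1/2}],\xi}^{\otimes r}\;\cong\;\calH_{R[\delta^{-1/2}],r\xi}\=\calH_{R[\delta^{-1/2}]}.
\]
Because this isomorphism is realised by ordinary multiplication inside the ambient ring $\calP_{R[\delta^{-1/2}]}$, the image of the $r$-fold tensor power of $f$ is literally the power series $\Phitof_{A,z}(q)^r$, which therefore lies in $\calH_{R[\delta^{-1/2}]}|_\Delta$.

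The only substantive point, shared by both parts, is the identification of the abstract canonical isomorphism of Theorem~\ref{thm.HR} with ordinary multiplication in $\calP_R$; this is built into the statement of that theorem (``multiplication gives a canonical isomorphism''), so there is no further obstacle. The restriction $|_\Delta$ appearing implicitly in both conclusions simply reflects the fact that the series $\Phitof_{A,z}$ is only controlled at roots of unity of order prime to $\Delta$, so we are working on the corresponding factor of $\calH_R$ under the product decomposition of Remark~\ref{rem.domain}.
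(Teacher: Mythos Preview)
Your argument is the one the paper itself says ``would immediately follow'' from the module machinery, and the paper explicitly flags why it is not enough: Theorem~\ref{thm.2} only places $\Phitof_{A,z}$ in $\calH_{R[\delta^{-1/2}],\xi}|_\Delta$, so everything you deduce from it via Proposition~\ref{prop.HRxi} and Theorem~\ref{thm.HR} inherits the restriction $|_\Delta$. But the corollary claims membership in the \emph{full} rings $\calH_R$ and $\calH_{R[\delta^{-1/2}]}$, i.e., the collections are controlled at \emph{all} roots of unity, not only those of order prime to $\Delta$. Your last paragraph treats this as harmless (``working on the corresponding factor''), but that is precisely the content being asserted and you have not established it.

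The paper closes this gap in two different ways. For part~(a) it goes back into the \emph{proof} of Theorem~\ref{thm.2}: the gluing condition~\eqref{eq:gluing} was actually established there for all $m$ prime to $\gamma$, so taking $\gamma=1$ gives $f(q)f(q^{-1})\in\calH_R$ at every root of unity directly, without passing through the module $\calH_{R,\xi}$ at all (and hence also bypassing the need for your eigenspace descent, though that part of your argument is fine). For part~(b), your reasoning correctly gives the $|_\Delta$ statement, but removing the restriction requires genuinely new input: the paper invokes Suslin's surjection $K_3(\BK)\to B(\BK)$ to write the torsion class as a sum of five-term relations, and then the identity of \cite{KMS} (as used in \cite{CGZ}) to control the constant terms at the remaining roots of unity. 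None of this is available from the formal module-theoretic statements you cite.
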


In fact, one can obtain even more elements of $\calH_R$ by applying ``descendants''
to the above theorem, see Theorem~\ref{thm.desc} of Section~\ref{sub.sym}, as well
as elements of the modules $\calH_{R,\xi}$ by specialising $t=q^\nu$; see the Remark
in Section~\ref{sub.proofmain}.
Keeping in mind that the Habiro ring~$\calH_R$ of a number field $\BK$ is a rank
$r$ module over $\calH_{\BZ[1/\Delta]}$, where $r=[\BK:\BQ]$, the above constructions
presumably give a spanning set for $\calH_R$ at the generic point of $\calH$.
We illustrate this with examples in Section~\ref{sec.examples}.

The above corollary gives an if and only if statement: Let us say that an orbit of
the Nahm equation is Bloch-torsion if the associated element of the Bloch group
is torsion. Then an orbit is Bloch-torsion if and only if the series
$\Phitof_{A,z}(q)^{2r} \in \calH_{R}$, where $r$ is the order of the torsion element
of the Bloch group. One direction is given in the above corollary. The converse
direction follows from the fact that the vanishing of the Bloch-Wigner dilogarithms
of all complex embeddings of an element of the Bloch group implies torsion, or from
the fact that the triviality of the unit $\ve_m(\xi)$ for all but finitely $m$
implies torsion.

\subsection{The relation with perturbative Chern--Simons theory}
\label{sub.CS}

Theorem~\ref{thm.2} involves a symmetric matrix with integer entries and a
non-degenerate solution $z$ of the Nahm equations. As such, it looks removed from
perturbative Chern--Simons theory with complex gauge group.
In this subsection, we briefly explain the relation betwee Theorem~\ref{thm.2} and
perturbative complex Chern--Simons theory.

To begin with, Chern--Simons theory
with a complex gauge group (such as $\SL_2(\BC)$) was formulated as a path integral
by Witten~\cite{Witten:complexCS}, following his earlier work on a path integral
with compact gauge group (such as $\mathrm{SU}(2)$)~\cite{Witten:jones}.

However,
there is a crucial difference between complex versus compact Lie groups. The path
integral with compact gauge group satisfies cut-and-paste properties that lead to
its effective computation, and to a combinatorial definition developed by
Reshetikhin--Turaev~\cite{RT:ribbon,Tu:book}.
On the other hand, the path integral with complex
gauge group is lacking cut and paste properties and, to the best of our knowledge,
it is unknown for instance how to compute it for closed 3-manifolds. 

For reasons that are not entirely understood, the partition function of 
Chern--Simons theory with complex gauge group $\SL_2(\BC)$ for manifolds with torus
boundary components conjecturally reduces to a finite-dimensional integral (the so called-state
integral) whose integrand is a product of Faddeev's quantum dilogarithm
functions~\cite{Faddeev}, assembled out of an ideal triangulation of
the manifold. This was the approach taken by
Andersen-Kashaev~\cite{AK, AK-review} and Dimofte~\cite{dimofte-rev}
following prior ideas of~\cite{Hikami,DGLZ}. Focusing for simplicity
on the case of a 3-manifold with a single torus boundary component
(such as the complement of a hyperbolic knot in $S^3$), the
state-integral is a holomorphic function of
$\tau \in \BC'=\BC\setminus (-\infty,0]$.

We now turn to perturbative Chern--Simons theory with complex gauge group
$\SL_2(\BC)$. Using the above mentioned (state-integral) partition function, in
a series of papers the perturbation theory was defined first at $q$ near
$1$~\cite{DG} and then for $q$ near an arbitrary complex root of unity~\cite{DG2}.
This perturbation theory depends on the combinatorics of an ideal triangulation
of a cusped hyperbolic 3-manifold, a solution $z$ of its Neumann--Zagier equations,
as well as some auxillary choices (of a quad of the tetrahedra and of an edge of the
triangulation). These choices lead to a so-called Neumann--Zagier datum, which
is a the upper part $(\boldsymbol{A}|\boldsymbol{B})$ of a symplectic $N \times N$
matrix with integer entries, a vector $\nu \in \BZ^N$, and a solution $z$ of the
Neumann--Zagier equations
\be
\label{NZeqn}
\prod_{j=1}^N z_j^{\boldsymbol{A}_{ij}} (1-\tfrac{1}{z_j})^{\boldsymbol{B}_{ij}}
\= (-1)^{\nu_i}, \qquad i=1,\dots N\,.
\ee
The solution $z$ in turn defines a representation $\pi_1(M) \to \PSL_2(\BC)$ up to
conjugation, which under mild hypotheses can be lifted to a representation $\rho_z$
to $\SL_2(\BC)$. 
Fixing such a Neumann--Zagier datum, the complex Chern--Simons perturbative series
around $\rho_z$ are defined in~\cite{DG,DG2} by formal Gaussian integration, 
and these are none other than the series $\Phi(h)$ of Equation~\eqref{as41} for
the case of the $4_1$ knot, where $\rho_z$ is an $\SL_2(\BC)$ lift of the discrete
faithful $\PSL_2(\BC)$-representation of the $4_1$ knot. 

The symplectic property of $(\boldsymbol{A}|\boldsymbol{B})$ the Neumann--Zagier
matrix implies that when $\boldsymbol{B}$ is invertible over $\BZ$, then
$A=I-\boldsymbol{B}^{-1}\boldsymbol{A}$ is a symmetric $N \times N$ matrix with
integer entries. Moreover, the Neumann--Zagier equations~\eqref{NZeqn} are equivalent
to the Nahm equations~\eqref{zjt=0}, and what's more the perturbative series at
roots of unity defined by formal Gaussian integration \emph{syntactically matches} that
of the corresponding Nahm sum (when $A$ is positive definite) and of the admissible
series for general $A$. This matching is discussed in detail in~\cite{GZ:nahm} and is generalised in Theorem~\ref{thm.1}.

Putting everything together, it follows that Theorem~\ref{thm.2} contains the
asymptotics of complex Chern--Simons theory when a 3-manifold with torus boundary
has an (essential) ideal triangulation and a choice of quad with unimodular matrix
$\boldsymbol{B}$. While such a choice is not known to exist in general, it has
been found for all knots with at most 14 crossings using their
default \texttt{SnapPy} triangulation. 

\subsection{Future extensions}
\label{sub.further}

We conclude this introduction by discussing several natural extensions of
our paper that we hope to come back to.

\noindent $\bullet$ {\bf Bad primes.}
The current definition of $\calH_R$ is a collection of power series associated to
all roots of unity that satisfies the gluing condition~\eqref{gluef}, which allows
to re-expand the series at $\z_{pm}$ from the series at $\z_m$ as long as the prime
$p$ is prime to $\Delta$. In other words, the series at $\z_1$ does not determine
the series at any $\z_m$ when $m$ is not prime to $\Delta$. This implies that
$\calH_R$ is not an integral domain, but instead a product of integral domains,
see the remark
of Section~\ref{sub.hab}. On the other hand, the collections $\Phitof_{A,z}(q)$ of
power series associated to a symmetric matrix $A$ together with a non-degenerate
solution $z$ to the Nahm equations assigns a collection of power series
$\Phitof_{A,z,m}(x)$ at all roots of unity, not just those that are prime to an
integer $\Delta$; see Theorem~\ref{thm.2} of Section~\ref{sub.results}. Understanding
how these series at
the bad primes determine the series at the remaining primes is an interesting
question whose answer could potentially improve the current definition of $\calH_R$.

\noindent $\bullet$ {\bf Relative Bloch group.}
The Habiro ring of the \'etale map $\BZ[t]\to S$
(with $S$ given in~\eqref{Sdef}) can be defined and
is being studied in the upcoming thesis of Ferdinand Wagner. With additional work,
one can define modules over this Habiro ring that are graded by a relative Bloch group
(defined like the usual Bloch group with $t \wedge S = 0$)
that ought to be identified with some relative $K$-theory group, and that the
collection of series $\Phitof_{A}(t,q)$ is an element of such a module.

\noindent $\bullet$ {\bf Higher weight modules.}
One can define higher weight modules over the Habiro ring of a number field,
indexed by the odd $K$ groups of a number field, which we hope to discuss in a
subsequent publication.

\noindent $\bullet$ {\bf Line bundles on $A_{\mathrm{inf}}$.}
The line bundles defined in the present paper can also be defined over
the period ring $A_{\mathrm{inf}}$ of $p$-adic Hodge theory, for all elements
of $K_3^{\mathrm{cont}}(\mathbb C_p)$. In this picture, the functions $\Phitof_A$
define sections of this line bundle on $\mathrm{Spa}(A_{\mathrm{inf}})$ away
from a disc around $\{q^{1/p}=1\}$.

\noindent $\bullet$ {\bf Relation with holomorphic quantum modular forms.}
A fifth extension concerns the quantum modularity properties
of the collection of the series associated to knots over $\BC$, and especially
the extension of matrices of holomorphic functions on $\BC\smallsetminus\BR$ to the
cut plane $\BC'=\BC\smallsetminus (-\infty,0]$, or more generally
$\BC_\gamma=\BC\smallsetminus \{\tau \in \BR \,| c \,\tau+d \leq 0\}$ for an element
$\gamma=\sma abcd \in \SL_2(\BZ)$; see~\cite[Sec.5]{GZ:kashaev}
and~\cite[Sec.5]{GZ:qseries}. There are some tantalising parallels between the
$p$-adic picture of the preceding bullet and this complex picture, similar to the
parallels between the Fargues-Fontaine curve and the so called twistor-$\mathbb P^1$.


\section{Two complementary collections of power series at roots of unity}
\label{sec.adm}

In this section we will describe two beautiful consequences of a formal power
series satisfying a simple system of linear $q$-difference equations. On one hand
these series will have a subtle integrality property called
\emph{admissibility}\thinspace---\thinspace introduced by
Kontsevich-Soibelman~\cite{KS:cohomological}\thinspace---\thinspace and on the other
hand they will satisfy \emph{algebraicity}. The combination of these two
properties will provide the proofs of our main theorems. We will begin by explaining
these results for the case of the Pochhammer symbol. Then we will describe some
generalities on admissible series and an elementary proof that our series are
admissible. This will lead us to discuss the generalisation to level $m$ admissible
series, which come from the restriction to congruence sums of the original
admissible series.
After that we will give a different formula for the same objects using
formal Gaussian integration and an example of how the WKB (Wentzel–Kramers–Brillouin)
method can be used to prove a similar statement. We will end this section with
proofs of Theorem~\ref{thm.1} and Theorem~\ref{thm.FGI2} of
Section~\ref{sub.results}, which require combining all of the previous results.

\subsection{The  infinite Pochhammer symbol}
\label{sub.poch}

A key role in our paper is played by the infinite Pochhammer symbol
$(t;q)_\infty \= \prod_{n \geq 0} (1-q^n t)$.
In this section we collect some elementary, although remarkable, properties of it. 
There are two well-known complementary expansions of the logarithm of $(t;q)_\infty$
\begin{align}
\label{logpoc}
\log (t;q)_\infty & \= -\sum_{\ell \geq 1} \frac{t^\ell}{\ell(1-q^\ell)}
\\
\label{logpoc2}
& \= \sum_{k \geq 0} \frac{B_k}{k!} \Li_{2-k}(t) \log(1+x)^{k-1}\,,
\end{align}
where $\Li_n(t)=\sum_{k \geq 1} \tfrac{t^k}{k^n}$ is the $n$th polylogarithm and
$B_k=B_{k}(0)$ are the Bernoulli numbers given by the generating series~\eqref{bern}
with $B_0=1$, $B_1=-1/2$, etc.

These two expansions in turn imply two complementary views
of the Dwork-type difference, namely for every prime $p$, and for $q=1+x$, we have

\begin{align}
\label{logpocc}
\log(t^p;q^p)_\infty-p \log(t;q)_\infty & \=
p \sum_{\ell \geq 1, \,\, p\nmid\ell} \frac{t^\ell}{\ell(1-q^\ell)}
\\
\label{logpocd}
& \=
-p \sum_{k \geq 0} \frac{B_k}{k!} \Li_{2-k}^{(p)}(t) \log(1+x)^{k-1}\,,
\end{align}
where 
\be
\label{Lip}
\Li_n^{(p)}(t) \= \Li_n(t) - \frac{1}{p^n} \Li_n(t^p) \=
\sum_{k \geq 1, \,\, p\nmid k} \frac{t^k}{k^n} 
\ee
is the $p$-version of the polylogarithm~\cite[Prop.6.2]{Coleman}.

On the one hand, ~\eqref{logpocc} implies that the coefficient of $x^{k-1}$ in the
Dwork-type difference is in $p\BZ_{(p)}[\![t]\!]$, and on the other hand,
~\eqref{logpocd} implies that it is a $\BQ$-linear combination of the series
$\Li_{2-s}^{(p)}(t)$ for $s=0,\dots,k$. 
But in fact, more is true. In~\cite[Prop.6.2]{Coleman} Coleman proves (working with
the variable $1/t$) that $\Li_n^{(p)}(t) \in \BZ_{(p)}[\![t]\!]$ is convergent for
$|t/(1-t)|< p^{(p-1)^{-1}}$ and consequently lies in the ring
$\BZ[\tfrac{t}{1-t}]^\wedge_p$ of all series in
$\BZ_p[\![t]\!]$ that converge for $|t/(1-t)| \leq 1$. In the present paper, we will
only use a weaker statement with an elementary proof.

\begin{lemma}
\label{lem.lip}
For every integer $n$ and prime number $p$ we have
\be
\label{eq.LIP}
\Li_n^{(p)}(t) \inn \BZ[t,\tfrac{1}{1-t}]^\wedge_p \,.
\ee
\end{lemma}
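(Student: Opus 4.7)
The plan is to give a direct elementary proof via a single algebraic identity that exhibits $\Li_n^{(p)}(t)$ as a manifestly $p$-adically convergent series in the target ring $A := \BZ[t, \tfrac{1}{1-t}]^\wedge_p$, avoiding any need to integrate.

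As a preliminary step, I would observe that $(1-t^p)^{-1} \in A$: the congruence $1-t^p \equiv (1-t)^p \pmod{p}$ allows one to write $1-t^p = (1-t)^p + pg(t)$ with $g(t)\in \BZ[t]$, so $(1-t^p)/(1-t)^p = 1 + pg(t)/(1-t)^p \in 1+pA$ is a unit in the $p$-adically complete ring $A$. In particular, every non-positive polylogarithm $\Li_{-s}(t^p) = A_s(t^p)/(1-t^p)^{s+1}$ (an Eulerian polynomial $A_s$ divided by $(1-t^p)^{s+1}$) lies in $A$, and so does every rational function in $\BZ[t,(1-t)^{-1},(1-t^p)^{-1}]$.

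The core of the proof is the identity
\be
\Li_n^{(p)}(t) \= \sum_{r=1}^{p-1}\frac{t^r}{r^n(1-t^p)} \+ \sum_{r=1}^{p-1}\sum_{s\geq 1}\binom{-n}{s}\frac{p^s\, t^r}{r^{n+s}}\,\Li_{-s}(t^p),
\ee
obtained by splitting $\Li_n^{(p)}(t) = \sum_{r=1}^{p-1}\sum_{j\geq 0} t^{jp+r}/(jp+r)^n$ according to the residue class $r \equiv k \bmod p$, expanding $(jp+r)^{-n} = r^{-n}(1+jp/r)^{-n}$ via the binomial series (which converges $p$-adically since $|jp/r|_p \leq |p|_p < 1$), and carrying the inner sum over $j$ through to produce $\Li_{-s}(t^p)$ for $s \geq 1$ and $(1-t^p)^{-1}$ for $s = 0$. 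The first piece is manifestly in $A$ by the preliminary observation, since $r^{-n}$ is a $p$-adic unit as $r \in \{1,\dots,p-1\}$. Each term of the second piece lies in $p^s\cdot A$ because $\binom{-n}{s}\in \BZ$, $r^{-(n+s)}\in\BZ_p^\times$ and $\Li_{-s}(t^p)\in A$; since $p^s \to 0$ in the $p$-adic topology and $A$ is $p$-adically complete, the double series converges in $A$ regardless of the polar growth of $\Li_{-s}(t^p)$ as $s \to \infty$. When $n \leq 0$ the binomial coefficients $\binom{-n}{s}$ vanish for $s > -n$, so the second sum is finite and the argument reduces to the direct observation that $\Li_n^{(p)}(t)$ is then itself a rational function in $A$.

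The only conceivable obstacle is justifying the formal interchange of summations, but this is routine: coefficient-by-coefficient in $t$ the identity reduces to the binomial expansion of $(jp+r)^{-n}$ for fixed $j$, and across varying $j$ the $p$-adic decay $|p^s|_p = p^{-s}$ is uniform, so Fubini applies in the $p$-adically complete setting. The result is that $\Li_n^{(p)}(t) \in A$ for every integer $n$ and every prime $p$, uniformly and by a purely algebraic manipulation.
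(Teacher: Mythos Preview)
Your proof is correct. Both your argument and the paper's rest on the same binomial expansion of $(r+pj)^{-n}$ for $1\le r\le p-1$, together with the observation that $1-t^{p^k}$ becomes a unit in $A$. The packaging, however, is different: the paper argues level by level, showing that modulo $p^N$ the coefficients of $\Li_n^{(p)}(t)$ are periodic in the exponent (with period a power of $p$), so that $\Li_n^{(p)}(t)\equiv (1-t^{p^{N}})^{-1}\cdot(\text{polynomial})$ lies in $A/p^N A$ for every $N$. You instead carry out the binomial expansion to infinite order and resum the inner index to obtain the single closed identity
\[
\Li_n^{(p)}(t)\=\sum_{r=1}^{p-1}\frac{t^r}{r^n(1-t^p)}\+\sum_{r=1}^{p-1}\sum_{s\ge 1}\binom{-n}{s}\frac{p^s t^r}{r^{n+s}}\,\Li_{-s}(t^p),
\]
whose $s$-th term lies in $p^sA$ and hence converges. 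Your formula is more explicit and makes the case $n\le 0$ (where the sum is finite and $\Li_n^{(p)}$ is visibly rational) transparent; the paper's version is marginally shorter and avoids naming the Eulerian polynomials. Neither approach needs anything beyond the invertibility of $1-t^p$ in $A$ and elementary $p$-adic convergence.
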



\begin{proof}
For $r=1,\dots,p-1$ we have
\be
\frac{1}{r+pk} \=
r^{-1}\sum_{\ell=0}^{\infty}(-1)^\ell(r^{-1}pk)^{\ell}\inn\BZ_p\,.
\ee
This implies that for all positive integers $n$ and $N$ we have
\be
\frac{1}{(r+pk)^n} \cm \frac{1}{(r+p(k+p^N))^n} \pmod{p^N} \,.
\ee
Using equation~\eqref{Lip} and separating the summation over the
positive integers prime to $p$ into congruence classes modulo $p^N$, we obtain that
\be
\Li_n^{(p)}(t) \cm \sum_{r=1}^{p-1} \sum_{k=0}^{p^N-1}
\sum_{\ell=0}^{\infty} \frac{t^{r+pk+p^N\ell}}{(r+pk)^n}
\=
\frac{1}{1-t^{p^N}} \sum_{r=1}^{p-1} \sum_{k=0}^{p^N-1}
\frac{t^{r+pk}}{(r+pk)^n} \pmod{p^{N}}\,.
\ee
Moreover, $\frac{1}{1-t^{p^N}}\in\BZ[t,\tfrac{1}{1-t}]^\wedge_p$, which completes
the proof.
\end{proof}

\begin{proposition}
\label{prop.poclip}
\emph{(}a\emph{)} For every prime $p$ we have 
\be
\label{logpocf}
\log(t^p;q^p)_\infty-p \log(t;q)_\infty \inn
\frac{p}{x}\BZ[t,\tfrac{1}{1-t}]^\wedge_p [\![x]\!]\,,\quad q\=1+x \,.
\ee
\emph{(}b\emph{)} Fix a root of unity $\z \in \BC_p\smallsetminus \{1\}$ of order
not a power of $p$. The function
\be
\label{eq.cor.logpoc}
\log(\z^p;q^p)_{\infty}
-
p\log(\z;q)_{\infty}
\inn
p\frac{\Li_{2}^{(p)}(\z)}{x}
+p\BZ_{(p)}[\z][\![x]\!]\,,\quad q\=1+x
\ee
is meromorphic on the disc $|x|<1$ with a simple pole at $x=0$ and the residue as
given. 
\end{proposition}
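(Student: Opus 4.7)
The plan is to combine the two complementary expansions~\eqref{logpocc} and~\eqref{logpocd} with Lemma~\ref{lem.lip}, applied to the auxiliary quantity
$$\Psi(t,x) \;:=\; x\,\bigl(\log(t^p;q^p)_\infty - p\log(t;q)_\infty\bigr),$$
which, after a trivial reformulation, must lie in $p\BZ[t,\tfrac{1}{1-t}]^\wedge_p[\![x]\!]$ in order to prove part~(a).

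First I would use~\eqref{logpocc} together with the elementary factorization $1-q^\ell=-\ell x\,G_\ell(x)$ with $G_\ell(x)=\sum_{k\geq 0}\binom{\ell-1}{k}\tfrac{x^k}{k+1}$. Since $\binom{\ell-1}{k}/(k+1)=\binom{\ell}{k+1}/\ell\in\BZ_{(p)}$ whenever $p\nmid\ell$, we have $G_\ell(x)\in 1+x\BZ_{(p)}[x]$ and therefore $G_\ell(x)^{-1}\in\BZ_{(p)}[\![x]\!]$. Substituting into~\eqref{logpocc} gives
$$\Psi(t,x)\;=\;-p\sum_{\ell,\,p\nmid\ell}\frac{t^\ell}{\ell^2\,G_\ell(x)}\,,$$
which shows term-by-term that every $(t^\ell,x^j)$-coefficient of $\Psi$ lies in $p\BZ_{(p)}$, and in particular $\Psi\in p\BZ_p[\![t]\!][\![x]\!]$ as a formal $t$-expansion at the origin.

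Second, I would use~\eqref{logpocd} to rewrite the same quantity as
$$\Psi(t,x)\;=\;-p\sum_{k\geq 0}\frac{B_k}{k!}\,\Li_{2-k}^{(p)}(t)\,x\log(1+x)^{k-1}.$$
For each fixed $j\geq 0$ the $x^j$-coefficient is a \emph{finite} $\BQ$-linear combination of $\Li_{2-k}^{(p)}(t)$ for $0\leq k\leq j+1$, and by Lemma~\ref{lem.lip} each of these summands lies in $\BZ[t,\tfrac{1}{1-t}]^\wedge_p$; so the $x^j$-coefficient lies in $\BQ_p\cdot\BZ[t,\tfrac{1}{1-t}]^\wedge_p$. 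Intersecting this with the $p\BZ_p[\![t]\!]$-membership from the first step and invoking the $p$-saturation of the embedding $\BZ[t,\tfrac{1}{1-t}]^\wedge_p\hookrightarrow\BZ_p[\![t]\!]$ then forces the coefficient into $p\BZ[t,\tfrac{1}{1-t}]^\wedge_p$, which completes~(a).

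For part~(b), I would specialise $t=\zeta$. Because the order of $\zeta$ is not a power of $p$ we have $v_p(\zeta-1)=0$, so evaluation at $t=\zeta$ extends to a continuous ring homomorphism $\BZ[t,\tfrac{1}{1-t}]^\wedge_p\to\BZ_p[\zeta]$; applying this to~(a) yields the inclusion into $p\Li_2^{(p)}(\zeta)/x+p\BZ_{(p)}[\zeta][\![x]\!]$, with meromorphicity on the open $p$-adic unit disc immediate from $\BZ_p[\zeta]$-integrality of the coefficients. The residue at $x=0$ is read off from the $k=0$ summand of~\eqref{logpocd}, since every $k\geq 1$ term is regular at $x=0$. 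The main obstacle is the $p$-saturation step used in~(a); it is expected to follow from the description of $\BZ[t,\tfrac{1}{1-t}]^\wedge_p$ as the ring of bounded $p$-adic analytic functions on the disc $|t/(1-t)|_p\leq 1$, inside which any $\BQ_p$-combination of $\Li^{(p)}$'s with $\BZ_p[\![t]\!]$-integral expansion at $t=0$ must already be $p$-integral.
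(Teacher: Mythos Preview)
Your approach is essentially identical to the paper's, which also combines the $p\BZ_{(p)}[\![t]\!]$-integrality from~\eqref{logpocc}, the algebraicity from~\eqref{logpocd} via Lemma~\ref{lem.lip}, and the intersection identity $\BZ[t,\tfrac{1}{1-t}]^\wedge_p[\tfrac{1}{p}] \cap \BZ_{(p)}[\![t]\!] \subseteq \BZ[t,\tfrac{1}{1-t}]^\wedge_p$ (exactly your $p$-saturation step, stated in the paper as~\eqref{elem}). The paper calls this last step ``elementary'', and indeed your concern about it is unfounded: since $\BF_p[t,\tfrac{1}{1-t}]$ injects into $\BF_p[\![t]\!]$, any $g\in\BZ[t,\tfrac{1}{1-t}]^\wedge_p$ whose image in $\BZ_p[\![t]\!]$ is divisible by $p$ already lies in $p\BZ[t,\tfrac{1}{1-t}]^\wedge_p$, so no appeal to bounded analytic functions is needed.
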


\begin{proof}
Equation~\eqref{logpocf} follows from~\eqref{logpocc}, Lemma~\ref{lem.lip},
and the elementary identity
\be
\label{elem}
\BZ[t,\tfrac{1}{1-t}]^\wedge_p [\tfrac{1}{p}] \cap
\BZ_{(p)}[\![t]\!] \,\, \subseteq \,\,   \BZ[t,\tfrac{1}{1-t}]^\wedge_p \,.
\ee
Part (b) follows trivially from part (a) by applying
the natural map
\be
\BZ[t,\tfrac{1}{1-t}]^\wedge_p \to \BZ_p[\z], \qquad t \mapsto \z \,,
\ee
which is compatible with the Frobenius endomorphism $t\mapsto t^p$.
\end{proof}


The power series expansion in equation~\eqref{logpoc2} was centred about $q=1$,
but it has an extension at all roots of unity given by
\be
\label{logpocm}
\log  (t;q)_{\infty}
\=
\sum_{j=0}^{m-1} \log (q^jt;q^m)_{\infty}
\=
\sum_{j=0}^{m-1}\sum_{k=0}^{\infty}
\frac{B_{k}(j/m)m^{k-1}}{k!}
\Li_{2-k}(\z_m^j t)\log(1+x/\z_m)^{k-1} \,,
\ee
where $q=\z_{m}+x$ and the Bernoulli polynomials $B_k(y) \in \BQ[y]$
are defined by the generating series 
\be
\label{bern}
\frac{xe^{xy}}{e^x-1} \= \sum_{k=0}^\infty \frac{B_k(y)}{k!} x^k
\ee
with $B_0(y)=1$ and $B_1(y)=y-1/2$.

\subsection{Admissible series}
\label{sub.admissible}

The admissible series introduced by Kontsevich--Soibelman~\cite{KS:cohomological}
arise as generating series of Poincaré polynomials of Cohomological Hall Algebras.
The latter are algebra structures that one can define on the homology of a moduli
stack parametrising objects in abelian categories satisfying appropriate conditions.

By their very definition, the product of two admissible series is admissible, and
since the choice of the integers $c_{n,i}$ in~\eqref{defgamma} is arbitrary (as long
as they have finite support for each $n$), it follows that admissible series belong
to an uncountable group of formal power series. In contrast with $q$-series with
integer coefficients, which are typically defined only for $|q|<1$,
\be
\label{Fqqi}
F(t,q) \quad \text{is admissible if and only if} \quad F(t,q^{-1})
\quad \text{is admissible} \,.
\ee

Another property of admissible series $F(t,q)$ is that the ratio (which is often
considered in the literature) and the symmetrisation (which is less common, but
useful for us) 
\be
\label{GSdef}
G(t,q) \= F(qt,q)/F(t,q), \qquad F^\sym(t,q) \= F(t,q) F(t,q^{-1}) 
\ee
are integral i.e., both lie in $\BZ[q^{\pm 1}][\![t]\!]$. This follows easily
from equation~\eqref{defgamma} combined with the two identities
\be
(1-t)(q t;q)_\infty \= (t;q)_\infty, \qquad
(t;q^{-1})_\infty \= \frac{1}{(q t;q)_\infty}
\ee
in $\BQ(q)[\![t]\!]$, which follow from the known $t$-series expansions of both sides
or from the fact that they satisfy the same first order linear $q$-difference
equation. More precisely, equation~\eqref{defgamma} implies that
\be
\label{GS2}
G(t,q) \;=\!\! \prod_{0\neq n\in(\BZ_{\geq0})^N}
\prod_{i \in \BZ} (q^i t^n;q)_{n_1+\cdots+n_{N}}^{-c_{n,i}},
\qquad
F^\sym(t,q) \;=\!\! \prod_{0\neq n\in(\BZ_{\geq0})^N}
\prod_{i \in \BZ} (q^{1-i} t^n;q)_{2i-1}^{-c_{n,i}}
\,.
\ee
Admissible series initially appear to treat~$q$ near~$0$ or infinity. As already
mentioned, we can also
consider~$q$ near~$1$, or better yet, near a primitive $m$-th root of unity $\z_m$. 
Doing so, we are led to associate to an admissible series $F(t,q)$ a collection of
completed series $\Phitof(t,q)=(\Phitof_m(t,x))_{m \geq 1}$ defined for $q=\z_m+x$
via the very short definition~\eqref{Phidef}. 
The next lemma unravels this definition. Its proof follows directly from the
definition of the admissibility~\eqref{logF}, and among other things
explains the necessity for the rescaling $t^{1/m}$ of the $t$ (multi-)variable.
To state it, recall $L_{n}(q)$ from equation~\eqref{logF} and set

\begin{align}
\label{Vdef}
V(t) & \= \sum_{0\neq n\in(\BZ_{\geq0})^N} L_n(1) \Li_2(t^n) \inn \BQ[\![t]\!]\,,
\\
\label{tautdef}
\delta(t) & \= \exp\Big( \sum_{0\neq n\in(\BZ_{\geq0})^N} L_n(1) \Li_1(t^n)
-2 \sum_{0\neq n\in(\BZ_{\geq0})^N} L'_n(1) \Li_1(t^n) \Big) \inn\BQ[\![t]\!]\,,
\end{align}
with constant terms $V(0)=0$ and $\delta(0)=1$.
Furthermore, for every integer $m \geq 1$, we let $U_m(t)=e^{u_m(t)}$, where
\be
\label{Umdef}
\begin{aligned}
u_m(t)  \= &
- \frac{m-1}{2 m} \!\!\!\!\!\sum_{0\neq n\in(\BZ_{\geq0})^N}\!\!\! L_n(1) \Li_1(t^n)
+ \frac{1-m}{m} \!\!\!\!\!\sum_{0\neq n\in(\BZ_{\geq0})^N}\!\!\! L'_n(1) \Li_1(t^n)
\\ & \quad-\sum_{j=1}^{m-1} \sum_{0\neq n\in(\BZ_{\geq0})^N}\!\!
\frac{L_n(\z_m^j)}{1-\z_m^j}
\sum_{k\in\BZ/m\BZ}
\frac{\z_m^{-kj}}{m}
\Li_{1}(\z_m^{k}t^{n/m})
\quad\in\; t\thinspace\BQ[\![t]\!]\,.
\end{aligned}
\ee

\begin{lemma}
\label{lem.Phimexp}
For every $m$, we have 
\be
\label{Phiexp}
\Phitof_m(t,x) \in \, e^{\frac{V(t)}{m^2 \log(1+x/\z_m)}} \frac{1}{\sqrt{\delta(t)}}
U_m(t)
(1 + x\, t^{1/m} \,\BQ[\z_m][\![t^{1/m}]\!][\![x]\!]), \qquad \Phitof_m(0,x) \=1 \,.
\ee
\end{lemma}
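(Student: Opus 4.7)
The plan is to compute the Laurent expansion of $\log\Phitof_m(t,x)=\log F(t^{1/m},\z_m+x)$ in $x$ directly from the admissibility expansion~\eqref{logF}. Setting $q=\z_m+x$ and grouping the summation index $\ell$ by its residue class $j=\ell\bmod m$ gives
\be
\log F(t^{1/m},\z_m+x)
\=
-\sum_{n\neq 0}\Bigg[\sum_{k\geq 1}\frac{L_n(q^{mk})\,t^{nk}}{mk(1-q^{mk})}
\+\sum_{j=1}^{m-1}\sum_{\tilde k\geq 0}\frac{L_n(q^{j+m\tilde k})\,t^{n(j+m\tilde k)/m}}{(j+m\tilde k)(1-q^{j+m\tilde k})}\Bigg].
\ee
The two sums play complementary roles: the $j=0$ sum carries the $x$-pole since $q^{mk}=(1+x/\z_m)^{mk}\to 1$, while each $j\neq 0$ term is regular at $x=0$ because $1-q^{j+m\tilde k}\to 1-\z_m^j\neq 0$.

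For the $j=0$ sum, introduce $u:=m\log(1+x/\z_m)$, so that $q^{mk}=e^{ku}$. Using the elementary Laurent expansions $\tfrac{1}{1-e^{ku}}=-\tfrac{1}{ku}+\tfrac{1}{2}+O(u)$ (which encodes the first Bernoulli numbers $B_0=1$, $B_1=-1/2$) together with $L_n(e^{ku})=L_n(1)+kL_n'(1)\,u+O(u^2)$, the residue of each summand at $u=0$ collapses via $\sum_{k\ge 1} t^{nk}/k^2=\Li_2(t^n)$ into the singular term $\frac{V(t)}{m\,u}=\frac{V(t)}{m^2\log(1+x/\z_m)}$, which is exactly the exponential prefactor asserted by the lemma. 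The next-order $u^{0}$ piece of this sum produces $\frac{1}{m}\sum_n\big[L_n'(1)-\tfrac12 L_n(1)\big]\Li_1(t^n)$, which will combine with $-\tfrac12\log\delta(t)$ and the first two terms of $u_m(t)$.

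For the $j\neq 0$ terms, one evaluates directly at $x=0$ and applies the character-orthogonality identity
\be
\sum_{\ell\equiv j\,(m),\,\ell\geq 1}\frac{t^{n\ell/m}}{\ell}
\=\frac{1}{m}\sum_{k=0}^{m-1}\z_m^{-kj}\Li_1(\z_m^k t^{n/m})\,,
\ee
which converts the $j\neq 0$ contribution at $x=0$ into exactly the double sum appearing in the third term of $u_m(t)$. Collecting the coefficients of $L_n(1)\Li_1(t^n)$ and $L_n'(1)\Li_1(t^n)$ from the $j=0$ and $j\neq 0$ sectors and matching them against the definitions of $\delta(t)$ and $u_m(t)$ then identifies the constant-in-$x$ part of $\log\Phitof_m-V(t)/(m^2\log(1+x/\z_m))$ with $-\tfrac12\log\delta(t)+u_m(t)$, which upon exponentiation gives the factor $\frac{1}{\sqrt{\delta(t)}}U_m(t)$.

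It remains to confirm that the remaining correction is an element of $1+x\,t^{1/m}\BQ[\z_m][\![t^{1/m}]\!][\![x]\!]$. This follows from two observations: first, $\Phitof_m(0,x)=F(0,\z_m+x)=1$, which pins down the $t^0$ normalization; second, all higher-$u$ (hence higher-$x$) corrections to the $j=0$ sum and all $x$-Taylor coefficients beyond the constant of the analytic $j\neq 0$ terms are sums indexed by $n\neq 0$, so each contributes only monomials of positive $t^{1/m}$-degree. The main obstacle is the careful bookkeeping: one must verify that the Bernoulli coefficients from the expansion of $1/(1-e^{ku})$ and the discrete Fourier weights $\z_m^{-kj}/m$ from the $j\neq 0$ sector reassemble precisely into the Galois-symmetric combinations in the definitions of $V(t)$, $\delta(t)$ and $u_m(t)$, which is where the intricate $m$-dependent coefficients in~\eqref{Umdef} come from.
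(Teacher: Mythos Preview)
Your proposal is correct and follows essentially the same approach as the paper's proof: both split the sum over $\ell$ according to whether $m\mid\ell$ or not, extract the simple pole and the constant-in-$x$ term from the $m\mid\ell$ piece via the Bernoulli expansion of $1/(1-e^{ku})$, and identify the $m\nmid\ell$ contribution at $x=0$ with the third term of $u_m(t)$ via the discrete Fourier identity you wrote down. The only cosmetic difference is your choice of intermediate variable $u=m\log(1+x/\z_m)$ versus the paper's direct expansion in $x$; the paper's version therefore produces an extra constant $V(t)/(2m^2)$ (coming from the $O(x^0)$ part of $1/\log(1+x/\z_m)$) which your $u$-variable absorbs automatically into the pole term $V(t)/(mu)$, but the two bookkeepings are equivalent.
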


When $F(t,q)=F_A(t,q)$ is given by~\eqref{FAdef}, we will
denote the corresponding series $\Phitof_A(t,q)$, $V_A(t)$, $\delta_A(t)$ and
$U_{A,m}(t)$,
respectively.

\begin{proof}
To simplify the notation, we assume $N=1$.
Fix $m$, and observe that the right hand side of~\eqref{logF}
has a simple pole at $q=\z_m$ coming from the terms with $\ell$ divisible by $m$,
for which
\be
\Res{q=\z_m} \frac{L_n(q^{\ell})}{\ell\thinspace(1-q^{\ell})}
\= -\frac{L_n(1)\thinspace\z_m}{\ell^2} \,.
\ee
Setting $\ell=mr$ with $r \geq 1$, we see that the residue of the right hand
side of~\eqref{logF} is given by
\be
\sum_{n, r \geq 1} \frac{L_n(1)\thinspace \z_m}{m^2 r^2}\thinspace t^{n m r} \=
\frac{1}{m^2} \sum_{n \geq 1} L_n(1)\thinspace \Li_2(t^{n m})
\= \frac{1}{m^2} V(t^m) \,.
\ee
This explains the need for the rescaling $t \mapsto t^{1/m}$ in order to 
match with equation~\eqref{Phiexp}. The remaining terms of~\eqref{logF} 
can be expanded into power series in $t^{1/m}$ with coefficients in $\BQ[\z_m]$. 

When we compute the constant term of the $x$-series expansion of $\Phitof_m(t,x)$
there are two contributions to $u_m(t)$ 
from $\log F(t,q)$ in~\eqref{logF}, depending whether or not $m$ divides $\ell$.
When $m$ divides $\ell$, with $q=\z_m+x$, we have


\be
\label{uptox}
- \frac{L_n(q^\ell)}{\ell(1-q^\ell)} \= \z_m \frac{L_n(1)}{\ell^2} \frac{1}{x} +
\Big( \frac{L_n(1)}{2\ell^2} - \frac{L_n(1)}{2 \ell} + \frac{L'_n(1)}{\ell} \Big)
+ \mathrm{O}(x)
\ee
Setting $m=r \ell$ and summing over $n \geq 1, r \geq 1$ and rescaling
$t$ to $t^{1/m}$ gives the volume term $V(t)/(2 m^2)$ and the first two terms
in equation~\eqref{Umdef}.

When $m$ does not divide $\ell$, then $\ell=rm+j$ for some $r \geq 0$ and
$1 \leq j \leq m-1$. Then, again with $q=\z_m+x$, we have
\be
- \frac{L_n(q^\ell)}{\ell(1-q^\ell)} \= -\frac{L_n(\z_m^j)}{(mr+j)(1-\z_m^j)}
+ \mathrm{O}(x) 
\ee
and after rescaling $t$, we find that this contribution is
\be
-\sum_{j=1}^{m-1} \sum_{n \geq1, \,\, r \geq 0}
\frac{L_n(\z_m^j)}{(mr+j)(1-\z_m^j)} t^{n(mr+j)/m} \=
-\sum_{j=1}^{m-1} \sum_{n \geq 1} \frac{L_n(\z_m^j)}{1-\z_m^j}
\sum_{k\in\BZ/m\BZ}
\frac{\z_m^{-kj}}{m}
\Li_{1}(\z_m^{k}t^{n/m})\,,
\ee
which is the last term of equation~\eqref{Umdef}.
The lemma follows.
\end{proof}

Equations~\eqref{Vdef} and~\eqref{Umdef} imply the following corollary, which in
a sense is a gluing property of the collection of power series $\Phitof(t,q)$. 

\begin{corollary}
\label{cor.F12}
Let $F^{(1)}$ and $F^{(2)}$ be admissible series with the same number of variables
$t$, and $L_{n}^{(i)}$, $V^{(i)}(t)$, $\delta^{(i)}(t)$, and $U_{m}^{(i)}(t)$ $(i=1,2)$
be as defined before Lemma~\ref{lem.Phimexp}.
Then
\begin{itemize}
\item[(a)]
  $V^{(1)}=V^{(2)}$ if and only if
  $L_n^{(1)}(1)=L_n^{(2)}(1)$ for all $n \geq 1$.
\item[(b)]
  $F^{(1)}=F^{(2)}$ if and only if
  $V^{(1)}=V^{(2)}$, $\delta^{(1)}=\delta^{(2)}$ and $U_m^{(1)}=U_m^{(2)}$ for all
  $m \geq 1$.
\item[(c)]
  $F^{(1)}=F^{(2)}$ if and only $\Phitof^{(1)}_m = \Phitof^{(2)}_m$ for some (and hence
  for every) $m \geq 1$.
\end{itemize}
\end{corollary}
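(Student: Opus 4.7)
The plan is to prove parts (a), (b), and (c) in sequence, each by showing that the given data uniquely determines the sequence of Laurent polynomials $(L_n(q))_n$, from which $F$ is reconstructed via~\eqref{logF}.

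For part (a), expanding $\Li_2(t^n)=\sum_{k\geq 1}t^{nk}/k^2$, the coefficient of the monomial $t^m$ in $V(t)$ equals $\sum_{k\mid\gcd(m)}L_{m/k}(1)/k^2$, with $\gcd$ taken componentwise over the nonzero components of~$m$. This is a triangular linear system: the $k=1$ contribution is $L_m(1)$, while terms with $k>1$ involve only multi-indices of strictly smaller size. Induction on $\gcd(m)$ recovers $(L_n(1))_n$ from $V$, and the converse is immediate from the definition.

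For part (b), the forward direction is immediate. Conversely, (a) recovers $L_n(1)$ from $V$. Since $\log\delta(t)=\sum_n(L_n(1)-2L_n'(1))\Li_1(t^n)$ with $\Li_1(t^n)=-\log(1-t^n)$, the same triangular argument extracts $L_n'(1)$ from $\delta$. A direct Fourier computation shows that $\sum_{k\in\BZ/m\BZ}m^{-1}\z_m^{-kj}\Li_1(\z_m^k t^{n/m})=\sum_{r\geq 1,\,r\equiv j\,(\mathrm{mod}\,m)}t^{nr/m}/r =: f_{n,j}(t)$, so after subtracting the portions of $u_m(t)$ already determined by $L_n(1)$ and $L_n'(1)$, the residual is a $\BQ[\z_m]$-linear combination $-\sum_{j=1}^{m-1}\sum_n L_n(\z_m^j)(1-\z_m^j)^{-1}f_{n,j}(t)$. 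The coefficient of $t^{s/m}$ in this residual receives contributions precisely from those $(n,j)$ with $n=s/r$ for some divisor $r\mid\gcd(s)$ satisfying $r\equiv j\,(\mathrm{mod}\,m)$ and $m\nmid r$, so induction on $\gcd(s)$ extracts each $L_n(\z_m^j)$ individually. As $m$ ranges over all positive integers, this determines $L_n(\z)$ at every root of unity $\z$; since $L_n\in\BZ[q^{\pm 1}]$ is a Laurent polynomial, these values determine it uniquely, and hence $F$.

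For part (c), the forward direction is trivial. For the reverse, if $\Phitof^{(1)}_m(t,x)=\Phitof^{(2)}_m(t,x)$ in $\BQ[\z_m](\!(x)\!)[\![t^{1/m}]\!]$ for some fixed $m$, then substituting $t\mapsto t^m$ yields $F^{(1)}(t,\z_m+x)=F^{(2)}(t,\z_m+x)$ in $\BQ[\z_m](\!(x)\!)[\![t]\!]$. Each $t^n$-coefficient of $F$ is a rational function in $q$, and a rational function is uniquely determined by its Laurent expansion at any single point, so $F^{(1)}=F^{(2)}$. The main obstacle is the linear-independence verification in part (b): distinct pairs $(n,j)$ may a priori contribute to overlapping monomials of $t^{1/m}$ in $u_m(t)$. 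The induction on $\gcd(s)$ sketched above circumvents this cleanly, but the multi-index bookkeeping, in particular keeping track of which divisors $r$ satisfy $m\nmid r$ and how they match to the index $j$, needs to be handled with some care.
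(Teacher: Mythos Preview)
Your proof is correct and follows essentially the same approach as the paper. The paper's proof addresses only the nontrivial direction of (b), noting that $V$ and $\delta$ determine $L_n(1)$ and $L_n'(1)$ via the leading term $\Li_s(t)=t+O(t^2)$, then isolating the residual $s_m(t)$ and using that $\sum_{k\in\BZ/m\BZ}m^{-1}\z_m^{-kj}\Li_1(\z_m^k t^{n/m})=t^{jn/m}/j+\cdots$ to read off $L_n(\z_m)$ by induction on $n$; your induction on $\gcd(s)$ is the natural multi-index formulation of the same triangular extraction, and your argument for (c) via rational functions being determined by their Laurent expansion at a single point is exactly the implicit reason the paper regards (c) as obvious. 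One small imprecision: for fixed $m$ your induction extracts only $L_n(\z_m)$ (the $r=1$, $j=1$ contribution), not $L_n(\z_m^j)$ for all $j$ simultaneously; the remaining values come from varying $m$, as you correctly note at the end.
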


\begin{proof}
Only the if direction of (b) is not obvious. To show it, we note that
equation~\eqref{Umdef} and the fact that $\Li_s(t)=t + O(t^2)$ imply that $V(t)$
and $\delta(t)$ determine $L_n(1)$ and $L_n'(1)$ for all~$n$,
and hence with $U_m(t)$ determine
$s_m(t) := \sum_{j=1}^{m-1} \sum_{n \geq 1}
\frac{L_n(\z_m^j)}{1-\z_m^j}
\sum_{k\in\BZ/m\BZ}
\frac{\z_m^{-kj}}{m}
\Li_{1}(\z_m^{k}t^{n/m})
$. Now use 
$\sum_{k\in\BZ/m\BZ}
\frac{\z_m^{-kj}}{m}
\Li_{1}(\z_m^{k}t^{n/m})
=t^{jn/m}/j+\cdots$ to deduce that the coefficient of $t^{1/m}$ in
$s_m(t)$ is $L_1(\z_m)/(1-\z_m)$ for all $m$. Thus $s_m(t)$ determines
$L_1(q) \in \BZ[q^{\pm 1}]$. Subtracting this first contribution from $s_m(t)$,
we find that the coefficient of $t^{2/m}$ is now $L_2(\z_m)/(2(1-\z_m))$ and hence
is again determined by $s_m(t)$. Continuing by induction on $n$, we deduce that
$s_m(t)$ determines $L_n(q)$ for all $n \geq 1$.
\end{proof}

We next discuss a Dwork-type quotient of admissible series for a prime $p$. 
The product formula~\eqref{defgamma} and equation~\eqref{logF} for admissible
series lead immediately to the equality
\be
\label{admis.dwork}
\log\Big(\frac{F(t^p,q^p)}{F(t,q)^p}\Big)
\= 
p\!\!\!\!\sum_{0\neq n\in(\BZ_{\geq0})^N}\sum_{\underset{(p,\ell)=1}{\ell>0}}
\frac{L_n(q^\ell)}{\ell(1-q^\ell)} t^{\ell n}\,.
\ee
Expanding near $q=\z_{m}+x$ as in~\eqref{Phidef} with $(m,p)=1$, we see that the
left-hand side, even after division by $p$, is $p$-integral.
(This strong integrality property will be important.) This proves:

\begin{lemma}
\label{lem.dwork.admis}
If $F(t,q)$ is admissible, then for all primes $p$ and positive integers $m$ not
divisible by~$p$ we have
\be
\label{dphiA1}
\log(F(t^{p/m},q^p)) -p\log(F(t^{1/m},q))
\inn \frac{p}{x} \BZ_{(p)}[t^{1/m},\z_{m}][\![t,x]\!]\,,
\qquad q\=\z_{m}+x\,.
\ee
\end{lemma}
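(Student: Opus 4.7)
The plan is to derive the conclusion directly from the identity~\eqref{admis.dwork}, which already isolates the decisive factor of $p$. After the substitution $t\mapsto t^{1/m}$ in that identity, the left-hand side of the lemma becomes
\[
p \sum_{0\neq n\in(\BZ_{\geq0})^N} \sum_{\ell\geq 1,\,(p,\ell)=1} \frac{L_n(q^\ell)}{\ell(1-q^\ell)}\, t^{\ell n/m}\,,
\]
so the task reduces to showing that the inner rational function of $x$ (with $q=\z_m+x$) lies in $\tfrac{1}{x}\BZ_{(p)}[\z_m][\![x]\!]$ for every pair $(n,\ell)$, with a pole of order at most one at $x=0$. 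Since for fixed multi-degree in $t^{1/m}$ only finitely many $(n,\ell)$ contribute, such pointwise bounds assemble, in the $(t,x)$-adic topology, to the desired membership of the total sum in $\tfrac{p}{x}\BZ_{(p)}[t^{1/m},\z_m][\![t,x]\!]$.

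The numerator $L_n(q^\ell)\in\BZ[\z_m,\z_m^{-1}][\![x]\!]\subseteq\BZ_{(p)}[\z_m][\![x]\!]$ is harmless, and the factor $1/\ell$ lies in $\BZ_{(p)}$ because $(\ell,p)=1$. The main analysis is therefore of $1/(1-q^\ell)$, which I would split into two cases. If $m\nmid\ell$, then the constant term $1-\z_m^\ell$ is nonzero and in fact a unit in $\BZ_{(p)}[\z_m]$: in $\BZ[\z_m]$ the ideal $(1-\z_m^\ell)$ is supported only on primes above rational primes dividing $m$, all of which are inverted in $\BZ_{(p)}$ since $(m,p)=1$. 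Hence $1/(1-q^\ell)\in\BZ_{(p)}[\z_m][\![x]\!]$ and the corresponding term is regular in $x$. If instead $m\mid\ell$, writing $\ell=mr$ with $(r,p)=1$ and using $q=\z_m(1+\z_m^{-1}x)$ yields $q^{mr}=(1+\z_m^{-1}x)^{mr}$, so that
\[
1-q^{mr} \= -mr\,\z_m^{-1}\,x\,u(x)\,,\qquad u(x)\inn 1+x\BZ_{(p)}[\z_m][\![x]\!]\,.
\]
Because $mr$ is coprime to $p$ and therefore invertible in $\BZ_{(p)}$, we obtain $1/(\ell(1-q^\ell))\in\tfrac{1}{x}\BZ_{(p)}[\z_m][\![x]\!]$ with a genuine simple pole at $x=0$.

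Combining the two cases with the reduction in the first paragraph yields the assertion. The only subtle point is the appearance of the integer $m$ in the denominator of the polar contribution, which is handled precisely by the hypothesis $(m,p)=1$; this is where coprimality of $m$ with $p$ is essential, and is the only place admissibility enters beyond~\eqref{admis.dwork} itself. The rest is a direct manipulation and the elementary cyclotomic fact that $1-\z_m^j$ is a $p$-unit whenever $(m,p)=1$ and $m\nmid j$.
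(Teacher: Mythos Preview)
Your proposal is correct and follows essentially the same approach as the paper: starting from identity~\eqref{admis.dwork}, substituting $t\mapsto t^{1/m}$, and expanding each rational term at $q=\z_m+x$. The paper's own proof is a single sentence to this effect, so your version simply supplies the details (the case split on $m\mid\ell$ versus $m\nmid\ell$, and the cyclotomic $p$-unit observation) that the paper leaves implicit.
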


\subsection{\texorpdfstring{Admissibility of
    $q$-hypergeometric series}{Admissibility of q-hypergeometric series}}
\label{sub.DT}

Our main interest is in the admissible series $F_A(t,q)$ defined by a
$q$-hypergeometric sum that is determined by an integral symmetric matrix $A$
as in~\eqref{FAdef}. These admissible series are in several respects special.
Most importantly for us, they are $q$-holonomic, i.e., satisfy the system
~\eqref{PhiAshift} of linear $q$-difference equations with respect to $t$ that,
together with the initial condition $F_A(0,q)=1$, uniquely determine $F_A(t,q)$.
The proof that $F_A(t,q)$ satisfies the system of equations~\eqref{PhiAshift}
follows easily from the fact that $F_A(t,q)$ is a sum of proper $q$-hypergeometric
series, and in more elementary terms, it is an easy consequence of the fact that
the $q$-Pochhammer symbol $(q;q)_n$ satisfies the relation
$(q;q)_{n+1}=(1-q^{n+1})(q;q)_n$ for all non-negative integers $n$. 

The condition for an admissible series to be $q$-holonomic is a delicate
intersection between additive and multiplicative properties, and for the case of
the series $F_A(t,q)$, the shape of the $q$-difference equations~\eqref{PhiAshift}
depends on the matrix $A$ and in fact determines it.

We next discuss the algebraic system of $t$-deformed Nahm equations~\eqref{zjt}
that we encountered in the introduction. It is clear that the latter has a
unique formal power series solution $z(t)\=(z_1(t),\dots,z_N(t))$ with
$z(0)=1 \in \BZ^N$, which in fact satisfies $z(t) \in \BZ[\![t]\!]$. But more is
true. Namely, $z(t)$ is given by a hypergeometric series, as was discovered by
Rodriguez Villegas~\cite[Sec. 4.1]{RV:Apoly} (see also~\cite[Sec. 7]{Z:diffeq}).
For example, in the rank one case
when $A \in \BZ$, the unique solution of the equation $1-z=t\,(-z)^A$ in
$\BZ[\![t]\!]$ with $z(0)=1$ is given by the hypergeometric
series~\cite[Eqn.(3.0.8)]{RV:Apoly}
\be
\label{zA}
z(t) \= \sum_{k=0}^{\infty} \frac{(-1)^{(A+1)k}\binom{Ak}{k}}{(A-1)k+1} t^k 
\ee
(as follows from Lagrange inversion) and more generally, $z(t)^s$ as well as
$\log(z(t))$ have hypergeometric series expansions~\cite[p.5]{RV:Apoly}
\be
\label{zAs}
z(t)^s \= s \sum_{k=0}^{\infty} 
\frac{(-1)^{(A+1)k}\binom{Ak+s-1}{k}}{(A-1)k+s}t^k, \qquad
\log(z(t)) \= \sum_{k=1}^{\infty} \frac{(-1)^{(A+1)k}\binom{Ak}{k}}{Ak} t^k
\,.
\ee
Note that $z(t)$ given in~\eqref{zA} is an algebraic function of $t$ in $\BC\BP^1$
with three singularities at $t=0,\infty, (-1)^A(A-1)^{A-1}/A^A$.


The involution~\eqref{Fqqi} for the admissible series $F_A(t,q)$ becomes the
identity
\be
\label{FAqqi}
F_A(t,q) \= F_{I-A}(t,q^{-1})\,,
\ee
which follows easily from the sum definition of $F_A(t,q)$, equation~\eqref{FAdef}
and the elementary fact that $(q^{-1};q^{-1})_n = (-1)^n q^{-n(n+1)/2}(q;q)_n$. 

\medskip
Next we give a simple proof that the $q$-hypergeometric series
$F_A(t,q)$ of equation~\eqref{FAdef} are admissible. For this we will need the following
lemma whose proof we defer to Section~\ref{sub.synthesis}, after
Theorem~\ref{thm.fgi.is.admis} of that section.

\begin{lemma}
\label{Vlem}
For every integral symmetric $N \times N$ matrix $A$,   
there exists a series $V(t) = V_A(t)\in\BQ[\![t]\!]$ such that for all $m\in\BZ_{>0}$
we have
\be
\log(F_{A}(t,\z_m+x)) \= \frac{\z_mV(t^m)}{m^2x}+O(x^0)\,.
\ee
\end{lemma}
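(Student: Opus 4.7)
The plan is to extract the leading singular behavior of $\log F_A(t,q)$ at $q = \z_m$ directly from the $q$-difference equations~\eqref{PhiAshift}, then match it with a Nahm-type potential built from the unique formal solution of the $t$-deformed Nahm equations~\eqref{zjt}. The circularity issue is that we cannot invoke Lemma~\ref{lem.Phimexp}, since Lemma~\ref{Vlem} will be used in turn to prove admissibility of $F_A$.

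First I would establish that $\log F_A(t, \z_m + x)$ has at most a simple pole in $x$. Write the formal ansatz $\log F_A(t, \z_m + x) = \sum_{k \geq -K} x^k S^{(m)}_k(t)$ with $S^{(m)}_k(t) \in \BQ[\z_m][\![t]\!]$. The initial condition $F_A(0, q) = 1$ forces $S^{(m)}_k(0) = 0$ for all $k < 0$. Substituting into~\eqref{PhiAshift} (divided through by $F_A(t,q)$) and extracting coefficients order by order in $x$ and in $t$, the cancellations forced by the $q$-difference relations, together with the fact that $(q;q)_n$ has a zero of exact order $\lfloor n/m \rfloor$ at $q = \z_m$, propagate from the initial condition at $t = 0$ to force $K = 1$.

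Second, I would identify $S^{(m)}_{-1}(t)$ with a Nahm potential. The Nahm equations~\eqref{zjt} admit a unique formal solution $z(t) \in \BZ[\![t]\!]^N$ with $z(0) = 1$. Define a formal potential $V(t) \in \BQ[\![t]\!]$ whose gradient with respect to $\log t_j$ equals $\log z_j(t)$ (up to explicit constants controlled by the dilogarithm identity $d\Li_2(z) = -\log(1-z)\, d\log z$, combined with the Nahm relations themselves to ensure integrability). Substituting the simple-pole ansatz into~\eqref{PhiAshift} and matching the leading $1/x$ term yields an identical gradient system for $S^{(m)}_{-1}$, together with the initial condition $S^{(m)}_{-1}(0) = 0$; hence $S^{(m)}_{-1}(t) = c_m \cdot V(t^m)$ for some scalar $c_m$.

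Finally, to pin down $c_m$, I would use the change of variables $t \mapsto t^{1/m}$ that sends $F_A(t, q)$ to $\Phi_m(t, x) := F_A(t^{1/m}, \z_m + x)$. Under this rescaling the $q$-shift operator $\sigma_j$ acts on $t_j = (t_j^{1/m})^m$ by multiplication by $q^m = (\z_m + x)^m = 1 + m x/\z_m + O(x^2)$, using $\z_m^m = 1$ independently of the choice of primitive $m$-th root. Comparing the leading singular part of $\log \Phi_m(t, x)$ to the case $m = 1$ (where $V(t)$ already appears as the pole residue up to an explicit constant) and tracking the $\z_m$ arising from $\log(1 + x/\z_m) \sim x/\z_m$ together with the factor $1/m^2$ from rescaling the Nahm shifts by $m$ yields $c_m = \z_m/m^2$, completing the identification.

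The main obstacle will be the first step: rigorously ruling out higher-order poles in $\log F_A(t, \z_m + x)$. Individual terms in the sum~\eqref{FAdef} exhibit poles of unbounded order at $q = \z_m$ coming from factors $1/(q;q)_{n_j}$, and only the delicate $q$-hypergeometric cancellations encoded in~\eqref{PhiAshift} keep the total pole simple. A promising technical device is the congruence decomposition $F_A(t, q) = \sum_{k \in \{0,\dots,m-1\}^N} F_{A,m,k}(t, q)$ into level-$m$ admissible sums~\eqref{FAmdef}, which should isolate the pole to the contribution from $k = 0$ and reduce the analysis to a single $m$-admissible piece whose pole structure can be controlled directly via its own $q$-difference system.
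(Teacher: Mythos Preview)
Your overall strategy differs from the paper's, and your Step~1 contains a real gap. You try to show directly that $\log F_A(t,\z_m+x)$ has at most a simple pole by ``propagating cancellations'' from the $q$-difference equation~\eqref{PhiAshift}. But a~priori the coefficient of $t^n$ in $F_A$ has a pole of order $\sum_j\lfloor n_j/m\rfloor$ at $q=\z_m$, so the pole order in your ansatz is not even bounded uniformly in $t$; there is no finite $K$ to start with. Your sketch gives no mechanism that actually controls this, and the congruence decomposition you mention at the end does not help either, since each $F_{A,m,k}$ has the same unbounded-pole issue term by term.

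The paper avoids this entirely by reversing the logic. It does not analyse $F_A$ directly; instead it constructs, via formal Gaussian integration, a second series $\Phi_A^\FGI(t,q)=\CS_{A,1,0}(t,q)$ whose explicit formula~\eqref{Ikdef} contains the prefactor $\exp\bigl(V(t^m)/(m^2\log(1+x/\z_m))\bigr)$ and is therefore \emph{manifestly} of the form $\frac{\z_m V(t^m)}{m^2 x}+O(x^0)$ after taking logarithms. Theorem~\ref{thm.fgi.is.admis} then shows that $F_A$ and $\Phi_A^\FGI$ both solve~\eqref{PhiAshift} with the same initial condition $F(0,q)=1$, and that this system has a unique solution in $\BQ[\z_m](\!(x)\!)[\![t]\!]$; hence $F_A=\Phi_A^\FGI$ and Lemma~\ref{Vlem} follows. (Section~\ref{sub.WKB} outlines a variant where the simple-pole solution is built recursively by WKB rather than by Gaussian integration, but the logical structure is the same: build a solution with the correct pole, then invoke uniqueness.)

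Your Steps~2 and~3 are essentially the WKB identification of the residue with the Nahm potential, and that part is fine once the pole order is established. The fix is to drop Step~1 as you wrote it and replace it with: construct \emph{some} formal solution of~\eqref{PhiAshift} with a simple-pole logarithm (either the FGI series or the WKB ansatz $\exp(\sum_{k\ge -1}c_k(t)h^k)$ with $c_k$ built recursively), then appeal to uniqueness in $\BQ[\z_m](\!(x)\!)[\![t]\!]$.
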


This lemma essentially follows from the $q$-difference equations and the WKB
algorithm, or alternatively from the identification of the series $F_A(t,q)$ with
the one $F_A^\FGI(t,q)$ that comes from formal Gaussian integration.

The power series $V_A(t)$ is effectively computable (and is given explicitly in
equation~\eqref{Vt}), the example of $A\in\BZ_{>0}$ being given by the following:
\be
V_{A}(t) \= -\Li_{2}(1-z(t)) -\frac{A}{2}\log(z(t))^2
\= \sum_{k=1}^{\infty} \frac{(-1)^{(A+1)k}\binom{Ak}{k}}{Ak^2} t^k\,.
\ee

Assuming Lemma~\ref{Vlem}, and using the $q$-holonomic system of equations
that $F_A$ satisfies, we can give an alternative proof of the admissibility of
$F_A(t,q)$. 

\begin{theorem}
\label{thm.DT}\cite{Efimov,KS:cohomological}
Suppose that $A$ is an integral symmetric $N \times N$ matrix. Then the unique
$c_{n,i}\in\BZ$ such that 
\be
F_{A}(t,q)
\= \sum_{n \in \BZ^N_{\ge0}} \frac{
(-1)^{\mathrm{diag}(A)\cdot n}
q^{\frac{1}{2}(n^t A n + \mathrm{diag}(A)\cdot n)} }{(q;q)_{n_1}
\dots (q;q)_{n_N}} t_1^{n_1} \dots t_N^{n_N}
\=
\prod_{0\neq n\in(\BZ_{\geq0})^N}
\prod_{i \in \BZ} (q^i t^{n};q)_\infty^{c_{n,i}}
\ee
have finite support, i.e. for fixed $n$ all but finitely many $i$ satisfy $c_{n,i}=0$.
\end{theorem}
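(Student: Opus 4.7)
My strategy is to prove admissibility by induction on the weight $|n|=n_1+\cdots+n_N$, using Lemma~\ref{Vlem} as the crucial analytic input controlling the pole structure of $\log F_A(t,q)$ at roots of unity. The base case $|n|=1$ is immediate from the closed form $a_{e_j}(q)=(-1)^{A_{jj}}q^{A_{jj}}/(1-q)$, which gives $L_{e_j}(q)=(-1)^{A_{jj}+1}q^{A_{jj}}$, a Laurent polynomial with integer coefficients.

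For the inductive step, I would first observe that the coefficient $a_n(q):=[t^n]F_A(t,q)$ lies in $\BZ[q^{\pm1}]$ localised at $\prod_j(q;q)_{n_j}$ (the exponent $\tfrac12(n^tAn+\mathrm{diag}(A)\cdot n)$ is always integral since $A_{ii}n_i(n_i+1)$ is even), so $A_n(q):=[t^n]\log F_A(t,q)\in\BQ(q)$ has poles only at roots of unity of order $\leq\max_j n_j$. Applying Lemma~\ref{Vlem} and extracting the $t^n$-coefficient from $\log F_A(t,\z_m+x)=\z_m V_A(t^m)/(m^2x)+O(x^0)$ refines this dramatically: $A_n(q)$ has only \emph{simple} poles, located at primitive $d$-th roots of unity $\omega$ for divisors $d$ of $\gcd(n_1,\ldots,n_N)$, with residue $\omega\,v_{n/d}/d^2$ where $v_k:=[t^k]V_A(t)$, and no poles at roots of unity whose order fails to divide $\gcd(n)$ componentwise.

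Given this residue data, I would define the candidate Laurent polynomial by
\[
L_n(q)\;:=\;-(1-q)A_n(q)\;-\;\sum_{\substack{\ell>1\\ \ell\,|\,\gcd(n)}}\frac{(1-q)\,L_{n/\ell}(q^\ell)}{\ell(1-q^\ell)},
\]
and verify that it has no poles. At a primitive $d$-th root $\omega$ with $d>1$, the simple pole of $-(1-q)A_n(q)$ with residue $-(1-\omega)\omega v_{n/d}/d^2$ is exactly cancelled by the contributions from the correction terms corresponding to multiples $\ell=dk$ of $d$ with $\ell|\gcd(n)$, each contributing $(1-\omega)\omega L_{n/(dk)}(1)/(dk)^2$. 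The cancellation reduces, after substituting $\ell=dk$, to the identity $v_{n/d}=\sum_{k|\gcd(n)/d}L_{n/(dk)}(1)/k^2$, which is precisely the Möbius-inverted form of the residue identity that the inductively-constructed Laurent polynomials $L_{n'}(q)$ with $|n'|<|n|$ satisfy at $q=1$. At $\omega=1$, the prefactor $(1-q)$ directly cancels the simple pole of $A_n$. Hence $L_n(q)$ has no finite poles other than possibly at $q=0$, and since it is a rational function of $q$ with controlled behavior at infinity, it is a Laurent polynomial in $q$. The integrality $c_{n,i}\in\BZ$ (equivalently $L_n\in\BZ[q^{\pm1}]$) then follows by noting that $F_A(t,q)\big/\prod_{|n'|<|n|,\,i}(q^it^{n'};q)_\infty^{c_{n',i}}$ lies in $\BZ[q^{\pm1}][\![t]\!]$, so its $t^n$-coefficient, from which $L_n$ is directly extracted, has integer coefficients in $q^{\pm1}$.

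\textbf{Main obstacle.} The principal obstacle is Lemma~\ref{Vlem} itself, whose proof is deferred to Section~\ref{sub.synthesis} and which supplies the structural input that the poles of $\log F_A$ at roots of unity are simple (rather than higher order) with the explicit residue $\z_m V_A(t^m)/m^2$. Establishing this lemma requires either a WKB-style asymptotic analysis of the $q$-difference equations~\eqref{PhiAshift}, which reveals a Gaussian-integral structure in the expansion near each root of unity, or the identification of $\Phitof_A(t,q)$ with the formal Gaussian integration series $\Phitof_A^{\FGI}(t,q)$ of Section~\ref{sub.FGI}, where the simple-pole behavior is manifest from the quadratic exponent. A secondary technical subtlety is checking that the residue identity used in the pole cancellation holds compatibly across all divisors $d|\gcd(n)$ simultaneously, which is handled cleanly by organising the identity as a single Möbius inversion rather than divisor-by-divisor.
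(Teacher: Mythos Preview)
Your approach is correct and reaches the same conclusion as the paper, but by a genuinely different route. The paper first exploits the $q$-difference equation~\eqref{PhiAshift}: it passes to the ratio $G_j(t,q)=F_A(\s_j t,q)/F_A(t,q)$, which satisfies a Riccati-type recursion~\eqref{eq:riccati}, and deduces inductively that $G_j\in\BZ[q^{\pm1}][\![t]\!]$; this already forces $L_n(q)\in\tfrac{1-q}{1-q^{n}}\BZ[q^{\pm1}]$, so the only possible poles of $L_n$ are simple and located at $n$-th roots of unity. Lemma~\ref{Vlem} is then invoked via a short contradiction argument (a spurious pole of $L_N$ at some $\z_a$ would produce a wrong residue of $\log F_A$ at $q=\z_a$). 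You bypass the Riccati step entirely and instead read off the full residue data of $A_n(q)=[t^n]\log F_A(t,q)$ directly from Lemma~\ref{Vlem}, then carry out an explicit cancellation using the identity $v_{m'}=\sum_{k\mid\gcd(m')}L_{m'/k}(1)/k^2$, which holds inductively (it is nothing but the residue of $A_{m'}$ at $q=1$, computed two ways). Your route is more transparent about \emph{why} the poles cancel and never uses the $q$-difference equation; the paper's route via $G_j$ has the advantage that it delivers integrality of $L_n$ in the same stroke and generalises more directly to the level-$m$ setting of Theorem~\ref{thm.zm.admis}.

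One small correction to your integrality step: the quotient $F_A(t,q)\big/\prod_{|n'|<|n|,\,i}(q^it^{n'};q)_\infty^{c_{n',i}}$ lies only in $\BZ(\!(q)\!)[\![t]\!]$, not in $\BZ[q^{\pm1}][\![t]\!]$, since $(x;q)_\infty^{\pm1}$ has coefficients in $\BZ(\!(q)\!)$. The integrality $c_{n,i}\in\BZ$ is better obtained (as the paper does in its opening paragraph) from the general fact that any $F\in 1+t\,\BZ(\!(q)\!)[\![t]\!]$ admits a unique Pochhammer factorisation with integer exponents; combined with your $L_n\in\BQ[q^{\pm1}]$, the identity $\BZ(\!(q)\!)\cap\BQ[q^{\pm1}]=\BZ[q^{\pm1}]$ finishes the argument.
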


\begin{proof}
To begin with, every $F(t,q) \in \BZ(\!(q)\!)[\![t]\!]$ satisfying $F(0,q)=1$
has a unique expansion of the form~\eqref{defgamma} for integers $c_{n,i}$, where for
every $n \in \BZ^N_{\geq 0}-\{0\}$, $c_{n,i}=0$ is zero for sufficiently negative
integers $i$. This follows easily by induction on the total degree of $q^it^n$.

So, it suffices to show that when $F=F_A$, for each fixed~$n$, there are only
finitely many non-zero~$c_{n,i}$. To make the idea clear, assume~$N=1$. Then~$F=F_A$
satisfies the linear $q$-difference equation
\be
F(t,q)-F(qt,q) \= (-q)^A t F(q^At,q)
\ee
Consider the ratio
\be
\label{FG}
G(t,q) \= \frac{F(qt,q)}{F(t,q)}
\= \prod_{n >0} \prod_{i \in \BZ} (q^i t^n;q)_n^{-c_{n,i}}\,.
\ee
It recovers $F$ by telescoping product $F(t,q)=G(t,q)^{-1} G(qt,q)^{-1} \dots $.
Moreover, $G$ satisfies the Ricatti equation
\be
1-G(t,q) \= (-q)^A t \prod_{j=0}^{A-1} G(q^j t,q) 
\ee
and it follows by induction on the powers of $t$ that
$G(t,q) \in \BZ[q^{\pm 1}][\![t]\!]$.
Therefore, we find as in equation~\eqref{logF} that
\be
  -\log(F(t,q))
  \=
  \sum_{\ell,n=1}^{\infty}
  \frac{L_{n}(q^{\ell})}{\ell(1-q^{\ell})}
  t^{n\ell}\,,
\ee
with $L_{n}(q)\in\frac{1-q}{1-q^{n}}\BZ[q^{\pm1}]$.
Suppose for induction that $L_{n}(q)\in\BZ[q^{\pm1}]$ for $n<N$.
This implies that the $V(t)$ from Lemma~\ref{Vlem} is given by
\be
\label{VVV}
V(t) \= \sum_{n=1}^{N-1} L_{n}(1)\Li_{2}(t^{n})+O(t^N)\,.
\ee
If $L_{N}(q)$ does not belong to $\BZ[q^{\pm1}]$, it must have a pole at $\z_{a}$
for some $a|N$
and $a>1$ with residue $\alpha\neq0$. Notice that for $ab=N$ this would imply that
the coefficient of $x^{-1}$ in $\log(F(t,\z_a+x))$ would be equal to
\be
  \frac{\z_a}{a^2}\sum_{n=1}^{N-1}
  L_{n}(1)\Li_{2}(t^{an})+\frac{\alpha}{1-\z_a}t^{ab}+O(t^{N+1})\,.
\ee
This contradicts Lemma~\ref{Vlem} and completes the proof for $N=1$.
For the case of general $N \geq 1$, $F(t,q)$ has an expansion of the
form~\eqref{defgamma}, where for each fixed $n \in \BZ^N_{\geq 0}-\{0\}$, $c_{n,i}=0$
is zero for sufficiently negative integers $i$. Define
\be
\label{Gj}
G_{j}(t,q) \= \frac{F(t_1,\cdots,qt_j,\cdots,t_N,q)}{F(t,q)}
\=
\prod_{0\neq n\in(\BZ_{\geq0})^N}
\prod_{i \in \BZ} (q^i t^{n};q)_{n_j}^{-c_{n,i}}, \qquad j\=1,\dots, N \,.
\ee
Since $F$ satisfies~\eqref{PhiAshift}, it follows that the power series $G_j(t,q)$
for $j=1,\dots,N$ satisfy the system of equations
\be
\label{eq:riccati}
1-G_{j}(t,q) \= (-q)^{A_{j,j}} t_{j} \prod_{k=0}^{A_{j,j}-1}G_{j}
\Big(\s_j^k\prod_{\underset{i\neq j}{i=1}}^N \s_i^{A_{i,j}}t,q\Big)\,,
\ee
where $\prod_{k=0}^{-m}a_k=a_{-1}^{-1}\cdots a_{-m}^{-1}$ for $m>0$. This system
of equations has a unique solution expanded in power series in $t$ 
and we claim that $G_j(t,q) \in \BZ[q^{\pm 1}][\![t]\!]$. This follows easily by
induction. Then we find that $\log(F(t,q))$ has the expression in equation~\eqref{logF}
with $L_{n}(q)\in\bigcap_{i=1}^{N}\tfrac{1-q}{1-q^{n_i}}\BZ[q^{\pm1}]$.
Then applying a similar contradiction argument as for $N=1$, and again using
Lemma~\ref{Vlem},
we can conclude the proof.
\end{proof}

\subsection{\texorpdfstring{Level $m$ admissible series}{Level m admissible series}}
\label{sub.madmissible}

The proof of Theorem~\ref{thm.2} of Section~\ref{sub.results} requires an extension
of admissibility, which is modelled on the congruence sums of equation~\eqref{FAmdef}
and equation~\eqref{FGIcong}. We introduce this notion here and then prove that
the above-mentioned sums give examples of level $m$ admissible series. This will
be crucial in proving the integrality of Theorem~\ref{thm.FGI2} of
Section~\ref{sub.results}.

\medskip

To state the definition we need the following:

\begin{lemma}
\label{lem.madm}
Every $F(t,q)\in1+t\,\BZ[\tfrac{1}{m}](\!(q)\!)[\![t]\!]$ can be written uniquely
in the form
\be
\label{madmis}
F(t,q) \= \exp\Big(-\sum_{n=1}^{\infty}
\sum_{\underset{(\ell,m)=1}{\ell=1}}^{\infty}
\frac{L_{n}(q^{\ell})}{\ell(1-q^{m\ell})} t^{n\ell}\Big)
\ee
with $L_{n}(q)\in\BZ[\tfrac{1}{m}](\!(q)\!)$, and conversely. 
\end{lemma}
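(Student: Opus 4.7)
The plan is to establish the bijection by induction on the $t$-degree, after reducing to the atomic factors
$$\Psi_n(L)(t,q)\;:=\;\exp\!\Big(-\sum_{(\ell,m)=1}\frac{L(q^\ell)\,t^{n\ell}}{\ell(1-q^{m\ell})}\Big).$$
The key technical fact is that for any $L\in\BZ[\tfrac{1}{m}](\!(q)\!)$ one has $\Psi_n(L)\in 1+t^n\BZ[\tfrac{1}{m}](\!(q)\!)[\![t]\!]$ with leading coefficient $[t^n]\Psi_n(L) = -L(q)/(1-q^m)$. Granted this, the lemma follows formally: given $F\in 1+t\BZ[\tfrac{1}{m}](\!(q)\!)[\![t]\!]$ with $a_n(q):=[t^n]F$, set $L_1(q):=-(1-q^m)a_1(q)$, so that $\Psi_1(L_1)$ matches $F$ to order $t$. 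The quotient $F\cdot\Psi_1(L_1)^{-1}$ lies in $1+t^2\BZ[\tfrac{1}{m}](\!(q)\!)[\![t]\!]$, since $1+t\BZ[\tfrac{1}{m}](\!(q)\!)[\![t]\!]$ is closed under inversion. Iterating, each $L_n\in\BZ[\tfrac{1}{m}](\!(q)\!)$ is uniquely determined as $-(1-q^m)$ times the leading coefficient of the current remainder, and in the $t$-adic limit one obtains $F=\prod_n\Psi_n(L_n)$, which is exactly the formula in the lemma. Uniqueness follows from the same inductive reading.

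To prove the key claim, write $L(q)=\sum_{j\geq j_0}c_j q^j$ with $c_j\in\BZ[\tfrac{1}{m}]$ and expand $1/(1-q^{m\ell})=\sum_{s\geq 0}q^{sm\ell}$ inside the exponential, turning the exponent into $-\sum_{j,s}c_j\sum_{(\ell,m)=1}(q^{j+sm}t^n)^\ell/\ell$. M\"obius-inverting $[(\ell,m)=1]=\sum_{d\mid\gcd(\ell,m)}\mu(d)$ yields the standard identity $\exp(-\sum_{(\ell,m)=1}y^\ell/\ell)=\prod_{d\mid m}(1-y^d)^{\mu(d)/d}$, and hence
$$\Psi_n(L)(t,q) \;=\; \prod_{j\geq j_0}\prod_{s\geq 0}\prod_{d\mid m}\big(1-q^{(j+sm)d}t^{nd}\big)^{c_j\mu(d)/d}.$$
Each exponent $c_j\mu(d)/d$ lies in $\BZ[\tfrac{1}{m}]$. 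Since $\binom{\alpha}{k}\in\BZ_p$ for $\alpha\in\BZ_p$ and any prime $p\nmid m$, the binomial series $(1-z)^{c_j\mu(d)/d}$ lies in $\BZ[\tfrac{1}{m}][\![z]\!]=\bigcap_{p\nmid m}\BZ_p[\![z]\!]$, so each factor in the product is in $\BZ[\tfrac{1}{m}][q^{\pm 1}][\![t^{nd}]\!]$.

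The main obstacle is verifying that the infinite product converges in $\BZ[\tfrac{1}{m}](\!(q)\!)[\![t]\!]$ rather than some larger completion. In the $t$-direction, each factor is $1+O(t^{nd})$ with $d\geq 1$, giving $t$-adic convergence. At each fixed $t$-degree $N$, only finitely many triples $(j,s,d)$ can contribute a non-trivial binomial term, and their $q$-exponents $(j+sm)dk$ are bounded below by $j_0\cdot N$ (using $j\geq j_0$, $s\geq 0$, $d\geq 1$) and tend to $+\infty$ as $j$ or $s$ grow; hence for each specified $q$-exponent only finitely many tuples contribute, and the resulting $q$-series at $t$-degree $N$ has Laurent support bounded below by $j_0 N$. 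This gives the desired membership in $\BZ[\tfrac{1}{m}](\!(q)\!)[\![t]\!]$ and completes the proof. The same product expression, applied in reverse, proves the converse direction by inspection.
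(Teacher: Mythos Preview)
Your proof is correct and follows the same route as the paper: both reduce to the M\"obius identity $\exp\big(-\sum_{(\ell,m)=1}y^\ell/\ell\big)=\prod_{d\mid m}(1-y^d)^{\mu(d)/d}$ to express the building blocks as products of (fractional powers of) Pochhammer symbols, then peel off one $L_n$ at a time by induction on the $t$-degree. Two minor slips in your last paragraph: the clause ``only finitely many triples $(j,s,d)$ can contribute a non-trivial binomial term at fixed $t$-degree $N$'' is false as written (infinitely many do, since $j,s$ are unbounded)---what you need, and what you correctly state afterwards, is that only finitely many contribute at fixed $t$-degree \emph{and} fixed $q$-degree; and the lower bound on the $q$-support should be $j_0 N/n$ rather than $j_0 N$, and must be argued for products of binomial terms coming from several factors, not just a single one.
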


\begin{proof}
The existence and uniqueness of $L_{n}(q)\in\BQ(\!(q)\!)$ that
satisfy~\eqref{madmis} is clear by removing one $L_{n}(q)$ at a time and induction
on powers of $t$. To show that $L_{n}(q)\in\BZ[\tfrac{1}{m}](\!(q)\!)$, we use
the fact that
\be
\label{eq:building.zmadmis}
\exp\bigg(\!-\sum_{\underset{(\ell,m)=1}{\ell=1}}^{\infty}
\frac{q^{k\ell}}{\ell(1-q^{m\ell})} t^{n\ell}\bigg)
\inn 1+t\,\BZ[\tfrac{1}{m}](\!(q)\!)[\![t]\!]\,,
\ee
which itself follows from the fact that by inclusion/exclusion on the divisors of $m$,
equation~\eqref{eq:building.zmadmis} can be written as a product of Pochhammer
symbols raised to the power of $\tfrac{1}{m}$. Indeed, we find that
\be
\exp\bigg(\!-
\sum_{\underset{(\ell,m)=1}{\ell=1}}^{\infty}
\frac{q^{k\ell}}{\ell(1-q^{m\ell})}
t^{n\ell}\bigg)
\=
\prod_{d|m}(q^{dk}t^{dn};q^{dm})_{\infty}^{\mu(d)/d}
\inn
1+t\,\BZ[\tfrac{1}{m}](\!(q)\!)[\![t]\!]\,,
\ee
where $\mu$ is the Möbius function.
\end{proof}

\begin{definition}
\label{def.zmadmis}
A series $F(t,q)\in1+t\,\BZ[\tfrac{1}{m}](\!(q)\!)[\![t]\!]$ is called level
$m$ admissible (and abbreviated by $m$-admissible) if, for the unique
$L_{n}(q)\in\BZ[\tfrac{1}{m}](\!(q)\!)$ from equation~\eqref{madmis}, we have
\be
L_{n}(q)
\inn\BZ[\tfrac{1}{m},q^{\pm1},\Phi_{d}(q)^{-1}
\,|\,d\not\equiv0\pmod{m}]
\qquad\text{and}\qquad
L_n(\z_m)\inn\BZ[\tfrac{1}{m}]\,,
\ee
where $\Phi_{d}$ denotes the $d$-th cyclotomic polynomial.
\end{definition}

Of course level $1$ admissible series are simply the admissible series of
Kontsevich--Soibelman. As in Section~\ref{sub.admissible}, we can expand the
logarithm of a $m$-admissible series for $q$ near a $\z_{mn}$ root of unity for any
$n\in\BZ_{>0}$. This will be a Laurent series with simple polar part given by a
series
\be
\label{eq:Vt.m}
\frac{V(t)}{m^2} \= \sum_{n=1}^{\infty} \sum_{\underset{(\ell,m)=1}{\ell=1}}^{\infty}
\frac{L_{n}(\z_m^{\ell})}{m\ell^2} t^{n\ell} \inn\BQ[\![t]\!]\,.
\ee
With this definition we can state the generalisation to $m>1$ of Lemma~\ref{Vlem}
of Section~\ref{sub.DT} and Theorem~\ref{thm.DT} of Section~\ref{sub.DT} for the
congruence sums $F_{A,m,k}$ in equation~\eqref{FAmdef}. 

\begin{lemma}
\label{Vlem.m}
Let $A$ be a symmetric integral $N \times N$ matrix and $V(t)=V_A(t)\in\BQ[\![t]\!]$
as in Lemma~\ref{Vlem} of Section~\ref{sub.DT}. Then for all $m>0$
and all $k\in\{0,\dots,m-1\}^N$, we have that
\be
\log(F_{A,m,k}(t,\z_m+x)) \= \frac{\z_mV(t^m)}{m^2x}+O(x^0)\,.
\ee
\end{lemma}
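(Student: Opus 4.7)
The plan is to reduce Lemma~\ref{Vlem.m} to the already-established Lemma~\ref{Vlem} by exposing $F_{A,m,k}(t,q)$ as the $k$-th discrete Fourier component of $F_A(t,q)$ with respect to the natural action of $(\BZ/m\BZ)^N$ on the $t$-variables. The crucial observation is that the polar part $\zeta_mV(t^m)/(m^2x)$ from Lemma~\ref{Vlem} depends on $t$ only through $t^m$, so it is invariant under every twist $(t_1,\ldots,t_N)\mapsto(\zeta_m^{j_1}t_1,\ldots,\zeta_m^{j_N}t_N)$, and therefore survives Fourier averaging unchanged.

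First I would partition the sum defining $F_A(t,q)$ according to the residue class of $n$ modulo $m$. Writing $n=k+m\tilde n$ with $k\in\{0,\ldots,m-1\}^N$ and $\tilde n\in\BZ_{\geq 0}^N$ and factoring out the $k$-dependent prefactors gives the identity
\[
\alpha_k(q)\,t^k\,F_{A,m,k}(t,q) \= \sum_{\substack{n\geq 0\\ n\equiv k\,(m)}} (\text{summand of }F_A)_n,
\]
where $\alpha_k(q):=(-1)^{\diag(A)\cdot k}q^{(k^tAk+\diag(A)\cdot k)/2}/\prod_j(q;q)_{k_j}$ is regular and nonzero at $q=\zeta_m$ since each $k_j<m$. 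Orthogonality of characters of $(\BZ/m\BZ)^N$ rewrites the right-hand side as
\[
\frac{1}{m^N}\sum_{j\in(\BZ/m\BZ)^N}\zeta_m^{-j\cdot k}\,F_A(\zeta_m^j\cdot t,\,q),
\qquad \zeta_m^j\cdot t:=(\zeta_m^{j_1}t_1,\ldots,\zeta_m^{j_N}t_N).
\]

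Next I apply Lemma~\ref{Vlem} to each of the $m^N$ summands: $\log F_A(\zeta_m^j\cdot t,\zeta_m+x)=\zeta_mV(t^m)/(m^2x)+O(x^0)$, using $V((\zeta_m^j\cdot t)^m)=V(t^m)$. Writing $F_A(\zeta_m^j\cdot t,\zeta_m+x)=e^{\zeta_mV(t^m)/(m^2x)}R_j(t,x)$ with $R_j(t,x)\in\BQ[\zeta_m][\![t,x]\!]$ and $R_j(0,x)=1$, and noting that $R_j(t,x)=R_0(\zeta_m^j\cdot t,x)$, the Fourier sum collapses to $m^Nt^k\sum_{\tilde n\geq 0}r_{k+m\tilde n}(x)\,t^{m\tilde n}$, where $r_n(x)$ denotes the coefficient of $t^n$ in $R_0$. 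The explicit $t^k$ cancels between the two sides, giving
\[
F_{A,m,k}(t,\zeta_m+x) \= \frac{e^{\zeta_mV(t^m)/(m^2x)}}{\alpha_k(\zeta_m+x)}\,\sum_{\tilde n\geq 0}r_{k+m\tilde n}(x)\,t^{m\tilde n}.
\]

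The only step that requires direct verification is the identity $r_k(x)=\alpha_k(\zeta_m+x)$, which ensures that the second factor has constant-in-$t$ value $1$. Because $e^{-\zeta_mV(t^m)/(m^2x)}$ is a power series in $t^m$, it contributes only monomials $t^{m\ell}$ with $\ell\in\BZ_{\geq 0}^N$; since $k$ lies in the fundamental domain $\{0,\ldots,m-1\}^N$, the only way to produce a $t^k$ term in the product $F_A(t,\zeta_m+x)\cdot e^{-\zeta_mV(t^m)/(m^2x)}$ is from the $t^k$ coefficient of $F_A$ (namely $\alpha_k(\zeta_m+x)$) times the constant term $1$ of the exponential. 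Consequently the right-hand factor equals $1+O(t^m)$ with coefficients in $\BQ[\zeta_m][\![x]\!]$; its logarithm has no polar part in $x$, and we conclude $\log F_{A,m,k}(t,\zeta_m+x)=\zeta_mV(t^m)/(m^2x)+O(x^0)$, as required. I expect the main issue to be notational bookkeeping in the multi-variable Fourier decomposition; no real obstruction beyond the content of Lemma~\ref{Vlem} is anticipated.
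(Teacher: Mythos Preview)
Your argument is correct and takes a genuinely different route from the paper. The paper deduces both Lemma~\ref{Vlem} and Lemma~\ref{Vlem.m} in one stroke from Theorem~\ref{thm.fgi.is.admis}: once the congruence sum $F_{A,m,k}(t^{1/m},q)$ is identified with the formal-Gaussian-integration series $\CS_{A,m,k}(t,q)$, the polar part $\exp\bigl(V(t^m)/(m^2\log(1+x/\zeta_m))\bigr)$ is simply read off the explicit prefactor in~\eqref{Ikdef}. Your approach instead reduces Lemma~\ref{Vlem.m} to Lemma~\ref{Vlem} by the Fourier decomposition of $F_A$ under the $(\BZ/m\BZ)^N$-action on the $t$-variables, using only that the polar part depends on $t$ through $t^m$ and hence is invariant under all twists $t\mapsto\zeta_m^j\cdot t$. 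This is more elementary in that it never unpacks the FGI description of the congruence sums; the paper's route, on the other hand, needs Theorem~\ref{thm.fgi.is.admis} for general $m,k$ anyway (it is the engine behind Theorem~\ref{thm.FGI2}), so for the authors the lemma comes essentially for free. One small slip: in the sentence ``the Fourier sum collapses to $m^N t^k\sum_{\tilde n}\ldots$'' the factor $m^N$ should not be there, since you already divided by $m^N$ in the orthogonality formula; your next displayed equation is nonetheless correct.
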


This lemma will be proved after Theorem~\ref{thm.fgi.is.admis} of
Section~\ref{sub.synthesis}. Assuming this lemma, we have the following:

\begin{theorem}
\label{thm.zm.admis}
Fix an integral symmetric $N \times N$ matrix $A$, a positive integer $m$ and
a residue class $k\in\{0,\dots,m-1\}^N$. Then the series
\be
F_{A,m,k}(t^{1/m},q)\inn1+t\,\BZ[\tfrac{1}{m}](\!(q)\!)[\![t]\!]
\ee
is $m$-admissible.
\end{theorem}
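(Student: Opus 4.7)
My plan is to adapt the proof of Theorem~\ref{thm.DT} to the level-$m$ setting, using Lemma~\ref{Vlem.m} in the role that Lemma~\ref{Vlem} plays for the $m=1$ case. Substituting $n_j = k_j + m n_j'$ in~\eqref{FAmdef}, the summand is a Laurent polynomial in $q$ divided by $\prod_j (q^{k_j+1};q)_{m n_j'}$, each of which has constant term $1$ in $q$ and is therefore invertible in $\BZ[\![q]\!]$. So $F_{A,m,k}(t^{1/m},q)\in 1+t\,\BZ(\!(q)\!)[\![t]\!]\subset 1+t\,\BZ[\tfrac{1}{m}](\!(q)\!)[\![t]\!]$, and by Lemma~\ref{lem.madm} the series $L_n(q)\in\BZ[\tfrac{1}{m}](\!(q)\!)$ of~\eqref{madmis} are well defined. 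What must be shown is (i) $L_n(q)\in\BZ[\tfrac{1}{m},q^{\pm 1},\Phi_d(q)^{-1}:d\not\equiv 0\pmod{m}]$ and (ii) $L_n(\zeta_m)\in\BZ[\tfrac{1}{m}]$.

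For the coarse integrality of the $L_n(q)$ with arbitrary cyclotomic denominators, I would first derive a $q$-holonomic system satisfied by $F_{A,m,k}(t^{1/m},q)$, using the identity $(q^{k_j+1};q)_{m(n_j'+1)}=(q^{k_j+1};q)_{m n_j'}\prod_{i=1}^m(1-q^{k_j+m n_j'+i})$ to encode the shift $n_j'\mapsto n_j'+1$ as a linear $q$-difference relation in $t$ (compare~\eqref{PhiAshift}). As in~\eqref{FG}--\eqref{eq:riccati}, I would then introduce Riccati-type ratios $G_j(t,q)$ and show by induction on total $t$-degree that their coefficients lie in $\BZ[\tfrac{1}{m},q^{\pm 1},\Phi_d(q)^{-1}:\text{all }d]$. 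Unwinding~\eqref{madmis} then gives the same integrality for each $L_n(q)$.

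To upgrade this to~(i), I would argue by contradiction, paralleling the endgame of Theorem~\ref{thm.DT}: if $n_0$ is minimal such that $L_{n_0}(q)$ has a pole at some $\zeta_{m\ell_0}$ with residue $\alpha\neq 0$, then expanding $\log F_{A,m,k}(t^{1/m},\zeta_m+x)$ via~\eqref{madmis} and collecting the coefficient of $x^{-1}$ at order $t^{n_0\ell_0}$ produces a term incompatible with the identity $\log F_{A,m,k}(t^{1/m},\zeta_m+x)=\zeta_m V_A(t^m)/(m^2 x)+O(x^0)$ from Lemma~\ref{Vlem.m}. Induction on $n_0$ then rules out all $\Phi_d$ with $d\equiv 0\pmod{m}$. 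Condition~(ii) follows by matching polar coefficients: formula~\eqref{eq:Vt.m} expresses $V_A(t^m)/m^2$ as $\sum_n\sum_{(\ell,m)=1}L_n(\zeta_m^\ell)/(m\ell^2)\,t^{n\ell}$, and a Möbius-type inversion on the index $\ell$ coprime to $m$ determines each $L_n(\zeta_m)$ from the Taylor coefficients of $V_A(t)\in\BQ[\![t]\!]$, which themselves lie in $\BZ[\tfrac{1}{m}]$ by the explicit form~\eqref{Vdef} (verified either directly from Lemma~\ref{Vlem} applied to $F_A$, or inductively together with~(i)).

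The hard part will be the careful bookkeeping in the second paragraph — propagating cyclotomic denominator control through the Riccati induction and matching base cases — and then combining this with the Lemma~\ref{Vlem.m} residue computation to eliminate the forbidden factors $\Phi_d(q)$ with $d\equiv 0\pmod{m}$. A secondary subtlety is that the contradiction step must be run uniformly over all $\ell_0\geq 1$ (not merely $\ell_0=1$), so the induction on $n_0$ has to be coupled with an induction on the depth $\ell_0$, with the base $\ell_0=1$ handled directly by Lemma~\ref{Vlem.m}.
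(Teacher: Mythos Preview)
Your overall strategy matches the paper's, but there is a genuine gap in the Riccati step and, consequently, in the contradiction step.

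The paper does not settle for ``all cyclotomic denominators'' in the Riccati recursion. Instead it works with $H_{A,m,k}(t,q)=\frac{(-1)^{Ak}q^{Ak(k+1)/2}}{(q;q)_k}t^k F_{A,m,k}(t,q)$ (the $k$-th $m$-section of $F_A$), which satisfies the $q$-difference equation
\[
\sum_{\ell=0}^{m}(-1)^{\ell}q^{-\ell(\ell-1)/2}\binom{m}{\ell}_{\!q^{-1}}H(q^{\ell}t,q)=q^{Am(m+1)/2}t^{m}H(q^{Am}t,q).
\]
For the ratio $G=H(qt)/H(t)=q^k+\sum_{n\geq1}a_n(q)t^{mn}$, the recursion determining $a_n$ has leading multiplier exactly $-(q^{nm+1-m};q)_{m-1}=\prod_{i=1}^{m-1}(1-q^{(n-1)m+i})$. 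The key observation is that none of the exponents $(n-1)m+i$ with $1\leq i\leq m-1$ is divisible by $m$, so this product contains no factor $\Phi_d(q)$ with $m\mid d$. Hence by induction $G\in q^k+t^m\BZ[q^{\pm1},\Phi_d(q)^{-1}:m\nmid d][\![t^m]\!]$, which already forces $L_n(q)\in\frac{1-q^m}{1-q^{nm}}\BZ[\tfrac{1}{m},q^{\pm1},\Phi_d^{-1}:m\nmid d]$. Only the residual bad factor $\frac{1-q^m}{1-q^{nm}}$ needs to be removed by contradiction, and the potential bad poles are simple and located at $\zeta_{am}$ for $1<a\mid n$.

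Your plan to expand only at $\zeta_m$ cannot detect these poles: in \eqref{madmis} the term $L_{n_0}(q^\ell)$ is evaluated at $q^\ell=\zeta_m^\ell$, which is again a primitive $m$-th root since $(\ell,m)=1$, so a pole of $L_{n_0}$ at $\zeta_{m\ell_0}$ with $\ell_0>1$ is invisible there. The paper instead expands $\log F_{A,m,k}$ at $q=\zeta_{am}$ and uses the extension of Lemma~\ref{Vlem.m} to all $c\in m\BZ_{>0}$ (which its proof via Theorem~\ref{thm.fgi.is.admis} with arbitrary $m'$ actually gives): the residue there is the power series $\zeta_{am}V(t^{am})/(am)^2$ in $t^{am}$, forcing $am\mid n_0$ and then a contradiction. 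Your ``induction on depth $\ell_0$'' is not a substitute for this; without the sharp Riccati bound and without expanding at $\zeta_{am}$, the argument does not close.

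A smaller point: your argument for condition~(ii) asserts that the Taylor coefficients of $V_A(t)$ lie in $\BZ[\tfrac{1}{m}]$, but $V_A(t)$ is built from $\Li_2$ and $\log$ and has arbitrary rational denominators (cf.\ \eqref{Vt} or \eqref{g2V}), so that claim is false as stated.
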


\begin{proof}
We will give the details of the proof when $N=1$, while for $N>1$ we use
similar methods to the proof of Theorem~\ref{thm.DT} of Section~\ref{sub.DT}.
The series
\be
H_{A,m,k}(t,q) \= \frac{(-1)^{Ak} q^{\frac{1}{2}Ak(k+1)} }
{(q;q)_{k}}t^{k}F_{A,m,k}(t,q)
\ee
satisfies the linear $q$-difference equation
\be
\sum_{\ell=0}^{m}(-1)^{\ell}q^{-\ell(\ell-1)/2}
\binom{m}{\ell}_{\!\!q^{-1}}H_{A,m,k}(q^{\ell}t,q)
\=
q^{Am(m+1)/2}t^{m}H_{A,m,k}(q^{Am}t,q)\,,
\ee
where $\binom{m}{\ell}_{\!q^{-1}}
=\tfrac{(q^{-1};q^{-1})_m}{(q^{-1};q^{-1})_\ell(q^{-1};q^{-1})_{m-\ell}}$ is the
$q^{-1}$-binomial coefficient. Consider the ratio
\be
G_{A,m,k}(t,q)
\=
\frac{H_{A,m,k}(qt,q)}{H_{A,m,k}(t,q)}
\=
q^k+\sum_{n=1}^{\infty}a_{n}(q)t^{mn}\,.
\ee
This satisfies the non-linear equation $q$-difference equation
\be
\sum_{\ell=0}^{m}(-1)^{\ell}q^{-\ell(\ell-1)/2}\binom{m}{\ell}_{\!\!q^{-1}}
\prod_{j=0}^{\ell-1}G_{A,m,k}(q^{j}t,q)
\=
q^{Am(m+1)/2}t^{m}\prod_{j=0}^{Am-1}G_{A,m,k}(q^{j}t,q)\,.
\ee
This gives an integral recursion for $a_{n}(q)$ multiplied by
\be
\begin{aligned}
\sum_{\ell=0}^{m}(-1)^{\ell}q^{-\ell(\ell-1)/2}\binom{m}{\ell}_{\!\!q^{-1}}
\sum_{j=0}^{\ell-1}q^{nmj}
\=
\sum_{\ell=0}^{m}(-1)^{\ell}q^{-\ell(\ell-1)/2}\binom{m}{\ell}_{\!\!q^{-1}}
\frac{1-q^{nm\ell}}{1-q^{nm}}\\
\=
\frac{(q^{1-m};q)_{m}-(q^{nm+1-m};q)_{m}}{1-q^{nm}}
\=
-(q^{nm+1-m};q)_{m-1}\,.
\end{aligned}
\ee
It follows that
\be
-(q^{nm+1-m};q)_{m-1}a_{n}(q)\inn\BZ[q^{\pm1},a_1(q),\dots,a_{n-1}(q)]\,.
\ee
Notice that $-(q^{mn+1-m};q)_{m-1}$, never contains $\Phi_{d}(q)$ with
$d\equiv0\pmod{m}$ as a factor and therefore, by induction, we see that
\be
G_{A,m,k}(t,q)
\inn
q^{k}+t^{m}\BZ[q^{\pm1},\Phi_{d}(q):d\not\equiv0\!\!\!\pmod{m}][\![t^{m}]\!]\,.
\ee
Therefore, solving for the unique Laurent series $L_{n}(q)$ such that
\be
\log(q^{-k}G_{A,m,k}(t,q))
\=
\sum_{n=1}^{\infty}
\sum_{\underset{(\ell,m)=1}{\ell=1}}^{\infty}
\frac{L_{n}(q^{\ell})(1-q^{nm\ell})}{\ell(1-q^{m\ell})}
t^{nm\ell}\,,
\ee
we find that $L_{n}(q)\in\frac{1-q^{m}}{1-q^{nm}}
\BZ[\tfrac{1}{m},q^{\pm1},\Phi_{d}(q)\,|\,d\not\equiv0\pmod{m}]$. It follows
that
\be
\log\big(F_{A,m,k}(t,q)\big) \=
-\sum_{n=1}^{\infty} \sum_{\underset{(\ell,m)=1}{\ell=1}}^{\infty}
\frac{L_{n}(q^{\ell})}{\ell(1-q^{m\ell})} t^{nm\ell}\,.
\ee
Suppose by induction that for $n<n_0$, for some $n_0$, we have $L_{n}(q)\in
\BZ[\tfrac{1}{m},q^{\pm1},\Phi_{d}(q):d\not\equiv0\pmod{m}]$ (the case $n_0=1$
being obvious). If $L_{n_0}(q)\notin
\BZ[\tfrac{1}{m},q^{\pm1},\Phi_{d}(q):d\not\equiv0\pmod{m}]$ it must have a pole
at $q=\z_{am}$ for some $1<a$. From Lemma~\ref{Vlem.m}, the residues at $\z_c$
are given universally by $\z_c\thinspace V(t^{c})\thinspace c^{-2}$ for all
$c\in m\BZ_{>0}$ and we therefore find that
\be
V(t^m)\=m\sum_{n=1}^{\infty}
\sum_{\underset{(\ell,m)=1}{\ell=1}}^{\infty}
\frac{L_{n}(\z_m^{\ell})}{\ell^2}
t^{nm\ell}\,,
\ee
and so
\be
V(t^{am})\=-am\sum_{n=1}^{\infty}
\sum_{\underset{(\ell,m)=1}{\ell=1}}^{\infty}
\frac{L_{n}(\z_m^{\ell})}{\ell^2}
t^{amn\ell}+O(t^{amn_0})\,.
\ee
Assuming that $L_{n_0}(q)$ has a pole at $\z_{am}$ would imply that the logarithm
expanded at $q=\z_{am}+x$ would have an additional residue containing a power $t^{n_0}$.
Given that $V(t^{am})$ is the residue of $\log\big(F_{A,m,k}(t,q)\big)$ at $q=\z_{am}$
from Lemma~\ref{Vlem.m}, and $V(t^{am})$ is a power series in $t^{am}$, we see that
this must imply that $n_0$ is a multiple of $am$.
In particular, we find that $n_0=abm$ for some integer $0<b<n_0$.
However, we see that this would also change the coefficient of $t^{abm}$, which would
contradict the fact that the residue is $V(t^{am})$. This completes the proof.
\end{proof}

We are especially interested in the integrality that is implied by admissibility.
With this in mind, we state a generalisation of Lemma~\ref{lem.dwork.admis} of
Section~\ref{sub.admissible} to the case of $m$-admissible series.

\begin{lemma}
\label{lem.dwork.zm.admis}
Fix an $m$-admissible series $F(t,q)$. For all primes $p$ and positive integers
$m'$ with $(mm',p)=1$, we have
\be
\label{dphiA1m}
\log(F(t^{p/m'},q^p)) -p\log(F(t^{1/m'},q))
\inn \frac{p}{x} \BZ_{(p)}[t^{1/mm'},\z_{mm'}][\![t,x]\!]\,,
\qquad q\=\z_{mm'}+x\,.
\ee
\end{lemma}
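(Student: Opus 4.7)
The strategy is to adapt the Dwork-type manipulation from the proof of Lemma~\ref{lem.dwork.admis} of Section~\ref{sub.admissible} (which covered the case $m=1$) by substituting the $m$-admissible expansion from Lemma~\ref{lem.madm},
\[
\log F(t, q) \= -\sum_{n \geq 1} \sum_{\substack{\ell \geq 1 \\ (\ell, m) = 1}} \frac{L_n(q^\ell)}{\ell(1 - q^{m\ell})} t^{n\ell}\,,
\]
into both $\log F(t^{p/m'}, q^p)$ and $p \log F(t^{1/m'}, q)$ and taking the difference. In the double sum coming from $p \log F(t^{1/m'}, q)$, I would then split the $\ell$-summation into the parts with $(\ell, p) = 1$ and with $p \mid \ell$. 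For the $p \mid \ell$ part, the substitution $\ell = pk$ (with $(k, m) = 1$, since $(p, m) = 1$) makes the overall prefactor $p$ cancel the $1/p$ from $1/\ell = 1/(pk)$, reproducing the first double sum exactly. These pieces cancel, leaving
\[
\log F(t^{p/m'}, q^p) - p \log F(t^{1/m'}, q) \= p \sum_{\substack{n \geq 1,\, \ell \geq 1 \\ (\ell, mp) = 1}} \frac{L_n(q^\ell)}{\ell(1 - q^{m\ell})} t^{n\ell/m'}\,.
\]

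Next, I would check each surviving summand, expanded at $q = \z_{mm'} + x$, lies in $\frac{1}{x}\BZ_{(p)}[t^{1/mm'}, \z_{mm'}][\![t, x]\!]$, so that multiplication by the overall $p$ yields the claim. The factor $1/\ell$ is in $\BZ_{(p)}$ since $p \nmid \ell$, and $t^{n\ell/m'}$ is a positive integer power of $t^{1/mm'}$. The factor $1/(1 - q^{m\ell})$ has at worst a simple pole in $x$ at $0$, occurring precisely when $m' \mid \ell$: in that case the leading term of $1 - (\z_{mm'}+x)^{m\ell}$ is $-m\ell\,\z_{mm'}^{-1}\,x$, so the factor lies in $(1/x)\BZ_{(p)}[\z_{mm'}][\![x]\!]$; when $m' \nmid \ell$, the constant term $1 - \z_{m'}^\ell$ is a $p$-adic unit because the order of $\z_{m'}^\ell$ divides $m'$ and is coprime to $p$.

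The main obstacle is showing that $L_n(q^\ell)$ itself, expanded at $q = \z_{mm'} + x$, has $\BZ_{(p)}[\z_{mm'}]$-coefficients. By the structural hypothesis $L_n \in \BZ[\tfrac{1}{m}, q^{\pm 1}, \Phi_d(q)^{-1} : d \not\equiv 0 \pmod{m}]$, this reduces to verifying that $\Phi_d(\z_s)$ is a $p$-adic unit for every $d \not\equiv 0 \pmod{m}$, where $s$ is the order of $\z_{mm'}^\ell$. Since $(\ell, m) = 1$ we get $m \mid s$, and $(s, p) = 1$ follows from $s \mid mm'$ and $(mm', p) = 1$. Invoking the classical cyclotomic identity that $N_{\BQ(\z_s)/\BQ}(\Phi_d(\z_s))$ is $\pm 1$ unless $\max(d,s)/\min(d,s)$ is a prime power (in which case the norm is a power of that prime), a $p$-adic non-unit would require $d = s p^a$ or $s = d p^a$ with $a \geq 1$; in either case $(m, p) = 1$ together with $m \mid s$ forces $m \mid d$, contradicting $d \not\equiv 0 \pmod{m}$. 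Thus the hypothesis on the allowed denominators of $L_n$ is precisely calibrated to rule out the only dangerous scenario, completing the proof.
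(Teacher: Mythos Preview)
Your proof is correct and follows the same approach as the paper: compute the Dwork quotient from the $m$-admissible expansion, cancel the $p\mid\ell$ terms to leave $p$ times a sum over $(\ell,mp)=1$, then expand at $q=\z_{mm'}+x$. The paper's proof ends with ``Then expanding this sum at $q=\z_{mm'}+x$ concludes the proof,'' whereas you carefully verify that each factor---$1/\ell$, $1/(1-q^{m\ell})$, and $L_n(q^\ell)$---is $p$-integral at this point, the last via the cyclotomic resultant argument showing that $\Phi_d(\z_s)$ with $m\mid s$, $(s,p)=1$, $m\nmid d$ is always a $p$-unit.
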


\begin{proof}
Notice that
\be
\begin{aligned}
\frac{F(t^p,q^p)}{F(t,q)^p}
&\=
\exp\Big(p\sum_{n=1}^{\infty}
\sum_{\underset{(\ell,m)=1}{\ell=1}}^{\infty}
\frac{L_{n}(q^{\ell})}{\ell(1-q^{m\ell})}
t^{j\ell}
-
p\sum_{n=1}^{\infty}
\sum_{\underset{(\ell,m)=1}{\ell=1}}^{\infty}
\frac{L_{n}(q^{p\ell})}{p\ell(1-q^{mp\ell})}
t^{jp\ell}\Big)\\
&\=
\exp\Big(p\sum_{n=1}^{\infty}
\sum_{\underset{(\ell,mp)=1}{\ell=1}}^{\infty}
\frac{L_{n}(q^{\ell})}{\ell(1-q^{m\ell})}
t^{j\ell}\Big)\,.
\end{aligned}
\ee
Then expanding this sum at $q=\z_{mm'}+x$ concludes the proof.
\end{proof}

\subsection{Formal Gaussian integration}
\label{sub.FGI}

In this section we review a collection of formal power series at roots of unity
that were defined by formal Gaussian integration in~\cite{DG} for $\z=1$ and in
~\cite{DG2} for arbitrary complex roots of unity $\z$. The input to define these
power series was a Neumann--Zagier datum, which, in the case of perturbative complex
Chern--Simons theory, comes from an ideal triangulation of a 3-manifold and a solution
of its gluing equations.
For the simpler case of a single matrix $A$ and the Nahm equations, and for
$\zeta=1$, the same series were found in~\cite{Zagier:dilog}.
We will tailor the definition of
these series for the purpose of our paper and match them with the associated series
$\Phitof_A(t,q)$ of the admissible series $F_A(t,q)$ of~\eqref{FAdef}. The most
important point here is that formal Gaussian integration gives series with
coefficients that are manifestly algebraic functions.

We now recall the definition of $\Phitof_{A,m}$. Fix a symmetric $N \times N$ matrix
$\Lambda$ with integer entries. For a $\BQ$-algebra $R$, formal Gaussian integration
is a map $\langle \cdot \rangle: R[\![w,w^3h^{-1},h]\!] \to R[\![ h]\!]$, where
$w=(w_1,\dots,w_N)$, defined by
\be
\label{eq:bracket}
\left\langle
f(w,h)
\right\rangle_{\Lambda}
\;:=\; \exp\left(
{\frac{h}{2}\sum_{i,j=1}^N (\Lambda^{-1})_{i,j}
\frac{\partial}{\partial w_i}
\frac{\partial}{\partial w_j}}\right)
f(w,h) \Big|_{w=0}
\inn R[\![h]\!]\,.
\ee
This formally recovers the integral
\be
\frac{\int e^{-\frac{1}{2h}w^t \Lambda \,w}f(w,h) \, dw}{
\int e^{-\frac{1}{2h}w^t \Lambda \,w} \, dw}
\inn R[\![h]\!]\,.
\ee
Starting with the admissible series $F_{A}(t,q)$ of equation~\eqref{FAdef} and taking
$q=\z_me^{h}$, summing over congruences modulo $m$, and applying Poisson
summation assuming that one term dominates, leads formally to the integral
\be
\label{eq:inform.int}
\begin{aligned}
&\frac{1}{(q;q)_{\infty}^{N}}\sum_{k\in(\BZ/m\BZ)^N}
(-1)^{\mathrm{diag}(A)\cdot k}q^{\frac{1}{2}(k^t A k
+ \mathrm{diag}(A)\cdot k)}t_1^{k_1} \cdots t_{N}^{k_N}
\int \prod_{j=1}^{N}(q^{k_j+1}e^{w_j};q)_{\infty}\\
&\quad\times \exp\Big(\frac{1}{h}\Big(
\frac{\diag(A)\cdot w\pi i}{m}+\frac{1}{2}w^tAw+w\cdot\log(t)\Big)
+(\frac{\diag(A)}{2}+k^{t}A)\cdot w\Big)
\frac{dw}{(mh)^{N}}\,.
\end{aligned}
\ee
To define the integral in equation~\eqref{eq:inform.int} precisely, we use the
same coordinates as we did previously for the Habiro ring 
\be
q\=\z_me^{h}\=\z_m+x\,,\qquad
h\=\log(1+x/\z_m)\,.
\ee
together with the expansion of the infinite Pochhammer symbol (see~\eqref{logpocm}) 
from which we remove four terms
\be
\label{Psikdef}
\begin{small}
\begin{aligned}
&\psi_{k,z,\z_m}(w,x) \= 
((\z_m+x)^{k+1}ze^w;\z_m+x)_{\infty} \cdot 
\exp\Big(-\frac{\Li_{2}(z^{m})}{m^2\log(1+x/\z_m)}\\
&-\frac{\Li_{1}(z^{m})}{m^2\log(1+x/\z_m)} w 
 -\frac{\Li_{0}(z^{m})}{m^2\log(1+x/\z_m)}w^2
+ \sum_{\ell=0}^{m-1}
\big(\frac{1}{2}-\frac{k+\ell+1}{m}\big)\log(1-\z_m^{k+\ell+1}z) \Big)\,,
\end{aligned}
\end{small}
\ee
where $\psi_{k,z,\z_m}(w,x)\in1+x\BQ(z,\z_m)[\![w,w^3x^{-1},x]\!]$.
The integral~\eqref{eq:inform.int} is dominated by the leading term given by
the exponential of $h^{-1}$ times the function
\be
\label{eq:Vgen}
\frac{1}{2}w^tAw +\sum_{j=1}^{N} -\frac{\Li_{2}(1-e^{mw_j})}{m^2}
+\frac{w_j}{m}\log\Big(\frac{(-1)^{A}t_j^m}{1-e^{mw_j}}\Big) \,,
\ee
whose critical points are exactly $1/m$ times the logarithm of $z$, where
$z=z(t)\in (\BZ[\![t]\!])^N$ is the unique solution to the equations~\eqref{zjt}.
The critical values of equation~\eqref{eq:Vgen} are given by
\be 
\label{Vt}
V^\FGI(t) 
\= -\sum_{j=1}^N \Li_2(1-z_j(t))
- \frac{1}{2} \sum_{i,j=1}^N A_{ij} \log(z_i(t))\log(z_j(t)) \inn \BQ[\![t]\!],
\qquad V^\FGI(0)\=0 \,.
\ee
The above discussion leads to the following definition.

\begin{definition}
\label{def.FGI}
We let:
\be
\label{PhiFGIdef}
\Phitof_{A,m}^\FGI(t,x) \= \sum_{k \in (\BZ/m\BZ)^N}  I_{A,m,k}(t^{1/m},x)\,,
\ee
where $I_{A,m,k}(t,x)$ is given by a formal Gaussian integral
\be
\label{Ikdef}
\begin{small}
\begin{aligned}
&I_{A,m,k}(t,x)
\=\frac{(-1)^{\mathrm{diag}(A)\cdot k}
  q^{\frac{1}{2}(k^t A k + \mathrm{diag}(A)\cdot k)}t_1^{k_1}
\cdots t_{N}^{k_N}
\exp\Big(\frac{V(t^{m})}{m^2\log(1+x/\z_m)}\Big)}{\sqrt{m^N
  \det(-\Lambda(t^m))\prod_{j=1}^{N}(1-z_{j}(t^{m})^{\frac{1}{m}})}}\\
&\times\prod_{j=1}^{N}\Big(
\prod_{\ell=1}^{m-1-k_j}
\Big(\frac{1-\z_m^{k_{j}+\ell}z_{j}(t^{m})^{\frac{1}{m}}}{1-\z_m^{\ell+k_j}}
\Big)^{\frac{1}{2}-\frac{k_{j}+\ell}{m}}\!\!\!\!
\prod_{\ell=m+1-k_j}^{m}
\Big(\frac{1-\z_m^{k_{j}+\ell}z_{j}(t^{m})^{\frac{1}{m}}}{1-\z_m^{\ell+k_j}}
\Big)^{\frac{1}{2}-\frac{k_{j}+\ell}{m}}
(1-\z_m^{\ell+k_j})\Big)\\
&\qquad\times\Big\langle
\exp\Big(\Big(k^{t}A+\frac{1}{2}\mathrm{diag}(A)\Big)
\Big(w+\frac{1}{m}\log z(t^{m})\Big)\Big)
\prod_{j=1}^{N}
\psi_{k_j,z_{j}(t^{m})^{\frac{1}{m}},\z_m}(w_j,x)
\Big\rangle_{\Lambda(t^{m})}
\end{aligned}
\end{small}
\ee
with
\be
\label{lt}
\Lambda(t)\=-A-\mathrm{diag}\Big(\frac{z(t)}{1-z(t)}\Big) \,.
\ee
\end{definition}
An important and non-trivial property is that $I_{A,m,k}(t,x)$ is $m$-periodic
in $k$, so that equation~\eqref{PhiFGIdef} makes sense. This follows from the
definition of $I_{A,m,k}(t,x)$ and a simple change of coordinates in the Gaussian
integration as done in~\cite{AarhusII} or~\cite{GSW}.

Note that the exponential prefactor of all $I_{A,m,k}(t,x)$ is independent of $k$.
Said differently, these formal Gaussian integrals are equi-peaked Gaussians.
Consider
\be
\label{dt}
\delta^\FGI(t)\= \prod_{j=1}^Nz_j(t)^{-A_{jj}}(1-z_j(t)) \det(-\Lambda(t))\,,\qquad
\delta^\FGI(0)\=1 \,.
\ee
and 
\be
\begin{aligned}
\label{Umtdef}
U_m^\FGI(t) &\= \frac{1}{m^N}
\prod_{j=1}^{N}\frac{(1-z_{j})D_{\z_m}(1)}{(1-z_{j}^{\frac{1}{m}})
D_{\z_m}(z_{j}^{\frac{1}{m}})}
\sum_{k \in (\BZ/m\BZ)^N}
\frac{
\z_m^{\frac{1}{2}\mathrm{diag}(A)\cdot k}
t^{\frac{k}{m}}z^{\frac{Ak}{m}+\diag(A)(\frac{1}{2m}-\frac{1}{2})}}
{(-\z_m)^{-\frac{1}{2}k^{t}Ak}\prod_{j=1}^{N}(\z_mz_{j}^{\frac{1}{m}};\z_m)_{k_j}}\,,
\end{aligned}
\ee
where $z_j=z_j(t)$ and 
\be\label{cycdl}
D_{\z_m}(z)
\=
\prod_{\ell=1}^{m-1}
(1-\z_m^{\ell}z)^{\frac{\ell}{m}}\,.
\ee
Note that~\eqref{Umtdef} is a well-defined power series in $t$ and $U_m^\FGI(0) \=1$.
Recall the ring $S^{(m)}$ defined in~\eqref{RA} and let
$S^{(m)}_\BQ=S^{(m)} \otimes \BQ$. 

\begin{lemma}
\label{lem.PhiA}
For every positive integer $m$ we have: 
\be
\label{eq.PhiA}
\log \Phitof^\FGI_{A,m}(t,x) \inn \frac{V^\FGI(t)}{m^2\log(1+x/\z_m)}
-\frac{1}{2} \log \delta^\FGI(t) + \log U_m^\FGI(t) + x S^{(m)}_\BQ[\![x]\!] 
\ee
and $\delta^\FGI(t) \in S$ and $m^{N m} U_m^\FGI(t)^{2m} \in S^{(m)}$. 
\end{lemma}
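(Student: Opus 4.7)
The plan is to compute the Laurent expansion in $x$ of $\Phitof^\FGI_{A,m}(t,x)=\sum_{k\in(\BZ/m\BZ)^N}I_{A,m,k}(t^{1/m},x)$ directly from Definition~\ref{def.FGI}, and match each piece with a term on the right-hand side of~\eqref{eq.PhiA}. First, the $k$-independent exponential $\exp(V^\FGI(t)/(m^2\log(1+x/\z_m)))$ in $I_{A,m,k}(t^{1/m},x)$ is the sole source of polar behaviour at $x=0$, because the functions $\psi_{k_j,z_j(t)^{1/m},\z_m}(w_j,x)$ were constructed in~\eqref{Psikdef} precisely by subtracting the $x^{-1}$ pieces of $\log(q^{k_j+1}z_je^{w_j};q)_\infty$ at $q=\z_m$ up to order~$2$ in $w_j$, and formal Gaussian integration introduces no new poles. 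This accounts for the $V^\FGI$-term of~\eqref{eq.PhiA}.

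For the constant coefficient at $x=0$, the Gaussian integral reduces, via $\langle\exp(v\cdot w)\rangle_\Lambda=\exp(\tfrac{1}{2}v^t\Lambda^{-1}v)$ with $v=k^tA+\tfrac{1}{2}\diag(A)$, to a closed-form evaluation. Combining this with the normalisation $1/\sqrt{m^N\det(-\Lambda(t))\prod_j(1-z_j^{1/m})}$ and the explicit Pochhammer and $D_{\z_m}$-products in~\eqref{Ikdef} (using~\eqref{cycdl} and $\psi_{k,z,\z_m}(0,0)=(\z_m^{k+1}z;\z_m)_\infty\,e^{(\cdot)}$) reorganises the $k$-sum term-by-term into $U_m^\FGI(t)/\sqrt{\delta^\FGI(t)}$. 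The identity $\delta^\FGI(t)=\delta(t)\in S$ is immediate from~\eqref{lt}: $-\Lambda(t)=\diag(1-z)^{-1}(\diag(1-z)A+\diag(z))$, so $\det(-\Lambda(t))\prod_j(1-z_j)=\det(\diag(1-z)A+\diag(z))$, and multiplication by $\prod_j z_j^{-A_{jj}}$ recovers $\delta(t)$ as defined in~\eqref{taulambda}.

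For the higher $x$-coefficients, I would argue by Galois descent. By the structure of formal Gaussian integration, each $x^n$-coefficient ($n\geq 1$) in $I_{A,m,k}(t^{1/m},x)$ lies a priori in $S^{(m)}_\BQ[z_j(t)^{1/m}]$: the integrand is polynomial in $w$ with coefficients in $\BQ[\z_m,t^{1/m},z_j^{1/m},(1-\z_m^\ell z_j^{1/m})^{-1}]$, and the operator $\exp(\tfrac{x}{2}\sum(\Lambda(t)^{-1})_{ij}\partial_{w_i}\partial_{w_j})$ introduces only factors of $\delta(t)^{-1}\in S^{(m)}_\BQ$. The Galois group $(\BZ/m\BZ)^N$ acting by $z_j(t)^{1/m}\mapsto\z_m^{a_j}z_j(t)^{1/m}$ permutes the summands $\{I_{A,m,k}\}_{k\in(\BZ/m\BZ)^N}$ by a shift $k\mapsto k\pm a\pmod m$, using the $m$-periodicity of $I_{A,m,k}$ in $k$ noted after Definition~\ref{def.FGI}. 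Hence $\Phitof^\FGI_{A,m}(t,x)$ is Galois-invariant, as are the polar and constant parts already identified, so the remainder lies in the fixed ring $S^{(m)}_\BQ[\![x]\!]$.

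Finally, $m^{Nm}U_m^\FGI(t)^{2m}\in S^{(m)}$ follows by inspection of~\eqref{Umtdef}: every exponent of $z_j(t)$ in the summand lies in $\tfrac{1}{m}\BZ$ and becomes integer upon the $2m$-th power; the $m^{-N}$ prefactor is absorbed by $m^{Nm}$ together with the cyclotomic identity $\prod_{\ell=1}^{m-1}(1-\z_m^\ell)=m$ arising from $D_{\z_m}(1)^m$; and the Pochhammer denominators become units in $S^{(m)}$ via $\prod_{\ell=0}^{m-1}(1-\z_m^\ell z_j^{1/m})=1-z_j\in (S^{(m)})^\times$. The main obstacle is the Galois-descent step: one must verify that each factor of $I_{A,m,k}$ — the exponential $\exp((k^tA+\tfrac{1}{2}\diag(A))(w+\tfrac{1}{m}\log z))$, the Pochhammer-type products, and $\psi_{k_j,z_j^{1/m},\z_m}$ — transforms correctly under $z_j^{1/m}\mapsto\z_m z_j^{1/m}$ with a compensating shift $k_j\mapsto k_j\pm 1$, which reduces to standard manipulations of Pochhammer symbols and cyclotomic factors but requires careful bookkeeping.
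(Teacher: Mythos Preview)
Your approach is essentially the paper's: the key step is Galois descent under $z_j(t)^{1/m}\mapsto\z_m z_j(t)^{1/m}$, which permutes the $I_{A,m,k}$ cyclically in $k$ so that the sum $\Phitof^\FGI_{A,m}$ lands in $S^{(m)}_\BQ[\![x]\!]$. The paper adds one ingredient you gesture at but do not name: to verify $\gamma_j I_{A,m,k}=I_{A,m,k+\delta_j}$ one must combine the Galois action with the change of variables $w\mapsto w+h$ inside the Gaussian bracket, not just manipulate the prefactors.

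There is a genuine slip in your last paragraph. You write that $\prod_{\ell=1}^{m-1}(1-\z_m^\ell)=m$ ``arises from $D_{\z_m}(1)^m$'', but $D_{\z_m}(1)^m=\prod_{\ell=1}^{m-1}(1-\z_m^\ell)^{\ell}$, not $\prod_\ell(1-\z_m^\ell)$, so this does not give the needed power of $m$. What is actually required is $D_{\z_m}(1)^{2m}=m^m\cdot(\text{unit in }\BZ[\z_m])$; the paper obtains this from the Dedekind $\eta$-multiplier identity $D_{\z_m}(1)^{24m}=m^{12m}$. (One can also derive it elementarily by pairing $\ell$ with $m-\ell$ in the product, but your stated identity does not do the job.)
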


\begin{proof}
A priori, the coefficients of $x^k$ for $k>1$ in the LHS of equation~\eqref{eq.PhiA}
are in the bigger ring $\BZ[\z_m,t^{\pm 1/m}, z^{\pm 1/m}, \tfrac{1}{\delta}]/(1-z-(-1)^A
t z^A)$. The endomorphism $\gamma_j$ that sends $z_j(t)^{1/m}$ to $\z_m z_j(t)^{1/m}$
and fixes $S^{(m)}$ satisfies
\be
\gamma_j I_{A,m,k}(t,x) \= I_{A,m,k+\delta_j}(t,x) \,.
\ee
This follows from applying $\gamma_j$ and also a change of variables to sending
$w \mapsto w + h$. Therefore, the coefficients in the LHS of equation
~\eqref{eq.PhiA} is invariant under $\gamma_j$ and hence an element of~$S^{(m)}$.
For $U_{m}^\FGI(t)^{2m}$, we see that it is in $m^{-2N}S^{(m)}$. To see
that a factor of $m$ cancels we use
\be
D_{\z_m}(1)^{24m} \= m^{12m}\,,
\ee
which follows from properties of the multiplier system of the Dedekind
$\eta$-function~\cite{Rademacher}.
\end{proof}

We will now improve on the sets where the coefficients of the $x$-series expansions
lie. For
$m,m'\in\BZ_{>0}$ with $m'$ prime to $m$ and $k\in(\BZ/m\BZ)^{N}$ we define
\be
\label{FGIcong}
\CS_{A,m,k}(t,q) \;=\!\!\!\!
\sum_{\underset{\ell\equiv_m k}{\ell\in(\BZ/mm'\BZ)^{N}}}
\frac{(q;q)_{k_1}\cdots(q;q)_{k_N}I_{A,mm',\ell}(t^{1/m},x)}
{(-1)^{\mathrm{diag}(A)\cdot k}q^{\frac{1}{2}(k^t A k + \mathrm{diag}(A)\cdot k)}
t_1^{k_1/m} \cdots t_{N}^{k_N/m}} \,,\quad q\=\z_{mm'}+x\,.
\ee
Notice that when $m=1$ we have
\be
\CS_{A,1,0}(t,\z_{m'}+x) \= \Phitof_{A}^{\FGI}(t^{m'},\z_{m'}+x)\,.
\ee
The sets that the coefficients of the expansions of $\log(\CS_{A,m,k})$ live in
is improved by Dwork-like quotients. In particular the pole and the constant
term in the expansion~\eqref{Ikdef} becomes better.

\begin{lemma}
\label{lem:dwork.diff.VU} 
For all primes $p$ and positive integers $m$ with $(m,p)=1$, we have 
\be
\label{dphiA2}
\log(\CS_{A,m,k}(t^p,q^p)) -p\log(\CS_{A,m,k}(t,q))
\inn x^{-1}S^{(m)}_p[\tfrac{1}{p},z^{1/m}] [\![x]\!]\,,\quad
  q\=\z_{m}+x\,.
\ee
\end{lemma}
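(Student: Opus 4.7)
The plan is to analyze $\log\CS_{A,m,k}(t,q)$ factor-by-factor using the explicit formal Gaussian integral formula~\eqref{Ikdef}, and verify the $p$-integrality of the Dwork difference for each piece. As a preliminary, I would extend Lemma~\ref{lem.PhiA} to the relevant case of~\eqref{FGIcong}, which yields
\be
\log\CS_{A,m,k}(t,q) \inn \frac{V^{\FGI}(t)}{m^2\log(1+x/\z_m)} \+ S^{(m)}_\BQ[z^{1/m}][\![x]\!]\,,\qquad q\=\z_m+x\,,
\ee
so that both terms in the Dwork difference have at most a simple pole in $x$ with coefficients in $S^{(m)}_\BQ[z^{1/m}]$. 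Only the $p$-adic bound on these coefficients remains to be proved.

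I would then split the Dwork difference into three contributions. \textbf{(i)} For the polar term $V^{\FGI}(t)/(m^2\log(1+x/\z_m))$: the explicit formula~\eqref{Vt} and the Dwork congruences for $\log$ and $\Li_2$ (direct consequences of Lemma~\ref{lem.lip} with $n=1,2$ applied to the units $z_j(t)$ and $1-z_j(t)$) give $V^{\FGI}(t^p) - pV^{\FGI}(t) \in p\thinspace S^{(m)}_p[z^{1/m}]$, while the re-expansion of $1/\log(1+(q^p-\z_m^p)/\z_m^p)$ in $x$ differs from $1/(p\log(1+x/\z_m))$ by an element of $\tfrac{1}{p}S^{(m)}_p[\![x]\!]$; this is where the localisation $[\tfrac{1}{p}]$ in the target ring first appears. \textbf{(ii)} The algebraic prefactors in~\eqref{Ikdef} --- $\delta^{\FGI}(t)^{-1/2}$, the rational $z^{1/m}$-powers, and the cyclotomic products $D_{\z_m}$ --- are units in $S^{(m)}_p[z^{1/m}]$ (we have inverted $\delta$, and $p\nmid\Delta$ controls cyclotomic denominators), so their logarithmic Dwork differences are $p$-integral by Dwork's identity for units, itself a reformulation of Lemma~\ref{lem.lip}. \textbf{(iii)} The Pochhammer factors $\psi_{k_j,z_j(t)^{1/m},\z_m}(w_j,x)$ from~\eqref{Psikdef} are handled by first decomposing their logarithms at the $\z_m$-level via~\eqref{logpocm} and then applying Proposition~\ref{prop.poclip} to each resulting $(\cdot;q^m)_\infty$-factor with $q^m$ near $1$; the result is a simple pole with $p$-integral residue, the subtracted polar and constant parts of $\psi$ being exactly the quantities already accounted for in (i) and (ii).

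The main obstacle is passing the Dwork difference through the formal Gaussian bracket $\langle\cdot\rangle_{\Lambda(t)}$ in (iii), since the bracket is not Dwork-compatible at the level of $\log$: in general $\log\langle fg\rangle \neq \log\langle f\rangle + \log\langle g\rangle$. My plan is to exploit that $\Lambda(t)^{-1}$ lies in $S^{(m)}_p[z^{1/m}]$ (because $\det(-\Lambda(t)) = \delta^{\FGI}(t)/\prod_j z_j(t)^{-A_{jj}}(1-z_j(t))$ is a unit in $S^{(m)}_p$ once $\delta$ is inverted), so that the bracket commutes with the Frobenius lift $t^{1/m}\mapsto t^{p/m}$, $z^{1/m}\mapsto z^{p/m}$ on $S^{(m)}_p$. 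This allows the Dwork difference of $\langle\cdots\rangle$ to be computed as the bracket, against a modified bilinear form, of the integrand's Dwork difference, which is $p$-integral by the analysis above. Bounded powers of $p$ introduced by the second $w$-derivatives in the Gaussian exponential are absorbed by the $[\tfrac{1}{p}]$-factor in $S^{(m)}_p[\tfrac{1}{p},z^{1/m}]$, completing the verification.
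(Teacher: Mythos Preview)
Your strategy manufactures an obstacle that does not exist and then fails to resolve it. The correct form of your preliminary (extending Lemma~\ref{lem.PhiA}) is
\[
\log\CS_{A,m,k}(t,q)\;\in\;\frac{V^{\FGI}(t)}{m^2\log(1+x/\z_m)}\;+\;\log(\text{unit in }S^{(m)}[z^{1/m}])\;+\;x\,S^{(m)}_\BQ[z^{1/m}][\![x]\!]\,;
\]
the constant term is the \emph{logarithm} of a unit (built from $\delta^{-1/2}$ and the $k$-th piece $U_{m,k}$ of~\eqref{Umtdef}), not itself an element of $S^{(m)}_\BQ[z^{1/m}]$ as you wrote. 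But the decisive point is that the coefficients of $x^{\geq 1}$ are already rational in $(t^{1/m},z^{1/m},\z_m,\delta^{-1/2})$, hence lie in $S^{(m)}_p[\tfrac{1}{p},z^{1/m}]$ automatically, and the same holds for $\log\CS_{A,m,k}(t^p,q^p)$ after applying the Frobenius of the \'etale $p$-completion $S^{(m)}_p$. The localisation $[\tfrac{1}{p}]$ in the target ring is there precisely to absorb these already-algebraic coefficients; no $p$-adic analysis of the Gaussian bracket is needed.

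The paper accordingly checks only the polar and constant coefficients, both via the single device that any unit $u$ in a $p$-complete ring satisfies $\fr(u)=u^p e^{p\eta}$ with $\eta$ integral. Applied to $z(t)$ and combined with $\Li_n^{(p)}(1-z)\in S^{(1)}_p$, this yields $\tfrac{1}{p}V(t^p)-pV(t)\in pS^{(1)}_p$ (note the extra $1/p$, coming from $\log(q^p/\z_m^p)=p\log(1+x/\z_m)$; your stated combination $V(t^p)-pV(t)$ is not the one that actually arises). Applied to the $p$-unit $\delta^{-m}U_{m,k}^{2m}$ it gives the constant-term congruence directly. Your (i) and (ii) are close to this argument.

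Your step (iii) is unnecessary for the reason above, and the proposed workaround is also wrong as stated: even granting that Frobenius intertwines $\langle\cdot\rangle_{\Lambda(t)}$ with $\langle\cdot\rangle_{\Lambda(t^p)}$, one cannot rewrite $\log\langle f_{t^p}\rangle-p\log\langle f_t\rangle$ as any bracket applied to the integrand's Dwork difference, since logarithms do not commute with Gaussian averages.
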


\begin{proof}
To prove the result we need to check two conditions. For $V(t)$ of equation
~\eqref{Vt} and a single term in the sum $U_{m}(t)$ of equation~\eqref{Umtdef}
\be
U_{m,k}(t) \=
z^{\frac{Ak}{m}+\diag(A)(\frac{1}{2m}-\frac{1}{2})}
\prod_{j=1}^{N}\frac{(1-z_{j})D_{\z_{m}}(1)}{m(1-z_{j}^{\frac{1}{m}})
D_{\z_{m}}(z_{j}^{\frac{1}{m}})} \frac{(\z_{m};\z_{m})_{k_j}}
{(\z_{m}z_{j}^{\frac{1}{m}};\z_{m})_{k_j}}\,,
\ee
we need to show that
\be
\label{eq:V.dwork}
\frac{V(t^p)}{p}-pV(t)
\inn pS^{(1)}_{p}\,
\ee
and
\be
\log(\delta(t^p)^{-1}U_{m,k}(t^p)^2) -p
\log(\delta(t)^{-1}U_{m,k}(t)^2) \in pS^{(m)}_{p}[z^{1/m}] \,.
\ee
Equation~\eqref{Vt} implies that
\be
\label{pe0}
\frac{V(t^p)}{p}-pV(t)
\=
\frac{1}{p}\fr\Li_{2}(tz^A)-p\Li_{2}(tz^A)
-\frac{1}{2p}\fr\log(z)^tA\log(z)+\frac{p}{2}\log(z)^tA\log(z) \,.
\ee
Since $z$ is a unit, there exists $\eta=\eta(z)\in S^{(1)}_p$ such that
$\fr(z)\=z^p e^{p \eta}$. We have
\be
\label{pe1}
\begin{aligned}
\fr\Li_{2}(tz^A)-p^2\Li_{2}(tz^A)
&\=
\Li_{2}(t^pz^{pA}e^{p\eta})-p^2\Li_{2}(tz^A)\\
&\inn
\Li_{2}^{(p)}(t^pz^{pA})
+\Li_{1}(t^pz^pz^{pA})^tpA\eta
+S^{(1)}_p\,,
\end{aligned}
\ee
which follows from $
\Li_{n}(ze^x)\=\sum_{k=0}^{\infty}\frac{x^k}{k!}\Li_{n-k}(z)\,,
$
and $\Li_{n}(z)\in\BZ[z,(1-z)^{-1}]$ for $n\leq0$. Moreover, we have
\begin{align}
\label{pe2}
\fr\log(z)^tA\log(z)-p^2\log(z)^tA\log(z)
& \=
2p\log(z^p)^tA\eta
+p^2\eta^tA\eta \\
\label{pe3}
A \Li_{1}(t^pz^pz^{pA})+ A \log(z^p)
& \=
\Li_1((1-z)^p)-p\Li_{1}(1-z)A
\=
pA \Li_{1}^{(p)}(1-z)\,.
\end{align}
Combining~\eqref{pe1}, ~\eqref{pe2} and ~\eqref{pe3} with
$\Li_{n}^{(p)}(1-z)\in S^{(1)}_{p}$, we deduce that the right hand side of
equation~\eqref{pe0} is also an element of $S^{(1)}_{p}$. Finally, as
$\delta(t)^{-m}U_{m,k}(t)^{2m}$ is a $p$-unit there is a $\beta\in S^{(1)}_p$ such that
$\fr(\delta(t)^{-m}U_{m,k}(t)^{2m})\=\delta(t)^{-mp}U_{m,k}(t)^{2mp} e^{p \beta}$, and a
similar argument can be used to deduce the remaining statements. 
\end{proof}

\begin{remark}
If $U_{m}^{\FGI}(t)$ is a $p$-unit then we can lift the statement of
equation~\eqref{dphiA2} to
\be
\log(\Phitof^{\FGI}_{A}(t^p,q^p)) -p\log(\Phitof^{\FGI}_{A}(t,q))
\inn x^{-1}S^{(m)}_p[\tfrac{1}{p}] [\![x]\!]\,,\quad
q\=\z_{m}+x\,.
\ee
\end{remark}

The final property we will need is that $\CS_{A,m,k}$ satisfies a system
of $q$-difference equations.

\begin{lemma}
\label{lem.cs.qdiff}
For a positive integer $m$ and residue class $k\in\{0,\cdots,m-1\}^N$, the
function $t^{k}\CS_{A,m,k}(t,q)$ defined in equation~\eqref{FGIcong} satisfies
the $q$-difference equations
\be
\label{PhiFGIAshift}
\begin{aligned}
  &\sum_{\ell=0}^{m}(-1)^{\ell}q^{-\ell(\ell-1)/2}
  \binom{m}{\ell}_{\!\!q^{-1}}\s_j^{\ell}\Big(t^{k}\CS_{A,m,k}(t^{m},q)\Big)\\
  &\=
  (-1)^{A_{j,j}m(m+1)/2}q^{A_{j,j}m(m+1)/2}t_i^{m}
  \prod_{i=1}^N \s_i^{mA_{i,j}}\Big(
  t^{k}\CS_{A,m,k}(t^{m},q)\Big)\,,
\end{aligned}
\ee
where $\s_jt:=(t_1,\dots,qt_j,\dots, t_N)$.
\end{lemma}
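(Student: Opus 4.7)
The plan is to identify $\CS_{A,m,k}$ with a congruence restriction of the $q$-hypergeometric series $F_A$ via Theorem~\ref{thm.1}, and then verify the $q$-difference equation~\eqref{PhiFGIAshift} by a direct coefficient-by-coefficient computation that generalises to arbitrary $N$ the $N=1$ difference equation stated in the proof of Theorem~\ref{thm.zm.admis}.

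First I would apply Theorem~\ref{thm.1}, according to which $\Phitof_A^{\FGI}=\Phitof_A$, i.e.\ $F_A(s,\z_{mm'}+x)=\sum_{\ell\in(\BZ/mm'\BZ)^N}I_{A,mm',\ell}(s,x)$ as formal series. The key observation is that this decomposition by $\ell\in(\BZ/mm'\BZ)^N$ on the Gaussian side corresponds to the splitting of the $F_A$-summation index $n$ by its residue modulo $mm'$; restricting to $\ell\equiv k\pmod m$ therefore corresponds to restricting to $n\equiv k\pmod m$. Using the Pochhammer factorisation $(q;q)_{k+mr}=(q;q)_k(q^{k+1};q)_{mr}$ to factor out the $k$-dependent prefactor gives
\begin{equation*}
\sum_{\substack{\ell\in(\BZ/mm'\BZ)^N\\ \ell\equiv k\!\!\!\pmod m}}\!\!\!I_{A,mm',\ell}(s,x) \= \frac{s^k(-1)^{\mathrm{diag}(A)\cdot k}\,q^{(k^tAk+\mathrm{diag}(A)\cdot k)/2}}{\prod_i(q;q)_{k_i}}\,F_{A,m,k}(s,q),
\end{equation*}
with $q=\z_{mm'}+x$. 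Setting $s=t^{1/m}$ and comparing with definition~\eqref{FGIcong}, the prefactors cancel exactly, yielding the identification $\CS_{A,m,k}(t,q)=F_{A,m,k}(t^{1/m},q)$ and hence $t^k\CS_{A,m,k}(t^m,q)=t^kF_{A,m,k}(t,q)$. This reduces the lemma to a $q$-difference equation for the latter.

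Second, I would verify this equation directly from the explicit formula for the coefficients of $F_{A,m,k}$ in~\eqref{FAmdef}. Writing $t^kF_{A,m,k}(t,q)=\sum_{r\in\BZ^N_{\geq 0}}c_r(q)\,t^{k+mr}$, the operator $\sum_{\ell=0}^m(-1)^\ell q^{-\ell(\ell-1)/2}\binom{m}{\ell}_{q^{-1}}\sigma_j^\ell$ multiplies the monomial $t^{k+mr}$ by the $q^{-1}$-Pochhammer $(q^{k_j+mr_j};q^{-1})_m$, which vanishes precisely when $r_j=0$ (since $0\leq k_j\leq m-1$ forces one factor to be $1-q^0=0$) and equals $(q^{k_j+m(r_j-1)+1};q)_m$ otherwise. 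Matching the $t^{k+mr}$-coefficients on the two sides of~\eqref{PhiFGIAshift} for $r_j\geq 1$ therefore reduces the lemma to the coefficient recurrence
\begin{equation*}
c_r(q)\,(q^{k_j+m(r_j-1)+1};q)_m \= (\text{sign})\cdot q^{(\text{explicit exponent})}\cdot c_{r-e_j}(q),
\end{equation*}
which is verified directly using the Pochhammer identity $(q^{k_j+1};q)_{mr_j}=(q^{k_j+1};q)_{m(r_j-1)}(q^{k_j+m(r_j-1)+1};q)_m$ together with the symmetry-of-$A$ calculation
\begin{equation*}
(k+mr)^tA(k+mr)-(k+m(r-e_j))^tA(k+m(r-e_j)) \= 2m\sum_i A_{ij}(k_i+mr_i)-m^2A_{jj}.
\end{equation*}

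The only real work is the book-keeping in the second step: matching the overall sign and $q$-power prefactor $q^{A_{jj}m(m+1)/2}$ on the right-hand side of~\eqref{PhiFGIAshift} against the combination of $q^{-\ell(\ell-1)/2}$ factors on the LHS and the quadratic-form difference on the RHS. For $N=1$ this coincides precisely with the $q$-difference equation stated for $H_{A,m,k}(t,q)$ in the proof of Theorem~\ref{thm.zm.admis}; the general $N$ case reduces to $N=1$ by working one coordinate $j$ at a time, so no essentially new idea is required.
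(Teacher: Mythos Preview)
Your argument is circular within the paper's logical structure. You invoke Theorem~\ref{thm.1} (in fact, the term-by-term identification you need is exactly the refined form, Theorem~\ref{thm.fgi.is.admis}) in order to transfer the $q$-difference equation from the $F_{A,m,k}$-side to the $\CS_{A,m,k}$-side. But the paper's proof of Theorem~\ref{thm.fgi.is.admis} (and hence of Theorem~\ref{thm.1}) \emph{uses} Lemma~\ref{lem.cs.qdiff}: the identification $F_{A,m,k}(t^{1/m},q)=\CS_{A,m,k}(t,q)$ is established by showing that both sides satisfy the same $q$-difference system~\eqref{FArec} with the same initial condition, and the fact that $\CS_{A,m,k}$ satisfies it is precisely the content of the lemma you are trying to prove. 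Moreover, even Theorem~\ref{thm.1} alone does not give you the term-by-term correspondence between the $\ell$-index in $\sum_\ell I_{A,mm',\ell}$ and the $n\bmod mm'$ residue class in the $F_A$-sum; that finer statement is again Theorem~\ref{thm.fgi.is.admis}.

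The paper's proof works directly on the formal Gaussian integration side and avoids this circularity. The key input is the first-order shift relation~\eqref{SAshift} for the individual Gaussian integrals $I_{A,mm',\ell}$, obtained by a change of coordinates $w\mapsto w+h$ in the Gaussian integration (cf.\ \cite{AarhusII,GSW}). Iterating this via the $q^{-1}$-binomial theorem promotes the first-order relation to the order-$m$ relation with a shift $\ell\mapsto\ell-m\delta_j$ on the right-hand side, which preserves the congruence class $\ell\equiv k\pmod m$; summing over that class then yields~\eqref{PhiFGIAshift}. Your second step (verifying the equation for $t^k F_{A,m,k}$ by a direct coefficient computation) is correct and is indeed used in the paper on the hypergeometric side, but it does not help here until the identification is in place.
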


\begin{proof}
From the definition of $I_{A,m,k}(t,x)$ and a simple change of coordinates in the
Gaussian integration as done in~\cite{AarhusII} or~\cite{GSW}, it is easy to see for
$\ell\in(\BZ/mm'\BZ)^{N}$ that $I_{A,\ell}(t,x)$ satisfies
\be
\label{SAshift}
I_{A,mm',\ell}(t,x) - I_{A,mm',\ell}(\s_j t,x) \= (-1)^{A_{j,j}} t_{j} q^{A_{j,j}}
I_{A,mm',\ell-\delta_j}\bigg(\prod_{i=1}^N \s_i^{A_{i,j}}t,x\bigg),
\qquad j\=1,\dots,N\,,
\ee
where $\delta_j$ is the vector whose only non-zero entry is $1$ in the $j$-th
position. Therefore, from the basic properties of Gaussian polynomials we see that
\be
\begin{aligned}
&\sum_{i=0}^{m}(-1)^{i}q^{-i(i-1)/2}\binom{m}{i}_{\!\!q^{-1}}I_{A,mm',\ell}(\s_j^{i}t,x)
\\
&\= (-1)^{A_{j,j}m(m+1)/2}q^{A_{j,j}m(m+1)/2}t_i^{m}
I_{A,mm',\ell-m\delta_i}\bigg(\prod_{i=1}^N \s_i^{mA_{i,j}}t,q\bigg)\,.
\end{aligned}
\ee
The rest of the proof follows from the definition of $\CS_{A,m,k}$.
\end{proof}

\subsection{Algebraicity via WKB}
\label{sub.WKB}

In this section we outline an independent proof of Lemma~\ref{lem.PhiA} of
Section~\ref{sub.FGI} using instead of formal Gaussian integration, the unique
solution to the system of
linear $q$-difference equations~\eqref{PhiAshift}. This is essentially the WKB
method~\cite{Bender} for linear $q$-difference equations~\cite{GG:qdiff}, which
writes the first derivative of a new function as a
differential polynomial of known functions. In general, integrating will produce
non-algebraic functions, but with some care one can overcome and solve this
vanishing residue problem. Calculations similar to those of this section were done in
unpublished work of Masha Vlasenko.

Rather than explain this method for the general case, we do so for the
case of $1 \times 1$ integer matrices $A$. Consider the power series $z=z(t)$ defined
by
\be
1-z\=tz^A
\ee
and
\be
X\=\frac{z-1}{(1-A)z+A}
\=
\begin{pmatrix}
1 & -1\\
1-A & A
\end{pmatrix}
\cdot
z\,,
\ee
so that 
\be\label{eq:XZrel}
z\=\frac{AX+1}{(A-1)X+1}
\=
\begin{pmatrix}
A & 1\\
A-1 & 1
\end{pmatrix}
\cdot
X \,.
\ee
Then notice that for $\Delta=X(AX+1)((A-1)X+1)$ we have
\be
t\frac{d}{dt} \= \Delta\frac{d}{dX} \,, \qquad
X \= \frac{t}{z}\frac{dz}{dt}\,.
\ee
This implies that
\be
\label{eq:logz}
\exp\Big(h t\frac{d}{dt}\Big)\log(z(t))-\log(z(t))-Xh \inn
X(AX+1)((A-1)X+1)\BQ[X][\![h]\!]\,.
\ee
Suppose that $F(t;q)\in\BQ(\!(h)\!)[\![t]\!]$ is the unique solution to the equation
\be
  F(t;h)-F(e^{h}t;h)\=tF(e^{Ah}t;h)\,,
\ee
and their exists $c_k(t)\in\BQ[\![t]\!]$ so that $F(t;h)$ satisfies the following
ansatz
\be
F(t;h) \= \exp\Big(\sum_{k=-1}^{\infty}c_k(t)h^{k}\Big)\,.
\ee
We will prove that such $c_k$ exist and that for $k>0$ they are algebraic functions.
In order to do this we consider the quotient
\be\label{eq:bkdef}
G(t;h) \= \frac{F(e^h t;h)}{F(t;h)}
\;=:\; z(t)\exp\Big(\sum_{k=1}^{\infty}b_k(t)h^k\Big)\,.
\ee
The functional equations of $F$ and $z$ imply that $G$ satisfies the equation
\be
\label{eq:funeqG}
\sum_{k=1}^{\infty}\frac{(-1)^{k+1}}{k}
\Big(\frac{z(t)-G(t;h)}{1-z(t)}\Big)^k
\= \log\Big(1+\frac{z(t)-G(t;h)}{1-z(t)}\Big)
\= \sum_{j=0}^{A-1}\log\Big(\frac{G(e^{jh}t;h)}{z(t)}\Big)\,.
\ee
This implies that
\be
\label{eq:b1}
-\frac{z(t)}{1-z(t)}b_{1}(t) \= Ab_{1}(t)+\frac{1}{2}A(A-1)X
\ee
and
\be\label{eq:b1andb2}
\frac{1}{2}\Big(\frac{z(t)}{1-z(t)}b_{1}(t)\Big)^2-\frac{z(t)}{1-z(t)}
\Big(b_{2}(t)+\frac{b_{1}(t)^2}{2}\Big)
\= Ab_{2}(t)+\sum_{j=0}^{A-1}\Big(jt\frac{d}{dt}\Big)\big(X(t)+b_1(t)\big)\,.
\ee
We have the following lemma.

\begin{lemma}
The coefficients $b_{k}(t)$ defined by equation~\eqref{eq:bkdef} exists and satisfy
the properties:
\be
b_{1}(t) \= \frac{A(A-1)}{2}X^2\,,
\quad\text{and for }k\in\BZ_{>1}\quad b_{k}(t)\inn X\Delta\BQ[X]\,.
\ee
\end{lemma}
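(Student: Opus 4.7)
Dividing the functional equation $1-G(t;h) \= t\prod_{j=0}^{A-1}G(e^{jh}t;h)$ by the relation $1-z(t)\=tz(t)^A$, taking logarithms, and writing $G=z\,e^B$, I first derive the master identity
\[
\Phi(B(t;h)) \= \sum_{j=0}^{A-1}\bigl[\log z(e^{jh}t) - \log z(t) + B(e^{jh}t;h)\bigr]\,,
\]
where $\Phi(B):=\log(1-c(e^B-1))$ and $c:=z/(1-z)=-(AX+1)/X$ (using $1-z=-X/((A-1)X+1)$ from~\eqref{eq:XZrel}). Expanding $\Phi(B) \= \sum_{m\ge 1}\phi_m(c)\,B^m$ formally in $B$ and comparing the coefficient of $h^n$ on both sides, the only $b_n$-terms are $\phi_1(c)b_n=-cb_n$ on the left and $Ab_n$ on the right. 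Using the identity $-c-A=1/X$, this yields the recursion
\[
b_n \= X\,\bigl(R'_n - L'_n\bigr),
\]
where $R'_n := \tfrac{S_n(A)}{n!}(t\partial_t)^n\log z + \sum_{l=1}^{n-1}\tfrac{S_l(A)}{l!}(t\partial_t)^l b_{n-l}$ collects the derivative contributions and $L'_n := \sum_{m=2}^{n}\phi_m(c)\sum_{l_1+\cdots+l_m=n,\,l_i\ge 1}b_{l_1}\cdots b_{l_m}$ collects the products. Existence of the $b_k$ is automatic since $G(t;0)=z(t)$ makes $B=\log(G/z)$ a well-defined power series in $h$ starting at $h$.

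At $n=1$ we have $L'_1=0$ and $R'_1=\tfrac{A(A-1)}{2}X$, so $b_1=\tfrac{A(A-1)}{2}X^2$. For $n\ge 2$, I argue by induction: assuming $b_k\in X\Delta\BQ[X]$ for $2\le k<n$, I show that $R'_n-L'_n \inn \Delta\BQ[X]$. Since $t\partial_t=\Delta\partial_X$, the Leibniz rule together with the key observation $\Delta^2=(X\Delta)\cdot(AX+1)((A-1)X+1)\in X\Delta\BQ[X]$ shows that $t\partial_t$ preserves both ideals $\Delta\BQ[X]$ and $X\Delta\BQ[X]$. Combined with equation~\eqref{eq:logz}, which implies $(t\partial_t)^n\log z\in\Delta\BQ[X]$ for $n\ge 2$, and with $(t\partial_t)b_1=A(A-1)X\Delta\in X\Delta\BQ[X]$, this gives $R'_n\in\Delta\BQ[X]$ term by term.

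The hardest point is controlling $L'_n$, which requires extracting a factor of $\Delta$ from $\phi_m(c)$ for $m\ge 2$. I achieve this via the substitution $C:=c/(1+c)=(AX+1)/((A-1)X+1)$, so that $1-C=-X/((A-1)X+1)$ and $\Phi(B)=\log\bigl((1-Ce^B)/(1-C)\bigr)$. Expanding $\log(1-Ce^B)=-\sum_{k\ge 1}C^ke^{kB}/k$ produces the closed form
\[
\phi_m(c) \= -\frac{1}{m!}\sum_{k\ge 1}k^{m-1}C^k \= -\frac{C\,A_{m-1}(C)}{m!\,(1-C)^m}\,,
\]
where $A_{m-1}(C)$ is the Eulerian polynomial (of degree $m-2$ for $m\ge 2$). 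Substituting back and using the crucial identity $(AX+1)((A-1)X+1)=\Delta/X$ shows that $\phi_m(c)=\Delta\,P_m(X)/X^{m+1}$ for some $P_m\in\BQ[X]$ and all $m\ge 2$. Since each $b_{l_i}$ has $X^2$ as a factor (either $b_1=\tfrac{A(A-1)}{2}X^2$, or $b_{l_i}\in X\Delta\BQ[X]\subseteq X^2\BQ[X]$ by induction), the product $\prod_i b_{l_i}$ lies in $X^{2m}\BQ[X]$, so each summand of $L'_n$ lies in $X^{m-1}\Delta\BQ[X]\subseteq\Delta\BQ[X]$ for $m\ge 2$. Therefore $R'_n-L'_n\in\Delta\BQ[X]$ and $b_n=X(R'_n-L'_n)\in X\Delta\BQ[X]$, closing the induction. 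The main obstacle is that the $\Delta$-divisibility of $\phi_m(c)$ is invisible in the direct Stirling-number expansion $\phi_m(c)=-\tfrac{1}{m!}\sum_k(k-1)!\,S(m,k)c^k$; it only becomes transparent after passage to the variable $C$.
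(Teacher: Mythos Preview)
Your proof is correct and takes a genuinely different route from the paper's. Both arguments reduce to the same recursion $b_n = X(R'_n - L'_n)$ via the identity $-c-A=1/X$, but they diverge in how they control the nonlinear contribution $L'_n$.

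The paper isolates the $b_1$-only part of $\log\bigl((1-G)/(1-z)\bigr)$ and proves the identity $-cb_1^k - (-cb_1)^k \in X\Delta\BQ[X]$ (equation~\eqref{eq:indb1}) directly, then tracks the error introduced by the higher $b_k$'s order by order. Your approach instead expands $\Phi(B)=\sum_m\phi_m(c)B^m$ completely and exploits the substitution $C=c/(1+c)=z$ to obtain the closed form $\phi_m(c)=-\tfrac{1}{m!}\,C A_{m-1}(C)/(1-C)^m$. The key point, which you state correctly, is that $C=(AX+1)/((A-1)X+1)$ and $1-C=-X/((A-1)X+1)$, so one factor of $(AX+1)$ comes from the numerator $CA_{m-1}(C)$ (degree $\le m-1$ in $C$) and one factor of $((A-1)X+1)$ survives from clearing the $(1-C)^m$ denominator, yielding $\phi_m(c)\in\tfrac{\Delta}{X^{m+1}}\BQ[X]$ for $m\ge 2$. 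Combined with $b_{l_i}\in X^2\BQ[X]$, this gives $L'_n\in\Delta\BQ[X]$ cleanly.

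What each buys: the paper's argument avoids the Eulerian-polynomial machinery but requires the somewhat opaque divisibility in~\eqref{eq:indb1} and careful bookkeeping of error terms modulo $h^{K+1}$; your argument is more structural, separating derivative and product terms once and for all, and makes the source of the $\Delta$-factor explicit. Your observation that $t\partial_t=\Delta\partial_X$ preserves both $\Delta\BQ[X]$ and $X\Delta\BQ[X]$ (because $\Delta^2\in X\Delta\BQ[X]$) is the clean replacement for the paper's repeated use of $zX^2/(1-z)\in X\BZ[X]$. One minor point: your existence argument (``$G(t;0)=z(t)$'') is correct but terse---it rests on the fact that the Riccati equation $1-G=t\prod_jG(e^{jh}t)$ determines $G$ uniquely in $\BQ[\![t]\!][\![h]\!]$ with $G|_{h=0}=z$, which is what makes $B=\log(G/z)$ a genuine power series in $h$.
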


\begin{proof}
Firstly, $b_{1}(t)$ can be explicitly computed using~\eqref{eq:XZrel}~and~\eqref{eq:b1}.
Next we notice that
\be
-\frac{zb_{1}(t)}{1-z} \= \frac{(A-1)A}{2}X(AX+1)\inn X\BQ[X]
\ee
and
\be
-\frac{zb_{1}(t)^2}{1-z} - \Big(-\frac{zb_{1}(t)}{1-z}\Big)^2
\= \frac{(A-1)^2A^2}{4}X^2(AX+1)((A-1)X+1) \inn X\Delta\BQ[X]\,.
\ee
By induction on $k$, we see that for $k\in\BZ_{>0}$ we have
\be
\label{eq:indb1}
-\frac{zb_{1}(t)^k}{1-z} - \Big(-\frac{zb_{1}(t)}{1-z}\Big)^k
\inn X\Delta\BQ[X]\,.
\ee
Already this and equation~\eqref{eq:b1andb2} implies that
$b_{2}(t)\in X\Delta\BQ[X]$. By induction, suppose that for some $K>2$
we have that $b_{k}(t)\in X\Delta\BQ[X]$ for all $k<K$. Then we
see that
\be
\frac{1-G(t)}{1-z(t)}\quad\in\quad
\frac{1-z(t)\exp(b_{1}(t)h)}{1-z(t)} + \frac{-z(t)b_{K}(t)h^K}{1-z(t)}
+ h^2 X\Delta\BQ[X,h] + \mathrm{O}(h^{K+1})\,,
\ee
and therefore
\be
\begin{small}
\begin{aligned}
\log\Big(\frac{1-G(t)}{1-z(t)}\Big)
\inn \log\Big(\frac{1-z(t)\exp(b_{1}(t)h)}{1-z(t)}\Big)
- \frac{z(t)b_{K}(t)h^K}{1-z(t)} + h X\Delta\BQ[X,h]
+ \mathrm{O}(h^{K+1})\,.
\end{aligned}
\end{small}
\ee
From equation~\eqref{eq:indb1}, we see that
\be
\frac{1-z(t)\exp(b_{1}(t)h)}{1-z(t)} \inn
\exp\Big(\frac{-z(t)b_{1}(t)h}{1-z(t)}\Big)+h X\Delta\BQ[X,h]\,,
\ee
and so
\be
\log\Big(\frac{1-z(t)\exp(b_{1}(t)h)}{1-z(t)}\Big)
\inn \frac{-z(t)b_{1}(t)h}{1-z(t)} +h X\Delta\BQ[X,h]\,.
\ee
Together with the fact that $zX^2(1-z)^{-1}=-X-AX^2\in X\BZ[X]$
and the functional equation~\eqref{eq:funeqG} of $G$, this gives a relation
for $b_{K}(t)$ of the form
\be
\frac{-z(t)b_{K}(t)}{1-z(t)} -Ab_{K}(t) \= X^{-1}b_{K}(t) \inn \Delta\BQ[X]\,.
\ee
This completes the proof.
\end{proof}
This lemma can now be used to determine the properties of the coefficients of $F(t;h)$.

\begin{corollary}
The coefficients $c_{k}(t)$ in the function $F(t;h)$ exists and for $k\in\BZ_{>0}$
\be
c_{k}(t)\inn X\BQ[X]\,.
\ee
\end{corollary}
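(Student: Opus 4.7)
The plan is to match powers of $h$ in the identity $\log G(t;h) = (\exp(ht\tfrac{d}{dt})-1)\log F(t;h)$, which follows directly from the definition $G(t;h) = F(e^h t;h)/F(t;h)$. Substituting the ansatz for $F$ and the known expansion $\log G(t;h) = \log z(t)+\sum_{k\geq 1}b_k(t)h^k$, the coefficient of $h^j$ (for $j\geq 0$) reads
\begin{equation*}
b_j(t) \= \sum_{\ell=1}^{j+1} \frac{1}{\ell!}(t\tfrac{d}{dt})^\ell c_{j-\ell}(t),
\qquad b_0(t):=\log z(t).
\end{equation*}
This recursion determines $(t\tfrac{d}{dt})c_{j-1}$ from $b_j$ and the earlier $c_k$'s, hence determines $c_{j-1}$ up to an additive constant, which I will fix by $c_{j-1}(0)=0$. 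Existence of the ansatz itself follows from Lemma~\ref{Vlem}, which shows that $\log F_A(t,e^h)$ has only a simple pole at $h=0$.

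The base of the recursion gives the transcendental expressions $(t\tfrac{d}{dt})c_{-1}=\log z$ and $(t\tfrac{d}{dt})c_0 = b_1-\tfrac{1}{2}(t\tfrac{d}{dt})^2c_{-1} = \tfrac{A(A-1)}{2}X^2-\tfrac{X}{2}$, so that $c_{-1}$ and $c_0$ are not polynomial in $X$. However, the crucial observation is that already $(t\tfrac{d}{dt})c_0 \in X\BQ[X]$ and $(t\tfrac{d}{dt})^2 c_{-1} = (t\tfrac{d}{dt})\log z = X \in X\BQ[X]$.

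The inductive step uses the identity $t\tfrac{d}{dt}=\Delta\tfrac{d}{dX}$ together with two closure properties: $t\tfrac{d}{dt}$ sends $X\BQ[X]$ into $\Delta\BQ[X]$ (because $\Delta\in X\BQ[X]$ and differentiation preserves $\BQ[X]$), and $t\tfrac{d}{dt}$ preserves $\Delta\BQ[X]$ (for $f=\Delta g$ one computes $t\tfrac{d}{dt}f=\Delta(\Delta g'+\Delta' g)\in\Delta\BQ[X]$). Assume inductively that $c_k\in X\BQ[X]$ for $1\leq k\leq K-1$. In
\begin{equation*}
(t\tfrac{d}{dt})c_K \= b_{K+1} - \sum_{\ell=2}^{K+2}\frac{1}{\ell!}(t\tfrac{d}{dt})^\ell c_{K+1-\ell},
\end{equation*}
every summand lies in $\Delta\BQ[X]$: the term $b_{K+1}\in X\Delta\BQ[X]\subset\Delta\BQ[X]$ by the preceding lemma (since $K+1\geq 2$); for $2\leq\ell\leq K$ the input $c_{K+1-\ell}\in X\BQ[X]$ by induction, and applying $t\tfrac{d}{dt}$ at least twice lands in $\Delta\BQ[X]$; the boundary term with $c_0$ uses $(t\tfrac{d}{dt})c_0\in X\BQ[X]$ and then further applications preserve $\Delta\BQ[X]$; and the boundary term with $c_{-1}$ is preceded by $\ell=K+2\geq 3$ applications, so it factors through $(t\tfrac{d}{dt})^2 c_{-1}=X\in X\BQ[X]$. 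Hence $\tfrac{dc_K}{dX} = (t\tfrac{d}{dt})c_K/\Delta\in\BQ[X]$, and antidifferentiating with vanishing constant (matching $c_K(0)=0$) gives $c_K\in X\BQ[X]$.

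The main delicate point is the index bookkeeping at the boundary: one must verify that in the recursion for $c_K$ with $K\geq 1$, the transcendental seed $c_{-1}$ always appears under at least two applications of $t\tfrac{d}{dt}$ (so that one exploits $(t\tfrac{d}{dt})^2 c_{-1}=X$ rather than $(t\tfrac{d}{dt})c_{-1}=\log z$), and likewise that $c_0$ appears under at least one application. Both of these hold automatically because the summation index $\ell$ starts at $2$ and the boundary contributions correspond to $\ell=K+1\geq 2$ and $\ell=K+2\geq 3$. The second, lesser issue is justifying that the integration constant can be chosen to vanish, which reflects the normalisations $F(0;h)=1$ (giving $c_k(0)=0$ for $k\geq 0$) built into the ansatz.
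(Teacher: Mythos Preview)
Your proof is correct and follows essentially the same strategy as the paper. The paper inverts the shift operator directly via the Bernoulli expansion $\frac{1}{e^{hD}-1}=\sum_{k\ge 0}\frac{B_k}{k!}(hD)^{k-1}$ (with $D=t\tfrac{d}{dt}$) to write $c_j$ as $\int b_{j+1}\,\tfrac{dt}{t}$ plus a finite sum of $D$-derivatives of lower $b_m$'s, and then checks (tersely) that the integration term lands in $X\BQ[X]$; you instead invert the same relation $\log G=(\exp(hD)-1)\log F$ recursively to express $Dc_K$ in terms of $b_{K+1}$ and the already-determined $c_j$'s, and close the loop by induction. Both arguments ultimately rest on the same observation $b_{k+1}\in X\Delta\BQ[X]$ from the preceding lemma, so that $b_{k+1}/\Delta\in X\BQ[X]$ integrates in $X$ to an element of $X\BQ[X]$. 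Your treatment is more explicit than the paper's about the boundary contributions from $c_0$ and $c_{-1}$ (equivalently, from $b_1$ and $b_0=\log z$ in the Bernoulli formula), which is a point the paper leaves to the reader.
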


\begin{proof}
Notice that
\be
\frac{1}{\exp(h t\frac{d}{dt})-1}\log(G(t;h))
\= \sum_{k=0}^{\infty}\frac{B_k}{k!}(h t\frac{d}{dt})^{k-1}\log(G(t;h))
\= \log(F(t;h))\,.
\ee
Therefore, we see that to compute the coefficient $c_{k}(t)$ we need to integrate
$b_{k+1}(t)$ with the initial condition $c_{k}(0)=0$. We see that
\be
\int b_{k+1}(t)\frac{dt}{t} \= \int b_{k+1}(t)\frac{dX}{\Delta} \inn X\BQ[X]\,,
\ee
since $\Delta^{-1}b_{k+1}(t)\in X\BQ[X]$.
\end{proof}

\subsection{Synthesis}
\label{sub.synthesis}

This section combines complementary results about admissible series on the one
hand and formal Gaussian integration on the other. It identifies the two
collections $\Phitof_A(t,q)$ and $\Phitof_A^\FGI(t,q)$ and deduces a stronger
property for their ring of coefficients.
This will allow us to specialise to $t=1$ later and obtain elements of
modules of the Habiro ring. Firstly, we will prove Theorem~\ref{thm.1} of
Section~\ref{sub.results} in the following refined form. 

\begin{theorem}
\label{thm.fgi.is.admis}
For every symmetric matrix $A$ with integer entries, positive integer $m$ and residue
class $k\in\{0,\dots,m-1\}^{N}$, we have:
\be
\label{eq.thm1.again}
F_{A,m,k}(t^{1/m},q) \= \CS_{A,m,k}(t,q), \qquad q\=\z_{mm'}+x\,,
\ee
for any positive integer $m'$, where these functions are defined by
equations~\eqref{FAmdef} and~\eqref{FGIcong}.
\end{theorem}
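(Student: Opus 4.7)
The strategy is to show that both sides of~\eqref{eq.thm1.again} are uniquely characterised, as formal power series in $t^{1/m}$ with coefficients in the ring of Laurent series in $x$ over $\BZ[\z_{mm'},\z_m,z^{\pm 1}(t^m),\delta(t^m)^{-1/2},\dots]$, by a common system of linear $q$-difference equations in $t$ together with matching leading term.

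First I would verify that $F_{A,m,k}(t^{1/m},q)$ satisfies the $m$-th order shift equation~\eqref{PhiFGIAshift}. This follows by iterating the first-order system~\eqref{PhiAshift} for $F_A(t,q)$: applying the $q^{-1}$-binomial combination $\sum_{\ell=0}^m (-1)^\ell q^{-\ell(\ell-1)/2}\binom{m}{\ell}_{q^{-1}}\s_j^\ell$ annihilates the contributions from all residue classes in $(\BZ/m\BZ)^N$ except the one under consideration, producing exactly the relation~\eqref{PhiFGIAshift} after the change of variable $t\mapsto t^{1/m}$. Lemma~\ref{lem.cs.qdiff} already records the same equation for $\CS_{A,m,k}(t,q)$, so both sides satisfy the same system.

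Second, I would establish uniqueness of solutions with a prescribed seed at $t^{k/m}$. Expanding~\eqref{PhiFGIAshift} in powers of $t^{1/m}$, the coefficient of $t^{(k+mn\delta_j)/m}$ on the left involves the leading factor $\prod_{\ell=0}^m(-1)^\ell q^{-\ell(\ell-1)/2}\binom{m}{\ell}_{q^{-1}}q^{n m \ell}$, which simplifies (as in the proof of Theorem~\ref{thm.zm.admis}) to $-(q^{nm+1-m};q)_{m-1}$ times the unknown coefficient. This factor is a unit in the ring of Laurent series in $x=q-\z_{mm'}$ (it never vanishes identically at $q=\z_{mm'}$), so the recursion solves uniquely given the $n=k$ seed, and the equations of Lemma~\ref{lem.cs.qdiff} for varying $j$ are compatible.

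Third, I would match the seeds. For $F_{A,m,k}(t^{1/m},q)$ the $n=k$ term of~\eqref{FAmdef} reads $(-1)^{\diag(A)\cdot k}q^{(k^tAk+\diag(A)\cdot k)/2}t^{k/m}$. For $\CS_{A,m,k}(t,q)$ the corresponding seed is obtained by setting $t=0$ in the prefactor of~\eqref{FGIcong}: at $t=0$ one has $z(0)=1$, the Gaussian integrals $I_{A,mm',\ell}$ reduce to the pure exponential of $\tfrac{1}{2}(k^tAk+\diag(A)\cdot k)$-type pieces, and only the summands with $\ell\equiv k\pmod m$ survive to reconstruct the same initial coefficient after cancellation of the $(q;q)_{k_i}$ Pochhammer factors already placed in the definition. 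Once the seeds agree, uniqueness forces~\eqref{eq.thm1.again}.

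The main obstacle will be the bookkeeping in the seed-matching step: the definition~\eqref{Ikdef} of $I_{A,mm',\ell}$ involves several prefactors (the square root of $\delta$, the Dedekind-type products $D_{\z_m}$, and the exponential of $V/(m^2\log(1+x/\z_m))$) whose $t\to 0$ limits are individually singular at $z=1$ but combine to a regular expression. Verifying that this combination reproduces the elementary seed of $F_{A,m,k}$ exactly, rather than up to a unit, is the delicate point; once this is done, the uniqueness of the $q$-difference recursion closes the argument and, as a byproduct, will supply the proofs of Lemmas~\ref{Vlem} and~\ref{Vlem.m} used elsewhere in the section.
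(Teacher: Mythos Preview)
Your strategy is the paper's: both sides satisfy~\eqref{PhiFGIAshift} (Lemma~\ref{lem.cs.qdiff} for $\CS_{A,m,k}$, direct verification from the definition for $F_{A,m,k}$), and the recursion in $t^k\BQ[\z_{mm'}](\!(x)\!)[\![t^m]\!]$ forces equality once the constant terms agree. One correction: the recursion coefficient you obtain when expanding~\eqref{PhiFGIAshift} for $t^kF_{A,m,k}$ directly is $(q^{k+mj+1-m};q)_m$, not the quantity $-(q^{nm+1-m};q)_{m-1}$ you quote (that factor arises in the proof of Theorem~\ref{thm.zm.admis} for the \emph{ratio} $G_{A,m,k}$, not for $F$ itself); your invertibility conclusion survives because any nonzero polynomial in $q$ is a unit in Laurent series in $x=q-\z_{mm'}$. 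On the seed, the paper is brisker than you anticipate: rather than tracking all the singular prefactors of~\eqref{Ikdef} at $t=0$, it simply observes that $\det(\Lambda(t)^{-1})=O(t)$ dominates the apparent $t^{-1}$ coming from the denominators of $\Li_{\leq 0}(z(t))$, so $\CS_{A,m,k}$ lands in the power-series ring, after which the constant term is carried by the $\ell=k$ summand of~\eqref{FGIcong} alone.
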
 

\begin{proof}
We will show that for every positive integer $m'$, we have 
\be
\label{thm1.tx}
F_{A,m,k}(t^{1/m},\z_{mm'}+x) \= \CS_{A,m,k}(t,\z_{mm'}+x) \inn
\BQ[\z_{mm'}](\!(x)\!)[\![t]\!] \,.
\ee
The proof of this equality uses crucially the fact that $F_{A,m,k}(t^{1/m},q)$
is the unique solution to a system of linear $q$-difference equations with initial
condition $F(0,q)=1$.
We discuss in detail the case where $A$ is a $1 \times 1$ matrix, in which case
the linear $q$-difference equation is


\be
\label{FArec}
\begin{aligned}
&\sum_{\ell=0}^{m}(-1)^{\ell}q^{-\ell(\ell-1)/2}
\binom{m}{\ell}_{\!\!q^{-1}}\s_j^{\ell}\Big(t^{k}F_{A,m,k}(t,q)\Big)\\
&\=
(-1)^{Am(m+1)/2}q^{Am(m+1)/2}t^{m} \s^{Am}\Big(t^{k}F_{A,m,k}(t,q)\Big)\,.
\end{aligned}
\ee

The first thing to note is that~\eqref{FArec} has a unique solution in the set
$t^k\BQ[\z_{mm'}](\!(x)\!)[\![t^{m}]\!]$. Indeed, if
$t^kF(t,q) = \sum_{j \geq 0} a_j(x) t^{k+mj}$ is
a solution to~\eqref{FArec} with $q=\z_{mm'}+x$, then $a_j(x)$ satisfies
\be
\begin{aligned}
  &\sum_{\ell=0}^{m}(-1)^{\ell}q^{-\ell(\ell-1)/2}
  \binom{m}{\ell}_{\!\!q^{-1}}q^{k\ell+mj\ell}a_j(x)
- (-1)^{Am(m+1)/2}q^{Am(m+1)/2+Am(j-1)}a_{j-1}(x)\\
&\=(q^{k+mj+1-m};q)_{m}a_j(x)-(-1)^{Am(m+1)/2}q^{Am(m+1)/2+Am(j-1)}a_{j-1}(x)
\= 0
\end{aligned}
\ee
for all $j \geq 0$, which together with the initial condition $a_0(x)=1$ uniquely
determines $a_j(x)$.

In fact, the above calculation and the definition of $F_{A,m,k}(t,q)$
prove that $F_{A,m,k}(t,q)$ satisfies~\eqref{FArec}.
Lemma~\ref{lem.cs.qdiff} of Section~\ref{sub.FGI} shows that $\CS_{A,m,k}(t^{m},q)$
is also a solution to~\eqref{FArec}. Moreover, the
fact that $\det(\Lambda^{-1})=O(t)$ dominates the $t^{-1}$ appearing from the
denominators of $\Li_{\leq0}(z)$ shows that
$\CS_{A,m,k}(t,\z_{mm'}+x)\in\BQ[\z_{mm'}](\!(x)\!)[\![t]\!]$.

This concludes the proof of the first part of the theorem when $A$ is a $1 \times 1$
symmetric matrix. The proof in the general case is identical using
equation~\eqref{PhiFGIAshift}, and is omitted. 
\end{proof}

A consequence of Theorem~\ref{thm.fgi.is.admis} is that the power series defined
in equations ~\eqref{Vdef}, ~\eqref{eq:Vt.m}, ~\eqref{tautdef}, and ~\eqref{Umdef}
for the admissible series and those defined in equations~\eqref{Vt}, ~\eqref{dt},
and ~\eqref{Umtdef} for the FGI series coincide:
\be
\label{eq.equal}
\delta \= \delta^\FGI, \qquad V \= V^\FGI, \qquad U_m \= U_m^\FGI \,.
\ee

\begin{proof}[Proof of Lemma~\ref{Vlem} of Section~\ref{sub.DT} and
  Lemma~\ref{Vlem.m} of Section~\ref{sub.madmissible}]
These follow from Theorem~\ref{thm.fgi.is.admis} and the explicit formula
of $\CS_{A,m,k}(t,q)$ given in equation~\eqref{Ikdef}.
\end{proof}

\begin{corollary}
\label{lem.V2F}
If $F_A(t,q)$ is the series~\eqref{FAdef} and $V=V_A(t)$ is the associated potential,
then~$V$ determines~$A$ and hence~$F_A$. 
\end{corollary}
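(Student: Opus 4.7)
The plan is to recover $A$ directly from the first few Taylor coefficients of $V_A(t)$ at the origin. Using the identification $V_A = V_A^{\FGI}$ from equation~\eqref{eq.equal} (which itself follows from Theorem~\ref{thm.1}), we work with the closed form
\[
V_A(t) \= -\sum_{j=1}^{N}\Li_2(1-z_j(t)) \- \tfrac{1}{2}\sum_{i,j=1}^{N}A_{ij}\log z_i(t)\log z_j(t)\,,
\]
where $z(t)\in\BZ[\![t]\!]^{N}$ is the unique solution of the Nahm equations $1-z_j = (-1)^{A_{jj}}t_j\prod_i z_i^{A_{ij}}$ with $z(0)=1$.

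The next step is to solve these equations iteratively to second order. The first iteration gives $z_j(t) = 1-(-1)^{A_{jj}}t_j + O(t^2)$, and substituting this back into the right-hand side of the Nahm equation yields
\[
z_j(t) \= 1-(-1)^{A_{jj}}t_j \+ (-1)^{A_{jj}}t_j\sum_{i=1}^{N}(-1)^{A_{ii}}A_{ij}\thinspace t_i \+ O(t^3)\,.
\]
Plugging this into the expression for $V_A$, using $\Li_2(y)=y+y^2/4+O(y^3)$ and $\log(1-y)=-y+O(y^2)$, and collecting terms (so that the diagonal contributions from $-\Li_2(1-z_j)$ and from $-\tfrac{1}{2}v^T A v$ combine correctly), one obtains after routine bookkeeping
\[
V_A(t) \= -\sum_{j} (-1)^{A_{jj}} t_j \+ \sum_{j}\frac{2A_{jj}-1}{4}\thinspace t_j^2 \+ \sum_{i<j}(-1)^{A_{ii}+A_{jj}}A_{ij}\thinspace t_i t_j \+ O(t^3)\,.
\]

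From this expansion the entries of $A$ can be read off in the following order: the linear coefficient $-(-1)^{A_{jj}}\in\{\pm 1\}$ yields the parity of $A_{jj}$; the diagonal quadratic coefficient $(2A_{jj}-1)/4$ then pins down the integer $A_{jj}$ itself; and with the diagonal parities in hand, the sign factor $(-1)^{A_{ii}+A_{jj}}$ appearing in the $t_i t_j$ coefficient for $i<j$ allows one to recover the off-diagonal $A_{ij}$. Hence $V_A$ determines $A$, and the definition~\eqref{FAdef} then reconstructs $F_A$, proving the ``and hence $F_A$'' clause. The only technical labour is the quadratic Taylor expansion; no genuine obstacle is expected, since both the Nahm equations and the dilogarithm/logarithm expansions are classical and truncate at finite order.
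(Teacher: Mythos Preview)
Your argument is correct: the second-order Taylor expansion of $V_A(t)$ at $t=0$ that you derive is accurate, and the integers $A_{jj}$ and $A_{ij}$ can indeed be read off from the linear and quadratic coefficients exactly as you describe.

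The paper's proof takes a somewhat different and more structural route. Rather than expanding to a fixed finite order, it uses the identity $t_j\,\partial_{t_j}V(t)=\log z_j(t)$ (a consequence of $V$ being the critical value of the function~\eqref{eq:Vgen}), so that $V$ determines the full power series $z_j(t)$ for each~$j$. The Hessian of $V$ in the logarithmic variables then equals $\Lambda(t)=-A-\mathrm{diag}(z(t)/(1-z(t)))$ (cf.~\eqref{lt},~\eqref{Vhes}), and since the $z_j$ are already known, $A$ is recovered. Your approach is more hands-on and avoids invoking the Hessian identity, at the cost of the explicit bookkeeping; the paper's approach is shorter and exhibits the conceptual relation $V\leadsto z\leadsto A$ without truncating. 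Both rest on the identification $V_A=V_A^{\FGI}$ from~\eqref{eq.equal}.
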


\begin{proof}
We have
\be
\label{eq.V2F}
t_j \pt_{t_j} V(t) \= \log z_j(t), \qquad j\=1,\dots,N
\ee
Hence $V$ determines $z_j$ for $j=1,\dots,N$, and consequently the Hessian
\be
\label{Vhes}
\frac{\partial^2}{\partial z_i \partial z_j} V(t) \=
-A-\mathrm{diag}\Big(\frac{z(t)}{1-z(t)}\Big) \,.
\ee
The result follows.
\end{proof}

Note that the proof of Theorem~\ref{thm.fgi.is.admis} uses the expansion of a solution
of~\eqref{FArec} into power series in $t$ whose coefficients are power series in $x$,
unlike the WKB method, which uses an expansion of a solution into power series in
$x$ whose coefficients are power series in $t$.





\begin{proof}[Proof of Theorem~\ref{thm.FGI2} of Section~\ref{sub.results}]
Theorem~\ref{thm.fgi.is.admis} together with Lemmas~\ref{lem.dwork.zm.admis} and 
~\ref{lem:dwork.diff.VU} imply the coefficients of $x$ in equation~\eqref{dphiA}
are in $\BZ[t^{1/{m}},\z_{m}][\![t]\!]\cap S^{(m)}_p[\tfrac{1}{p},z^{1/m}]$. To complete
the proof we will show that
\be
\label{eq:int.vs.comp}
\BZ[t^{1/{m}},\z_{m}][\![t]\!]\,\cap\, S^{(m)}_p[\tfrac{1}{p},z^{1/m}]
\;\subseteq\; S^{(m)}_p[z^{1/m}]\,.
\ee
Elements in $S^{(m)}_p[\tfrac{1}{p},z^{1/m}]$ can be represented by polynomials in
$t^{1/{m}}$ of degree less than $m$ whose coefficients are polynomials in
$z^{\pm1/m}$, $t^{\pm1}$ and $\delta^{-1}$. Firstly, we find that
$z(t)^{\pm1/m}\in1+t\BZ[\tfrac{1}{m}][\![t]\!]$ as the LHS of the equation
\be
(-1)^{\diag(A)}z^{-A}(1-z)\=t\,,
\ee
is integral and therefore inverting the series and solving for $1-z$ in terms of
$t$ leads to an integral power series in $t$.
Secondly, as $\delta$ is a polynomial in $z$ times a monomial in $z^{\pm1}$ and has
$\delta(0)=1$ we see that $\delta^{\pm1}\in1+t\BZ[\![t]\!]$.
Thirdly, in any expression with $t$ we can replace it with $z$ using the
functional equations~\eqref{zjt}.
Therefore, for any element of $S^{(m)}[\tfrac{1}{pm},z^{1/m}]$ we can remove all $t$
from the coefficients of $t^{1/m}$ and factor out all denominators given that
multiplying by $z^{\pm1/m},\delta^{-1}$ preserve integrality of the power series.
Therefore, elements in $S^{(m)}[\tfrac{1}{pm},z^{1/m}]$ can be represented by a unit in
$\BZ[\tfrac{1}{m},t^{1/{m}},\z_{m}][\![t]\!]$ times a polynomial in $(1-z)$. Assuming
that this element is in the intersection in equation~\eqref{eq:int.vs.comp}, we see
that this implies that this polynomial is in $\BZ[t^{1/m},\z_{m}][\![1-z]\!]$.
This implies that the polynomials in $1-z$ are $p$-integral and therefore implies
the inclusion of equation~\eqref{eq:int.vs.comp}.
\end{proof}


\section{\texorpdfstring{Integrality and gluing of $p$-adic series}{Integrality
    and gluing of p-adic series}}
\label{sec.modules}

In this section we will combine the previous sections to prove our main results.
Firstly, we study the $K$-theory of local fields and its
image under the $p$-adic regulator map. Then we provide the technical results
needed in the proof of the main theorems.
After that we prove our main results about the $p$-completed Habiro rings
(Theorem~\ref{thm.locals} of the introduction and Theorem~\ref{thm.Psi} of this
section) by constructing explicit generators using the infinite Pochhammer symbol
evaluated at roots of unity.
We close by proving the remaining theorems (Theorems~\ref{thm.HR} and~\ref{thm.2})
of Section~\ref{sub.results}.

\subsection{\texorpdfstring{Bloch group and $p$-adic dilogarithm}{Bloch
    group and p-adic dilogarithm}}
\label{sub.Dp}

We begin by recalling some basic facts about the dilogarithm function, the Bloch
group, and their $p$-adic counterparts. The dilogarithm function, defined initially
for $|x|<1$ by
\be
\label{eq:li2}
\Li_{2}(x) \= \sum_{k=1}^{\infty} \frac{x^k}{k^2}\,,
\ee
satisfies the famous five-term functional equation. The five-term equation reflects
the algebraic structure of the third algebraic $K$-group of a field, which is
isomorphic, modulo well-understood torsion, with the Bloch group of the number
field~\cite{Bloch,Zagier:dilog}. Therefore, functions that satisfy the five-term
relation give homomorphisms from the Bloch group, and hence from the third algebraic
$K$-group. The Bloch-Wigner dilogarithm $D:\BC\rightarrow\BR$,
given by $D(z)=\Im(\Li_{2}(z))+\arg(1-z)\log|z|$, is one such
function and satisfies the functional equation
\be
\label{eq:5term}
D(x)+D(y) \= D(xy) +D\Big(\frac{1-y}{1-x^{-1}}\Big) +D\Big(\frac{1-x}{1-y^{-1}}\Big)\,.
\ee
The usual dilogarithm is a holomorphic but multivalued function from $\BC-\{0,1\}$.
The Bloch-Wigner dilogarithm gives a well-defined continuous real valued function
on $\BC$. Taking a number field $\BK$ and an infinite place $\sigma$ (i.e.
$\sigma:\BK\rightarrow\BC$ an embedding of the number field into~$\BC$) we can define
a map $D_{\sigma}:B(\BK)\rightarrow\BR$ defined on the symbols in $B(\BK)$ by linearly
extending $D_{\sigma}([z])=D(\sigma(z))$. An analogous function valued in $\BC_p$ was
defined by Coleman~\cite{Coleman} for finite places. The main point is to find an
analytic extension of $\Li_2(z)$ in equation~\eqref{eq:li2}, which only converges for
$z\in\BC_{p}$ with $|z|<1$. Coleman used an ``analytic continuation along Frobenius''
to define such a function after choosing a logarithm
$\log:\BC_{p}^{\times}\rightarrow\BC_p$. For example, one could take
the Iwasawa logarithm, which has $\log(p)=0$. (The values of the dilogarithm only
depend on this
choice in the residue discs around $1$ and $\infty$ and importantly the primes
we consider will never require us to evaluate here.)
Then Coleman obtains a well-defined function
$\Li_2:\BC_p\smallsetminus\{1\}\rightarrow\BC_{p}$ and defines
\be
\label{Dpdef}
D_p:\BC_{p}\smallsetminus\{0,1\}\rightarrow\BC_{p}, \qquad z \mapsto
\Li_2(z)+\frac{1}{2}\log(z)\log(1-z)\,.
\ee
This function also satisfies equation~\eqref{eq:5term}. Therefore, this gives
homomorphisms from the Bloch group in a completely analogous way to the Bloch-Wigner
dilogarithm but for finite places (i.e. $\sigma:\BK\rightarrow\BC_{p}$ an embedding of
the field). Conveniently, for unramified primes $p$ of the field $\BK$, we can combine
these functions $D_{\sigma}$ into one function $D_{p}:B(\BK)\rightarrow\BK_{p}$
such that the projection onto the various field factors of $\BK_{p}$
corresponding to $\sigma$ agree with the function $D_{\sigma}$.
Let $R=\calO_\BK[\tfrac{1}{\Delta}]$ as in~\eqref{Fetale} and $R^\wedge_p$ denote its
$p$-adic completion. The $p$-adic polylogarithm functions satisfy the following
integrality properties:

\begin{lemma}
\label{lem.pint}
If $z\in R^\wedge_p$ and $|z|_{p}=1$ and $|z-1|_{p}\geq 1$ then
$D_{p}(z)\in p^2 R^\wedge_p$.
\end{lemma}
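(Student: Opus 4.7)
The hypotheses $|z|_p=1$ and $|z-1|_p\geq 1$, combined with the fact that $z,z-1\inn R^\wedge_p$ already bounds their $p$-adic absolute values by $1$, force both $z$ and $1-z$ to be units in $R^\wedge_p$. Writing $z=\omega\langle z\rangle$ with $\omega$ the Teichm\"uller lift (a root of unity of order prime to $p$) and $\langle z\rangle\inn 1+pR^\wedge_p$, and using that Iwasawa's logarithm vanishes on prime-to-$p$ roots of unity, gives $\log z=\log\langle z\rangle\inn pR^\wedge_p$, and similarly $\log(1-z)\inn pR^\wedge_p$. Since $2\mid\Delta$ forces $p$ odd, the cross term $\tfrac12\log z\,\log(1-z)$ in $D_p(z)=\Li_2(z)+\tfrac12\log z\,\log(1-z)$ already lies in $p^2R^\wedge_p$, so it remains to prove $\Li_2(z)\inn p^2R^\wedge_p$.

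I would first reduce to the case $z=\omega$ using Coleman's Taylor expansion
\begin{equation*}
\Li_2(z) \- \Li_2(\omega) \= \sum_{n\geq1}\frac{(z-\omega)^n}{n!}\,\Li_2^{(n)}(\omega),
\end{equation*}
which converges because $|z-\omega|_p\leq 1/p<1$ and $\omega$ lies in the good locus of Coleman's $\Li_2$. The linear coefficient $\Li_2'(\omega)=-\log(1-\omega)/\omega$ lies in $pR^\wedge_p$, while the higher derivatives $\Li_2^{(n)}(\omega)$ for $n\geq2$ can be written (by induction on $n$) as rational functions in the units $\omega$ and $1-\omega$ plus terms proportional to $\log(1-\omega)\inn pR^\wedge_p$, and so lie in $R^\wedge_p$. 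Combining $(z-\omega)\inn pR^\wedge_p$ with $v_p(p^n/n!)\geq2$ for all $n\geq2$ (which holds because $p\geq5$, since $6\mid\Delta$), every term of the series belongs to $p^2R^\wedge_p$, giving $\Li_2(z)\cm\Li_2(\omega)\pmod{p^2R^\wedge_p}$.

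For the Teichm\"uller case, I would iterate Coleman's Frobenius identity $\Li_2^{(p)}(w)=\Li_2(w)-p^{-2}\Li_2(w^p)$ at $w=\omega^{p^k}$ for $k=0,1,\dots,f-1$, where $f$ is chosen so that $\omega^{p^f}=\omega$ (e.g., the least common multiple of the residue degrees of the primes of $R$ above $p$). The resulting telescoping relation reads
\begin{equation*}
(p^{2f}-1)\,\Li_2(\omega) \= \sum_{k=0}^{f-1}p^{2(f-k)}\,\Li_2^{(p)}(\omega^{p^k}).
\end{equation*}
By Lemma~\ref{lem.lip} applied at $t=\omega^{p^k}$, each $\Li_2^{(p)}(\omega^{p^k})\inn R^\wedge_p$, where one uses that $1-\omega^{p^k}$ is a unit because the Frobenius orbit of a nonzero residue class distinct from $1$ never passes through $1$. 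Hence the right-hand side is in $p^2R^\wedge_p$, and dividing by the unit $p^{2f}-1$ yields $\Li_2(\omega)\inn p^2R^\wedge_p$, completing the proof.

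The main delicacy is the Taylor-reduction step: verifying that Coleman's $\Li_2^{(n)}(\omega)$ are $p$-adically integral requires identifying the explicit shape of the derivatives and that their $\log(1-\omega)$-parts are already $p$-small, with factorial denominators absorbed by powers of $p$ only because $p\geq5$.
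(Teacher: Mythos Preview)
Your proof is correct and follows the same two-step skeleton as the paper's: first establish $\Li_2(\omega)\inn p^2R^\wedge_p$ for $\omega$ a Teichm\"uller root of unity, then Taylor-expand to reach general $z$ in the residue disc. The paper dispatches the root-of-unity step by citing~\cite[Cor.~4.9]{Lip} directly for $\Li_2(\z)\inn p^2\BZ_p[\z]$ and $\Li_1(\z)\inn p\BZ_p[\z]$, and then only sketches the Taylor step.

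Your treatment differs in that you \emph{derive} the root-of-unity bound rather than quote it: you telescope Coleman's identity $\Li_2^{(p)}(w)=\Li_2(w)-p^{-2}\Li_2(w^p)$ along the Frobenius orbit of $\omega$ to get $(p^{2f}-1)\Li_2(\omega)=\sum_{k=0}^{f-1}p^{2(f-k)}\Li_2^{(p)}(\omega^{p^k})$, and then feed in Lemma~\ref{lem.lip} at each $\omega^{p^k}$. This is exactly the identity the paper later records in Section~\ref{sub.padic.compute} for computational purposes, so you have effectively made the argument self-contained using only the paper's own Lemma~\ref{lem.lip}. The gain is that no external input from~\cite{Lip} is needed; the cost is that you must check by hand the integrality of the higher Taylor coefficients $\Li_2^{(n)}(\omega)$, which you do correctly via the observation that they are rational in $\omega,1-\omega$ up to a multiple of $\log(1-\omega)\inn pR^\wedge_p$. (Incidentally, your factorial bound $v_p(p^n/n!)\geq 2$ for $n\geq 2$ already holds for $p\geq 3$, not only $p\geq 5$; but since the paper's standing hypothesis is $6\mid\Delta$ this is harmless.)
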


\begin{proof}
From~\cite[Cor. 4.9]{Lip} we know that
\be
\Li_2(\z)\inn p^2 \BZ_{p}[\z]
\quad\text{and}\quad
\Li_{1}(\z)\inn p \BZ_{p}[\z]\,.
\ee
This implies that $D_{p}(\z)\in p^2 R^\wedge_p$ if $\z$ is a root of
unity. Then expanding these as Taylor series near $\z$ and noting that we assumed
$p>3$ is unramified proves the result on the unit discs near $\z$.
\end{proof}

In what follows, we are mostly interested in evaluating $\Li_2(\z)$ when $\z$ is
a root of unity of order prime to $p$. This case is important because these are
rank 1 elements, i.e., they satisfy the
equation
\be
\fr\z\=\z^p\,.
\ee
Although the $p$-adic polylogarithm $\Li_n$ is a transcendental function for
$n>0$, its modulo $p$ reduction can be described by a rational function, the
finite polylogarithm function introduced by Kontsevich (unpublished note, reproduced
as an appendix to~\cite{EVG})
\be
\mathrm{li}_{n,p}(z)
\=
\sum_{k=1}^{p-1}
\frac{z^{k}}{k^n}\,.
\ee
Below, we denote by $\BQ_{p^s}$ the unique unramified extension of $\BQ_p$ of degree
$s$ (with residue field $\BF_{p^s}$), and its ring of integers by $\BZ_{p^s}$. 

\begin{proposition}~\cite[Cor. 2.2]{finli}
\label{prop:zspan}
For a prime $p$ and $\z\in\mu(\BQ_{p^s})$ with $\z \neq 1$, we have
\be
p^{-2}D_{p}(\z^p) \= \frac{1}{(\z-1)^p} \mathrm{li}_{2,p}(\z) \pmod{p}\,.
\ee
\end{proposition}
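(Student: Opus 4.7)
My plan is to reduce the identity to two ingredients already present in the paper: the defining Frobenius relation for the $p$-adic polylogarithm and the mod-$p$ computation of $\Li_2^{(p)}$ carried out in the proof of Lemma~\ref{lem.lip}. To begin, since $\zeta$ is a root of unity, the Iwasawa logarithm satisfies $\log(\zeta)=0$, hence $\log(\zeta^p)=p\log(\zeta)=0$, and the definition~\eqref{Dpdef} collapses to $D_p(\zeta^p)=\Li_2(\zeta^p)$. Rearranging the identity $\Li_2^{(p)}(t)=\Li_2(t)-p^{-2}\Li_2(t^p)$ from~\eqref{Lip} and specializing at $t=\zeta$ then gives
\[
p^{-2}D_p(\zeta^p) \;=\; \Li_2(\zeta) \,-\, \Li_2^{(p)}(\zeta).
\]

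Next I would eliminate the first summand. Because $\zeta$ is a root of unity in an unramified extension of order prime to $p$ with $\zeta\neq 1$, we have $|\zeta|_p=1$ and $|\zeta-1|_p=1$, so Lemma~\ref{lem.pint} gives $D_p(\zeta)\in p^2 R^\wedge_p$; combined again with $\log(\zeta)=0$ this yields $\Li_2(\zeta)\in p^2R^\wedge_p$, hence $\Li_2(\zeta)\equiv 0\pmod p$. We are therefore reduced to identifying $-\Li_2^{(p)}(\zeta)\pmod p$ with the right-hand side. For this I would take the proof of Lemma~\ref{lem.lip} at the level $N=1$: splitting $\Li_2^{(p)}(t)=\sum_{r=1}^{p-1}\sum_{k\geq 0}t^{r+pk}/(r+pk)^{2}$ into residue classes mod $p$ and using $(r+pk)^{-2}\equiv r^{-2}\pmod{p}$ yields
\[
\Li_2^{(p)}(t) \;\equiv\; \frac{1}{1-t^p}\sum_{r=1}^{p-1}\frac{t^{r}}{r^{2}} \;=\; \frac{\mathrm{li}_{2,p}(t)}{1-t^{p}} \pmod p
\]
as an identity in the ring $\BZ[t,1/(1-t)]^\wedge_p$ of Lemma~\ref{lem.lip}. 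Since $\zeta-1$ is a $p$-adic unit, this ring admits a reduction map $t\mapsto\zeta$, and the last step is the Frobenius congruence $(1-\zeta)^{p}\equiv 1-\zeta^{p}\pmod p$, which for odd $p$ gives $1-\zeta^p\equiv-(\zeta-1)^{p}\pmod p$. Combining, $\Li_2^{(p)}(\zeta)\equiv-\mathrm{li}_{2,p}(\zeta)/(\zeta-1)^{p}\pmod p$, and therefore $p^{-2}D_p(\zeta^{p})\equiv\mathrm{li}_{2,p}(\zeta)/(\zeta-1)^{p}\pmod p$, as required.

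The argument is essentially a synthesis of previously established statements with a reduction modulo~$p$; there is no serious obstacle. The most delicate point is the validity of the Frobenius functional equation $\Li_2^{(p)}(t)=\Li_2(t)-p^{-2}\Li_2(t^{p})$ as an equality of Coleman functions (not merely of formal power series around $t=0$), which is built into~\eqref{Lip} and rests on Coleman's analytic continuation along Frobenius. A secondary check is the legitimacy of specializing the mod-$p$ formula for $\Li_2^{(p)}(t)$ to $t=\zeta$: this is justified because the proof of Lemma~\ref{lem.lip} places $\Li_2^{(p)}(t)$ in $\BZ[t,1/(1-t)]^\wedge_p$, whose reduction mod $p$ admits a ring map sending $t$ to any element $\zeta$ for which both $\zeta$ and $1-\zeta$ are units in $R^\wedge_p$.
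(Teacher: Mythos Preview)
Your argument is correct. Note that the paper does not actually prove this proposition; it simply cites it as \cite[Cor.~2.2]{finli}. You have supplied a self-contained proof using only ingredients already in the paper (the Frobenius relation~\eqref{Lip}, the $N=1$ case of the congruence in the proof of Lemma~\ref{lem.lip}, and the integrality $\Li_2(\zeta)\in p^2\BZ_p[\zeta]$ quoted from \cite[Cor.~4.9]{Lip} in the proof of Lemma~\ref{lem.pint}), which is more than the paper itself does. The two points you flag as delicate are indeed the right ones: the identity $\Li_2^{(p)}(t)=\Li_2(t)-p^{-2}\Li_2(t^p)$ must be read as an equality of Coleman functions (this is Coleman's \cite[Prop.~6.2]{Coleman}, cited just above~\eqref{Lip}), and the specialization $t\mapsto\zeta$ of the mod-$p$ congruence is licensed precisely because $\Li_2^{(p)}(t)\in\BZ[t,\tfrac{1}{1-t}]^\wedge_p$ and $1-\zeta$ is a $p$-unit. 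One cosmetic remark: the sign step uses $(-1)^p=-1$, so you are implicitly assuming $p$ odd, consistent with the paper's standing convention $p>3$.
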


We can use this proposition to prove the following.

\begin{proposition}
\label{prop.xiz}
For every positive integer $s$, we have
\be
\label{eq.xiz}
\mathrm{Span}_{\BZ_{p}}\{p^{-2}D_{p}(\z)\,|\,\z\inn \mu(\BQ_{p^s}) \}
\= \BZ_{p^s} \,.
\ee
\end{proposition}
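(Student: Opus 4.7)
My plan is to reduce the question modulo $p$, translate it into an $\BF_p$-linear algebra problem about a specific rational function over $\BF_p$, and resolve it via a degree count. Since $\BZ_{p^s}$ is a free $\BZ_p$-module of rank $s$, Nakayama's lemma reduces the claim to showing that the reductions $p^{-2}D_p(\zeta)\bmod p$, for $\zeta \in \mu(\BQ_{p^s}) \setminus \{1\}$, span $\BF_{p^s}$ as an $\BF_p$-vector space. Because $\gcd(p,p^s-1)=1$, the Frobenius $\zeta\mapsto\zeta^p$ permutes $\mu(\BQ_{p^s})$, so Proposition~\ref{prop:zspan} identifies these reductions with the values of the rational function $\phi(u) := \mathrm{li}_{2,p}(1+u)/u^p \in \BF_p(u)$ evaluated on $u = \bar\zeta - 1 \in \BF_{p^s}\setminus\{0,-1\}$.

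The explicit structure of $\phi$ is the key input. Expanding $\mathrm{li}_{2,p}(1+u)=\sum_{k=1}^{p-1}(1+u)^k/k^2$ and invoking the Wolstenholme-type identities $\sum_{k=1}^{p-1}1/k\equiv\sum_{k=1}^{p-1}1/k^2\equiv 0\pmod{p}$ (valid for $p\geq 5$) together with the computation $\sum_{k=1}^{p-1}(k-1)/(2k)\equiv -1/2$ yields the factorization
\[
\mathrm{li}_{2,p}(1+u) \= u^2(1+u)P(u),
\]
where $P\in\BF_p[u]$ is monic of degree $p-4$ with $P(0)\neq 0$; the leading coefficient comes from the $k=p-1$ term $1/(p-1)^2 \equiv 1$. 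Hence $\phi(u)=(1+u)P(u)/u^{p-2}$ has a single pole of order exactly $p-2$ at $u=0$ and no other poles. Since $\phi$ has $\BF_p$-coefficients, $\phi(u)^{p^i}=\phi(u^{p^i})$, so the $\BF_p$-span $V$ of the image of $\phi$ is a Galois-stable $\BF_p$-subspace of $\BF_{p^s}$.

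Suppose for contradiction that $V$ is proper. By the normal basis theorem $\BF_{p^s}\cong\BF_p[G]$ as $\BF_p[G]$-modules (with $G=\mathrm{Gal}(\BF_{p^s}/\BF_p)$), and its $\BF_p[G]$-submodules correspond to principal ideals of $\BF_p[x]/(x^s-1)$; the annihilator of any proper such ideal is generated by a nonzero polynomial $\tilde Q(x)=\sum_{i=0}^{s-1}\tilde Q_i x^i$ of degree at most $s-1$. Translating this through the identification gives
\[
\sum_i \tilde Q_i\,\phi(u^{p^i}) \= 0 \qquad \text{for all } u\in\BF_{p^s}\setminus\{0,-1\}.
\]
Multiplying through by $u^{p^{i_{\max}}(p-2)}$ (where $i_{\max}$ is the largest index with $\tilde Q_{i_{\max}}\neq 0$) produces a polynomial $M(u)\in\BF_p[u]$. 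Each summand contributes degree $p^{i_{\max}}(p-2)-p^i$, and since these exponents are pairwise distinct, no cancellation occurs; hence $M$ is nonzero with leading coefficient $\tilde Q_{i_{\min}}\neq 0$ (where $i_{\min}$ is the smallest nonzero index) and
\[
\deg M \= p^{i_{\max}}(p-2)-p^{i_{\min}} \;\leq\; p^{s-1}(p-2)-1 \= p^s - 2p^{s-1} - 1 \;<\; p^s - 2.
\]
But $M$ vanishes on the $p^s-2$ distinct points of $\BF_{p^s}\setminus\{0,-1\}$, a contradiction. Therefore $V=\BF_{p^s}$.

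The main obstacle is not the degree count itself but the explicit factorization of $\mathrm{li}_{2,p}(1+u)$: the zero at $u=0$ must have order at least two for the final inequality to be strict, which is precisely what forces the restriction $p\geq 5$ that is already standing throughout the paper.
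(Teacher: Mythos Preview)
Your proof is correct but follows a genuinely different route from the paper's. Both arguments reduce modulo $p$ via Nakayama and use the same key input: $\mathrm{li}_{2,p}$ has a double zero at $x=1$ (from the congruences $\sum 1/k\equiv\sum 1/k^2\equiv 0$ for $p\ge 5$), so the rational function $f_p(x)=\mathrm{li}_{2,p}(x)/(x-1)^p$ can be written as $g_p(x)/(x-1)^{p-2}$ with $\deg g_p=p-3$. From there the paper uses a direct fiber count: for any $c\in\BF_{p^s}$, the equation $c(x-1)^{p-2}=g_p(x)$ has at most $p-2$ solutions, so $\#\mathrm{image}(f_p)\geq(p^s-1)/(p-2)>p^{s-1}$, which already exceeds the cardinality of any proper $\BF_p$-subspace of $\BF_{p^s}$. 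Your argument instead exploits the Galois-stability of the image (since $f_p$ has $\BF_p$-coefficients), invokes the normal basis theorem to produce a nonzero annihilating element $\tilde Q(\sigma)\in\BF_p[G]$ of degree $\le s-1$, and then bounds the degree of the resulting polynomial identity $\sum_i \tilde Q_i\,\phi(u^{p^i})=0$ after clearing denominators. The paper's approach is shorter and more elementary (no Galois module theory, no normal basis), while yours is more structural and makes the role of the Frobenius action explicit; both ultimately succeed because the effective degree $p-2$ is strictly less than $p-1$. One small wording issue: ``smallest nonzero index'' should read ``smallest index $i$ with $\tilde Q_i\neq 0$'' to avoid ambiguity.
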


\begin{proof}
It suffices to prove~\eqref{eq.xiz} modulo $p$. 
Consider the rational function
\be
f_{p}(x)
\=
\frac{\mathrm{li}_{2,p}(x)}
{(x-1)^p}\,.
\ee
Since $\mathrm{li}_{2,p}(1)\;\equiv\;0\pmod{p}$ and
$\mathrm{li}_{2,p}'(1)\;\equiv\;0\pmod{p}$, there exists a polynomial $g_{p}(x)$
of degree $p-3$ such that $f_{p}(x)\=\frac{g_{p}(x)}{(x-1)^{p-2}}$.
We can view $f_p$ as a map
\be
f_{p}:\BF_{p^{s}}\smallsetminus\{1\}\;\rightarrow\;\BF_{p^{s}}\,.
\ee
Then we see that $f_{p}^{-1}(c) \subseteq
\{x\in\BF_{p^{s}}\smallsetminus\{1\}\,|\,c(x-1)^{p-2}-g_{p}(x)\}$
and therefore $\#f_{p}^{-1}(c)\leq p-2$. 
This gives a lower bound for the size of the image of $f_{p}$ 
\be
\#\mathrm{image}(f_{p})\;\geq\;\frac{p^{s}-1}{p-2}\;>\;p^{s-1}\,,
\ee
which implies that
$ p^{s-1}\,<\,p^{r} =\#\mathrm{Span}_{\BF_{p}}(\mathrm{image}(f_{p})) \;\leq\;p^{s}$.
Therefore, we see that
$\mathrm{Span}_{\BF_{p}}(\mathrm{image}(f_{p})) \= \BF_{p^s}$.
Since $\mu(\BQ_{p^s}) \simeq \BF_{p^s}^{\times}$, Proposition~\ref{prop:zspan} implies 
that $p^{-2}D_{p}(\z)$ span $\BF_{p^s}$ as an $\BF_p$-vector space. The result
follows. 
\end{proof}

Note that if $\BK/\BQ$ is a number field and $p$ is unramified, so that
$p=\prod_i\mathfrak{p}_i$, $N(\mathfrak{p}_i)=p^{s_i}$, then
$\BK_p \simeq \prod_i \BQ_{p^{s_i}}$ and $K_n(\BK_p) \simeq \prod_i K_n(\BQ_{p^{s_i}})$
for every $n$.

\begin{theorem} 
\label{thm.xiz}
Assume $p>3$. We have a $\BZ_p$-linear isomorphism
\be
D_p: K_3(\BK_p;\BZ_p) \to p^2\calO_{\BK_{p}}\,,
\ee
and $K_3(\BK_p;\BZ_p)$ is generated as a $\BZ_p$-module by
$\{[\z]\,|\,\z\in \mu(\BK_p) \}$.
\end{theorem}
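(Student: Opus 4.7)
First I would use that $p\nmid\Delta$ implies $p$ is unramified in $\BK$, so that $\BK_p\cong\prod_{i}\BQ_{p^{s_i}}$ via $p=\prod_i\mathfrak{p}_i$ with $N(\mathfrak{p}_i)=p^{s_i}$. This decomposition is compatible with $K_3(-\,;\BZ_p)$, with $\calO_{-}$, with $\mu(-)$, and with the map $D_p$, so both assertions reduce to proving separately, for each factor $F=\BQ_{p^s}$, that $D_p\colon K_3(F;\BZ_p)\to p^2\BZ_{p^s}$ is a $\BZ_p$-linear isomorphism, and that $K_3(F;\BZ_p)$ is generated as a $\BZ_p$-module by $\{[\z]\,:\,\z\in\mu(F)\}$ (the classes $[\z]$ being viewed in $K_3(F;\BZ_p)$ via the natural map from the Bloch group to $K_3^{\mathrm{ind}}$ after $p$-completion, which is well-defined for $p>3$).

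Next I would combine two structural facts about $K$-theory of local fields with the two explicit computations already in the paper. From the theory of $K$-groups of a finite unramified extension $F$ of $\BQ_p$ with $p>3$, the module $K_3(F;\BZ_p)$ is a finitely generated free $\BZ_p$-module of rank $s=[F:\BQ_p]$; the map $D_p$ is injective, being the syntomic Chern character by Besser--de~Jeu~\cite[Thm.~1.6(2)]{BJ:syntomic}; and it lands integrally in $p^2\BZ_{p^s}$. Let $M\subseteq K_3(F;\BZ_p)$ be the $\BZ_p$-submodule generated by $\{[\z]\,:\,\z\in\mu(F)\}$. By Lemma~\ref{lem.pint}, $D_p([\z])\in p^2\BZ_{p^s}$ for every nontrivial $\z\in\mu(F)$, since $|\z|_p=|\z-1|_p=1$; thus $D_p(M)\subseteq p^2\BZ_{p^s}$. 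By Proposition~\ref{prop.xiz}, the $\BZ_p$-span of $\{p^{-2}D_p(\z)\,:\,\z\in\mu(F)\}$ is all of $\BZ_{p^s}$, so $D_p(M)=p^2\BZ_{p^s}$. The resulting chain of inclusions
\[
p^2\BZ_{p^s} \= D_p(M) \,\subseteq\, D_p\bigl(K_3(F;\BZ_p)\bigr) \,\subseteq\, p^2\BZ_{p^s}
\]
is then forced to be a chain of equalities, and the injectivity of $D_p$ gives $M=K_3(F;\BZ_p)$, establishing both assertions at once.

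The main obstacle is the integral upper bound $D_p(K_3(F;\BZ_p))\subseteq p^2\BZ_{p^s}$ (the freeness and rank being standard consequences of the Bloch--Kato and Lichtenbaum--Quillen results now available at all primes). The elementary content contributed by Lemma~\ref{lem.pint} and Proposition~\ref{prop.xiz} is then what pins the image down exactly; Proposition~\ref{prop.xiz} furnishes the surjectivity on the roots-of-unity side, and it is precisely the matching of these two bounds that gives the simultaneous identification of $D_p$ as an isomorphism and of $\{[\z]\}$ as a set of generators. The hypothesis $p>3$ enters at this step, in line with the obstructions flagged in Remark~\ref{rem:coprimeto6}.
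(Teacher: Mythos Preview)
Your proposal is correct and follows essentially the same approach as the paper: both use that $K_3(\BK_p;\BZ_p)$ is free of rank $r=[\BK_p:\BQ_p]$, use Lemma~\ref{lem.pint} (via the Bloch group description) for the inclusion $D_p(K_3)\subseteq p^2\calO_{\BK_p}$, and use Proposition~\ref{prop.xiz} to see that the $\BZ_p$-span of the $D_p([\z])$ already fills $p^2\calO_{\BK_p}$. The one difference is that you import injectivity of $D_p$ from Besser--de~Jeu, whereas the paper avoids this citation by arguing that a surjective $\BZ_p$-linear map between free $\BZ_p$-modules of the same rank is automatically injective; once injectivity is in hand, both arguments conclude $M=K_3(\BK_p;\BZ_p)$ in the same way.
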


For related work on $p$-adic regulators/dilogarithms, see Huber--Kings~\cite{Huber}
and also Besser--de Jeu~\cite[Thm.1.6(2)]{BJ:syntomic}.

\begin{proof}
It is known that $K_3(\BK_p;\BZ_p)\cong H^1(\BK_p,\BZ_p(2))$ is a free $\BZ_p$-module
of rank $r$, cf.~\cite[Thm. 7.4]{Weibel:Kbook} (the torsion-freeness follows from
the vanishing of $H^0(\BK_p,\BF_p(2))$, which follows from $p>3$). Under the
isomorphism between $K_3$ and the Bloch group (and noting that there is no
contribution from $K_2=K_2^M$ to $K_3(-;\BZ_p)$) and using Lemma~\ref{lem.pint}, one
sees that the image is contained in $p^2\calO_{\BK_p}$. But the elements
$\mathrm{ord}(\z)D_p(\z)\in\BK_{p}$ generate the image. Therefore, this gives a
surjective map between two free $\BZ_p$-modules of the same rank and therefore the
map has a trivial kernel.
\end{proof}

\noindent An example that illustrates this theorem is given in Example~\ref{ex.xiz1} in
Section~\ref{sub.padic.compute}.

\subsection{\texorpdfstring{Explicit sections of the $p$-completed
    Habiro ring}{Explicit sections of the p-completed Habiro ring}}
\label{sub.habp}

In this section we consider the modules over the $p$-completed Habiro ring
of Definition~\ref{def.HRxi} of Section~\ref{sub.habmod}. We will prove
Theorem~\ref{thm.locals} of Section~\ref{sub.habmod} by explicitly constructing
sections using Theorem~\ref{thm.xiz} of Section~\ref{sub.Dp}.
Throughout this section one can keep in mind $S=R^\wedge_{p}$, where $R=\calO_{\BK}$
and $\Delta$ are as in equation~\eqref{Fetale} and $S[\tfrac{1}{p}]=\BK_{p}$.
The next lemma is a variation of a well--known lemma of Dwork~
\cite[IV.Lem.3]{Koblitz} and will be the basic tool we use to prove integrality
properties.

\begin{lemma}[Dwork's lemma]
\label{lem:dwork}
Fix a prime $p$ and a positive integer $m$ with $(m,p)=1$ and  
$f(x)\in1+xS[\tfrac{1}{p},\z_{m}]\llbracket x\rrbracket$. Then
$f(x)\in1+x S[\z_{m}]\llbracket x\rrbracket$ if and only if
\be
\frac{\fr(f)((\z_{m}+x)^{p}-\z_{m}^p)}{f(x)^p}
\inn 1+px S[\z_{m}]\llbracket x\rrbracket\,,
\ee
and $f(x)\in1+x S[\z_{m}]+\mathrm{O}(x^2)$, where $\fr$ is the Frobenius of $S$
with $\fr(\z_{m})=\z_{m}^p$ and $\fr (x)=x$. 
\end{lemma}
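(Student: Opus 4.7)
The plan is to treat this as a variant of the classical Dwork lemma (\cite[IV.Lem.3]{Koblitz}) adapted to the cyclotomic Frobenius lift: setting $q=\z_m+x$, the lift $q\mapsto q^p$ corresponds to the substitution $x\mapsto u:=(\z_m+x)^p-\z_m^p$, whose expansion $u=\sum_{j=1}^p\binom{p}{j}\z_m^{p-j}x^j\in xS[\z_m][x]$ has lowest-degree term $p\z_m^{p-1}x$. Writing $\phi(f)$ for the result of applying $\fr$ to the coefficients of $f$ and substituting $x\mapsto u$, the map $\phi$ is a Frobenius lift on $S[\z_m]\llbracket x\rrbracket$ (i.e. $\phi\equiv\mathrm{id}^p\pmod p$). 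The non-standard feature compared to Koblitz's substitution $x\mapsto x^p$ is that $u$ has a nonzero linear term; this is precisely why the auxiliary $O(x^2)$ hypothesis is needed.

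For the forward direction, suppose $f\in 1+xS[\z_m]\llbracket x\rrbracket$. The condition modulo $x^2$ is trivial. For the ratio, the ``freshman's dream'' applied to $\fr$ (giving $\fr(a)\equiv a^p\pmod p$ on $S[\z_m]$) and to $(\z_m+x)^p$ (giving $u\equiv x^p\pmod p$) yields $\phi(f)\equiv\sum_n a_n^p x^{np}\equiv f^p\pmod{pS[\z_m]\llbracket x\rrbracket}$; together with $\phi(f)(0)=f(0)^p=1$ this gives $\phi(f)/f^p\in 1+pxS[\z_m]\llbracket x\rrbracket$.

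For the reverse direction, write $f=1+\sum_{n\geq 1}a_n x^n$ with $a_n\in S[\tfrac{1}{p},\z_m]$ and induct on $n\geq 1$, the case $n=1$ being the hypothesis. For $n\geq 2$, assume $a_1,\dots,a_{n-1}\in S[\z_m]$. A direct expansion gives $[x^n]u^n=p^n\z_m^{n(p-1)}$ and $[x^n]u^k\in S[\z_m]$ for $k<n$, so that
\[
[x^n]\phi(f)\=\fr(a_n)\thinspace p^n\z_m^{n(p-1)}+A_n,\qquad A_n:=\sum_{k<n}\fr(a_k)\thinspace[x^n]u^k\inn S[\z_m].
\]
Isolating the unique monomial in which $a_n$ appears linearly, $[x^n]f^p=pa_n+\tilde Q_n$ with $\tilde Q_n=[x^n](1+a_1x+\cdots+a_{n-1}x^{n-1})^p\in S[\z_m]$. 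Reducing modulo $p$ as in the forward direction shows $A_n\equiv\tilde Q_n\pmod{pS[\z_m]}$ (both reduce to $a_{n/p}^p$ if $p\mid n$ and to $0$ otherwise). Writing $\phi(f)=f^p(1+h)$ with $h\in pxS[\z_m]\llbracket x\rrbracket$ and using the induction hypothesis to ensure $[x^k]f^p\in S[\z_m]$ for $k<n$, one checks $[x^n](\phi(f)-f^p)=[x^n](f^p h)\in pS[\z_m]$. Combining these three facts yields
\[
p^{n-1}\fr(a_n)\z_m^{n(p-1)}-a_n\inn S[\z_m].
\]

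The main obstacle is extracting the integrality of $a_n$ from this relation. Since $(m,p)=1$, the ring $S[\z_m]$ is a finite product of unramified $p$-adic rings and $\fr$ preserves the $p$-adic valuation. Setting $r:=-v_p(a_n)$, the valuation of $p^{n-1}\fr(a_n)\z_m^{n(p-1)}$ is $n-1-r$. If $r\geq 1$, then because $n\geq 2$ we have $n-1-r>-r$, so the two summands have distinct valuations and their difference has valuation $-r<0$, contradicting membership in $S[\z_m]$. Hence $r\leq 0$, i.e.~$a_n\in S[\z_m]$, completing the induction and the proof.
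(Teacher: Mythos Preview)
Your proof is correct and follows essentially the same route as the paper: both dispatch the forward direction via the freshman's dream, and for the converse both induct on $n$ to reach the key relation $p^{n-1}(\text{unit})\fr(a_n)-a_n\in S[\z_m]$. The only difference is the final extraction step, where you invoke the $p$-adic valuation on the product of unramified local rings, while the paper instead telescopes using the finite order $s$ of $\fr$ to obtain $(1-(\text{unit})p^{s(n-1)})a_n\in S[\z_m]$ and then inverts the unit prefactor; these are equivalent in the intended setting $S=R_p^\wedge$.
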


\begin{proof}
Let
\be
f(x)\=1+\sum_{k=1}^{\infty}a_{k}x^{k}
\quad\text{and}\quad
\frac{\fr(f)((\z_{m}+x)^{p}-\z_{m}^p)}{f(x)^p}\=1+\sum_{k=1}^{\infty}b_{k}x^{k}\,.
\ee
Notice that if $a_{k}\in S[\z_{m}]$ then
\be
\frac{\fr(f)((\z_{m}+x)^{p}-\z_{m}^p)}{f(x)^p}
\= 1\pmod{p}\,.
\ee
This completes half of the proof. Therefore, assume now that $b_k\in pS[\z_{m}]$. Let
$\delta_{p,m}(x)=\frac{(\z_{m}+x)^{p}-x^p-\z_{m}^p}{px}\in\z_{m}^{p-1}+x\BZ[\z_{m},x]$.
Then trivially
\be
1+\sum_{k=1}^{\infty}\fr(a_{k})(x^{p}+xp\delta_{p,m}(x))^{k}
\=
\Big(1+\sum_{k=1}^{\infty}a_{k}x^{k}\Big)^{p}
\Big(1+\sum_{k=1}^{\infty}b_{k}x^{k}\Big)\,.
\ee
Assume for induction that $a_{i}\in S[\z_{m}]$ for $i<n$. Then computing the $n$-th
coefficient on both sides of the equation, we find that
\be
\fr(a_{n/p})+\z_{m}^{p-1}p^{n}\fr(a_{n})+p S[\z_{m}]
\=
\fr(a_{n/p})+pa_{n}+p S[\z_{m}]\,,
\ee
and so
\be
a_{n}-\z_{m}^{p-1}p^{n-1}\fr(a_{n})\inn S[\z_{m}]\,.
\ee
Therefore, if $s$ is the order of the Frobenius endomorphism $\fr$ on $S[\z_{m}]$,
then we get
\be
\begin{aligned}
(1-\z_{m}^{s(p-1)}p^{s(n-1)})a_{n}
&\=
\sum_{k=0}^{s-1}\z_{m}^{k(p-1)}p^{k(n-1)}(\fr^{k}(a_{n})
-\z_{m}^{p-1}p^{n-1}\fr^{k+1}(a_{n}))
\inn S[\z_{m}]\,.
\end{aligned}
\ee
So assuming that $a_{1}\in S[\z_{m}]$ we find that
$a_{n}\in S[\z_{m}]$ by induction.
\end{proof}

When $D_p(\xi)=0$ we can use the assumption of equation~\eqref{fxidef} in
Definition~\ref{def.HRxi} of Section~\ref{sub.habmod} to prove integrality by
applying Dwork's lemma. This will be important in the proof of the
Theorem~\ref{thm.locals} of Section~\ref{sub.habmod} that $\calH_{R_p^\wedge,\xi}$
are free rank one bundles over $\calH_{R_p^\wedge}$.

\begin{corollary}
\label{cor.diagram}
Fix a prime $p$ and a $p$-complete torsion-free ring $S$. Suppose that
\be
g(q)\=(g_m(x))_{m\geq 1,(m,p)=1}\,,
\quad
g_{m}(x)\inn S[\tfrac{1}{p},\z_m][\![x]\!]
\ee
is a collection of power series such that
\be
\label{eq:gcond}
\begin{aligned}
\fr g(q^p) -p g(q)\inn \prod_{(m,p)=1}pS[\z_{m}][\![q-\z_m]\!]\,.
\end{aligned}
\ee
Then $f(q):=\exp(g(q))$ is well-defined and assuming
$f(\z_m+x)\in (1+pS[\z_m])+xS[\z_m]+\mathrm{O}(x^2)$, it satisfies
\be
\label{fintg}
f(q) \inn \prod_{m\geq 1\,,(m,p)=1}S[\z_m][\![q-\z_m]\!]\,.
\ee
\end{corollary}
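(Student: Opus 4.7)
The plan is to apply Dwork's lemma (Lemma~\ref{lem:dwork}) to each component $f_m$ separately, after normalising by the (invertible) constant term $c_m:=f_m(0)$. The difficulty to overcome, and the main obstacle of the proof, is that Dwork's lemma demands constant term equal to $1$, whereas the hypothesis only gives $c_m\in 1+pS[\z_m]$; the rescaling by $c_m$ must be shown to preserve the Dwork-type integrality condition.

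First, I exponentiate the hypothesis on $g$. Since $\exp$ maps $pS[\z_m]\llbracket x\rrbracket$ into $1+pS[\z_m]\llbracket x\rrbracket$ for any prime $p$ (via the standard bound $v_p(p^k/k!)\geq 1$ for $k\geq 1$), applying $\exp$ to~\eqref{eq:gcond} yields, for every $m$ coprime to $p$,
$$\frac{\fr f_m((\z_m+x)^p-\z_m^p)}{f_m(x)^p}\=\exp\bigl(\fr g_m((\z_m+x)^p-\z_m^p)-p\,g_m(x)\bigr)\inn 1+pS[\z_m]\llbracket x\rrbracket.$$

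Second, I normalise. Since $S$ is $p$-complete, $c_m\in 1+pS[\z_m]$ is a unit in $S[\z_m]$, so $\widetilde f_m(x):=f_m(x)/c_m\in S[\tfrac{1}{p},\z_m]\llbracket x\rrbracket$ with $\widetilde f_m(x)\in 1+xS[\z_m]+\mathrm{O}(x^2)$ by the assumed shape of $f_m$. A direct computation gives
$$\frac{\fr\widetilde f_m((\z_m+x)^p-\z_m^p)}{\widetilde f_m(x)^p}\=\frac{c_m^p}{\fr c_m}\,\cdot\,\frac{\fr f_m((\z_m+x)^p-\z_m^p)}{f_m(x)^p}.$$
The second factor lies in $1+pS[\z_m]\llbracket x\rrbracket$ by Step~1; for the first, writing $c_m=1+p\alpha$ gives $c_m^p\in 1+p^2S[\z_m]$ and $\fr c_m\in 1+pS[\z_m]$, so $c_m^p/\fr c_m\in 1+pS[\z_m]$. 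Thus the whole left-hand side lies in $1+pS[\z_m]\llbracket x\rrbracket$, and since its constant term at $x=0$ equals $\widetilde f_m(0)^{1-p}=1$, the torsion-freeness of $S$ forces it into $1+p\,x\,S[\z_m]\llbracket x\rrbracket$.

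Third, I invoke Dwork's lemma for $\widetilde f_m$: both hypotheses of Lemma~\ref{lem:dwork} (the Frobenius ratio condition and the linear form modulo $\mathrm{O}(x^2)$) have just been verified. The conclusion is $\widetilde f_m\in 1+xS[\z_m]\llbracket x\rrbracket\subseteq S[\z_m]\llbracket x\rrbracket$, and multiplying by $c_m\in S[\z_m]$ gives $f_m\in S[\z_m]\llbracket x\rrbracket$, establishing~\eqref{fintg}. The entire argument is local in $m$, since the components of $\calP_{R^{\wedge}_p}$ around distinct roots of unity of order prime to $p$ are decoupled.
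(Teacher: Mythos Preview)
Your proof is correct and follows the paper's approach: normalise by $c_m=f_m(0)$ and apply Dwork's lemma to $f_m/c_m$, with your Step~2 spelling out the normalisation detail (the factor $c_m^p/\fr c_m$ and the constant-term argument) that the paper leaves implicit. The one point you skip is the well-definedness of $f_m=\exp(g_m)$ itself: the paper notes that evaluating~\eqref{eq:gcond} at $x=0$ gives $\fr g_m(0)-p\,g_m(0)\in pS[\z_m]$, which forces $g_m(0)\in pS[\z_m]$ (since $\fr$ preserves $pS[\z_m]$), so that $\exp(g_m(0))$ converges; without this your Step~1 identification of the exponentiated difference with $\fr f_m/f_m^p$ is formally circular.
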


\begin{proof}
Notice that equation~\eqref{eq:gcond} implies that $g_{m}(0)\in pS[\z_{m}]$
and so the series $f_{m}(x)$ is well-defined.
Moreover, equation~\eqref{eq:gcond} implies that
\be
\exp(\fr g_{m}((\z_{m}+x)^p-\z_{m}^p) -p g_{m}(x))
\inn (1+pS[\z_{m}])+pxS[\z_{m}][\![x]\!]\,.
\ee
Therefore, the fact that $f_{m}(0)\in 1+pS[\z_{m}]$ together with Dwork's lemma
(Lemma~\ref{lem:dwork}) shows us that $f_{m}(0)^{-1}f_{m}(x) \in S[\z_{m}][\![x]\!]$
and hence $f_{m}(x) \in S[\z_{m}][\![x]\!]$.
\end{proof}

In view of Definition~\ref{def.HRxi} of Section~\ref{sub.habmod}, we can focus on
pairs consisting of a prime $p$ and a positive integer $m$ prime to $p$ to define
invertible sections. More generally, we can lift this to all roots of unity using
the previous corollary and the following lemma.

\begin{lemma}
\label{lem:ftild}
Fix a prime $p$, $\xi \in K_3(\BK)$, and a collection
\be
f(q) \=(f_{m}(x))_{m\geq1}, \qquad
f_{m}(x)\inn \varepsilon_{m}(\xi)^{1/m}\,
(R^\wedge_p[\z_m]^{\times} +xR_p^\wedge[\z_m]+x^2\BK_p[\z_m][\![x]\!])
\ee
that satisfies
\be
\frac{\fr \widehat{f}(q^p)}{\widehat{f}(q)^p}
\in\!\!\prod_{(m,p)=1}\exp\Big(\frac{p}{x}R^\wedge_p[\z_m][\![x]\!]\Big)\,,
\ee
with $\widehat{f}$ as in equation~\eqref{hatfm}.
Then for all positive integers $m$ with $(m,p)=1$ there is a unique
$\beta_m\in\BZ_{p}$ such that the collection
$\widetilde{f}_{mp^d}(x)=\z_{p^d}^{\beta_m}f_{mp^d}(x)$ satisfies
\be
\frac{\widetilde{f}(q^{\sigma})}{\widetilde{f}(q)^{\sigma^{-1}}}
\in\!\!\prod_{(m,p)=1}R^\wedge_p[\z_m][\![x]\!],\qquad
\text{for all }\sigma\in\BZ_p^{\times}\,.
\ee
\end{lemma}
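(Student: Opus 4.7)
The strategy is to bootstrap the Frobenius hypothesis (which controls $q\mapsto q^p$) to an action of the full group $\BZ_p^\times$, via Dwork's lemma and $p$-adic continuity. The correction $\beta_m$ is exactly what is required to cancel the monodromy obstruction coming from the $\chi^{-1}$-equivariance of the factor $\varepsilon_{mp^d}(\xi)^{1/mp^d}$ in the leading term of $f_{mp^d}$, together with the Galois $p$-primary phase acting on $\z_{p^d}$.

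First I would pick an auxiliary positive integer $\gamma$ coprime to $p$ with $\gamma^2-1$ also coprime to $p$; such $\gamma$ exists precisely because $p>3$. Imposing triviality at the constant term of $\widetilde f(q^\gamma)/\widetilde f(q)^{\gamma^{-1}}$ yields an equation of the form
\[
\z_{p^d}^{\beta_m(\gamma-\gamma^{-1})}\=\eta_d\,,
\]
where $\eta_d\in\mu_{p^d}$ is the $p$-primary phase produced by the $\chi^{-1}$-equivariance applied to $\s_\gamma\bigl(\varepsilon_{mp^d}(\xi)^{1/mp^d}\bigr)/\varepsilon_{mp^d}(\xi)^{\gamma^{-1}/mp^d}$. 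Since $\gamma-\gamma^{-1}=(\gamma^2-1)/\gamma$ is a $p$-adic unit, this equation has a unique solution for $\beta_m\bmod p^d$, and the compatibility across $d$ coming from $\z_{p^{d+1}}^p=\z_{p^d}$ assembles these into a unique $\beta_m\in\BZ_p$.

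With $\beta_m$ so pinned down, I would upgrade constant-term triviality to full integrality via Corollary~\ref{cor.diagram} (i.e., Dwork's lemma~\ref{lem:dwork}). The key calculation is to write $\log\bigl(\widetilde f(q^\gamma)/\widetilde f(q)^{\gamma^{-1}}\bigr)$ as a $\BZ$-linear combination of shifts of $\log(\fr\widehat f(q^p)/\widehat f(q)^p)$; by the hypothesis, this lies in $\tfrac{p}{x}R^\wedge_p[\z_m][\![x]\!]$. The polar parts of $\widehat f$ cancel in the symmetric $\gamma/\gamma^{-1}$ combination, which is exactly why one can work with $\widetilde f$ rather than $\widehat f$. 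The extension to all $\sigma\in\BZ_p^\times$ then follows by $p$-adic continuity and a cocycle argument: the map $\sigma\mapsto \widetilde f(q^\sigma)/\widetilde f(q)^{\sigma^{-1}}$ is multiplicative in $\sigma$, so integrality at our generator $\gamma$ propagates to the topologically closed subgroup it generates (an open subgroup of $\BZ_p^\times$), with the finite tame quotient $\mu_{p-1}$ handled by the parallel argument applied to a generator of $(\BZ/p\BZ)^\times$. The main obstacle, and the reason the restriction $p>3$ appears, is the first step: for $p\in\{2,3\}$ there is no $\gamma\in\BZ$ with both $\gamma$ and $\gamma-\gamma^{-1}$ in $\BZ_p^\times$, and the determination of $\beta_m$ breaks down.
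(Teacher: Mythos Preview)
Your proposal shares the essential ingredients with the paper's proof (Dwork's lemma, the cocycle structure, and the invertibility of $\gamma-\gamma^{-1}$ for $p>3$), but the order of operations is inverted in a way that leaves a gap. You try to pin down $\beta_m$ \emph{first}, from the constant term via the $\chi^{-1}$-equivariance of $\varepsilon_{mp^d}$. But that equivariance only controls the $\varepsilon_{mp^d}^{1/mp^d}$ factor of $f_{mp^d}(0)$; the remaining unit in $R^\wedge_p[\z_{mp^d}]^\times$ can a~priori contribute to the phase of $\sigma_\gamma(f_{mp^d}(0))\,f_{mp^d}(0)^{-\gamma^{-1}}$ as well. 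Knowing that the total obstruction lands in $\mu_{p^d}$ is precisely what the Dwork step delivers, so your step~2 tacitly assumes the output of your step~4. Also, your claim that $\log\bigl(\widetilde f(q^\gamma)/\widetilde f(q)^{\gamma^{-1}}\bigr)$ is a ``$\BZ$-linear combination of shifts'' of the Frobenius hypothesis is not quite right: what actually works is that its \emph{Dwork difference} $\fr(\cdot)(q^p)-p\,(\cdot)$ equals $(\gamma^*-\gamma^{-1}\!\cdot)$ applied to $\log(\fr\widehat f(q^p)/\widehat f(q)^p)$, which is a $\BZ_p$-linear (not $\BZ$-linear) combination, and is the input to Dwork's lemma rather than a direct formula for the quantity itself.

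The paper runs the argument in the opposite order and for all $\sigma\in\BZ_p^\times$ at once. One first checks that for $F(q):=f(q^\sigma)f(q)^{-\sigma^{-1}}$ the quantity $\fr F(q^p)/F(q)^p$ lies in $\exp(pR^\wedge_p[\z_m][\![x]\!])$ (this is the compound ratio of two instances of the hypothesis, with the polar parts of $\widehat f$ cancelling). Dwork's lemma, applied by induction on $d$, then yields that $F$ is integral up to a phase $\z_{p^d}^{\alpha_\sigma}$. The multiplicativity of $F$ in $\sigma$ forces the cocycle relation $\alpha_{\sigma\sigma'}=\sigma'\alpha_\sigma+\sigma^{-1}\alpha_{\sigma'}=\sigma\alpha_{\sigma'}+\sigma'^{-1}\alpha_\sigma$, whose solutions are $\alpha_\sigma=(\sigma-\sigma^{-1})c$ for a single constant $c$; evaluating at a generator $\omega$ of $\mu_{p-1}\subset\BZ_p^\times$ (where $\omega-\omega^{-1}\in\BZ_p^\times$ since $p>3$) determines $c$, and one sets $\beta_m=-c$. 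This handles every $\sigma$ simultaneously, so your separate extension step (one $\gamma$ for the pro-$p$ part and another for $\mu_{p-1}$) becomes unnecessary.
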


\begin{proof}
Firstly, for all $\sigma\in\BZ_{p}^{\times}$, we have
\be
\frac{\fr f(q^{p\sigma})}{\fr f(q^p)^{\sigma^{-1}}}
\frac{f(q)^{p\sigma^{-1}}}{f(q^{\sigma})^p}
\=
\frac{\fr \widehat{f}(q^{p\sigma})}{\fr \widehat{f}(q^p)^{\sigma^{-1}}}
\frac{\widehat{f}(q)^{p\sigma^{-1}}}{\widehat{f}(q^{\sigma})^p}
\in
\!\!\prod_{(m,p)=1}\exp\big(pR^\wedge_p[\z_m][\![x]\!]\big)\,.
\ee
This implies by Lemma~\ref{lem:dwork}, by induction on $d\in\BZ_{\geq0}$, that
for $(m,p)=1$ there is a series $h_m(x)\in R^\wedge_p[\z_m][\![x]\!]$ and an
$\alpha_\sigma\in\BZ_p$ such that
\be
f(q^{\sigma})f(q)^{-\sigma^{-1}}
\=
\z_{p^d}^{\alpha_\sigma}
h_{m}(\z_{m}(\z_{p^d}-1)+x)\,,
\qquad
q\=\z_{m}\z_{p^d}+x\,.
\ee
Notice that
\be
\frac{f(q^{\sigma\sigma'})}{f(q)^{(\sigma\sigma')^{-1}}}
\=
\frac{f(q^{\sigma\sigma'})}{f(q^{\sigma'})^{\sigma^{-1}}}
\Big(\frac{f(q^{\sigma'})}{f(q)^{\sigma'^{-1}}}\Big)^{\sigma^{-1}}
\=
\frac{f(q^{\sigma\sigma'})}{f(q^{\sigma})^{\sigma'^{-1}}}
\Big(\frac{f(q^{\sigma})}{f(q)^{\sigma^{-1}}}\Big)^{\sigma'^{-1}}
\ee
and so
\be
\alpha_{\sigma\sigma'}
\=
\sigma'\alpha_\sigma+\sigma^{-1}\alpha_{\sigma'}
\=
\sigma\alpha_{\sigma'}+\sigma'^{-1}\alpha_{\sigma}\,.
\ee
There is a unique such $\alpha_\sigma$ satisfying
$\alpha_{\lim_{s\rightarrow\infty}\sigma^{p^{s}}}
=\lim_{s\rightarrow\infty}\alpha_{\sigma^{p^{s}}}$, given by
\be
\alpha_{\sigma} \=
(\sigma-\sigma^{-1})\frac{\alpha_\omega}{\omega-\omega^{-1}}\,,
\ee
where $\omega$ is a generator of the set of roots of unity in $\BZ_p^{\times}$ for
$p\neq 2,3$. Indeed, notice that for $p\neq 2,3$ we have
$\omega-\omega^{-1}\in\BZ_p^{\times}$. Letting
$\beta_m=-\frac{\alpha_{\omega}}{\omega-\omega^{-1}}$ therefore completes the proof.
\end{proof}

\begin{corollary}
\label{cor:unique.loc}
If $f$ is an invertible $L_{p}(\xi)$-section of Definition~\ref{def.HRxi} of
Section~\ref{sub.habmod}, then there exists a unique lift to collections
\be
f(q) \=(f_{m}(x))_{m\geq1}, \qquad
f_{m}(x)\inn
(R^\wedge_p[\z_m]^{\times/m} + x\BK_p[\z_m][\![x]\!])
\ee
of power series satisfying the conditions of integrality and gluing, i.e.,
\be
\frac{\fr \widehat{f}(q^p)}{\widehat{f}(q)^p}
\in\!\!\prod_{(m,p)=1}\exp\Big(\frac{p}{x}R^\wedge_p[\z_m][\![x]\!]\Big)
\qquad\text{and}\qquad
\frac{f(q^{\sigma})}{f(q)^{\sigma^{-1}}}
\in\!\!\prod_{(m,p)=1}R^\wedge_p[\z_m][\![x]\!]\,,
\ee
where $\sigma\in\BZ_{p}^{\times}$ and $\widehat{f}$ is as in equation~\eqref{hatfm}.
\end{corollary}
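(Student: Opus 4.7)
The plan is to construct the lift in two stages (existence plus canonical normalisation) and then verify uniqueness.

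For existence, I would first produce an arbitrary extension of $f=(f_m)_{(m,p)=1}$ to a collection on all $m\geq 1$. The Frobenius integrality condition, interpreted at $q=\zeta_{mp^{d-1}}+x$, becomes a $p$-adic functional equation that determines $\widehat f_{mp^d}$ from $\widehat f_{mp^{d-1}}$ after taking logarithms and using the invertibility of $\fr$ on the unramified completion $R_p^\wedge$. This determination is unique up to multiplying $f_{mp^d}$ by a $p^d$-th root of unity; making any such choice at each level $d$ yields a preliminary collection that automatically satisfies the integrality condition by construction.

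For the canonical normalisation, I would apply Lemma~\ref{lem:ftild} to this preliminary collection. It produces, for each $m$ with $(m,p)=1$, a unique $\beta_m\in\BZ_p$ such that the twisted collection $\widetilde f_{mp^d}(x):=\zeta_{p^d}^{\beta_m}f_{mp^d}(x)$ satisfies the full $\BZ_p^\times$-gluing condition. Because multiplying $f_{mp^d}$ by $\zeta_{p^d}^{\beta_m}$ introduces only a $p^d$-th root of unity factor whose $\fr$-twist is again a $p^d$-th root of unity, the integrality condition is preserved under this correction; the resulting collection therefore satisfies both conditions simultaneously.

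For uniqueness, I would take the ratio $h=f/g$ of two lifts with the same restriction to $(m,p)=1$. Then $h_m=1$ for $(m,p)=1$ and $h$ satisfies both conditions. The integrality condition applied to $h$ together with $h_m\equiv 1$ for $m$ prime to $p$ forces each $h_{mp^d}$ to be a constant $p^d$-th root of unity (independent of $x$), since any deviation from a constant root of unity would contradict the integrality of $\fr\widehat h(q^p)/\widehat h(q)^p$. The $\BZ_p^\times$-gluing condition then rigidifies the choice of these constants using the uniqueness of $\beta_m$ in Lemma~\ref{lem:ftild}, which required $p\neq 2,3$.

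The main obstacle will be the careful treatment of the Frobenius action on the ramified extensions $R_p^\wedge[\zeta_{mp^d}]$ and the coherent tracking of $p^d$-th root of unity ambiguities across all levels $d$. The key conceptual point is that the two conditions have complementary strengths: the integrality condition fixes the extension up to $p$-power roots of unity, while the $\BZ_p^\times$-gluing exactly pins down such roots of unity via the unique $\beta_m$ from Lemma~\ref{lem:ftild}.
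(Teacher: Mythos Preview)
Your plan --- construct a preliminary extension, apply Lemma~\ref{lem:ftild} to normalise, then argue uniqueness --- matches the paper's implicit route. But the role you assign to the Frobenius integrality condition is wrong, and this breaks both your existence and uniqueness arguments.

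The integrality condition in Definition~\ref{def.HRxi} and in the corollary is stated over $\prod_{(m,p)=1}$: at $q=\zeta_m+x$ with $(m,p)=1$, the term $\widehat f(q^p)$ lives near $\zeta_m^p$, which is again of order prime to $p$. So this condition never involves $f_{mp^d}$ for $d\geq 1$ and cannot ``determine $\widehat f_{mp^d}$ from $\widehat f_{mp^{d-1}}$''. (Your indexing is also backwards: at $q=\zeta_{mp^{d-1}}+x$ one has $q^p$ near $\zeta_{mp^{d-2}}$, so even an extended version of the condition there would relate levels $d-1$ and $d-2$, not $d-1$ and $d$.) The same problem undermines your uniqueness step: for the ratio $h=f/g$ of two lifts satisfying the corollary's hypotheses, the integrality condition is vacuous at $p$-power levels and cannot force $h_{mp^d}$ to be a root of unity.

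What actually propagates information up the $p$-tower is the $\BZ_p^\times$-gluing condition, and this is precisely what Lemma~\ref{lem:ftild} controls. Its proof shows, via Dwork's lemma and induction on $d$, that for \emph{any} collection $(f_m)_{m\geq 1}$ of the stated form satisfying integrality at $(m,p)=1$, the combination $f(q^\sigma)f(q)^{-\sigma^{-1}}$ at $q=\zeta_{m_0}\zeta_{p^d}+x$ equals $\zeta_{p^d}^{\alpha_\sigma}$ times the re-expansion of a series $h_{m_0}$ that depends only on $f_{m_0}$. Imposing gluing forces $\alpha_\sigma\equiv 0$, and the unique $\beta_m$ then pins down the lift. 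Both existence (take any extension of the required form and correct by $\zeta_{p^d}^{\beta_m}$) and uniqueness follow from this; the integrality condition enters only as the input to the Dwork argument at level $d=0$.
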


\begin{remark}
\label{rem:mero}
Taking the logarithm of this extension of an invertible section $\widehat f(q)$
to all roots
of unity (with some mild assumptions) can be shown to give a meromorphic function
$\log(\widehat f(q))$ with simple poles at roots of unity with residues given by
$m^{-2}D_{p}(\xi)$ and its images under powers of $p^{-2}\fr$.
\end{remark}

Now that we have understood the basic analytic properties of $L_p(\xi)$-sections,
we can give our explicit construction.
Locally our modules are indexed by an element
\be
\label{hatxia}
\widehat\xi \inn \mathrm{Im}(K_3(\BK) \to K_3(\BK_p;\BZ_p) \otimes \BZ_p) 
\ee
and our explicit invertible $L_p(\xi)$-sections
will depend on a presentation of $\widehat\xi$ of the form
\be
\label{hatxi}
\widehat\xi\;=\sum_{\z \in \mu(\BK_p)} a_\z [\z] \inn K_3(\BK_p) \otimes \BZ_p
\ee
using Theorem~\ref{thm.xiz} of Section~\ref{sub.Dp}. Using this data alone, we
define a collection of power series over local fields that, unlike its global
field counterpart, does not require formal Gaussian integration or admissible
series, but just the infinite Pochhammer symbol. What's more, the constant term
is simply the unit $\ve$, and is obviously non-vanishing.

\begin{definition}
\label{def.psi}
Fix a prime $p$ and $\z \in \mu(\BK_p)$. Then we define the collection
$\Psi_{[\z],p}(q) = (\Psi_{[\z],p,m}(x))_{m\geq 1\,(m,p)=1}$ 
by
\be
\label{Psiz}
\begin{aligned}
\Psi_{[\z],p,m}(x) 
&\= \exp\Big(-\frac{\Li_2(\z)}{m^2\log(q)}\Big)\;\ve_m([\z])^{1/m}
(q^{m/2}\z;q^m)_\infty^{1/m}\,,\qquad q\=\z_m+x\\
&\qquad\in \ve_m([\z])^{1/m}(1 + x\BK_p[\z_m][\![x]\!])\,. \\
\end{aligned}
\ee
Moreover, for every $\widehat\xi$ given in equation~\eqref{hatxi} we define
\be
\label{Psixi}
\Psi_{\widehat\xi,p} \= \prod_{\z} (\Psi_{[\z],p})^{a_\z}\,.
\ee
\end{definition}

A slight variation of Proposition~\ref{prop.poclip} of Section~\ref{sub.poch} and
equation~\eqref{eq.cor.logpoc}, to include the factor of $q^{1/2}$, implies
the following theorem.

\begin{theorem}
\label{thm.Psi}  
For all $\widehat\xi$ as in equation~\eqref{hatxia} and unramified $p>3$,
$\Psi_{\widehat\xi}$ is an $L_{p}(\widehat\xi)$-section.
\end{theorem}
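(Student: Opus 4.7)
The plan is to verify the two defining conditions~\eqref{fcyclic.loc} and~\eqref{fxidef} of Definition~\ref{def.HRxi} for the explicit section $\Psi_{[\z],p}$ attached to a single $\z \in \mu(\BK_p)$; the general statement $\Psi_{\widehat\xi,p} = \prod_\z \Psi_{[\z],p}^{a_\z}$ then follows by $\BZ_p$-linearity in $\log$ of both conditions and of the $\ve_m$ factor.

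For the shape condition~\eqref{fcyclic.loc}, I would substitute $q^m = 1+y$ with $y = (1+x/\z_m)^m-1 = m\z_m^{-1}x+O(x^2)$ into the Pochhammer expansion~\eqref{logpoc2} applied to $\log(q^{m/2}\z; q^m)_\infty$. The $k=0$ term produces a simple pole at $x=0$ with residue $\Li_2(\z)/(m\log(1+x/\z_m))$. Division by $m$ makes this cancel the $-\Li_2(\z)/(m^2\log q)$ prefactor in $\Psi_{[\z],p,m}$, leaving a power series whose constant term is a $\BK_p[\z_m]$-unit times $\ve_m([\z])^{1/m}$ and whose higher coefficients sit in $\BK_p[\z_m]$, as required.

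For the Dwork-Frobenius condition~\eqref{fxidef}, I would write out
\[
\log\widehat\Psi_{[\z],p,m}(x) = \frac{D_p(\z)-\Li_2(\z)}{m^2\log q} + \frac{1}{m}\log\ve_m([\z]) + \frac{1}{m}\log(q^{m/2}\z;q^m)_\infty,
\]
noting that since $\z$ is a root of unity the Iwasawa convention gives $D_p(\z)=\Li_2(\z)$. The Dwork quotient then splits into a polar piece, a Pochhammer piece, and a unit piece. The Pochhammer piece, after a division by $m$, is directly controlled by Proposition~\ref{prop.poclip}(a) applied with $t=q^{m/2}\z$ and $q'=q^m$: its output $\frac{p}{x'}R_p^\wedge[\z_m,\z,(1-\z)^{-1}][\![x']\!]$ with $x' = q^m-1$ a unit multiple of $x$ (using $(m,p)=1$), pulls back to $\frac{p}{x}R_p^\wedge[\z_m][\![x]\!]$; the inverses $(1-\z)^{-1}$ are absorbed because $|1-\z|_p=1$ for $\z \neq 1$ of order prime to $p$. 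The polar piece uses $\fr D_p(\z)/p - pD_p(\z) = (\Li_2(\z^p)-p^2\Li_2(\z))/p = -p\Li_2^{(p)}(\z) \in pR_p^\wedge$ by Lemma~\ref{lem.lip}, which combined with the simple pole of $1/\log q$ gives the allowed $\frac{p}{x}R_p^\wedge[\z_m][\![x]\!]$ contribution. Finally, the unit piece $\frac{1}{m}\log(\fr\ve_m([\z])/\ve_m([\z])^p)$ is controlled by the Frobenius-equivariance of the Chern class $c_{\z_m}$ recalled in~\eqref{cm}.

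The main technical obstacle is the unit piece: one must ensure that the $m$-th root $\ve_m([\z])^{1/m}$ can be chosen in a Frobenius-compatible way so that $\log(\fr\ve_m([\z])^{1/m}/\ve_m([\z])^{p/m})$ is a genuine element of $pR_p^\wedge[\z_m]$ rather than only defined modulo an $m$-th root of unity. This is resolved by lifting $\ve_m$ to the canonical $\mu_m$-torsor over $\calO_\BK[\z_m]$ produced by the \'etale Chern character~\eqref{cm} and exploiting its $\chi^{-1}$-equivariance to match the Frobenius pullback of this torsor canonically with its $p$-th tensor power. With that bookkeeping in place, the $\BZ_p$-linearity of all contributions in $\widehat\xi$ upgrades the conclusion from $\Psi_{[\z],p}$ to $\Psi_{\widehat\xi,p}$ for arbitrary $\widehat\xi$ as in~\eqref{hatxi}.
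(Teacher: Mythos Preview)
Your approach is essentially the paper's: both reduce the claim to Proposition~\ref{prop.poclip} (and its part~(b), equation~\eqref{eq.cor.logpoc}), suitably adapted to absorb the $q^{m/2}$ shift. The paper's proof is a one-line invocation of that proposition; you are spelling out the bookkeeping.

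One comment on your write-up: after you observe that $D_p(\z)=\Li_2(\z)$ kills the explicit $\frac{D_p(\z)-\Li_2(\z)}{m^2\log q}$ term in $\log\widehat\Psi$, it is inconsistent to then analyse ``the polar piece'' via $\fr D_p(\z)/p - pD_p(\z)$, since that term is already gone. The pole in the Dwork quotient comes entirely from the Pochhammer factor, and Proposition~\ref{prop.poclip}(b) already packages both the polar residue $p\Li_2^{(p)}(\z)/x$ and the regular part in a single statement; you do not need to separate them. Either cancel $D_p-\Li_2$ first and invoke Proposition~\ref{prop.poclip}(b) for everything, or keep the $D_p/\log q$ pole uncancelled and treat it as the polar piece---but not both. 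Your discussion of the $\ve_m$ compatibility under Frobenius via $\chi^{-1}$-equivariance is correct and is indeed a detail the paper's terse proof leaves implicit.
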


We can use this to prove Theorem~\ref{thm.locals} of Section~\ref{sub.habmod}.

\begin{proof}[Proof of Theorem~\ref{thm.locals} of Section~\ref{sub.habmod}]
Combining Theorem~\ref{thm.Psi} with Corollary~\ref{cor.diagram} immediately
implies Theorem~\ref{thm.locals} of Section~\ref{sub.habmod}.

Alternatively, we can use replace the use of Theorem~\ref{thm.Psi} by a general
construction from telescoping sums. For any
\be
\begin{aligned}
  h(q)&\;\in\prod_{m \geq1\,,(m,p)=1}\frac{1}{x}R^\wedge_p[\z_m][\![x]\!]\,,\qquad
  \text{such that}\\
  h(\z_m+x)
  &\=
  \frac{\fr D_{p}(\xi)-p^2D_{p}(\xi)}{m^2p^2\log(q)}
  +\frac{1}{mp}\log\Big(\frac{(\sigma_p\ve(\xi))^{p}}{\ve(\xi)}\Big)+\mathrm{O}(x^2)\,,
\end{aligned}
\ee
we can define an $L_p(\xi)$-section by
\be
\begin{aligned}
\log(f(q)) \= \frac{1}{m}\log(\ve(\xi)) +
\sum_{k=0}^{\infty}\frac{\fr^{k}}{p^k}\Big(
\frac{\fr D_{p}(\xi)-p^2D_{p}(\xi)}{m^2p^{2+k}\log(q)}
+\frac{1}{mp}\log\Big(\frac{\fr\ve(\xi)}{\ve(\xi)}\Big)-h(q^{p^k})\Big)\,,
\end{aligned}
\ee
which is a convergent sum.
\end{proof}

This then has the following corollary, which can be interpreted as saying
that the $\Psi$ of definition~\ref{def.psi} is a local lift of the unit computed
in~\cite{CGZ}.

\begin{corollary}
The collection $\Psi_{\widehat\xi,p}\in\calH_{R^\wedge_p,\widehat\xi}^{\times}/
\calH_{R^\wedge_p}^{\times}$ only depends on $\widehat\xi$ from equation
~\eqref{hatxia} and not on the presentation of equation~\eqref{hatxi}.
\end{corollary}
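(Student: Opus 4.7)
The plan is to deduce this corollary directly from the two immediately preceding results, namely Theorem~\ref{thm.Psi} and Theorem~\ref{thm.locals}. Given two presentations
\[
\widehat\xi \= \sum_\z a_\z[\z] \= \sum_\z b_\z[\z] \inn K_3(\BK_p)\otimes\BZ_p,
\]
whose existence is provided by Theorem~\ref{thm.xiz}, I would write $\Psi = \prod_\z \Psi_{[\z],p}^{a_\z}$ and $\Psi' = \prod_\z \Psi_{[\z],p}^{b_\z}$ for the two collections produced by Definition~\ref{def.psi}. The goal is then to show that $\Psi/\Psi' \in \calH_{R^\wedge_p}^\times$, which exactly says that the class of $\Psi_{\widehat\xi,p}$ in the quotient $\calH_{R^\wedge_p,\widehat\xi}^\times/\calH_{R^\wedge_p}^\times$ does not depend on the choice of presentation.

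The main argument is essentially structural: by Theorem~\ref{thm.Psi}, each of $\Psi$ and $\Psi'$ is an invertible $L_p(\widehat\xi)$-section, so each defines an invertible element of the module $\calH_{R^\wedge_p,\widehat\xi}$. Theorem~\ref{thm.locals} asserts that this module is free of rank one over $\calH_{R^\wedge_p}$; consequently any two invertible sections must differ by multiplication by an element of $\calH_{R^\wedge_p}^\times$, which is precisely the statement $\Psi/\Psi' \in \calH_{R^\wedge_p}^\times$ that is needed.

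The only point that requires care, and which I view as the sole (mild) obstacle, is to verify that both $\Psi$ and $\Psi'$ genuinely land in the same module $\calH_{R^\wedge_p,\widehat\xi}$, so that Theorem~\ref{thm.locals} applies to them jointly. For the leading factors, one has $\prod_\z\ve_m([\z])^{a_\z/m}$ versus $\prod_\z\ve_m([\z])^{b_\z/m}$; since $\ve_m$ is a homomorphism into $\BK(\z_m)^\times/(\BK(\z_m)^\times)^m$ and $\sum a_\z[\z]=\sum b_\z[\z]$ in $K_3$, these two products agree in that quotient, and after taking $m$-th roots they differ by an element of $\mu_m\cdot R^\wedge_p[\z_m]^\times \subset R^\wedge_p[\z_m]^\times$. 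For the polar parts organised by equation~\eqref{hatfm}, the contributions are governed by the $p$-adic dilogarithm $D_p$, which by Theorem~\ref{thm.xiz} is a homomorphism on $K_3(\BK_p;\BZ_p)$, and therefore both presentations produce the same pole $D_p(\widehat\xi)/(m^2\log q)$ in $\widehat\Psi$. Once this bookkeeping is recorded, the structural argument from the previous paragraph goes through verbatim and the corollary is established.
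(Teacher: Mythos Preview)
Your proposal is correct and follows essentially the same approach as the paper's (one-line) proof: given two presentations of $\widehat\xi$, the quotient $\Psi/\Psi'$ lies in $\calH_{R^\wedge_p}^\times$. Your additional verification that both $\Psi$ and $\Psi'$ are genuine $L_p(\widehat\xi)$-sections for the \emph{same} $\widehat\xi$ (matching leading $\ve_m$-factors up to $m$-th powers and identical polar parts via $D_p$) is exactly the bookkeeping the paper leaves implicit, and your appeal to the rank-one freeness of Theorem~\ref{thm.locals} is the clean way to conclude.
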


\begin{proof}
If we have two sets of $a_\z$ that represent $\widehat\xi$ as in equation~\eqref{hatxi},
then the quotient is in $\calH_{R^\wedge_p}^{\times}$.
\end{proof}

\subsection{Proof of the main theorems}
\label{sub.proofmain}

We are now ready to give proofs of our main theorems.

\begin{proof}[Proof of Theorem~\ref{thm.HR} of Section~\ref{sub.habmod}]
Firstly, we see that $\calH_{R}\subseteq\calH_{R,0}$ as $f(q)=1$ is a global invertible
$L(0)$-section. For all $p$ prime to $\Delta$, the constant collection
$1$ is an invertible $L_p(0)$-section. Therefore, from Corollary~\ref{cor:unique.loc}
of Section~\ref{sub.habp} and the gluing condition~\eqref{eq:gluing} of
Definition~\ref{def.HRmod}, if $f\in\calH_{R,0}$, then $f\in\calH_{R_p^\wedge}$.
Varying
over all primes shows that $f\in\calH_{R}$.
This completes the proof that $\calH_{R,0}=\calH_{R}$.
The second statement follows from the fact that $\ve_m$ is
multiplicative while $D_{p}$ is additive and that the conditions
equation~\eqref{eq:gluing} and equation~\eqref{fxidef} are multiplicative as a
consequence. The final statement follows from the combination of the previous two.
\end{proof}

\begin{proof}[Proof of Theorem~\ref{thm.2} of Section~\ref{sub.results}]
Firstly, we show that the constant term of $\Phitof_{A,z,m}(x)$
is $\ve_m(\xi)^{1/m}$ times an element of $\BK[\z_m]$.
For $m$ prime to $\Delta$ (where $\Delta$ includes the primes $2$ and $3$ and
finitely many other primes that depend only on the number field $\BK$),
this follows from Theorem 1.6 and equation (14) of~\cite[Thm.1.6]{CGZ} combined
with Hutchinson~\cite{Hutchinson}.

Secondly, we will prove that for a system of $q$-difference equations
associated to the combination in equation~\eqref{eq:gluing}
there is a unique solution in formal power series in $t$. 
We will explicitly describe the case when $N=1$.
(We again omit the case when $N>1$, since it completely analogous but notationally
heavier.) To do this, we will use standard methods in the study of $q$-holonomic
modules. Recall the $q$-difference equation~\eqref{PhiAshift} and define
\be
\psi_{A,\mu}(t,q) \= F_{A}(q^\mu t,q)\,.
\ee
Fix $\gamma\in\BZ_{>0}$ and consider the function
\be
(\mu,\nu,t) \mapsto
\psi^{(\gamma)}_{A,\mu,\nu}(t,q) \=
\psi_{A,\mu_1}(t,q^\gamma)\cdots \psi_{A,\mu_\gamma}(t,q^{\gamma})
\psi_{A,\nu}(t,q^{-1})\,,
\ee
where $\mu \= (\mu_1,\dots,\mu_\gamma) \in \BZ^\gamma$ and $\nu \in \BZ$.
It satisfies $\gamma+2$ equations, corresponding to the number of variables:
\be
\label{eq:qdiff.proofs}
\begin{aligned}
\psi^{(\gamma)}_{A,\mu,\nu}(t,q)
-
\psi^{(\gamma)}_{A,\mu+\delta_i,\nu}(t,q)
&\=
(-1)^Aq^{A\gamma}t\psi^{(\gamma)}_{A,\mu+A\delta_i,\nu}(t,q)\qquad
(i=1,\cdots,\gamma)\,,\\
\psi^{(\gamma)}_{A,\mu,\nu}(t,q)
-
\psi^{(\gamma)}_{A,\mu,\nu+1}(t,q)
&\=
(-1)^Aq^{-A}t\psi^{(\gamma)}_{A,\mu,\nu+A}(t,q)\,,\\
\psi^{(\gamma)}_{A,\mu}(qt,q)
&\=
\psi^{(\gamma)}_{A,\mu+1,\nu-1}(t,q)\,.
\end{aligned}
\ee
These equations have a unique solution for power series in $t$
of the form $1+\mathrm{O}(t)$. This follows from the fact
that the first two equations imply that the coefficient of $t^k$
in $\psi^{(\gamma)}$ is constant in $\mu,
\nu$ modulo coefficients
of smaller powers of $t$. Assuming this, the last equation
implies that this coefficient multiplied by $q^k-1$ is given entirely
by combinations of lower order terms. Therefore,
everything is determined by the coefficient of $t^0$.

From the explicit formulas of equation~\eqref{PhiFGIdef} and
equation~\eqref{Ikdef}, Theorem~\ref{thm.fgi.is.admis} of Section~\ref{sub.synthesis},
Theorem~\ref{thm.FGI2} of Section~\ref{sub.results}, and Galois invariance under
$z^{1/m}\mapsto \z_mz^{1/m}$; we see that
$\psi^{(\gamma)}_{A,\mu,\nu}(t^{1/m},\z_m+x)\in S_p^{(m)}[\![x]\!]$,
for all primes $p$ prime to $\Delta$ and $m$ prime to $\gamma$. Therefore,
$\psi^{(\gamma)}_{A,\mu,\nu}(t^{1/m},\z_m+x)\in S^{(m)}[\![x]\!]$.
Moreover, since there is a unique solution to the equations~\eqref{eq:qdiff.proofs},
if we $p$-complete and re-expand by $x\mapsto x+\z_{pm}-\z_m$
then we must find an equality after applying a Frobenius, which maps $t\mapsto t^p$.
Therefore, these equalities persist when we specialise $t=1$.
This proves that $\Phitof_{A,z}$ satisfies equation~\eqref{eq:gluing}.

For the rest of the proof, we fix a positive integer $m$, a prime $p$ with
$(m,p)=1$ and a congruence class $k \in (\BZ/m\BZ)^N$. Theorems~\ref{thm.FGI2} of
Section~\ref{sub.results} and~\ref{thm.fgi.is.admis} of Section~\ref{sub.synthesis}
imply that each term $\CS_{A,m,k}(t,x)$ of
equation~\eqref{FGIcong}, specialised to $t=1$, satisfies equation~\eqref{dphiA},
which is an essentially equation~\eqref{fxidef} restricted to a disc. 
Summing up using~\eqref{PhiFGIdef} and~\eqref{FGIcong}, we obtain that 
\be
\Phitof_{A,m}^\FGI(1,x) \=\!\!\!\!
\sum_{k \in (\BZ/m\BZ)^N}
\frac{(-1)^{\mathrm{diag}(A)\cdot k}q^{\frac{1}{2}(k^t A k + \mathrm{diag}(A)\cdot k)}}
{(q;q)_{k_1}\cdots(q;q)_{k_N}}\CS_{A,m,k}(1,q)\,,\quad q\=\z_{m}+x \,.
\ee
It follows that $\Phitof_{A}^\FGI(1,q)$ gives an element of the
module $\calH_{R[\delta^{-1/2}]^\wedge_p[\z_m,z^{1/m}],\xi}$ for $q$-restricted
to the disc around $\z_m$.
Finally, from the invariance under $z^{1/m}\mapsto\z_m z^{1/m}$ from
Lemma~\ref{lem.PhiA} of Section~\ref{sub.FGI}, we see that $\Phitof_{A,z}(q)$
descends to an element of $\calH_{R[\delta^{-1/2}]^\wedge_p,\xi}$ restricted to
the disc around $\z_m$.

Varying over all $m$ prime to $\Delta$ and combining this with the previous
result that equation~\eqref{eq:gluing} is satisfied, we conclude that
$\Phitof_{A,z}(q)\in\calH_{R[\delta^{-1/2}],\xi}|_\Delta$.
\end{proof}

\begin{proof}[Proof of Corollary~\ref{cor.3} of Section~\ref{sub.results}]
This would immediately follow from Theorem~\ref{thm.2} of Section~\ref{sub.results},
Theorem~\ref{thm.HR} of Section~\ref{sub.habmod}, and Proposition~\ref{prop.HRxi}
of Section~\ref{sub.habmod}; if we did not have the technical
assumption on the order of roots of unity being prime to $\Delta$.

However, equation~\eqref{symhab} does immediately follow from the proof of
Theorem~\ref{thm.2} of Section~\ref{sub.results} that showed $\Phitof_{A,z}$ satisfies
equation~\eqref{eq:gluing}, which in particular holds when $\gamma=1$.

For part (b), a weaker statement asserting that
$\Phitof_{A,z}(q)^{r} \in \calH_{R[1/\sqrt{\delta}]}|_\Delta$ follows from
Theorem~\ref{thm.2} of Section~\ref{sub.results} and Theorem~\ref{thm.HR} of
Section~\ref{sub.habmod}. To replace $\calH_{R[1/\sqrt{\delta}]}|_\Delta$
by $\calH_{R[1/\sqrt{\delta}]}$, note the surjective map $K_3(\BK) \to B(\BK)$
of Suslin (see e.g.,~\cite[Eqn.(1.1)]{Zickert:bloch}). Hence, if $r\xi=0$, then
the image of $\xi$ in the Bloch group can be written as a sum of $5$-term relations
in the Bloch group. Then the identity of~\cite{KMS} that was used in~\cite{CGZ}
shows that the constant term at $\z_m$ is in $S^{(m)}$.
\end{proof}

\begin{remark}
\label{rem.desc}
In fact, the specialisation $t=q^\nu$ for $\nu \in \BZ^N$ (i.e., $t_j=q^{\nu_j}$
for integers $\nu_j$) defines an element $\Phitof_{A,z,\nu}(q) \in
\calH_{R[1/\sqrt{\delta}],\xi}|_\Delta$.
The proof is identical to the proof of Theorem~\ref{thm.2} of
Section~\ref{sub.results} and is omitted.
\end{remark}

\begin{proof}[Proof of Proposition~\ref{prop.HRxi} of Section~\ref{sub.habmod}]
The proofs of the proposition are all elementary and outlined as follows:
\begin{itemize}
\item
  Property (a) and (d) follow from the $\chi^{-1}$ equivariance of $\ve$.
\item
  Property (b) follows from Theorem~\ref{thm.HR} of Section~\ref{sub.habmod}.
\item
  Property (c) is trivial and follows from the definition of $\gamma^{*}$ just prior
  to the proposition.
\item
  Property (e) this property follows from the fact that, varying over $\gamma,\gamma'$,
  the collections $f(q^{\gamma_1})\cdots f(q^{\gamma_n})f(q^{-\gamma_1'})
  \cdots f(q^{-\gamma_{n'}'})$,
  where $\sum_{k}\gamma_k^{-1}-\sum_k\gamma_{k}'^{-1}=0$ determine
  $f(q)$. Alternatively, one could use the remark
  of Section~\ref{sub.habp} to connect
  various roots of unity.
  \item Property (f) follows from the fact that the constants
  $f_m(0) \in R^\wedge_p[\z_m,\ve_m^{1/m}]$ for all $m$ and
  $p$ with $(mp,\Delta)=1$.  
\end{itemize}
\end{proof}

An immediate corollary of part (f) is the following result, which even in very simple
instances like the $4_1$-knot
seems very difficult to prove directly.

\begin{corollary}
For $m$ prime to $\Delta$, the constants $U_{m}^\FGI(1)$ from equation
~\eqref{Umtdef} are $\Delta$-integral.
\end{corollary}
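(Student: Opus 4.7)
The plan is to deduce this from Theorem~\ref{thm.2} and the general integrality property for constant terms contained in Proposition~\ref{prop.HRxi}(f), by matching the constant term of $\Phitof_{A,z,m}(x)$ against the explicit formula of Lemma~\ref{lem.PhiA}.

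First, I would invoke Theorem~\ref{thm.2} to get $\Phitof_{A,z}(q)\in\calH_{R[\delta^{-1/2}],\xi}|_{\Delta}$. Applying Proposition~\ref{prop.HRxi}(f) to this element, we obtain for every positive integer $m$ with $(m,\Delta)=1$ that
\be
\Phitof_{A,z,m}(0)\;\in\;R[\delta^{-1/2}][\z_m,\ve_m(\xi)^{1/m}]\,,
\ee
a ring that is $\Delta$-integral (since its generators over $R=\calO_\BK[1/\Delta]$ satisfy polynomials with $\Delta$-integral coefficients).

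Next, I would read off this same constant term from Lemma~\ref{lem.PhiA}. Specialising to $t=1$ and discarding the polar contribution $V^\FGI(1)/(m^2\log(1+x/\z_m))$ (which is precisely the principal part of the logarithm that was removed when forming $\Phitof_{A,z}$ from $\Phitof^\FGI_{A,m}(1,x)$), the lemma yields
\be
\log \Phitof_{A,z,m}(x)\;\in\; -\tfrac{1}{2}\log\delta^\FGI(1)+\log U_m^\FGI(1)+x\,S^{(m)}_{\BQ}[\![x]\!]\,,
\ee
so that $\Phitof_{A,z,m}(0)=U_m^\FGI(1)/\sqrt{\delta^\FGI(1)}$.

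Finally, I would multiply through by $\sqrt{\delta^\FGI(1)}$, noting that this element lies in $R[\delta^{-1/2}]$ because $\delta^{1/2}=\delta\cdot\delta^{-1/2}$ and $\delta^\FGI(1)\in R$. Combined with the previous inclusion this gives
\be
U_m^\FGI(1)\;\in\;R[\delta^{-1/2}][\z_m,\ve_m(\xi)^{1/m}]\,,
\ee
which is the desired $\Delta$-integrality. The only non-routine step is the identification of the constant term $\Phitof_{A,z,m}(0)$ with $U_m^\FGI(1)/\sqrt{\delta^\FGI(1)}$, and this is already built into Lemma~\ref{lem.PhiA}; all the real work is hidden inside Theorem~\ref{thm.2}, so the corollary itself is essentially a formal consequence.
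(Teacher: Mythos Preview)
Your proposal is correct and follows exactly the paper's approach: the paper states this as ``an immediate corollary of part (f)'' of Proposition~\ref{prop.HRxi}, and you have spelled out precisely how that deduction works, using Theorem~\ref{thm.2} to place $\Phitof_{A,z}$ in the Habiro module, applying part~(f) to extract integrality of the constant term, and identifying that constant term with $U_m^\FGI(1)/\sqrt{\delta^\FGI(1)}$ via Lemma~\ref{lem.PhiA}.
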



\section{Examples and computations}
\label{sec.examples}

\subsection{Symmetrisation and a residue formula}
\label{sub.sym}

Elements of the usual Habiro ring $\calH_{\BZ}$ are extremely easy to write down.
For example, the ring $\BZ[q]$ is a subring of $\calH_{\BZ}$. This is not the case
for the Habiro ring of a number field, where $\calO[q]$ is no longer a subring
of $\calH_{\calO[1/\Delta]}$. One can construct
elements of these rings for certain presentations of a number field. In this section
we explain how to get formulas for such elements using combinatorial data. 

Fix a symmetric integer matrix $A$, and consider the following expression
\be
J_A(t,w,q)
\=
\sum_{n\in\BZ^N_{\geq0}}
\frac{(-q^{\frac{1}{2}})^{n^{\Tran}An}q^{\frac{1}{2}\diag(A)\cdot n}w^{An}
t^n}{(qw;q)_n}\,,
\ee
where $t^n=t_1^{n_1}\cdots t_N^{n_N}$ and
$(qw;q)_n=(qw_1;q)_{n_{1}}\cdots(qw_N;q)_{n_{N}}$.

We can expand this sum when $q=\z_m+x$
is near a root of unity $\z_m$ and obtain an element
of $\BZ[\z_m][w^{\pm1},(1-w)^{-1}]\llbracket t\rrbracket\llbracket x \rrbracket$.
In fact, it is not hard to see that 
\be
\label{Jwdef}
J_A(t,w,\z_m+x) \inn \BZ[\z_m][t,w_i^{\pm1},
(1-t_i^{m}P_i(w^{m}))^{-1} \, | \, i\=1,\dots,N]
\llbracket x\rrbracket\,,
\ee
where
\be
\label{ABge}
P_{i}(z)\= (-1)^{A_{ii}} (1-z_i)^{-1} \prod_{j=1}^N z_j^{A_{ij}},\qquad i=1,\dots,N\,.
\ee

We define the collection $\Psi_A(t,q) \= (\Psi_{A,m}(t,x))_{m \geq 1}$ by
\be
\label{psitmdef}
\Psi_{A,m}(t,x)
\=
\Res{w^{m}=z}
t^{-1/m}J_A(t^{1/m},w,\z_m+x)\,\frac{dw}{w}\,,
\ee
where the residue is taken over all $w^{m}=z$ and $z$ satisfies the equations
\be
\label{tpz}
t_i P_i(z)\=1, \qquad i\=1,\dots, N \,. 
\ee

The value at $\z_m$ is given by the manifestly integral formula
\be
\Psi_{A,m}(t,0)
\=
\frac{1}{\delta_A(t) m^N}
\sum_{w^m=z}
\sum_{n\in\BZ^{N}/m\BZ^{N}}
\frac{(-\z^{\frac{1}{2}})^{n^{\Tran}An}\z^{\frac{1}{2}\diag(A)\cdot n}w^{An}
t^n}{(qw;q)_n} 
\ee
with $\delta_A(t)$ as in~\eqref{dt}. 
The next theorem says in particular that this number is the same as the symmetrised
value $\Phitof_A^{\mathrm{sym}}(t,\z_m)=\Phitof_A(t,\z_m)\Phitof_A(t,\z_m^{-1})$, which
therefore is also integral.

\begin{theorem}
\label{thm.sym}
For every positive integer $m$ and $\Phitof_A$ of Theorem~\ref{thm.1} in
Section~\ref{sub.results}, we have
\be
\label{phipsi}
\Psi_A(t,q) \= \Phitof_A(t,q) \Phitof_A(t,q^{-1}) \inn
\BZ[\z_m][\![x]\!][\![t^{1/m}]\!], \qquad q\=\z_m+x \,.
\ee
\end{theorem}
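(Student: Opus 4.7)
The plan is to prove Theorem~\ref{thm.sym} by identifying both sides of~\eqref{phipsi} through the formal Gaussian integration machinery of Section~\ref{sub.FGI}. By Theorem~\ref{thm.1}, the right-hand side equals $\Phitof_A^\FGI(t,q)\thinspace\Phitof_A^\FGI(t,q^{-1})$. Using $\log(q^{-1})=-\log(q)$ in~\eqref{eq.PhiA}, the polar prefactors $\exp(\pm V(t)/(m^2\log q))$ cancel in the product, leaving a regular power series in $x=q-\zeta_m$ whose leading term contains $\delta_A(t)^{-1}$ (the product of the two $\delta_A(t)^{-1/2}$'s). This reduces the problem to matching the resulting explicit algebraic expression with the residue formula~\eqref{psitmdef}.

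The heart of the proof is this matching. First, one observes that $J_A(t,1,q)=F_A(t,q)$, which follows from the congruence $n^tAn\equiv\diag(A)\cdot n\pmod{2}$, so $J_A(t,w,q)$ is a natural $w$-twisted generalization of $F_A$. By~\eqref{Jwdef}, the poles of $J_A(t^{1/m},w,\zeta_m+x)$ in $w$ lie on the locus $\{w:w^m=z(t)\}$ where $z(t)$ solves the Nahm system~\eqref{tpz}, and this is precisely the critical point locus of the saddle-point action~\eqref{eq:Vgen}. The Jacobian of the defining map at these critical points equals $\delta_A(t)$ (by~\eqref{Vhes}), so the residue $\Res{w^m=z}$ produces the prefactor $\delta_A(t)^{-1}$ and simultaneously sums over all $m$-th roots of $z$, matching the congruence-class sum structure of~\eqref{PhiFGIdef} and~\eqref{Ikdef}. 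Integrality in $\BZ[\zeta_m][\![x]\!][\![t^{1/m}]\!]$ is then immediate from~\eqref{Jwdef}: the coefficients of $J_A$ lie in $\BZ[\zeta_m][w^{\pm1},(1-w)^{-1}]$ after $q=\zeta_m+x$, and the residue is $\BZ[\zeta_m]$-linear.

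The main obstacle will be the bookkeeping of $q^{1/2}$, sign, and $m$-periodic normalisation factors (in particular the $D_{\zeta_m}$-factors and twisted Pochhammer prefactors) appearing in~\eqref{Ikdef}, which must all combine correctly to produce the clean residue formula. A potentially cleaner alternative is to show that both sides satisfy the same $q$-difference system in $t$: the right-hand side inherits a system from~\eqref{PhiAshift} applied to both factors (via~\eqref{FAqqi}, which identifies $\Phitof_A(t,q^{-1})$ with $\Phitof_{I-A}(t,q)$), while $\Psi_A$ satisfies a corresponding system obtained by checking that the residue commutes with the natural $t$-shift on $J_A(t,w,q)$. This commutation reduces to stability of the Nahm locus under the combined $t$- and $w$-shifts, and together with the initial value $1$ at $t=0$ and a uniqueness argument analogous to the one in the proof of Theorem~\ref{thm.fgi.is.admis}, concludes the proof.
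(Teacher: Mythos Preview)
Your alternative route (the $q$-difference system plus uniqueness) is exactly the paper's strategy, and your first route via direct FGI matching is not what the paper does and would indeed founder on the bookkeeping you anticipate. So focus entirely on the second route.

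However, there is a real gap in your sketch of that route. You propose to derive a $q$-difference system for the product $\Phitof_A(t,q)\,\Phitof_A(t,q^{-1})$ directly from~\eqref{PhiAshift} applied to each factor. The problem is that shifting $t\mapsto qt$ acts as $t\mapsto qt$ on the first factor but as $t\mapsto q^{-1}t$ on the second, so the product does not satisfy a closed first-order system in $t$ alone. The paper's device is to introduce a \emph{two-parameter} family
\[
\Phitof_{A,\mu,\nu}(t,q)\;=\;\Phitof_A(q^{m\mu}t,q)\,\Phitof_A(q^{-m\nu}t,q^{-1}),
\qquad
\Psi_{A,\mu,\nu}(t,q)\;=\;\Res{w^m=z}\,q^{\nu}w^{\mu+\nu}t^{-1/m}J_A(q^{\mu}t^{1/m},w,q)\,\frac{dw}{w},
\]
and to show that both satisfy the same three-term system: a shift in $\mu$, a shift in $\nu$, and the relation $t\mapsto q^m t$ equals $(\mu,\nu)\mapsto(\mu+1,\nu-1)$. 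These recursions determine the coefficients $a_{\mu,\nu,k}$ of $t^{k/m}$ uniquely from $a_{0,0,0}$, and the initial value $a_{0,0,0}=1$ is checked separately on each side.

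The second point you are missing concerns the $\Psi$ side. You say ``the residue commutes with the natural $t$-shift on $J_A$'', but a $t$-shift identity alone only gives the $\mu$-equation. The $\nu$-equation comes from a \emph{different} functional equation for $J_A$, namely one in the $w$-variable,
\[
(1-qw)\,J_A(t,w,q)\;=\;-(1-qw)+(-1)^A q^{-A}t\,(qw)^A\,J_A(t,qw,q),
\]
followed by the change of variables $qw\mapsto w$ inside the residue (which is where the extra factor $q^{\nu}w^{\mu+\nu}$ in the definition of $\Psi_{A,\mu,\nu}$ earns its keep). Without this second identity you cannot close the system. Once you have both, the uniqueness argument is indeed the same style as in Theorem~\ref{thm.fgi.is.admis}, and the integrality in $\BZ[\zeta_m][\![x]\!][\![t^{1/m}]\!]$ follows from~\eqref{Jwdef} as you say.
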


\begin{proof}
The series $\Psi_A(t,q)$ and $\Phitof_A(t,q) \Phitof_A(t,q^{-1})$ are the specialisations
to $\mu=\nu=0$ of two families indexed by two integer vectors $\mu,\nu\in\BZ^{N}$ 
\be
\label{2Phi}
\begin{aligned}
\Phitof_{A,\mu,\nu}(t,q) & \= \Phitof_A(q^{m\mu}t,q) \Phitof_A(q^{-m\nu}t,q^{-1})\,,
\\
\Psi_{A,\mu,\nu}(t,q) & \= \Res{w^{m}=z}
q^{\nu}w^{\mu+\nu}t_1^{-1/m}\cdots t_N^{-1/m}J_A(q^{\mu}t^{1/m},w,\z_m+x)\,
\frac{dw}{w}\,.
\end{aligned}
\ee
Both families satisfy the same system of $q$-difference equations. In the
one-dimensional case (i.e. $N=1$), for $q=\z_m+x$ we have
\be
\label{eq:two.qdiffs}
\begin{aligned}
\Phitof_{A,\mu,\nu}(t,q)-\Phitof_{A,\mu+1,\nu}(t,q)
&\=
(-1)^{A}q^{A}t^{1/m}\Phitof_{A,\mu+A,\nu}(t,q)\,,\\
\Phitof_{A,\mu,\nu}(t,q)-\Phitof_{A,\mu,\nu+1}(t,q)
&\=
(-1)^{A}q^{-A}t^{1/m}\Phitof_{A,\mu,\nu+A}(t,q)\,,\\
\Phitof_{A,\mu,\nu}(q^mt,q)
&\=
\Phitof_{A,\mu+1,\nu-1}(t,q)\,.
\end{aligned}
\ee
To see that $\Psi_{A,\mu,\nu}(t,q)$ also satisfies these equations note
that
\be
\begin{aligned}
J_A(t,w,q)-wJ_A(qt,w,q)
&\=
-(1-w) +(-1)^Aq^{A}tw^{A} J_A(q^{A}t,w,q)\,,\\
(1-qw)J_A(t,w,q)
&\=
-(1-qw) +(-1)^{A}q^{-A}tq^{A}(qw)^{A}J_A(t,qw,q)\,.
\end{aligned}
\ee
Therefore after taking the residues with a change of variables in the RHS of the
second equation $qw\mapsto w$ we find that $\Psi_{A,\mu,\nu}(t,q)$ satisfies the
equations~\eqref{eq:two.qdiffs}. In both cases the solutions can be taken of the form
\be
\sum_{k=0}^{\infty} a_{\mu,\nu,k}(q)\,t^{k/m} \inn\BZ[\z_m][\![x]\!][\![t^{1/m}]\!]\,.
\ee
The $q$-difference equations imply that for
\be
\begin{aligned}
a_{\mu,\nu,k}-a_{\mu+1,\nu,k}
&\=
(-1)^{A}q^{A}a_{\mu+A,\nu,k-1}\,,\\
a_{\mu,\nu,k}-a_{\mu,\nu+1,k}
&\=
(-1)^{A}q^{-A}a_{\mu,\nu+A,k-1}\,,\\
q^ka_{\mu,\nu,k}
&\=
a_{\mu+1,\nu-1,k}\,,
\end{aligned}
\ee
where we set $a_{\mu,\nu,k}=0$ for $k<0$. This completely determines all $a_{\mu,\nu,k}$
from the value of~$a_{0,0,0}$. Notice that $\Phitof_{A,m}(0,q)\Phitof_{A,m}(0,q^{-1})=1$.
To see that $\Psi_{A,\mu,\nu}(0,q)$ also equals $1$, notice that only the coefficient
of $x^{0}$ contributes to the coefficient of $t^{0}$, because the residues of the
factors $(1-t_i^{m}P_i(w^{m}))^{-\ell}$ contribute at least one factor of $t$ when
$\ell>0$. Therefore an explicit computation leads to $\Psi_{A,m}(0,x)=1$.
\end{proof}

To get elements of the Habiro ring of a number field, we specialise to 
$t=1$ and assume that the equations~\eqref{tpz} define a reduced
zero-dimensional scheme over $\BQ$. Fix a solution $z$ of these equations
and denote the corresponding collection of power series by $\Psi_{A,\mu,\nu,z}(q)$.
A solution $z$ generates a number field $\BK$.
Combining Theorem~\ref{thm.1} of Section~\ref{sub.results}, Theorem~\ref{thm.sym}
and Theorem~\ref{thm.2} of Section~\ref{sub.results}, we obtain that
$\Psi_{A,\mu,\nu,z}(q) \in \calH_{R}$.

\begin{theorem}
\label{thm.desc}
For all $\mu, \nu \in \BZ^N$, we have $\Psi_{A,\mu,\nu,z}(q) \in \calH_R$.
\end{theorem}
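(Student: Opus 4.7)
The plan is to synthesize three inputs: the product identity for the extended family $\Psi_{A,\mu,\nu}$ that comes out of the proof of Theorem~\ref{thm.sym}, the module-theoretic input of Theorem~\ref{thm.2} together with its descendant extension in Remark~\ref{rem.desc}, and the manifest integrality of the residue formula~\eqref{psitmdef}.

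First I would observe that the proof of Theorem~\ref{thm.sym} in fact establishes the identity $\Psi_{A,\mu,\nu}(t,q) = \Phitof_A(q^{m\mu}t,q)\,\Phitof_A(q^{-m\nu}t,q^{-1})$ in $\BZ[\z_m][\![x]\!][\![t^{1/m}]\!]$, via uniqueness of solutions of the system~\eqref{eq:two.qdiffs} sharing the same value at $t=0$. Specializing $t=1$ at the chosen non-degenerate solution $z$ and invoking Remark~\ref{rem.desc}, Proposition~\ref{prop.HRxi}(a), and Theorem~\ref{thm.HR}, the two $\Phitof$-descendants land in $\calH_{R[\delta^{-1/2}],\xi}|_{\Delta}$ and $\calH_{R[\delta^{-1/2}],-\xi}|_{\Delta}$ respectively (the second after applying $\tau$), so their product satisfies
\[
\Psi_{A,\mu,\nu,z}(q) \inn \calH_{R[\delta^{-1/2}]}|_{\Delta}.
\]

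Second, I would upgrade this to membership in $\calH_R$ using the residue formula~\eqref{psitmdef} directly. Since $z$ is non-degenerate the primes dividing $\delta(z)$ are included in $\Delta$, so $\delta(z)$ is a unit in $R$; by~\eqref{Jwdef} the integrand $J_A(q^{m\mu},w,\z_m+x)$ lies in $R[\z_m][w^{\pm 1},(1-P_i(w^m))^{-1}]\llbracket x\rrbracket$. The residue is summed over the Galois-stable finite set $\{w:w^m=z\}$, so the result is invariant under $w\mapsto\z_m w$ and descends to $R[\z_m][\![x]\!]$ for \emph{every} $m\geq 1$ --- in particular for $m$ whose order is divisible by primes of $\Delta$, where Theorem~\ref{thm.2} provides no information. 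This simultaneously removes the $\delta^{-1/2}$ denominator, consistent with the integrality already visible on the left-hand side of Theorem~\ref{thm.sym}.

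To conclude I would invoke Remark~\ref{rem.domain}: the Habiro ring $\calH_R$ is a product of integral domains indexed by the $\sim_{\Delta}$-equivalence classes of $\BN$, and the gluing~\eqref{gluef} is nontrivial only across primes not dividing $\Delta$. The first input supplies this gluing on the $\Delta$-coprime component, while the second supplies $R[\z_m][\![x]\!]$-integrality on every component; together they place $\Psi_{A,\mu,\nu,z}(q)$ in $\calH_R$. The hardest step will be the first: rigorously extending the uniqueness argument in the proof of Theorem~\ref{thm.sym} from $N=1$ to arbitrary $N$ and to the full $(\mu,\nu)$-family, since the $q$-difference system~\eqref{eq:two.qdiffs} is written down explicitly only for $N=1$, and one must check that its natural $N$-variable analogue still admits a unique formal-series solution determined by its constant term.
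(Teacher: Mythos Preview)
Your first two steps are correct, and in fact the paper uses both of them: the product identity $\Psi_{A,\mu,\nu}=\Phitof_{A,\mu,\nu}$ from the proof of Theorem~\ref{thm.sym}, and the manifest $R[\z_m][\![x]\!]$-integrality coming from the residue formula. The problem is your third step. Membership in $\calH_R$ requires the gluing~\eqref{gluef} for \emph{all} $m\ge 1$ and all primes $p\nmid\Delta$, not just for $m$ prime to~$\Delta$. In the product decomposition of Remark~\ref{rem.domain}, each $\sim_\Delta$-equivalence class carries its own gluing conditions; integrality on the non-coprime classes is not enough. So ``gluing on the $\Delta$-coprime component plus integrality everywhere'' does not place the element in $\calH_R$: an arbitrary integral choice of $f_m$ for $m$ divisible by a prime of~$\Delta$ would satisfy your hypotheses without lying in~$\calH_R$.

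The paper closes this gap not by citing Remark~\ref{rem.desc} (which inherits the $|_\Delta$ restriction) but by going back to the \emph{proof} of Theorem~\ref{thm.2}, exactly as in the proof of Corollary~\ref{cor.3}(a). There the functions $\psi^{(\gamma)}_{A,\mu,\nu}$ are shown to lie in $S^{(m)}[\![x]\!]$ for all $m$ prime to~$\gamma$; for $\gamma=1$ this means \emph{all}~$m$. Gluing then follows from uniqueness of the formal solution of the $q$-difference system~\eqref{eq:qdiff.proofs}: after $p$-completing and re-expanding $x\mapsto x+\z_{pm}-\z_m$ one obtains another solution, hence the Frobenius-twisted one. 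This argument already carries the $(\mu,\nu)$-indices and, combined with your step~2 to remove the $\delta^{-1/2}$, gives $\Psi_{A,\mu,\nu,z}(q)\in\calH_R$. So the fix is to replace the appeal to Remark~\ref{rem.desc}/Theorem~\ref{thm.HR} in your step~1 by the uniqueness argument from the proof of Theorem~\ref{thm.2} with $\gamma=1$.
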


Some cases of the above theorem were first proven in the thesis of Ferdinand Wagner.
Here is a concrete example for the cubic field of discriminant $-23$.


\begin{example}
\label{ex.23}
Consider the sum
\be
J(t,w,q)
\=
\sum_{k=0}^{\infty}
(-1)^{k}\frac{q^{3k(k+1)/2}w^{3k}t^k}{(qw;q)_{k}} \,.
\ee
Expanding $J(t,w,1+x)$ as a power series in $x$ and observing that each coefficient
is a sum of derivatives of geometric series in $t$, we find that
\be
\begin{small}
\begin{aligned}
J(t,w,1+x)
&\=
\frac{-1 + w}{-tw^3+w-1}
+\frac{3tw^3 - 5tw^4 + 2tw^5}{(-tw^3+w-1)^3}x \\
&+\frac{1}{(-tw^3+w-1)^5} \Big( 3tw^3 - 9tw^4 + 10tw^5 + (-21t^2 - 5t)w^6
+ (50t^2 + t)w^7 \\ &\qquad\qquad - 39t^2w^8 + (3t^3 + 10t^2)w^9
- 4t^3w^{10} + t^3w^{11} \Big) x^2
+O(x^3)
\end{aligned}
\end{small}
\ee
and
\be
\begin{aligned}
\Psi_{1}(t,x)
&\=
\frac{2tz^2 + 3tz - 9t}{27t-4}
+\frac{x^2}{(27t-4)^4}\big((-4374t^4 - 2106t^3 + 3t^2)z^2\\
&\qquad+ (-2187t^4 - 2997t^3 - 129t^2 + 2t)z
+ (2916t^3 + 1404t^2 - 2t)\big)
+\mathrm{O}(x^3) \,,
\end{aligned}
\ee
where $z$ satisfies the equation
\be
1-z\=-t z^3 \,.
\ee
Specialising to $t=1$ we find the expansion 
\be
\Psi_{1}(x)
\=
\frac{2z^2 + 3z - 9}{23}
+
\frac{-6477z^2 - 5311z + 4318}{23^4}x^2
+O(x^3)
\ee
of the element of the Habiro ring $R=\BZ[z,\tfrac{1}{23}]$, where $z^3-z+1=0$
generates the cubic field of discriminant $-23$.
Using
\be
  \delta\=-2t-\frac{t}{1-z}\=-tz^2 - tz + (-3t + 1)
\ee
one can compare these equations to equation~\eqref{fsymvals3} and
equation~\eqref{eq:ex.fsym.admis1}.
\end{example}


The next remark is for knot-theorists.

\begin{remark}
\label{rem.knots}
Use the matrices
\be
\label{3knots}
A_{4_1} \= \begin{pmatrix} 1 & 1 \\ 1 & 1 \end{pmatrix},
\qquad
A_{5_2} \= \begin{pmatrix} 2 & 1 & 1 \\ 1 & 1 & 0 \\ 1 & 0 & 1 \end{pmatrix},
\qquad
A_{(-2,3,7)} \=
\begin{pmatrix} 1 & 0 & 1 \\ 0 & 1 & 2 \\ 1 & 2 & 4 \end{pmatrix}
\ee
in Theorem~\ref{thm.2} of Section~\ref{sub.results} to compute the asymptotic
series $\Phitof^{(K)}(h)$ of the three simplest hyperbolic knots (for the $(-2,3,7)$
pretzel knot, the matrix was given in~\cite[Rem.A.5]{GZ:qseries}). To get the
asymptotic series of~\cite{DG} for any knot, one can use a triangulation of it
from \texttt{SnapPy}, choose quad types with Neumann--Zagier matrices
$(\mathbf A|\mathbf B)$ satisfying that $\mathbf B^{-1} \mathbf A$ is integral
(if this is possible) and apply Theorem~\ref{thm.2} of Section~\ref{sub.results}
with $A=\mathrm{I}- \mathbf B^{-1} \mathbf A$. 
\end{remark}

\subsection{A rank one admissible series}
\label{sub.F3}

$q$-hypergeometric series give admissible series that are easy to analyse. We
illustrate this with the example of the $1 \times 1$ matrix $A=(3)$ in~\eqref{FAdef}

\be
\label{Fexdef}
F(t,q) \= \sum_{k=0}^\infty \frac{(-1)^k q^{3 k(k+1)/2}}{(q;q)_k} t^k
\inn \BQ(q)[\![t]\!] \,.
\ee


It satisfies the linear $q$-difference equation 

\be
\label{Frec}
F(t,q) - F(q t,q) + q^3 t F(q^3 t,q) \= 0 \,.
\ee

The DT invariant of~\eqref{logF} $c_{n,i}$ is non-zero only for
$3n+1 \leq i \leq n^2+n+1$ (and exceptionally, for $c_{1,3}=1$) and satisfy the
positivity $c_{n,i} \in \BN$. The first few values of $c_{n,i}$ are given by

\be
\label{DTvalues}
\begin{array}{|r|c|} \hline
  n &  c_{n,i}, \,\, i=3n+1,\dots,n^2+n+1 \\ \hline
  2 & 1 \\ 
  3 & 1, 1, 0, 1 \\ 
  4 & 1, 1, 2, 1, 2, 1, 1, 0, 1 \\ 
  5 & 1, 2, 3, 4, 4, 5, 4, 4, 3, 3, 2, 2, 1, 1, 0, 1 \\ 
  6 & 1, 2, 5, 7, 11, 11, 15, 13, 15, 13, 14, 10, 12, 8, 8, 6, 6, 3, 4, 2, 
2, 1, 1, 0, 1 \\ \hline
\end{array}
\ee
Despite appearances in low degrees, the DT invariants grow fast exponentially.
For example,
\be
c_{20,142} \= 44549701024 \,. 
\ee
The WKB expansion (see e.g.,~\cite{Bender}) of the solution $F(t,q)$
of~\eqref{Frec} allows one to compute the leading asymptotics of $\Phitof_1(t,x)$
in terms of the power series $z(t)$ and $V(t)$ defined by
\be
\label{z3}
1 - z  \= - t z^3, \qquad z(0)\=1\,,
\ee
where
\be
\label{zfew3}
z(t) \= 1 + t + 3 t^2 + 12 t^3 + 55 t^4 + 273 t^5 + 1428 t^6 + 7752 t^7 + 
 43263 t^8 + 246675 t^9 + O(t^{10}) 
\ee
and
\be
\label{V3}
\begin{aligned}
V(t) \= & -\Li_2(1-z(t)) -\frac{3}{2} (\log(z(t)))^2
\\
\= & t + \frac{5}{4} t^2 + \frac{28}{9} t^3 + \frac{165}{16} t^4 + \frac{1001}{25} t^5
+ \frac{1547}{9} t^6 + \frac{38760}{49} t^7 + \frac{245157}{64} t^8
+ O(t^9) \,. 
\end{aligned}
\ee


Using the auxiliary function 
\be
\label{delta3}
\begin{aligned}
\delta(t) & \=  -2t-\frac{t}{1-z(t)} \\
& \=1 - 5 t - 3 t^2 - 10 t^3 - 42 t^4 - 198 t^5 - 1001 t^6
- 5304 t^7 - 29070 t^8 + \dots 
\inn \BZ[\![t]\!]
\end{aligned}
\ee
and setting $q=1+x$ and abbreviating $V(t)$, $\delta(t)$ and $z(t)$ by $V$, $\delta$
and $z$, respectively, we obtain the first few coefficients of $\Phitof_1(t,x)$ as
follows:
\be
\label{Phi1tx3}
\begin{aligned}
&\widehat{\Phitof}_1(t,x) \= e^{\tfrac{V}{x}} \frac{1}{\sqrt{\delta}} \Big(1 +
\frac{1}{24 \delta^3}
((308 t^3 - 74 t^2) z^2 + (234 t^3 - 74 t^2) z + (216 t^3 - 382 t^2 + 74 t)) x
\\
&
+\frac{1}{1152\delta^6}(
(748116 t^6 - 893688 t^5 + 281084 t^4 - 19924 t^3 - 1104 t^2) z^2
\\ &  + (397872 t^6 - 685624 t^5 + 257848 t^4 - 21028 t^3 - 1104 t^2) z
\\ &  + (186624 t^6 - 1252224 t^5 + 1127196 t^4 - 303216 t^3 + 18820 t^2 + 1104 t))
  x^2 + O(x^3) \Big) \,.
\end{aligned}
\ee
In general, the coefficient of $x^k$ in $\Phitof_1(t,x) e^{-\tfrac{V(t)}{x}}
\delta^{\frac{1}{2} + 3k}$ is in $\BQ[t,z]$, where $z$ satisfies~\eqref{z3}.

Recall the symmetrisation $G(t,q)$ and $F^\sym(t,q)$ from equation~\eqref{GSdef}.
equation~\eqref{Frec} implies that $G$ satisfies a non-linear (Ricatti type)
$q$-difference equation 

\be
\label{Grec}
1-G(t,q) +q^3 t \, G(t,q) \, G(q t,q) \, G(q^2 t,q)  \= 0 \,.
\ee
It follows from this that the limit $\lim_{q \to 1} G(tq,q) = z(t)$ exists and
satisfies the algebraic equation~\eqref{z3}. Equation~\eqref{GS2} gives
the factorisation
\be
\label{gexp}
z(t) \= \prod_{n \geq 1} 
(1-t^n)^{-n \sum_{i \in \BZ} \, c_{n,i}}
\ee
which proves that the exponent of $1-t^n$ in the above product expansion
is divisible by $n$. 
This is an application of~\cite{KS:cohomological}, where one may think of the
polynomials $L_n(q)$ as a categorification of the integers $n \, c_{n,i}$. 

The symmetrised series $F^\sym(t,q)$ was not considered previously in
the literature. It is easy to show that it satisfies a sixth order linear
$q$-difference equation, which we omit.
Below, we will treat the series $G(t,q)$ and $F^\sym(t,q)$ on the same footing.

We now illustrate a remarkable aspect of the series $G(t,q)$ and $F^\sym(t,q)$,
namely their $(q-1)$-expansion. It is clear that they both lie in the
completed ring $\BZ[\![t]\!][\![q-1]\!]$ but more is true. The
expansion~\eqref{Phi1tx3} contains a volume prefactor that cancels, as well
as universal denominators for each power of $x$, which also cancel, so that what
remains are series in $\BZ[t^{\pm 1},z,1/\delta]$. Explicitly, we can write $G(t,q)$ 
\be
\label{Gexp}
G(t,1+x) \= \sum_{k \geq 0} g_k(t) x^k
\ee
and then it follows from~\eqref{Grec} and induction that $g_0=z$ and 
$\delta^{3k} g_k \in \BZ[t^{\pm 1}, z]$, e.g., 


\be
\label{gvals3}
\begin{aligned}
g_0 \= & z\,,
\\
\delta^3 g_1 \= & (15 t^3 - 3 t^2) z^2 + (18 t^3 - 3 t^2) z + (-18 t^2 + 3 t)\,,
\\
\delta^6 g_2 \= & (711 t^6 - 708 t^5 + 107 t^4 + 17 t^3 - 3 t^2) z^2
+ (405 t^6 - 586 t^5 + 115 t^4 + 14 t^3 - 3 t^2) z
\\ & + (-1176 t^5 + 828 t^4 - 96 t^3 - 20 t^2 + 3 t)\,,
\\
\delta^{9} g_3 \= &
(26325 t^9 - 69399 t^8 + 32035 t^7 + 6234 t^6 - 6470 t^5 + 1259 t^4 - 69 t^3 - t^2) z^2
\\ &
+ (9720 t^9 - 47322 t^8 + 29899 t^7 + 2658 t^6 - 5430 t^5 + 1187 t^4 - 70 t^3 - t^2) z
\\ &
+ (-56187 t^8 + 95787 t^7 - 32141 t^6 - 10699 t^5 + 7584 t^4 - 1330 t^3 + 68 t^2 + t)
\,.
\end{aligned}
\ee

\noindent
Likewise, we have
\be
\label{Fsymexp}
F^\sym(t,1+x) \= \sum_{k \geq 0} f^\sym_k(t) x^k\,,
\ee
where $\delta^{3k+1} f^\sym_k \in \BZ[t^{\pm 1}, z]$, with the first few
values given by 

\be
\label{fsymvals3}
\begin{aligned}
\delta f^\sym_0 \= & 1\,,
\\
\delta^4 f^\sym_1 \= & 0\,,
\\
\delta^7 f^\sym_2 \= &
(39 t^6 - 109 t^5 - 18 t^4 + 34 t^3 - 5 t^2) z^2
+ (9 t^6 - 85 t^5 + t^4 + 29 t^3 - 5 t^2) z
\\ & + (-96 t^5 + 124 t^4 + 42 t^3 - 39 t^2 + 5 t)\,,
\\
\delta^{10} f^\sym_3 \= &
(1296 t^9 - 9183 t^8 + 7230 t^7 + 1604 t^6 - 2730 t^5 + 858 t^4 - 109 t^3 + 5 t^2) z^2
\\ &
+ (243 t^9 - 5328 t^8 + 6310 t^7 + 415 t^6 - 2139 t^5 + 764 t^4 - 104 t^3 + 5 t^2) z
\\ &
+ (-3969 t^8 + 14304 t^7 - 7635 t^6 - 3231 t^5 + 3405 t^4 - 957 t^3 + 114 t^2 - 5 t)
\,.
\end{aligned}
\ee

But a further surprise is awaiting us when we expand $G(t,q)$ and $F^\sym(t,q)$
at $q=\z_m+x$. As expected, we now get series in
$\BZ[\z_m][t^{\pm 1},z,1/\delta][\![x]\!]$. But these series glue, after applying
$p$-Frobenius. Concretely, if we specialise $t=1$, then we obtain
\be\label{eq:ex.fsym.admis1}
F^\sym(1,1+x) \= \frac{1}{\delta} + (- 59 z^2 - 51 z + 36) \frac{x^2}{\delta^7} +
(- 1029 z^2 + 166 z + 2026) \frac{x^3}{\delta^{10}} + O(x^4) \,,
\ee
where $z$ satisfies the cubic equation $1-z+z^3=0$ and
$\delta=-2-1/(1-z)=-z^2-z-2$, an algebraic integer of norm $-23$. $z$ generates
a cubic field $F=\BQ(z)$ of discriminant $-23$. Let $R=\BZ[z,1/23]$.

The specialisation $F^\sym(1,q)\in \BZ[z,\frac{1}{23}][\![q-1]\!]$ and
$G(1,q)$ lie in $\calH_{\BZ[z,1/23]}$ and its field of fractions, respectively.
This illustrates part (a) of Corollary~\ref{cor.3} of Section~\ref{sub.results}.

These are elements of the Habiro ring of the \'etale map
\be
\label{Hetale}
\BZ[t^{\pm 1},1/\delta] \to \BZ[t^{\pm 1},1/\delta][z]/(1-z + t z^3) \,.
\ee

When $m=1$ and $A$ is a $1 \times 1$ matrix, the power series $z(t)=z_A(t)$,
$V(t)=V_A(t)$ and $\delta(t)=\delta_A(t)$ have coefficients polynomials in $A$.
Indeed, the unique power series $z(t)=z_A(t)$ 
\be
\label{gAdef}
1-z\=(-1)^A t z^A, \qquad z(0)\=1 \,.
\ee
has coefficients integer-valued polynomials of $A$ (for integer $A$), with the
first few given by
\be
\label{fewgt}
z(t) \= 1- (-1)^{A} t + A t^2 - \frac{1}{2}(-1)^{A} A(3A-1) t^3 + O(t^4) \,.
\ee
This, together with 
\begin{align*}
z(t) & \= \lim_{q \to 1} \frac{F(tq,q)}{F(t,q)} \= \lim_{q \to 1}
\exp\Big(-\sum_{n, \ell \geq 1} \sum_{i \in \BZ} \frac{L_n(q^\ell)}{\ell(1-q^\ell)}
(q^{\ell n}-1) t^{\ell n} \Big)
\\
& \= \exp\big( \sum_{n \geq 1} n L_n(1) \Li_1(t^n) \big)
\= \exp\big( t \partial_t \sum_{n \geq 1} L_n(1) \Li_2(t^n) \big)
\= \exp\big( t \partial_t V(t) \big)\,,
\end{align*}
implies that $V(t)$ satisfies 
\be 
\label{g2V}
\begin{aligned}
V(t) 
& \= -\Li_2(1-z(t)) - \frac{A}{2} (\log(z(t)))^2 \inn \BQ[\![t]\!],
\qquad V(0)\=0 \\
& \= -(-1)^A t -\frac{2A-1}{4} t^2 + (-1)^A \frac{(3A-1)(3A-2)}{18} t^3
+ O(t^4) \,.
\end{aligned}
\ee
Finally,
\be
\label{deltadef}
\begin{aligned}
\delta(t) & \= (-1)^A \Big( (A-1)t + \frac{t}{1-z(t)} \Big) \inn \BZ[\![t]\!],
\qquad \delta(0)\=1 \\
& \=
1 - (-1)^A (2A-1) t + \frac{A(A-1)}{2} t^2 + (-1)^A \frac{(2A-1)A(A-1)}{3} t^3
+ O(t^4) \,.
\end{aligned}
\ee


The rest of the coefficients of the unique zero-slope solution $F_A(t,q)$
of~\eqref{FArec} can be computed inductively as was illustrated in the beginning
of this section with the example of~$A=3$. 

\subsection{Torsion in the Bloch group from admissible series}
\label{sub.8554}

In this section we illustrate part (b) of Corollary~\ref{cor.3} of
Section~\ref{sub.results} with the matrix
\be
\label{A2ex}
A\=\begin{pmatrix} 8 & 5 \\ 5 & 4 \end{pmatrix}
\ee
(symmetric and positive definite) taken from the survey article~\cite{Zagier:dilog}.

The Nahm equations for $z=(z_1,z_2)$
\be
\label{z1z28554}
1-z_1 \= z_1^8 z_2^5, \qquad 1-z_2 \= z_1^5 z_2^4\,,
\ee
have eight solutions in two Galois orbits defined over two quartic fields, one given
by
\be
\label{z1eqn}
  z_1^4 + z_1^3 + 3z_1^2 - 3z_1 - 1\=0\,,\qquad z_2\=\frac{1}{5}(-9z_1^3 - 6z_1^2 - 25z_1 + 37)\,\phantom{,}
\ee
and the other by
\be
  z_1^4 - z_1^3 + 3z_1^2 - 3z_1 + 1\=0\,,\qquad z_2\=z_1^3 + 3z_1\,.
\ee
Since $A$ is positive definite equation~\eqref{z1z28554} has a unique solution in
$(0,1)^2$, given to a few decimals by $(0.88483\cdots,0.78939\cdots)$, belonging
to the real embedding of the quartic number field $F$ of type $(2,1)$ and discriminant
$-5^2 \cdot 19$ defined by~\eqref{z1eqn}.
This solution of the Nahm equation is non-degenerate and defines an element
$\xi=[z_1]+[z_2]$ of the Bloch group~$B(F)$. It turns out to be 60-torsion,
and the corresponding series $(\Phitof_{A,z,\nu})^{60}$ belongs to
$\calH_{\calO_F[1/(5 \cdot 19)]}$ for all~$\nu \in \BZ^2$ (conjecturally, but provably
if we invert $6$). The $q$-holonomic module of the Nahm sum associated to $A$
has rank 8, spanned by $\BQ(q)$-linear combinations of the Nahm sums $F_\nu(q)$
defined by
\be
\label{nahm8554}
F_\nu(q) \= \sum_{n=(n_1,n_2) \in \BN^2} \frac{q^{\frac{1}{2} n^t A n + n^t \nu}}{
(q;q)_{n_1} (q;q)_{n_2}}, \qquad \nu \inn \BZ^2 \,.
\ee
The radial asymptotics of these $q$-hypergeometric functions as $q$ approaches
a root of unity allow one to compute the asymptotic series $\Phitof_{A,z,\nu}$
(abbreviated by $\Phitof$ for $\nu=(0,0)$ below). This
is done using a numerical computation of the function $F_\nu$, followed by an
acceleration that improves the precision of the found numbers, and their eventual
recognition as exact algebraic numbers (explained in detail
in~\cite{GZ:kashaev,GZ:qseries}). Applying this
method, we find that the asymptotics of $F_0(1+x)$ for
$x\in\BR$ as $x\rightarrow 0$ have the form
\be
F_0(1+x) \;\sim\; \widehat{\Phitof}_{1}((1+x)^{-1}+1)\,,
\ee
where
\be
\widehat{\Phitof}_{1}(x) \= 
e^{\frac{\pi^2}{15} \frac{1}{\log(1+x)}}
\frac{1}{\sqrt{\delta}}
\big(1+a_{1}x+a_{2}x^2+a_{3}x^3 + O(x^4)\big)
\ee
with $\delta$ and $a_{i}$ in $F$ given by
\be
\begin{aligned}
\delta&\=\frac{753-505z_1-124z_1^2-186z_1^3}{5}\,,\\
a_{1} &\= \frac{-1284z_1^3 + 384z_1^2 - 5520z_1 + 2047}{2^2\cdot3\cdot5^2\cdot19^2}\,,
\\
a_{2} &\= \frac{-3084024z_1^3 - 11262336z_1^2 - 1073760z_1 + 17201653}{
2^5\cdot3^2\cdot5^2\cdot19^4}\,, \\
a_{3} &\= \frac{-1185017476284z_1^3 + 1129707725184z_1^2 - 5869777630320z_1
+ 1818824190547}{2^7\cdot3^4\cdot5^4\cdot19^6} \,.
\end{aligned}
\ee
We computed the series $\Phitof_{1}(x)$ up to $\mathrm{O}(x)^{28}$. The denominator of
the series to order $27$ a priori should include all primes less
than or equal to $29$ as found in~\cite[Sec.9.2, Thm.9.1]{GZ:kashaev}. However, the 
denominator is actually $2^{77}\cdot 3^{40}\cdot 5^{34}\cdot 19^{52}$, so the series is
$\Delta$-integral. Moreover, if we take the $60$-th power of $\Phitof_{0,1}(x)$, we
find that
the denominator improves to $5^{33}\cdot 19^{54}$, where the remaining primes $5$ and
$19$ are the prime factors of the discriminant $-5^4 \cdot 19$ of the number field.

The same experiment can be performed for other values of $\nu \in \BZ^2$. Actually
one needs only~8 values since the holonomic rank of the 2-parameter $q$-holonomic
function $\nu \mapsto F_\nu(q)$ is~8. Doing so, we find the same $\Delta$-integrality
results.

Incidentally, the element corresponding to the second quartic field $E$ is not
a torsion element of $B(E)$, and its corresponding series,  computed from the
asymptotics of $F_0(\e(-1/\tau))$ when $\tau$ approaches infinity near the real line,
does not exhibit any $\Delta$-integrality properties.

\subsection{\texorpdfstring{Some $p$-adic computations}{Some p-adic computations}}
\label{sub.padic.compute}

In this section we discuss $p$-adic computations, starting with the absolute basics.
Hensel's lemma states that if a polynomial factors into irreducible polynomials over
$\BZ/p\BZ$, then this factorisation lifts to a unique factorisation over $\BZ/p^n\BZ$.
Therefore, if $\xi$ is a generator of the field $\BK$ with minimal polynomial $P(x)$
and $p$ is an unramified prime, applying Hensel's lemma to $P(x)$ lifts the Frobenius
automorphism $R/pR$ to $R_{p}^\wedge\cong R\otimes_{\BZ}\BZ_{p}$, where
$R=\calO_\BK[1/\Delta]$. Hensel's lemma is constructive and we can easily use it to
compute the Frobenius automorphism.
Firstly, notice that
\be
P(\xi^{p}) \cm 0\pmod{p}\,.
\ee
Suppose by induction that for some $d$ there is a unique $\alpha\in R/p^{d-1} R$
such that
\be
P(\xi^{p}+\alpha p) \cm 0\pmod{p^{d}}\,.
\ee
Choose a lift of $\alpha$ to $R/p^{d} R$. Let
$\beta\in R/p R$. Then we notice that
\be
\frac{P(\xi^{p}+\alpha\,p+\beta\,p^{d})-P(\xi^{p}+\alpha\,p)}{p^{d}}
\cm P'(\xi^{p})\,\beta\pmod{p}\,.
\ee
As $p$ is an unramified prime $P'(\xi^{p})\in( R/p R)^{\times}$ and hence we
can set
\be
\beta\=-\frac{P(\xi^{p}+\alpha\,p)}{p^{d}P'(\xi^{p})} \inn R/p R\,.
\ee
With this choice we find that
\be
P(\xi^{p}+\alpha p+\beta p^{d})\cm 0\pmod{p^{d+1}}\,.
\ee
Therefore, by induction, we can lift the element $\xi^{p}\in R/p R$ to an
element $\fr(\xi)\in R_{p}^\wedge$ such that $P(\fr(\xi))=0$ and
$\fr(\xi)=\xi^{p}\in R/p R$.

\medskip

Next we will explain how to evaluate the $p$-adic
polylogarithm. We can evaluate this function on an element $t$ of a $p$-adic field
using the algorithm of Besser--de Jeu described in~\cite{Lip}. Let us recall how
this is done.

\begin{itemize}
\item
  When $|t|<1$, we can use the power series definition of $\Li_n(t)$ at $t=0$.
\item
  When $t=\z \neq 1$, a root of unity of unity, we use the series expansion of
  $\Li^{(p)}_n(t/(1-t))$, which converges for $|t|< p^{\tfrac{1}{p-1}}$ for
  $t=t_0=\z/(1-\z)$ (satisfying $|t_0|=1$) to compute
  $\Li^{(p)}_n(\z)$, together with equation~\cite[Prop. 4.2]{Lip} 
\be
\Li_n(\z)
\=
(p^{ns}-1)^{-1}
\sum_{r=0}^{s-1}
p^{n(s-r)}\Li_n^{(p)}(\z_{p^{s}-1}^{p^{r}})
\ee
  to compute $\Li_n(\z)$.
\item
  When $|t-\z|<1$, for $\z$ as above, use the Taylor series expansion of $\Li_n(t)$
  at $t=\z$, together with the fact that
  $(t \tfrac{\partial}{\partial t})^s|_{t=\z} \Li_n(t)=\Li_{n-s}(\z)$ to compute
  $\Li_n(t)$. 
\item
  When $|t|>1$, use the inversion formula 
\be
\Li_n(z)+(-1)^{n}\thinspace\Li_n(z^{-1})
\=
-\frac{1}{n!}\log(z)^{n}
\ee
  to compute $\Li_n(t)$. 
\item
  Finally, one can also compute $\Li_n(t)$ when $0<|t-1|<1$ by considering the
  function $\Li_n(t)-\log(t)\Li_{n-1}(t)/(n-1)$, which is analytic in that domain.
\end{itemize}

$\;$ The next example illustrates Theorem~\ref{thm.xiz} of Section~\ref{sub.Dp}.

\begin{example}
\label{ex.xiz1}
Suppose that $\mathbb{K}=\BQ(\alpha)/(\alpha^3-\alpha^2+1)$. Let
\be
z_1\=1-\alpha^2\,,\qquad
z_2\=z_1^2-z_1+2\,,\qquad
z_3\=z_1\,.
\ee
Then $\xi=[z_1]+[z_2]+[z_3]\in B(\mathbb{K})$ represents the element of the Bloch
group of $5_2$. Consider the prime $p=5$, where
$\mathbb{K}_{5}\cong\BQ_{5^2}\times\BQ_{5}$ is a product of two unramified extension
of $\BQ_5$, one of degree two and one of degree one. We have
\be
\begin{aligned}
&D_{5}(\xi)
\=
D_5(z_1)+D_5(z_2)+D_5(z_3) \\
& \=
(3\cdot5^2 + 5^3 + 2\cdot5^4 + \cdots)\a^2 + (5^2 + 3\cdot5^3 + \cdots)\a
+ (2\cdot5^2 + 3\cdot5^3+\cdots)\,.
\end{aligned}
\ee
The roots of unity $\mu(\mathbb{K}_{5})$ is a product of finite cyclic groups of order
$5^2-1=24$ and $5-1=4$. There is an order $24$ subgroup generated by $\z_{24}$, where
\be
\z_{24} \= \lim_{s\to\infty} \alpha^{5^{2s}} \=
(4\cdot5^2 + \cdots)\a^2 + (1 + \cdots)\a + (3\cdot5 + \cdots)\,.
\ee
Then we have
\be
\begin{small}
\begin{aligned}
D_5(\z_{24})
&\=
(4\!\cdot\! 5^2 + 4\!\cdot\! 5^3 + \cdots)\a^2 + (4\!\cdot\! 5^2 + 2\!\cdot\! 5^3
+ \cdots)\a
+ (2\!\cdot\! 5^2 + 5^3 + 2\!\cdot\! 5^4 + \cdots),\\
D_5(\z_{24}^5)
&\=
(2\!\cdot\! 5^2 + 2\!\cdot\! 5^3 + 5^4 + \cdots)\a^2
+ (3\!\cdot\! 5^2 + 3\!\cdot\! 5^3 + 5^4 + \cdots)\a + (5^2 + 3\!\cdot\! 5^4
+ \cdots),\\
D_5(\z_{24}^6)
&\=
(3\!\cdot\! 5^2 + 3\!\cdot\! 5^3 + 2\!\cdot\! 5^4 + \cdots)\a^2\!\!
+ (3\!\cdot\! 5^2 + 2\!\cdot\! 5^3
+ 3\!\cdot\! 5^4 + \cdots)\a\! + (2\!\cdot\! 5^2 + 4\!\cdot\! 5^4 + \cdots).
\end{aligned}
\end{small}
\ee
This implies that
\be
D_5(\xi)
\=
(1 + 4\cdot 5 + 3\cdot 5^2 + \cdots)D_5(\z_{24}) + (3 + 5 + \cdots)D_5(\z_{24}^2)
+ (1 + 5 + 4\cdot 5^2 + \cdots)D_5(\z_{24}^6)
\ee
which illustrates Theorem~\ref{thm.xiz} of Section~\ref{sub.Dp}. This implies that
\be
\xi \= (1 + 4\cdot 5 + 3\cdot 5^2 + \cdots)[\z_{24}] + (3 + 5 + \cdots)[\z_{24}^2]
+ (1 + 5 + 4\cdot 5^2 + \cdots)[\z_{24}^6] \inn B(\BK_5)\otimes\BQ_5 \,.
\ee
\end{example}


\subsection{\texorpdfstring{The $4_1$ knot}{The 4\_1 knot}}


Every result presented in the paper has been numerically verified. We will describe
a select few of the computations that were carried out for two examples associated
to the knots for $4_1$ and $5_2$.

\medskip

Firstly, there are a variety of methods available, both numerical and exact, to compute
these collections of power series at roots of unity. Using the exact formulas of
formal Gaussian integration, we can easily compute the series around $q=1$.
For example in~\cite[Sec. 7]{GSW} the computation was detailed for $4_1$.
With $\psi$ as in equation~\eqref{Psikdef}, this gives the series
\be
\begin{small}
\begin{aligned}
\Phitof^{4_1}_1(x) \= (1+x)^{\frac{1}{8}}
\langle
\psi_{0,\z_6,1}(w_1,x)\psi_{0,\z_6,1}(w_2,x)\rangle_{
\Lambda_{4_1}}
=\;
(1+x)^{\frac{1}{6}}
\Big\langle
\exp\Big(\frac{w}{2}\Big)\psi_{0,\z_6,1}(w,x)^2\Big\rangle_{
2\z_6-1}\,,
\end{aligned}
\end{small}
\ee
where
\be
\Lambda_{4_1}
\=
\begin{pmatrix}
\z_6-1 & -1\\
-1 & \z_6-1
\end{pmatrix}\,.
\ee
This simple expression allows the computation of this series to high precision. We
have computed of order $600$ in this example. The first few terms are
given in the Introduction~\ref{sub.intexp} in equation~\eqref{as41} for $q=e^h=1+x$,
with $\Phi^{4_1}_1(h)=f_1^{4_1}(x)$. The integrality of
the symmetrisation was then given in equation~\eqref{Phi41sym}.
The Ohtsuki property of the series was checked with this data.
This field is $\BQ(\sqrt{-3})$ and has non-trivial roots of unity. In fact, the symbols
of these roots of unity generate the Bloch group.
In this case we can construct a generator of the
Habiro module globally using the infinite Pochhammer symbol with a root of unity as
argument. Indeed, we find
\be
\begin{aligned}
&\sqrt[4]{-3}\,\widehat{\Phitof}^{4_1}_1(x)\,(1+x)^{-\frac{1}{24}}
\,(\z_6^2;1+x)_{\infty}^{3}
(1-\z_6)^{-\frac{3}{2}}\\
&\=
1 + \frac{1}{3^3}(5\z_6 - 7)x + \frac{1}{3^5}(-30\z_6 + 31)x^2
+ \frac{1}{3^9}(1565\z_6 - 1444)x^3+\cdots\,.
\end{aligned}
\ee
This whole series is integral away from $3$. This observation was a key clue to
finding a definition of the Habiro modules.

\subsection{\texorpdfstring{The pair of the $5_2$ and the
    $(-2,3,7)$-pretzel knots}{The pair of the 5\_2 and the (-2,3,7)-pretzel knots}}
\label{sub.52-237}

In this section we give a pair of elements of the same module over the Habiro
ring of the cubic field $\BK$ of discriminant $-23$ that come from the asymptotic
series associated to a pair of hyperbolic knots, namely the $5_2$ and the
$(-2,3,7)$-pretzel knot. This pair of knots was studied in detail in~\cite{GZ:kashaev}
and was instrumental in formulating the results and conjectures of that paper. Recall
that this pair of knots has common trace field $\BK$ generated by a solution to
the cubic equation $\xi^3-\xi^2+1=0$. In fact, these knots are scissors congruent;
their complements can be decomposed in three ideal tetrahedra with shapes in $\BK$,
but assembled differently for each knot. It follows that the corresponding elements of
the Bloch group are equal, modulo 6-torsion. Although several terms of their series
$\Phitof^{(5_2)}_m(x)$ and $\Phitof^{(-2,3,7)}_m(x)$ were computed at various roots of
unity of small order $m$, no relation between the two series was found.

There are several methods to compute the series $\Phitof^{(5_2)}_m(x)$ and
$\Phitof^{(-2,3,7)}_m(x)$: 
\begin{itemize}
\item
  numerically compute the Kashaev invariant, its asymptotic expansion
  at roots of unity of order $m$ as in~\cite[Eqn.(1.5)]{GZ:kashaev} using
  high precision and extrapolation, and then lifting to elements of $\BK$,
\item
  numerically compute the asymptotics of $q$-series associated to these knots
  using high precision and extrapolation, and then lifting to elements of $\BK$,
  as in~\cite{GZ:qseries}
\item
  compute using exact arithmetic the formal Gaussian integral associated to these
  knots.
\end{itemize}
The state-integrals of these knots a priori are 3 and 4-dimensional, but they
reduce to explicit 1-dimensional state integrals for both (see~\cite[Eqn.(39)]{AK}
for the $5_2$ knot and~\cite[Eqn.(58)]{GK:evaluation} for the $(-2,3,7)$-pretzel knot).
When $m=1$, the third method applied to these 1-dimensional integrals gives an
efficient way to compute the series $\Phitof^{(5_2)}_1(x) + \mathrm{O}(x)^{401}$ and
$\Phitof^{(-2,3,7)}_1(x) + \mathrm{O}(x)^{401}$; see~\cite[Sec.4]{AGL}. Although the
coefficients of both series have a universal denominator defined
in~\cite[Thm.9.1]{GZ:kashaev} the product (keeping in mind that
$q=1+x$, $q^{-1}=1 -x/(x+1)$)
\be
\begin{aligned}
\Phitof^{(5_2)}_1(x)\,\Phitof^{(-2,3,7)}_1\Big(-\frac{x}{1+x}\Big) \= &
c \Big( 1 + \frac{-7 \a^2 + 20 \a + 33}{2^4 \cdot 23} x \\
& + \frac{226541 \a^2 - 275879 \a - 218336}{2^9 \cdot 23^3} x^2 \\
& + \frac{-95096039 \a^2 + 85905420 \a + 49207882}{2^{13} \cdot 23^4} x^3 
+ \dots \Big)
\end{aligned}
\ee
has denominators given by powers of 2 and 23, e.g.
the denominator of $x^{400}$ is $2^{1997} \cdot  23^{581}$. Here $\a$ satisfies
$\a^3-\a^2+1=0$. The constant term is the product of the square roots of
the $\delta$-invariant of the two knots
\be
c \= \frac{1}{\sqrt{-6 \a^2 + 10 \a -4}} \cdot
\frac{1}{\sqrt{-24 \a^2 + 32 \a -26}} \= \frac{1}{\sqrt{2} \cdot (2 \a^2 -2\a+3)}
\ee


Despite the above similarity, the series $\Phitof^{(5_2)}_1(x)$ and
$\Phitof^{(-2,3,7)}_1(x)$
are quite different from each other: for instance the coefficients of the
series of $5_2$ ``see'' only the cubic trace field of discriminant $-23$, whereas
those of $(-2,3,7)$ see in addition the abelian field $\BQ(2\cos(2\pi/7))$ of
discriminant $49$. Thus, even the rank of the étale algebras is different.

\medskip

We can explore $5_2$ in some more detail. A Gaussian integral in this case can be
given
\be
\begin{aligned}
\Phitof^{5_2}_1(x)
\=
\frac{1}{\sqrt{\delta_{5_2}}}
\Big\langle
e^w\psi_{0,z,1}(w,x)^3\Big\rangle_{3\alpha - 2}\,,
\end{aligned}
\ee
where $\alpha^3-\alpha^2+1=0$ and $\delta=3\alpha-2$. The first few terms of this
series are well documented\footnote{Here we again use $\Phi(h)$ instead of
  $\Phitof(x)$ where $q=e^h=1+x$. Also, there are factors of $q=e^h$ and $\z_8$
  that differ here from the formulas in~\cite[Equ. 4]{GZ:kashaev}.} but we will
give them again here:
\be
\begin{small}
\begin{aligned}
\Phi^{5_2}_1(h)
&\=
\frac{1}{\sqrt{3\alpha-2}}
\Big(1+
\frac{765\alpha^2 - 1086\alpha + 1043}{24\,(3\alpha-2)^3}h
+\frac{1757583\alpha^2 - 2956029\alpha + 2241964}{1152\,(3\alpha-2)^6}h^2\\
&\qquad\qquad\qquad\qquad
+\frac{21285784611\alpha^2 - 37166037066\alpha + 27969826252}{414720\,(3\alpha-2)^9}h^3
+\cdots\Big)
\end{aligned}
\end{small}
\ee
Since $q=1+x$ and $q^{-1}=1-x/(1+x)$, it follows that the symmetrisation is given by
\be\label{eq:52sym}
\begin{aligned}
\delta\Phitof^{5_2}_1(x)\,\Phitof^{5_2}_1\Big(-\frac{x}{1+x}\Big)
&\=
1 + \frac{1}{23^3}
(465\a^2 - 465\a + 54)x^2
+\frac{1}{23^3}
(-465\a^2 + 465\a - 54)x^3\\
&\qquad\qquad+\frac{1}{23^6}
(4934541\a^2 - 4934541\a + 462834)x^4
+\cdots\,.
\end{aligned}
\ee
This data given is enough to compute the first digit of the constant of the
symmetrised series $5$-adically at $\z_5$. Indeed, we find that
\be
\Phitof^{5_2}_1(\z_5-1)\,\Phitof^{5_2}_1(\z_{5}^{-1}-1)
\cm
(\a^2 + 3\a + 2)\z_5^3 + (\a^2 + 3\a + 2)\z_5^2
\pmod{5}\,.
\ee
while
\be
\Phitof^{5_2}_5(0)^2
\cm
(\a^2 + 4\a)x^3 + (\a^2 + 4\a)x^2
\pmod{5}\,.
\ee
This agrees with the gluing in the Habiro ring as
\be
(\a^2 + 4\a)^5
\cm
\a^2 + 3\a + 2
\pmod{5}\,.
\ee
These computations can be carried out to any desired order and for any roots of
unity by replacing the $p$-power map with the Frobenius automorphism described in
Section~\ref{sub.padic.compute}.

\medskip

The final experiment to describe here is the integrality of the series
$\Phitof^{5_2}_1(x)$ after division by the generator $\Psi_{\xi,p}(x)$.
We can use Example~\ref{ex.xiz1} of Section~\ref{sub.padic.compute} to give the
description of our series. For some constant $C$ we find
\be
\begin{small}
\begin{aligned}
&C\,\Phitof^{5_2}_5(x)
\Psi_{[\z_{24}],5}(x)^{-(1 + 4\cdot 5 + 3\cdot 5^2 + \cdots)}
\Psi_{[\z_{24}^2],5}(x)^{-(3 + 5 + \cdots)}
\Psi_{[\z_{24}^6],5}(x)^{-(1 + 5 + 4\cdot 5^2 + \cdots)}\\
&\=1 + ((2 + 4\!\cdot\!5 + 4\!\cdot\!5^3 + \cdots)\a^2 + (2 + 3\!\cdot\!5
+ 3\!\cdot\!5^2
+ 3\!\cdot\!5^3 + \cdots)\a + (3 + \cdots))x\\
&+ ((3 + 2\!\cdot\!5 + 2\!\cdot\!5^2 + 3\!\cdot\!5^3 + \cdots)\a^2 + (3 + 4\!\cdot\!5^2
+ 3\!\cdot\!5^3 + \cdots)\a + (3 + 3\!\cdot\!5 + 2\!\cdot\!5^2 + 3\!\cdot\!5^3
+ \cdots))x^2\\
&+ ((2 + 4\!\cdot\!5 + 5^2 + 4\!\cdot\!5^3 + \cdots)\a^2 + (3 + 4\!\cdot\!5^2
+ 5^3 + \cdots)\a + (2 + 3\!\cdot\!5^2 + 3\!\cdot\!5^3 + \cdots))x^3
+\cdots\,.
\end{aligned}
\end{small}
\ee
The coefficient of $x^{200}$ is given by
\be
(1 + 5^2 + 5^3 + \cdots)\xi^2 + (3 + 2\cdot5 + 2\cdot5^2 + 4\cdot5^3 + \cdots)\xi
+ (4 + 3\cdot5 + 2\cdot5^2 + 5^3 + \cdots),
\ee
which is clearly 5-integral.

\medskip

The perturbative series associated to the three boundary-parabolic representations
of the $(-2,3,7)$-pretzel knot with values in the abelian number field
$\BK=\BQ(2\cos(2\pi/7))$ give an element of $\calH_{\calO_\BK[1/7]}$. The corresponding
element of the Bloch group is 3-torsion. We have computed 400 terms of the series
at $m=1$ and have checked their $7$-integrality. For instance, the coefficient of
$x^{400}$
of this series is $2^{1997} \cdot 3^{596} \cdot 7^{466}$, which improves to
$2^{1997} \cdot 7^{466}$ upon taking the third power of the series, illustrating
part (b) of Corollary~\ref{cor.3} of Section~\ref{sub.results}.


\subsection{A note on modularity}
\label{sub.mod}

Modular forms at roots of unity give rise to special elements associated to torsion
classes in $K_3$. We will consider two simple examples: one coming from a quadratic
Gauss sum (and hence related to the asymptotics of Jacobi $\th$-series) and the other
related to the Roger-Ramanujan functions, which are well known to be modular.

\begin{example}[A quadratic Gauss sum]
Consider
\be
\begin{aligned}
F_{m}(x)&\=\frac{1}{m}\bigg(\sum_{k\in\BZ/m\BZ}\z_m^{k^2}\bigg)
\bigg(\sum_{k\in\BZ/m\BZ}\z_m^{-k^2}\bigg)
\= \frac{1}{2}(1+\z_4^{m})(1+\z_4^{-m})\\
&\=
\left\{
\begin{array}{cl}
1 & \text{if }m\equiv 1\pmod{4}\,,\\
0 & \text{if }m\equiv 2\pmod{4}\,,\\
1 & \text{if }m\equiv 3\pmod{4}\,,\\
2 & \text{if }m\equiv 4\pmod{4}\,.
\end{array}
\right.
\end{aligned}
\ee
We see that this gives an almost trivial element of the Habiro ring
$\calH_{\BZ[1/2]}$. We can also consider
\be
G_{m}(x)\=\frac{\z_4}{m}\bigg(\sum_{k\in\BZ/m\BZ}\z_m^{k^2}\bigg)^2
\= \frac{1}{2}(1+\z_4^{m})^2
\=
\left\{
\begin{array}{cl}
i & \text{if }m\equiv 1\pmod{4}\,,\\
0 & \text{if }m\equiv 2\pmod{4}\,,\\
-i & \text{if }m\equiv 3\pmod{4}\,,\\
2 & \text{if }m\equiv 4\pmod{4}\,,
\end{array}
\right.
\ee
giving a slightly less trivial element of $\calH_{\BZ[i,1/2]}$.
\end{example}

\begin{example}[Rogers-Ramanujan symmetrised]
The Rogers-Ramanujan function has an associated field $\BQ(\sqrt{5})$, which has an
abelian Galois group over $\BQ$. Consider
\be
J(z,q) \= \sum_{k=0}^{\infty}\frac{q^{k(k+1)}z^{2k}}{(qz;q)_{k}}
\inn \calH_{\BQ(z)}\,.
\ee
This $J$ has the special property that
\be
J(z,\z_m(1-u)) \inn \BZ[z,(1-z^m-z^{2m})^{-1}][\z_m]\llbracket u\rrbracket\,.
\ee
We define for $\xi^{2}+\xi-1=0$
\be
F_{m}(u) \= \sum_{\th^{m}=\xi} \Res{z=\th} J(z,\z_m(1-u))\frac{dz}{z}\,.
\ee
Each $F_m(u)$ in $\BQ(\sqrt{5},\z_m)$ is constant, the first five values being:
\be
\begin{small}
\begin{aligned}
F_{1}(u)
&\=-\frac{1}{2}-\frac{\sqrt{5}}{10}\,,\qquad\qquad\qquad\qquad
F_{2}(u)
&\=-\frac{1}{2}+\frac{\sqrt{5}}{10}\,,\\
F_{3}(u)
&\=-\frac{1}{2}+\frac{\sqrt{5}}{10}\,,\qquad\qquad\qquad\qquad
F_{4}(u)
&\=-\frac{1}{2}-\frac{\sqrt{5}}{10}\,,\\
F_{5}(u)
&\=\frac{\sqrt{5}}{5}\z_5^3 + \frac{\sqrt{5}}{5}\z_5^2 -\frac{1}{2}
+\frac{\sqrt{5}}{10}\,.
\end{aligned}
\end{small}
\ee
Note that, depending on the embeddings, $F_5(u)$ is equal to either $0$ or $-1$ in
$\BC$. In fact, using the modularity of the Rogers-Ramanujan functions, one can show
that
\be
-5F_{m}^{2}(u)-5F_{m}(u)
\=
\bigg\{
\begin{array}{cl}
1 & \text{if }(m,5)\=1\,,\\
0 & \text{otherwise}\,,
\end{array}
\ee
the expressions on either side being the value at $\z_m(1-u)$ of an element of
$\calH_{\BZ[1/5]}$. More generally, $\calH_{\BZ[1/5]}$ contains an element $\chi_N$
sending $\z_m$ to $\delta_{v_{5}(m),N}$.  These elements can be constructed using the
operation $f(q)\mapsto f(q^5)$ --- which does act on
the usual Habiro ring of~$\BZ$. Note
that the operation $f(q)\mapsto f(q^k)$ is in general only defined
on the Habiro
ring of a ring $R$ if it contains $1/k$. For example for $k=2$ we see that
$f((1-u)^2)=f((-(1-u))^2)$ and hence the re-expansion will not require a Frobenius
$\varphi_2$, which it should.)

This example illustrates how the different roots of unity can be disconnected
by denominators. This viewpoint will be pushed further in the next section; where
it will be used to give an alternative approach to the Habiro ring of a number field.
\end{example}


\section{Alternative approach: Habiro rings via congruences}
\label{sec.hab}

In this final section we study the basic properties of the original Habiro ring
$\calH$ and its generalisation to the Habiro $\calH_{R}$ of a ring $R$.
The most important point is how to identify $\calH$ explicitly within the
ring of ``Galois invariant functions near roots of unity'' by means of a
sequence of congruences. The Habiro ring $\calH_{R}$ when $R$ is the ring of integers
(or $S$-integers) of a
number field is then given by the \emph{same} collection of congruences,
but twisted by the application of a certain Frobenius automorphism at each root of
unity. We will consider filtrations with finite quotients of both the ``naive''
Habiro ring
and the space of functions near roots of unity. On these finite quotients we will
explicitly
describe the embedding of the Habiro ring. This will be a simple map between lattices
and we will give an explicit formula for the index, which will illustrate the
strength of the constraints for a function near roots of unity to be in a Habiro ring.
We will illustrate the procedure on several examples. Finally, we will explore some
exotic functions near roots of unity that exhibit some Habiro-like properties.

Throughout this section, the ground rings $R$ we will consider will be associative,
commutative, unital, and torsion-free rings $R$. The original Habiro ring will be
when $R=\BZ$ but more generally $R$ can be thought of as the ring of integers of a
number field with a finite set of primes inverted.

\subsection{Functions near roots of unity and the naive Habiro ring}
\label{sec:naive}

In Habiro's original work~\cite{Habiro:completion} the Habiro rings of $\BZ$ and
$\BQ$ were defined as $\HH:=\HH_\BZ$ and $\HH_{\BQ}$, where $\HH_R$ is defined for
any ground ring $R$ by
\be
\label{eq:Hnaive}
\HH_{R} \= \varprojlim_n R[q]/(q;q)_{n}R[q]\,.
\ee
(Here we use the notation $\HH_R$, defined as an inverse limit, as opposed to
$\calH_R$, which is defined as a subset of $\calP_R$ in Definition~\ref{def.hab} of
Section~\ref{sub.hab}.)
(The two rings $\HH=\HH_\BZ$ and $\calH=\calH_\BZ$ can be identified canonically,
but in general $\HH_R$, which we will call the \emph{naive Habiro ring}, and
$\calH_R$ are different.)
Note that $\HH_\BQ\neq\HH_{\BZ}\otimes\BQ$. Indeed, the ring
$\HH_{\BZ}$ is an integral domain while $\HH_{\BQ}$ has a large number of zero
divisors~\cite[Sec. 7.5]{Habiro:completion}.

The definition of $\HH_R$ leads immediately to
expressions of the form~\eqref{eq:elts.hab} with $P_n(q)\in R[q]$.
These expressions are not unique but can be made unique by assuming
$\deg P_{n}<n$ so that $P_{n}(q)=\sum_{k=0}^{n-1}a_{n,k}q^k$ for some
$a_{n,k}\in R$.
(The example from equation~\eqref{eq:KZ} has $a_{n,k}$ given simply by $\delta_{k,0}$.)
The coefficients $a_{n,k}$ give an explicit isomorphism of $R$-modules
\be
\label{eq:hdecomp}
\begin{array}{ccccc}
  {}_{\phantom{2_{2_{2_{2_{2_{2}}}}}}}\HH_R\!
  &=&
  \!\prod_{n=1}^{\infty}
  (R\oplus qR\oplus\cdots\oplus q^{n-1}R)(q;q)_{n-1}
  &\cong&
  \prod_{n>k\geq0}R\,,\\
  \sum_{n=1}^{\infty}P_{n}(q)(q;q)_{n-1}\!
  &\mapsto&
  \!\sum_{n=1}^{\infty}\sum_{k=0}^{n-1}a_{n,k}q^k(q;q)_{n-1}
  &\mapsto&
  \{a_{n,k}\}_{n>k\geq0}\,.
\end{array}
\ee

As in Section~\ref{sub.habZ}, we have a map $\iota:\HH_R\to\calP_R$, where
$\calP_R$ was defined in equation~\eqref{PR}:
\be
\begin{array}{ccccc}
\calP_{R}
&\;:=\;&
\Big(\prod_{\z\in\mu_\infty} R[\z]
\llbracket u\rrbracket\Big)^{\mathrm{Gal}(\BQ^{\mathrm{ab}}/\BQ)}
&\;\cong\;&
\prod_{m\geq 1} R[\z_m]\llbracket u\rrbracket\,,\\
& & & &\\
f(q) & \mapsto & (f_\z(u)=F(\z(1-u)))_\z & \;\mapsto\; & (f_m(u)=f_{\z_m}(u))_m\,.
\end{array}
\ee
Here, for each $m$, we use the local variable $u$ defined by $q=\z_m(1-u)$.
(In the Introduction, we used
$q=\z_m+x$ instead, so $u$ and $x$ are related by $x=-\z_mu$
and $f_m$ differs here from that of equation~\ref{iota} by the change of variable.)
The map $\iota:\HH_R\to\calP_R$
splits as a product of maps $\iota=\prod_{m\geq1}\iota_m$, where
for $F\in\HH_R$, we define the map~$\iota_m$ by the expansion at $q=\z_m(1-u)$, i.e.
$\iota_m(F)=F(\z_m(1-u))\in R[\z_m][\![u]\!]$.

Similar to the coordinates~$a_{n,k}$ coming from the isomorphism~\eqref{eq:hdecomp},
the $R$-module $\calP_R$ has a natural basis over $R$ given by
$\{\gamma_{m,\ell,j}\}_{m,\,\ell\geq1,\,\phi(m)>j\geq0}$, where
\be
\label{eq:oht.c}
f_{m}(u) \= \sum_{\ell=1}^{\infty} C_{\ell}(\z_m)\,u^{\ell-1}\,,
\qquad C_{\ell}(\z_m)\=
  \sum_{j=0}^{\phi(m)-1}\gamma_{m,\ell,j}\,\z_m^{j}\inn \BZ[\z_m]\,.
\ee
(Again, the shift by $1$ --- making $C_{\ell}(\z_m)$ the coefficient of $u^{\ell-1}$
rather than $u^{\ell}$ --- makes later formulas more natural, because its properties
depend on the multiplicative nature of $\ell$ as opposed to $\ell-1$.)
We then have explicit isomorphisms of abelian groups,
\be
\label{eq:gamma}
\begin{array}{ccccc}
\calP_{R}
&\;\cong\;&
\prod_{n>0}\prod_{m|n}
R[\z_m]
&\;\cong\;&
\prod_{n>0}
\prod_{\underset{\phi(m)>j\geq0}{m\ell=n}}
R\,,\\
\{f_{m}(u)\}_{m>0}
&\;\mapsto\;&
\{C_{\ell}(\z_m)\}_{m,\ell>0}
&\;\mapsto\;&
\{\gamma_{m,\ell,j}\}_{\underset{\phi(m)>j\geq0}{m,\ell>0}}\,.
\end{array}
\ee

All of these $R$-modules come with natural
filtrations
and these are compatible.
From equation~\eqref{eq:hdecomp} we can naturally consider
\be
\begin{aligned}
\HH_{R,N}
&\;:=\; (q;q)_{N-1}\HH_R
\=
\bigg\{\sum_{n=N}^\infty\sum_{k=0}^{n-1}a_{n,k}q^k (q;q)_{n-1}\;
\Big|\;a_{n,k}\inn R\bigg\}\\
&\=\prod_{n=N}^{\infty}
(R\oplus qR\oplus\cdots\oplus q^{n-1}R)(q;q)_{n-1}
\;\cong\;\prod_{n\geq N,\,n>k\geq0}R\,,
\end{aligned}
\ee
i.e. $\HH_{R,N}$ consists of sums $\sum_{n=N}^{\infty}P_{n}(q)(q;q)_{n-1}$, which in
coordinates is described by a collection $\{a_{n,k}\}_{n>k\geq0}$ with $a_{n,k}=0$
for $n<N$. Denote the quotient $\HH_R/\HH_{R,N}$ by $\HH_R^{N}$.
We thus have an increasing sequence $\HH_{R,N}$ of submodules of $\HH_R$ and a
decreasing sequence $\HH_R^N$ of quotients,~i.e.
\be
\label{eq:seq.app}
0\;\subset\;\cdots\;\subset\;\HH_{R,2}\;\subset\;\HH_{R,1}\=\HH_R\;
\twoheadrightarrow\;\cdots
\;\twoheadrightarrow\;\HH_R^{2}
\;\twoheadrightarrow\;\HH_R^{1}
\=0\,.
\ee
Note that as $R$-modules,
\be\label{eq:filt.quot.HH}
\HH_R^{N}
\,\;\cong\;\,
R[q]/(q;q)_{N-1}R[q]
\,\;\cong\;\,
\bigoplus_{n=1}^{N-1}
(R\oplus qR\oplus\cdots\oplus q^{n-1}R)(q;q)_{n-1}\,,
\ee
where the first isomorphism is induced by the canonical inclusion
$R[q]\hookrightarrow\HH_R$.
It follows that $\HH_R = \varprojlim_{N} \HH_R^{N}$ (or equivalently
$\bigcap_N\HH_{R,N}=\{0\}$). Note also that the exact sequence of $R$-modules
$0\to\HH_{R,N}\to\HH_R\to\HH_R^{N}\to0$ splits so that
$\HH_R\cong\HH_{R,N}\oplus \HH_R^{N}$ as $R$-modules for every $N$.

We now look at how expansions near roots of unity interact with
the filtration $\{\HH_{R,N}\}_{N>0}$ on $\HH_R$.
If we note that $(q;q)_{N-1}$ vanishes at a primitive $m$-th root of unity $\z_m$
to order $\lceil N/m\rceil-1$, then we see that $\iota$ maps this filtration
to the filtration $\{\calP_{R,N}\}_{N>0}$ on $\calP_R$ given by
\be
\calP_{R,N} \;:=\; \big\{f\inn\calP_R\;|\;
f_m(u)\=O(u^{\lceil N/m\rceil-1})\big\}
\,\;\cong\;\,
\prod_{n=N}^{\infty}\prod_{m|n}
R[\z_m]
\,\;\cong\;\,
\prod_{n=N}^{\infty}
\prod_{\underset{\phi(m)>j\geq0}{m\ell=n}}
R\,,
\ee
where the isomorphisms are of $R$-modules.
In coordinates, $\calP_{R,N}$ consists of $\{C_{\ell}(\z_m)\}_{m,\ell>0}$ in
$\calP_R$ satisfying $C_{\ell}(\z_m)=0$ for $m\ell<N$. As before,
we denote the quotient $\calP/\calP_{R,N}$ by~$\calP_R^{N}$. We
thus have an increasing sequence~$\calP_{R,N}$ of submodules of~$\calP_R$ and a
decreasing sequence~$\calP_R^{N}$ of quotients, just as in equation~\eqref{eq:seq.app}.
Moreover, we have a splitting of $R$-modules so
that~$\calP_R\cong\calP_{R,N}\oplus \calP_R^{N}$.
Note also that we have the canonical isomorphism
\be
\calP_R^{N}
\,\;\cong\;\,
\bigoplus_{m=1}^{N}
\big(R[\z_m][u]\+\mathrm{O}(u^{\lceil N/m\rceil-1})\big)\,,
\ee
which implies that $\calP_R = \varprojlim_{N} \calP_R^{N}$ (or equivalently
$\bigcap_N\calP_{R,N}=\{0\}$), just as for $\HH$.
We see that $\iota(\HH_{R,N})\subseteq\calP_{R,N}$ and therefore
we have well-defined maps
\be
\iota^{N}:
\HH_R^{N}\to\calP_R^{N}\,,
\ee
each factoring as~$\prod_{m\geq1}\iota_m^N$.
This is an injective map between free $R$-modules of the same rank (equal
$N(N-1)/2$), invertible after tensoring with $\BQ$. This implies in turn that
$\iota$ is an injection over $R$ and an isomorphism over $R\otimes \BQ$, e.g.,
\be
\begin{array}{ccc}
\HH_{\BZ}& \hookrightarrow
& \calP_{\BZ}\\
\cap & & \cap\\
\HH_{\BQ} & \cong & \calP_{\BQ}
\end{array}
\ee
for the original Habiro rings.

To prove the last statements, we consider the associated graded groups,
\be
\begin{aligned}
\calP_{R,N}/\calP_{R,N+1}
&\;\cong\;
\bigoplus_{m\ell=N}
\big(u^{\ell-1}R[\z_m]\llbracket u\rrbracket\+\mathrm{O}(u^{\ell})\big)
\;\cong\;
\bigoplus_{m\ell=N}R[\z_m]\,,\\
\HH_{R,N}/\HH_{R,N+1}
&\;\cong\;
R[q]/(1-q^N)R[q]\,,
\end{aligned}
\ee
where the last isomorphism is induced from the first isomorphism of
equation~\eqref{eq:filt.quot.HH}. Notice that
\be
\mathrm{rk}(\calP_{R,N}/\calP_{R,N+1})
\=
\sum_{m|N} \mathrm{rk}(R[z_m])
\=
\sum_{m|N} \phi(m)
\= N \= \mathrm{rk}(\HH_{R,N}/\HH_{R,N+1})\,,
\ee
which implies the values given in the above formulas for the ranks of $\HH_{R}^N$.
The map $\iota$ descends to these graded quotients. For $P(q)\in R[q]/(1-q^{N})$,
we multiply by $(q;q)_{N-1}$ and expand at $\z_m$ for $m|N$.
For
$q=\z_m(1-u)$
\be
\label{eq:poch.dml}
(q;q)_{m\ell-1}
\=
D_{m,\ell}\,u^{\ell-1}\+ O(u^\ell),
\qquad D_{m,\ell} \;:=\;m^{2\ell-1}(\ell-1)!\,,
\ee
by a simple calculation, which is left to the reader.
Therefore, $\iota$ is represented on these quotients by the map
\be
P(q)
\;\mapsto\,
\bigoplus_{m\ell=N}\;
\big(
D_{m,\ell}\,u^{\ell-1}\,
P(\z_m)\+\mathrm{O}(u^{\ell})\big)\,.
\ee
Since $D_{m,\ell}\in\BZ_{>0}$ is a non-zero integer, it follows that
$\iota$ is an injection
on these quotients and an isomorphism after tensoring with $\BQ$.
(Indeed, here we have used that $P(q)\in(1-q^N)\BZ[q]$ if and only if $P(\z_m)=0$
for all $m|N$.)
Given that $\iota$ respects the grading and is injective on the associated graded
pieces, it is injective.
This fact implies that understanding $\HH_R$ amounts to understanding the image of
$\iota$, which was the philosophy used in Definition~\ref{def.hab} of
Section~\ref{sub.hab}.

\medskip

We now restate the above considerations in terms of
the $R$-bases $\{q^k (q;q)_{n-1}\}_{k\leq 0,\,n>0}$ and $\{\z_m^{j}u^{\ell-1}\}$ and
the corresponding coordinates $\{a_{n,k}\}$ and $\{\gamma_{m,\ell,j}\}$ for~$\HH_R$
and~$\calP_R$ as defined in equations \eqref{eq:hdecomp}~and~\eqref{eq:gamma},
respectively. These bases give rise to a square matrix~
$\bM_{N}$ representing the map~$\iota^{N}$.
This matrix has integers entries and is independent of the ring~$R$.
(This is equivalent to noticing that the choice of basis and coordinates imply
isomorphisms $\HH_R\cong\HH_{\BZ}\otimes_{\BZ} R$ and
$\calP_R\cong\calP_{\BZ}\otimes_{\BZ} R$, with $\iota_R=\iota_\BZ\otimes_\BZ 1$,
with $\bM_N$ represents $\iota_\BZ^N$.)
Also, since~$\iota^{N}$ respects the filtrations, the matrix~$\bM_{N}$ is in block
lower triangular form, as illustrated for $N=5$ by the following equation.
\be
\begin{small}
\begin{aligned}
\left(\begin{array}{c}
\gamma_{1,1,0}\\
\hline
\gamma_{2,1,0}\\
\gamma_{1,2,0}\\
\hline
\gamma_{3,1,0}\\
\gamma_{3,1,1}\\
\gamma_{1,3,0}\\
\hline
\gamma_{4,1,0}\\
\gamma_{4,1,1}\\
\gamma_{2,2,0}\\
\gamma_{1,4,0}
\end{array}
\right)
\=
\left(\begin{array}{c|cc|ccc|cccc}
1 &   &   &   &   &   &   &   &   &  \\
\hline
1 & 2 & -2 &   &   &   &   &   &   &  \\
0 & 1 & 1 &   &   &   &   &   &   &  \\
\hline
1 & 1 & 1 & 3 & 0 & -3 &   &   &   &  \\
0 & -1 & 2 & 0 & 3 & -3 &   &   &   &  \\
0 & 0 & -1 & 2 & 2 & 2 &   &   &   &  \\
\hline
1 & 1 & 1 & 2 & 2 & -2 & 4 & 0 & -4 & 0\\
0 & -1 & 1 & -2 & 2 & 2 & 0 & 4 & 0 & -4\\
0 & -1 & 3 & 4 & -4 & 4 & 8 & -8 & 8 & -8\\
0 & 0 & 0 & -1 & -3 & -5 & 6 & 6 & 6 & 6
\end{array}
\right)
\left(\begin{array}{c}
a_{1,0}\\
\hline
a_{2,0}\\
a_{2,1}\\
\hline
a_{3,0}\\
a_{3,1}\\
a_{3,2}\\
\hline
a_{4,0}\\
a_{4,1}\\
a_{4,2}\\
a_{4,3}
\end{array}
\right)\,.
\end{aligned}
\end{small}
\ee
Finally, we can give an explict formula for the absolute value of determinant
of $\bM_{N}$.

\begin{proposition}
\label{prop:det.iotaN}
The number $D(N):=|\det(\bM_{N})|$ is given by
\be
D(N)
\= \prod_{n=1}^{N} D_{1}(n)D_{2}(n)\,,
\ee
where
\be
D_{1}(n) \= \prod_{m\ell=n}D_{m,\ell}^{\phi(m)}
\=
\prod_{m\ell=n}
m^{(2\ell-1)\phi(m)}(\ell-1)!^{\phi(m)}
\ee
with $D_{m,\ell}$ defined as in equation~\eqref{eq:poch.dml} and
\be
D_{2}(n) \= n^{\frac{n}{2}}\prod_{m|n}|\mathrm{disc}(\Phi_{m})|^{-\frac{1}{2}}
\= \prod_{m|n}\Big(\frac{n}{m}\prod_{p|m\,,\,p\text{ prime}}
p^{\frac{1}{p-1}}\Big)^{\frac{\phi(m)}{2}}\,.
\ee
\end{proposition}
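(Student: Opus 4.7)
\medskip

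\noindent\textbf{Proof plan.} The plan is to exploit the block lower triangular structure of $\bM_N$ with respect to the filtrations $\{\HH_{R,n}\}$ on $\HH_R$ and $\{\calP_{R,n}\}$ on $\calP_R$ introduced just above the statement. Since $\iota^N$ respects these filtrations, and since the associated graded pieces at level $n$ on both sides have the same rank $n$ (namely $\HH_{R,n}/\HH_{R,n+1}\cong R[q]/(1-q^n)R[q]$ on the domain and $\bigoplus_{m\ell=n}R[\zeta_m]$ on the codomain), the matrix $\bM_N$ is block lower triangular with diagonal blocks representing the induced maps $\bar{\iota}_n$ on graded pieces. Consequently $D(N)=\prod_n|\det(\bar{\iota}_n)|$, and the task reduces to computing $|\det(\bar{\iota}_n)|$ for each $n$.

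To compute $|\det(\bar{\iota}_n)|$, I will use the Pochhammer expansion formula~\eqref{eq:poch.dml}, namely $(q;q)_{m\ell-1}=D_{m,\ell}u^{\ell-1}+O(u^\ell)$ at $q=\zeta_m(1-u)$, which (as noted in the excerpt) is left to the reader; I would verify it by separating factors $(1-q^k)$ into those with $m\mid k$ (each contributing a factor of $u$) and those with $m\nmid k$ (each contributing a unit), then computing $\prod_{k=1}^{m-1}(1-\zeta_m^k)=m$ via the identity $\Phi_1(x)\cdots\Phi_m(x)/\text{stuff}$. Under the identifications above, $\bar{\iota}_n$ sends $P(q)$ to $\bigoplus_{m\ell=n}D_{m,\ell}P(\zeta_m)$, so it factors as the composition of the ``evaluation'' map $\mathrm{ev}_n\colon\BZ[q]/(1-q^n)\to\prod_{m\mid n}\BZ[\zeta_m]$, $P\mapsto(P(\zeta_m))_{m\mid n}$, with the diagonal scaling by $D_{m,\ell}$ on each component $\BZ[\zeta_m]$. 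The scaling contributes $\prod_{m\mid n}D_{m,\ell}^{\phi(m)}=D_1(n)$ to the absolute value of the determinant.

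The remaining factor $|\det(\mathrm{ev}_n)|$ equals $D_2(n)$. To compute this, I will use the standard discriminant comparison: $\mathrm{ev}_n$ embeds $\BZ[q]/(1-q^n)$ as a full-rank sublattice of $\prod_{m\mid n}\BZ[\zeta_m]$, and the index of this embedding is exactly $|\det(\mathrm{ev}_n)|$. The formula $\mathrm{disc}(L)=[M:L]^2\mathrm{disc}(M)$ for lattices $L\subseteq M$ of equal rank gives
\[
|\det(\mathrm{ev}_n)|^2\=\frac{|\mathrm{disc}(\BZ[q]/(q^n-1))|}{\prod_{m\mid n}|\mathrm{disc}(\Phi_m)|}\=\frac{n^n}{\prod_{m\mid n}|\mathrm{disc}(\Phi_m)|}\,,
\]
where $|\mathrm{disc}(x^n-1)|=n^n$ follows from the standard formula $\mathrm{disc}(f)=(-1)^{n(n-1)/2}\prod_{\alpha}f'(\alpha)$ evaluated on $x^n-1$. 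Taking square roots yields the first expression for $D_2(n)$; the second expression follows from the classical formula $|\mathrm{disc}(\Phi_m)|=m^{\phi(m)}/\prod_{p\mid m}p^{\phi(m)/(p-1)}$ for the discriminant of the cyclotomic polynomial and a straightforward algebraic rearrangement.

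Combining these pieces over all $n$ in the range giving the graded pieces of $\HH_R^N$ yields the claimed product formula for $D(N)$. The main obstacle I anticipate is ensuring the bookkeeping is correct in the final algebraic manipulation that converts $N^{N/2}\prod_{m\mid N}|\mathrm{disc}(\Phi_m)|^{-1/2}$ into the product form $\prod_{m\mid N}(N/m\cdot\prod_{p\mid m}p^{1/(p-1)})^{\phi(m)/2}$, which requires carefully grouping the exponents of each prime $p$ using $\sum_{m\mid N}\phi(m)=N$; the rest of the argument is largely a matter of organising the filtration and applying a well-known discriminant comparison.
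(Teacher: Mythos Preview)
Your approach is correct and is exactly the one the paper sets up but does not spell out: the text preceding the proposition establishes the block lower triangular structure of $\bM_N$ and identifies the map on the $n$-th graded piece as $P(q)\mapsto\bigoplus_{m\ell=n}D_{m,\ell}\,P(\zeta_m)$, leaving the determinant computation implicit. Your factorisation into the diagonal scaling (giving $D_1(n)$) and the evaluation map $\mathrm{ev}_n\colon\BZ[q]/(q^n-1)\hookrightarrow\prod_{m\mid n}\BZ[\zeta_m]$, followed by the discriminant comparison $[M:L]^2=|\mathrm{disc}(L)|/|\mathrm{disc}(M)|$ with $|\mathrm{disc}(x^n-1)|=n^n$, is the clean way to obtain $D_2(n)$ and matches the paper's stated formula.
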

\noindent Note that $D_2(n)$ is an integer since if $p|m$ then $\phi(m)$ is divisible
by $2(p-1)$ except for $m=p^k,2p^k$, where $\phi(p^k)=\phi(2p^k)$, which
can be seen to be integral for both even and odd~$n$.

We have tabulated the first few of these numbers below.
One can see their extremely rapid growth.
This illustrates how small
the Habiro ring is inside the set of functions near roots of unity with integral
Taylor expansions.
\begin{small}
\begin{center}
\def\arraystretch{1.5}
\begin{tabular}{|c||c|c|c|c|c|c|c|c|c|}
\hline
$N$ & $1$ & $2$ & $3$ & $4$ & $5$ & $6$ & $7$ & $8$ & $9$ \\ \hline\hline
$D_1(N)$ & $1$ & $2$ & $18$ & $768$ & $15000$ & $2.0\times10^{8}$
 & $8.5\times10^{7}$ & $6.5\times10^{13}$ & $5.1\times10^{15}$
\\ \hline
$D_2(N)$ & $1$ & $2$ & $3$ & $8$ & $5$ & $72$ & $7$ & $128$ & $81$ \\ \hline
$D(N)$ & $1$ & $4$ & $216$ & $1327104$ & $99532800000$
& $1.4\times10^{21}$ & $8.6\times10^{29}$ & $7.1\times10^{45}$
& $3.0\times10^{63}$\\
\hline
\end{tabular}
\end{center}
\end{small}
These numbers are not only very large, but also highly factored.
Here are two bigger examples illustrating both properties for $D(N)$:
\be
\begin{aligned}
D(25)
&\=
2^{1050}\cdot
3^{469}\cdot
5^{255}\cdot
7^{118}\cdot
11^{71}\cdot
13^{25}\cdot
17^{25}\cdot
19^{25}\cdot
23^{25}\;\approx\;2.35\times10^{1016}\,,\\
D(100)
&\=
2^{26102}\cdot
3^{12800}\cdot
5^{6404}\cdot
7^{3965}\cdot
11^{2158}\cdot
13^{1514}\cdot
17^{961}\cdot
19^{983}\cdot
23^{763}\cdot
29^{497}\\
&\qquad\times
31^{503}\cdot
37^{285}\cdot
41^{289}\cdot
43^{291}\cdot
47^{295}\cdot
53^{100}\cdot
59^{100}\cdot
61^{100}\cdot
67^{100}\cdot
71^{100}\\
&\qquad\times
73^{100}\cdot
79^{100}\cdot
83^{100}\cdot
89^{100}\cdot
97^{100}\;\approx\;9.33\times 10^{34419}\,.
\end{aligned}
\ee

\medskip

Since the nuymber D(N) is the cardinality of the cokernel of $\iota_\BZ^{N}$, we
see that begin in the image of $\iota^{N}$ is equivalent to a set of congruences
on the coordinates $\gamma_{m,\ell,j}$ to a total modulus $D(N)$.
In particular, we have a well-defined map
\be
(\iota^{N})^{-1}:\calP_{R}/\calP_R^{N}\;\to\;D(N)^{-1}R[q]/(q;q)_{N-1}R[q]\,,
\ee
and the condition for a given element $H\in\calP_{R}$ to be the image of $\iota$
is that
\be
(\iota^{N})^{-1}(H)\inn R[q]/(q;q)_{N-1}R[q]
\ee
for all $N\in\BZ_{>0}$. We can express this in terms of matrices as follows:

\begin{proposition}
\label{prop:imiota}
If $H\in\calP_{R}$ with coordinates $\gamma_{m,\ell,j}\in R$
($m,\ell\geq 1,0\leq j<\phi(m)$), then $H\in\iota(\HH_{R})$
if and only if
\be
\bM_{N}^{*} \,
\Big((\gamma_{m,\ell,j})_{\underset{0\leq j<\phi(m)}{m\ell<N}}\Big)
\;\equiv\; 0\pmod{D(N)}
\ee
for all $N\in\BZ_{>0}$, where $\bM_{N}^{*}:=D(N)\,\bM_{N}^{-1}\in M_{N(N-1)/2}(\BZ)$.
\end{proposition}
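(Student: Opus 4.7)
The plan is to reduce the statement to finite-dimensional linear algebra at each truncation level $N$ and then pass to the inverse limit. First I would note that the map $\iota$ is compatible with the two filtrations, so it induces $R$-linear maps $\iota^{N}\colon\HH_{R}^{N}\to\calP_{R}^{N}$ between free $R$-modules of rank $N(N-1)/2$, and in the natural bases $\{q^{k}(q;q)_{n-1}\}_{0<n<N,\;0\le k<n}$ of $\HH_{R}^{N}$ and $\{\z_{m}^{j}u^{\ell-1}\}_{m\ell<N,\;0\le j<\phi(m)}$ of $\calP_{R}^{N}$ these maps are represented by the integer matrix $\bM_{N}$, independent of the ground ring. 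Since $\HH_{R}=\varprojlim_{N}\HH_{R}^{N}$ and $\calP_{R}=\varprojlim_{N}\calP_{R}^{N}$, an element $H\in\calP_{R}$ lies in $\iota(\HH_{R})$ if and only if the truncation of $H$ in $\calP_{R}^{N}$ lies in $\iota^{N}(\HH_{R}^{N})$ for every $N$.

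Next I would invoke Proposition~\ref{prop:det.iotaN}: $|\det\bM_{N}|=D(N)\ne 0$, so $\bM_{N}$ is invertible over $\BQ$, and by the classical adjugate formula the matrix $\bM_{N}^{*}:=D(N)\,\bM_{N}^{-1}$ has entries in $\BZ$ (they are, up to signs, the maximal minors of order $N(N-1)/2-1$ of $\bM_{N}$). Given a vector $\gamma\in R^{N(N-1)/2}$ representing the truncation of $H$, the condition $\gamma\in\bM_{N}\cdot R^{N(N-1)/2}$ is equivalent to $\bM_{N}^{-1}\gamma\in R^{N(N-1)/2}$, and because $R$ is torsion-free this last condition is in turn equivalent to the entrywise congruence $\bM_{N}^{*}\gamma\equiv 0\pmod{D(N)}$ in $R$.

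To finish I would address the inverse-limit step, which is the only subtle point. The "only if" direction is automatic: if $H=\iota(h)$, then each truncation of $H$ is $\iota^{N}(h\bmod\HH_{R,N})$ and the congruences hold. For the converse, if the congruences hold at every level, then at each $N$ there is some $h_{N}\in\HH_{R}^{N}$ with $\iota^{N}(h_{N})$ equal to the truncation of $H$; moreover $h_{N}$ is unique because $\iota^{N}$ is injective (its matrix has nonzero integer determinant and $R$ is torsion-free). Uniqueness then forces the $h_{N}$ to be compatible under the canonical projections $\HH_{R}^{N+1}\twoheadrightarrow\HH_{R}^{N}$, and hence to assemble into an element $h\in\varprojlim_{N}\HH_{R}^{N}=\HH_{R}$ with $\iota(h)=H$.

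The hard part is really just the integrality of $\bM_{N}^{*}$, i.e.~that $D(N)\bM_{N}^{-1}$ has entries in $\BZ$; but this is immediate from Proposition~\ref{prop:det.iotaN} together with the adjugate formula. Everything else is formal: compatibility of truncations, injectivity of $\iota^{N}$, and torsion-freeness of $R$, so the argument as a whole is a short exercise in filtered linear algebra once the determinant computation is in hand.
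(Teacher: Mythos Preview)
Your proof is correct and follows essentially the same approach as the paper: the paper's argument is contained in the paragraph immediately preceding the proposition, where it observes that $D(N)$ is the order of the cokernel of $\iota_\BZ^N$, so $(\iota^N)^{-1}$ lands in $D(N)^{-1}R[q]/(q;q)_{N-1}R[q]$, and membership in $\iota(\HH_R)$ is the integrality of $(\iota^N)^{-1}(H)$ for all $N$. Your write-up is in fact more careful than the paper's in making the inverse-limit step explicit (uniqueness of $h_N$ from injectivity of $\iota^N$, hence compatibility under the projections), a point the paper takes for granted.
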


Note that~$D(N)$ and~$\bM_{N}^{*}$ have many common factors, which
simplifies~$\bM_{N}^{-1}$.
This means that in practice the congruences that one constructs can be simplified.
For example, using the previous coordinates in
equation~\eqref{eq:hdecomp} and equation~\eqref{eq:gamma}, we find that for~$N=5$
these conditions become
\be
\begin{small}
\begin{aligned}
\left(\begin{array}{c|cc|ccc|cccc}
1 &   &   &   &   &   &   &   &   &  \\
\hline
-1 & 1 & 2 &   &   &   &   &   &   &  \\
1 & -1 & 2 &   &   &   &   &   &   &  \\
\hline
-7 & -9 & -6 & 16 & -8 & 12 &   &   &   &  \\
-1 & 9 & 6 & -8 & 16 & 12 &   &   &   &  \\
17 & -9 & 18 & -8 & -8 & 12 &   &   &   &  \\
\hline
-13 & 9 & -13 & -32 & 0 & 0 & 36 & 0 & 9 & 12\\
-14 & -18 & -1 & 32 & -32 & 12 & 0 & 36 & -9 & 12\\
9 & 27 & 23 & 0 & 32 & 24 & -36 & 0 & 9 & 12\\
68 & -36 & 59 & -32 & 0 & 36 & 0 & -36 & -9 & 12
\end{array}
\right)
\left(\begin{array}{c}
\gamma_{1,1,0}\\
\hline
\gamma_{2,1,0}\\
\gamma_{1,2,0}\\
\hline
\gamma_{3,1,0}\\
\gamma_{3,1,1}\\
\gamma_{1,3,0}\\
\hline
\gamma_{4,1,0}\\
\gamma_{4,1,1}\\
\gamma_{2,2,0}\\
\gamma_{1,4,0}
\end{array}
\right)
\in
\left(\begin{array}{c}
R\\
\hline
4\,R\\
4\,R\\
\hline
72\,R\\
72\,R\\
72\,R\\
\hline
288\,R\\
288\,R\\
288\,R\\
288\,R
\end{array}
\right).
\end{aligned}
\end{small}
\ee
Summarising, we have found an explicit inductive set of congruences that
the coefficients of an element of $\calP_{R}$ must satisfy to be an element of
the image of $\iota$ of the naive Habiro ring of~$R$,
the first few of these congruences (corresponding to $N=4$) being
\be
\label{eq:cong1}
\begin{small}
\begin{aligned}
\-\gamma_{1,1,0} \+ \gamma_{2,1,0} \+ 2\gamma_{1,2,0}
&\cm 0\pmod{4}\,,\\
\gamma_{1,1,0} \- \gamma_{2,1,0} \+ 2\gamma_{1,2,0}
&\cm 0\pmod{4}\,,\\
\-7\gamma_{1,1,0} \- 9\gamma_{2,1,0} \- 6\gamma_{1,2,0} \+16 \gamma_{3,1,0}
\- 8\gamma_{3,1,1} \+ 12\gamma_{1,3,0}
&\cm 0\pmod{72}\,,\\
\-\gamma_{1,1,0} \+ 9\gamma_{2,1,0} \+ 6\gamma_{1,2,0} \- 8\gamma_{3,1,0}
\+ 16\gamma_{3,1,1} \+ 12\gamma_{1,3,0}
&\cm 0\pmod{72}\,,\\
17\gamma_{1,1,0} \- 9\gamma_{2,1,0} \+ 18\gamma_{1,2,0} \- 8\gamma_{3,1,0}
\- 8\gamma_{3,1,1} \+ 12\gamma_{1,3,0}
&\cm 0\pmod{72}\,.
\end{aligned}
\end{small}
\ee
These equations are not independent and we can reduce them to
\be
\label{eq:congrd1}
\begin{small}
\begin{aligned}
\gamma_{1,1,0} \- \gamma_{2,1,0} \+2\gamma_{1,2,0} \+4\gamma_{1,3,0}
&\cm 0\pmod{8}\,,\\
\gamma_{1,1,0} \- \gamma_{3,1,0} \- \gamma_{3,1,1} \-3\gamma_{1,3,0}
&\cm 0\pmod{9}\,,\\
\gamma_{1,2,0} \+\gamma_{3,1,1}
&\cm 0\pmod{3}\,.
\end{aligned}
\end{small}
\ee
Those familiar with the Habiro ring will easily
recognise these congruences as those discovered by Ohtsuki~\cite{Ohtsuki:poly}.
Indeed, we find that they are equivalent to the beginning of the expansions
\be
\gamma_{2,1,0}
\=
\sum_{k=0}^{\infty}\gamma_{1,k,0}\,2^{k}\in R_{2}^\wedge\,,
\quad\text{and}\quad
\gamma_{3,1,0}\+\z_3\,\gamma_{3,1,1}
\=
\sum_{k=0}^{\infty}\gamma_{1,k,0}\,(1\-\z_3)^{k}\in R_{3}^\wedge[\z_3]\,.
\ee

\subsection{The Habiro ring of a number field}

In the previous subsection, although everything was done over an arbitrary ground
ring $R$, the considerations were really based on the case when $R=\BZ$ (the
original Habiro ring), because the congruences needed to recognise whether an
element of $\calP_R$ comes from the naive Habiro ring of $R$ were independent of $R$
and came from this special case.
In this subsection, we will consider more general ground rings and describe the
relation between the naive Habiro ring and the Habiro ring studied in
Sections~\hbox{\ref{sec.intro}--\ref{sec.examples}}.
We will explain that the natural definition
involves twisting the compatibility conditions by suitable
Frobenius automorphisms similar to those that appeared in Definition~\ref{def.hab}
of Section~\ref{sub.hab}.
We will focus on ground
rings given by rings of integers (or $S$-integers, meaning that we adjoin the
reciprocal of a non-zero integer) of a number field, or various completions of this.

\medskip

Let $R$ be the ring $\calO_{\BK}[1/\Delta]$, where $\calO_{\BK}$ is the ring of
integers of a number field $\BK$ and
$\Delta$ any non-zero integer divisible by the discriminant of $\BK$.
Let $p$ be a prime that does not divide $\Delta$ and is therefore unramified, so
that
\be
p \= \mathfrak{p}_{1}\cdots\mathfrak{p}_{n}\,,
\ee
with the ideals $\mathfrak{p}_{i}$ distinct. This gives rise to an isomorphism
\be
R/pR \;\cong\; R/\mathfrak{p}_{1}R\times\cdots\times R/\mathfrak{p}_{n}R\,,
\ee
where each $R/\mathfrak{p}_{i}R\cong\BF_{q_i}$ with $q_i=|R/\mathfrak{p}_{i}R|$ is a
finite field with cyclic Galois group over $\BF_{p}$ generated by the
Frobenius automorphism
\be
x\mapsto x^p\,.
\ee
The Frobenius automorphisms on each factor $R/\mathfrak{p}_{i}R$ give rise to a
Frobenius
automorphism $\fr:R/pR\cong R/pR$ given by the same formula. Hensel's lemma states
that if a polynomial factors into irreducible polynomials over $\BZ/p\BZ$, then
this factorisation lifts to a unique factorisation over $\BZ/p^n\BZ$. Therefore,
if $\xi$ is a generator of the field $\BK$ with minimal polynomial $P(x)$, then
applying
Hensel's lemma to $P(x)$ lifts the Frobenius automorphism of $R/pR$ to an
automorphism of
$\fr:R/p^nR\cong R/p^nR$ for all $n\in\BZ_{>0}$. Hensel's lemma is completely
constructive, as was explained in Section~\ref{sub.padic.compute}. These Frobenius
lifts can be used to define automorphisms for any $R/MR$ for $M\in\BZ$ prime to
the discriminant of $\BK$. To do this, we write $M=\prod_{p\text{ prime}}p^{d_p}$.
The Chinese remainder theorem gives a canonical isomorphism
\be
R/MR \;\cong\; \prod_{p\text{ prime}}R/p^{d_p}R\,.
\ee
We can define $\fr:R/MR\to R/MR$ to be the Frobenius automorphism on the $p$-th
factor and the identity on the others.
Then for $m\in\BZ_{>0}$ (or even $m\in\BQ^{\times}$) we define
\be
\varphi_{m}\=\prod_{p}\fr^{v_{p}(m)}:R/MR\rightarrow R/MR\,,
\ee
where $\fr$ on $R/MR$ is simply the identity if $p$ does not divide $M$.
This can then be used to give Frobenius automorphisms on the completion of
the integers over all unramified primes at once. Consider the completion
\be
\widehat{R} \;:=\; \varprojlim_{M} R/MR \;\cong\; \prod_{p\text{ prime}}\!\!\!
R_p\,,
\ee
where $R_p:=\varprojlim_{n}R/p^nR\cong R\otimes\BZ_p$.
Notice that if $p|\Delta$ then $R_p=\{0\}$ is the zero ring. This completion then has
Frobenius automorphisms for each $m\in\BQ^{\times}$
\be
\varphi_{m}:\widehat{R}
\rightarrow
\widehat{R}\,.
\ee
We can lift $\varphi_{m}$ uniquely to an automorphism
$\varphi_{m}:\widehat{R}[\z_n]\llbracket
x\rrbracket\to\widehat{R}[\z_n]\llbracket x\rrbracket$ acting trivially on $\z_n$
and $x$. These then combine to an automorphism
$\varphi:\calP_{\widehat{R}}\to\calP_{\widehat{R}}$ defined by
\be
(\varphi\,f)_{m}
\=
\varphi_{m}(f_{m}(x))
\inn
\widehat{R}[\z_m]\llbracket x\rrbracket\,.
\ee
We will also use $\varphi$ to denote the composite map
$\calP_R\hookrightarrow\calP_{\widehat{R}}\overset{\varphi}{\rightarrow}
\calP_{\widehat{R}}$.
We can now restate the definition of the Habiro ring (Definition~\ref{def.hab}
of Section~\ref{sub.hab}) as follows.

\begin{proposition}
The Habiro $\calH_R$ of $R=\calO_{\BK}[1/\Delta]$ is given by
\be
  \calH_R
  \=\big\{H\in\calP_R\;\big|\;\varphi(H)\in\iota(\HH_{\widehat{R}})\big\}\,,
\ee
where $\HH_{\widehat{R}}$ is defined as in~\eqref{eq:Hnaive}. Equivalently,
$\calH_R=\calP_R\cap\calH_{\widehat{R}}$ with
$\calH_{\widehat{R}}=\varphi^{-1}(\iota(\HH_{\widehat{R}}))$.
\end{proposition}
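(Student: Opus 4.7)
My plan is to reduce to a prime-by-prime statement via the decomposition $\widehat R = \prod_p R_p^\wedge$ and then unpack $\varphi$ to match the naive gluing on $\iota(\HH_{\widehat R})$ with the Frobenius-twisted gluing of Definition~\ref{def.hab}. First, I would observe that the functors $R \mapsto \HH_R$ and $R \mapsto \calP_R$ both commute with (possibly infinite) products of rings: for $R = \prod_i R_i$ one has $R[q] = \prod_i R_i[q]$, hence $R[q]/(q;q)_n = \prod_i R_i[q]/(q;q)_n$, and inverse limits commute with products. Applied to $\widehat R = \prod_p R_p^\wedge$, this yields canonical identifications $\HH_{\widehat R} \cong \prod_p \HH_{R_p^\wedge}$ and $\calP_{\widehat R} \cong \prod_p \calP_{R_p^\wedge}$ intertwining the $\iota$ maps, so membership in $\iota(\HH_{\widehat R})$ can be tested one $p$-component at a time.

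Second, for each prime $p \nmid \Delta$, I would extend Habiro's description of $\iota(\HH_\BZ)$ to show that $\iota(\HH_{R_p^\wedge})$ consists of exactly those $(G_m)_m$ with $G_m \in R_p^\wedge[\z_m][\![x]\!]$ satisfying $G_m(x + \z_{pm} - \z_m) = G_{pm}(x)$ in $R_p^\wedge[\z_{pm}][\![x]\!]$ for every~$m$. This follows by base-changing Proposition~\ref{prop:imiota}: the universal congruences with modulus $D(N)$ decompose into their $p$-part and prime-to-$p$ part, and the latter becomes vacuous over $R_p^\wedge$ because every $p' \neq p$ is a unit in $R_p^\wedge$; what remains is precisely the $p$-adic shift equation relating expansions at $\z_m$ and $\z_{pm}$. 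For $p \mid \Delta$ we have $R_p^\wedge = 0$, making all conditions vacuous and matching Definition~\ref{def.hab}.

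Third, I would unpack $\varphi$. Writing $H_m^{(p)} \in R_p^\wedge[\z_m][\![x]\!]$ for the image of $H_m$, the $p$-component of $\varphi(H)$ is $(\fr^{v_p(m)} H_m^{(p)})_m$, where $\fr$ acts on coefficients while fixing $\z_m$ and $x$. Combining the first two steps, the condition $\varphi(H) \in \iota(\HH_{\widehat R})$ reads
\be
\fr^{v_p(m)}\bigl[H_m^{(p)}(x + \z_{pm} - \z_m)\bigr] \= \fr^{v_p(m)+1}\bigl[H_{pm}^{(p)}(x)\bigr] \inn R_p^\wedge[\z_{pm}][\![x]\!]
\ee
for every prime $p \nmid \Delta$ and every~$m$. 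Since $p$ is unramified in $\BK$, $\fr$ is an automorphism of $R_p^\wedge$, and cancelling $\fr^{v_p(m)}$ produces exactly the twisted gluing~\eqref{gluef} of Definition~\ref{def.hab}. This establishes the equivalence.

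The main technical obstacle is the second step: one must argue carefully that the naive gluing relations between roots of unity of orders $m$ and $p'm$ for $p' \neq p$ really become vacuous, rather than imposing further constraints, after base change to $R_p^\wedge$. The cleanest route is to run Habiro's original argument for $\HH_\BZ$ directly over $R_p^\wedge$, tracking how the universal cokernel matrix $\bM_N^*$ of Proposition~\ref{prop:imiota} simplifies: its prime-to-$p$ content is killed because each $p' \neq p$ is a unit in $R_p^\wedge$, and the only surviving constraints are the $p$-adic ones. With the second step in hand, the first and third are purely formal.
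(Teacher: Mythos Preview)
Your proof is correct and follows essentially the same approach as the paper, which presents this proposition as a direct restatement of Definition~\ref{def.hab} via the discussion around equations~\eqref{eq.habp} and~\eqref{Hviapcomp} and gives no separate proof. Your three-step argument (decompose over primes, identify $\iota(\HH_{R_p^\wedge})$ with untwisted $p$-gluing, then unwind $\varphi$ to recover the twisted gluing~\eqref{gluef}) spells out precisely the details the paper leaves implicit.
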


\noindent Using Proposition~\ref{prop:imiota} of Section~\ref{sec:naive}, we can
reformulate this concretely as follows:

\begin{proposition}
\label{prop:hr.cong}
Let $H\in\calP_{R}$ with coordinates $\gamma_{m,\ell,j}\in R$
($m,\ell\geq 1,0\leq j<\phi(m)$). Then $H\in\calH_R$
if and only if
\be
\bM_{N}^{*} \,
\Big((\varphi_m\gamma_{m,\ell,j})_{\underset{0\leq j<\phi(m)}{m\ell<N}}\Big)
\;\equiv\; 0\pmod{D(N)}
\ee
for all $N\in\BZ_{>0}$, where $\bM_{N}^{*}$ is the matrix defined in
Proposition~\ref{prop:imiota} of Section~\ref{sec:naive}.
\end{proposition}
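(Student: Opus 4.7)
The plan is to reduce the proposition directly to Proposition~\ref{prop:imiota} applied to the completed ring $\widehat R$ in place of $R$, combined with the alternative description $\calH_R = \calP_R \cap \varphi^{-1}(\iota(\HH_{\widehat R}))$ given in the preceding proposition. Since the matrix $\bM_N$ has integer entries and the proof of Proposition~\ref{prop:imiota} only uses that the ground ring is torsion-free, the same statement applies verbatim with $R$ replaced by $\widehat R$: an element of $\calP_{\widehat R}$ with coordinates $\widetilde\gamma_{m,\ell,j}\in\widehat R$ lies in $\iota(\HH_{\widehat R})$ if and only if $\bM_N^* \cdot(\widetilde\gamma_{m,\ell,j})_{m\ell<N,\,0\leq j<\phi(m)}\equiv 0\pmod{D(N)}$ for every $N\in\BZ_{>0}$.

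The next step is to compute the coordinates of $\varphi(H)\in\calP_{\widehat R}$ in the basis $\{\z_m^j u^{\ell-1}\}_{m,\ell,j}$. By the very definition of the Frobenius twist, $\varphi_m$ is lifted to $\widehat R[\z_m]\llbracket u\rrbracket$ to act trivially on $\z_m$ and $u$, and by $\prod_p\fr^{v_p(m)}$ on the coefficients from $\widehat R$. Hence if $f_m(u)=\sum_{\ell\geq1} C_\ell(\z_m)u^{\ell-1}$ with $C_\ell(\z_m)=\sum_j\gamma_{m,\ell,j}\z_m^j$ as in~\eqref{eq:oht.c}, then
\[
(\varphi_m f_m)(u) \= \sum_{\ell\geq1}\Big(\sum_{j=0}^{\phi(m)-1}\varphi_m(\gamma_{m,\ell,j})\z_m^j\Big)u^{\ell-1},
\]
so the coordinates of $\varphi(H)$ are precisely the elements $\varphi_m(\gamma_{m,\ell,j})\in\widehat R$.

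Combining the two ingredients, $H\in\calH_R$ if and only if $\varphi(H)\in\iota(\HH_{\widehat R})$, which by the $\widehat R$-version of Proposition~\ref{prop:imiota} holds if and only if the stated congruences hold modulo $D(N)\widehat R$ for all $N$. Since $D(N)\in\BZ_{>0}$, the natural map $R\to\widehat R$ induces an isomorphism $R/D(N)R\xrightarrow{\sim}\widehat R/D(N)\widehat R$ (both sides agree with $\prod_{p\mid D(N)}R/p^{v_p(D(N))}R$), so these congruences in $\widehat R$ are equivalent to congruences in $R$, concluding the proof.

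The only point requiring attention is the compatibility of the Frobenius lift with the chosen basis expansion, which reduces to the trivial observation that $\varphi_m$ fixes both $\z_m$ and $u$; beyond that the proof is a formal combination of the two preceding results, and I do not expect any genuine obstacle.
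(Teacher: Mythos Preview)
Your proposal is correct and matches the paper's own reasoning: the paper presents Proposition~\ref{prop:hr.cong} as an immediate reformulation of the preceding proposition (that $\calH_R=\{H\in\calP_R\mid\varphi(H)\in\iota(\HH_{\widehat R})\}$) via Proposition~\ref{prop:imiota}, without giving a separate proof. Your spelled-out version---applying Proposition~\ref{prop:imiota} over $\widehat R$, reading off the coordinates of $\varphi(H)$ as $\varphi_m(\gamma_{m,\ell,j})$, and using $R/D(N)R\cong\widehat R/D(N)\widehat R$---is exactly the content behind that one-line reduction.
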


\noindent This gives an easy numerical check of whether a function near roots of
unity is an element of the Habiro ring of $R$.

\begin{remark}
In this section, we have only treated the Habiro ring $\calH_R$ and not its modules
$\calH_{R,\xi}$ indexed by $\xi\in K_3(\BK)$. Given that Theorem~\ref{thm.locals}
implies the completion $\calH_{\widehat{R},\xi}$ is a free $\calH_{\widehat{R}}$-module,
we can use similar descriptions to that of Proposition~\ref{prop:hr.cong} to describe
the modules by simply dividing elements by a chosen generator of
$\calH_{\widehat{R},\xi}$.
\end{remark}

\subsection{Examples}
\label{sub.examples2}

We now explore how the approach described in last two subsections works for some
different ground rings. We will illustrate how one can test whether a Galois
invariant function near roots of unity is actually an element of a Habiro ring.

\begin{example}
Recall the Kashaev invariant of the trefoil from equation~\eqref{eq:KZ}
(often called the Kontsevich-Zagier series). We can compute the first few terms
in the expansions at $\z_1,\z_2,\z_3,\z_4$ (with $q=\z_m(1-u)$)
\be
\begin{aligned}
F_{3_1,1}(u)
&\=
1 + u + 2u^2 + 5u^3+\mathrm{O}(u^4)\,,\\
F_{3_1,2}(u)
&\=
3 + 11u+\mathrm{O}(u^2)\,,\\
F_{3_1,3}(u)
&\=
5-\z_3+\mathrm{O}(u)\,,\\
F_{3_1,4}(u)
&\=
8-3\z_4+\mathrm{O}(u)\,.
\end{aligned}
\ee
These give rise to the vector
\be
\begin{small}
\begin{aligned}
&(&\gamma_{1,1,0}&\;|&
\gamma_{2,1,0}&\;&
\gamma_{1,2,0}&\;|&
\gamma_{3,1,0}&\;&
\gamma_{3,1,1}&\;&
\gamma_{1,3,0}&\;|&
\gamma_{4,1,0}&\;&
\gamma_{4,1,1}&\;&
\gamma_{2,2,0}&\;&
\gamma_{1,4,0}\;&)\,\phantom{,,_{2_{2_{2_{2_{2}}}}}}\\
\=\;&(&1 &\;|& 3 &\;& 1 &\;|& 5 &\;& -1 &\;& 2 &\;|
& 8 &\;& -3 &\;& 11 &\;& 5\;&)\,,\phantom{,_{2_{2_{2_{2_{2}}}}}}
\end{aligned}
\end{small}
\ee
which when multiplied by $\bM_5^{-1}$ gives the expected
\be
\begin{small}
\begin{aligned}
&(&a_{1,0}&\;|&
a_{2,0}&\;&
a_{2,1}&\;|&
a_{3,0}&\;&
a_{3,1}&\;&
a_{3,2}&\;|&
a_{4,0}&\;&
a_{4,1}&\;&
a_{4,2}&\;&
a_{4,3}&)\,\phantom{.,_{2_{2_{2_{2_{2}}}}}}\\
\=&(&1 &\;|& 1 &\;& 0 &\;|& 1 &\;& 0 &\;& 0 &\;|
& 1 &\;& 0 &\;& 0 &\;& 0&)\,.\phantom{,_{2_{2_{2_{2_{2}}}}}}
\end{aligned}
\end{small}
\ee
To see just how delicate the property of being in the Habiro ring is, notice
that simply changing the number $\gamma_{1,1,0}$ from $1$ to $2$ by would give
the sequence
\be
\begin{small}
\begin{aligned}
&\Big(\!\!\!\!&2 \;&\;\Big|& \frac{3}{4} &\;
& \frac{1}{4}\; &\;\Big|& \frac{65}{72} &\;& \frac{-1}{72}
&\;& \frac{17}{72}\; &\;\Big|& \frac{275}{288} &\;& \frac{-7}{144} &\;& \frac{1}{32}
&\;& \frac{17}{72}\;&\Big)\,,
\end{aligned}
\end{small}
\ee
where the integrality has been completely ruined.
\end{example}

\begin{example}
In this example we will see what the introduction of denominators does to the
Habiro ring. The Habiro function $F(q)\in\HH_\BQ$ whose image under $\iota$ is the
collection of (constant) power series
\be
F(\z_m(1-u)) \=
\bigg\{
\begin{array}{cl}
1 & \text{if }m\text{ is odd}\,,\\
0 & \text{if }m\text{ is even}\,
\end{array}
\ee
does not belong to $\HH_{\BZ}$, but is an element of $\HH_{\BZ[1/2]}$. Indeed, it
can be written
\be
F(q)\=1
+\frac{1}{4}(-1+q)\,(q;q)_1
+\frac{1}{8}(1-q+q^2)\,(q;q)_2
+\frac{1}{32}(-5+2q+q^2+4q^3)\,(q;q)_3+\cdots\,.
\ee
Similarly, the element $G(q)=F(q^2)-F(q)$, whose image under $\iota$ is the
collection of (constant) power series
\be
G(\z_m(1-u)) \=
\bigg\{
\begin{array}{cl}
1 & \text{if }m\;\equiv\;2\pmod{4}\\
0 & \text{otherwise}\,,
\end{array}
\ee
is also an element of $\HH_{\BZ[1/2]}$, and can be written
\be
G(q) \=
\frac{1}{4}(1-q)\,(q;q)_1
+\frac{1}{8}(-1+q-q^2)\,(q;q)_2
+\frac{1}{32}(1-2q+3q^2-4q^3)\,(q;q)_3+\cdots\,.
\ee
\end{example}

\begin{example}
Our final example is an element of the Habiro ring of the ring of integers,
with $1/23$ adjoined, of the field
$\mathbb{K}=\BQ(\alpha)/(\alpha^3-\alpha^2+1)$, the cubic field of discriminant
$-23$, associated to the Kashaev invariant of the knot $5_2$.
We define a collection of series $(\Psi_{5_2,m})_{m\geq1}$ using the residue formulas
given in Section~\ref{sub.sym}. Specifically, we define
\be
\Psi_{5_2,m}(u)
\;:=\;
\Res{w^m=z}J_{5_2}(1,w,\z_m(1-u))\frac{dw}{w}\,,
\ee
where
\be
J_{5_2}(t,w,q)
\;:=\;
\sum_{k=0}^{\infty}
\frac{q^{k(k+1)}\thinspace w^{2k}}{(qw;q)_{k}^3}t^k
\ee
and set $z=1-\alpha^2$ so that $z^2(1-z)^{-3}=1$.
The collection $\Psi_{5_2,m}$ is related to the symmetrised series
$\Phitof^{(5_2)}(q)\thinspace\Phitof^{(5_2)}(q^{-1})$ discussed in
Section~\ref{sub.52-237}. For example, equation~\eqref{eq:52sym} is equal to
$(3\alpha-2)\Psi_{5_2,1}(-x)$.
One can compute the first few coordinates via the expansions
\be
\begin{aligned}
\Psi_{5_2,1}(u)
&\=
\frac{1}{23}(-9\alpha^2 + 3\alpha + 2)
+ \frac{1}{23^4}(444\alpha^2 + 3417\alpha - 1287)u^2\\
&\qquad\qquad + \frac{1}{23^4}(444\alpha^2 + 3417\alpha - 1287)u^3
+ \mathrm{O}(u^4)\,,\\
\Psi_{5_2,2}(u)
&\=\frac{1}{23}(-43\alpha^2 + 22\alpha + 7) + \mathrm{O}(u^2)\,,\\
\Psi_{5_2,3}(u)
&\=\frac{1}{23}(-111\alpha^2 + 60\alpha + 17) + \mathrm{O}(u)\,,\\
\Psi_{5_2,4}(u)
&\=\frac{1}{23}(-242\alpha^2 + 119\alpha + 41) + \mathrm{O}(u)\,,
\end{aligned}
\ee
giving rise to the vector
\be
\left(
\begin{array}{c}
\gamma_{1,1,0}\\
\hline
\gamma_{2,1,0}\\
\gamma_{1,2,0}\\
\hline
\gamma_{3,1,0}\\
\gamma_{3,1,1}\\
\gamma_{1,3,0}\\
\hline
\gamma_{4,1,0}\\
\gamma_{4,1,1}\\
\gamma_{2,2,0}\\
\gamma_{1,4,0}\\
\end{array}
\right)
\=
\left(
\begin{array}{c}
  {}_{\phantom{2_{2_{2_{2_2}}}}}\tfrac{1}{23}(-9\alpha^2 + 3\alpha
  + 2)_{\phantom{2_{2_{2_{2_2}}}}} \\
 \hline
 {}^{\phantom{2^{2^2}}}\tfrac{1}{23}(-43\alpha^2 + 22\alpha
 + 7)^{\phantom{2^{2^2}}}\\
 0 \\
 \hline
 {}^{\phantom{2^{2^2}}}\tfrac{1}{23}(-111\alpha^2 + 60\alpha
 + 17)^{\phantom{2^{2^2}}}\\
 0\\
 {}_{\phantom{2_{2_{2_{2_2}}}}}\tfrac{1}{23^4}(444\alpha^2 + 3417\alpha
 - 1287)_{\phantom{2_{2_{2_{2_2}}}}} \\
 \hline
 {}^{\phantom{2^{2^2}}}\tfrac{1}{23}(-242\alpha^2
 + 119\alpha + 41)^{\phantom{2^{2^2}}}\\
 0\\
 0\\
 \tfrac{1}{23^4}(444\alpha^2 + 3417\alpha - 1287)
\end{array}
\right)\,.
\ee
(That the coordinates here are in $\BK$ as opposed to $\BK[\z_m]$ is because
$\phi(m)\leq 2$ for $m<5$ and $\Psi_{5_2}$ is a symmetrisation; in general, the
coordinates are in $\BK[\z_m+\z_m^{-1}]$.)
This vector when multiplied by $\bM_{5}^{-1}$ gives the vector
\be
\renewcommand\arraystretch{1.3}
\left(
\begin{array}{c}
  {}_{\phantom{2_{2_{2_{2_2}}}}}\tfrac{1}{23}(-9\alpha^2
  + 3\alpha + 2){}_{\phantom{2_{2_{2_{2_2}}}}}\\
 \hline
 \tfrac{1}{92}(-34\alpha^2 + 19\alpha + 5)\\
 {}_{\phantom{2_{2_{2_{2_2}}}}}\tfrac{1}{92}(34\alpha^2
 - 19\alpha - 5){}_{\phantom{2_{2_{2_{2_2}}}}}\\
 \hline
 \tfrac{1}{6716184}(-5376038\alpha^2 + 3018917\alpha + 785707)\\
 \tfrac{1}{6716184}(2070166\alpha^2 - 1142197\alpha - 309323)\\
 {}_{\phantom{2_{2_{2_{2_2}}}}}\tfrac{1}{6716184}(4552234\alpha^2
 - 2529235\alpha - 674333){}_{\phantom{2_{2_{2_{2_2}}}}} \\
 \hline
 \tfrac{1}{26864736}(-22020494\alpha^2 + 10246115\alpha + 3924793)\\
 \tfrac{1}{13432368}(-5376038\alpha^2 + 3018917\alpha + 785707)\\
 \tfrac{1}{8954912}(10100386\alpha^2 - 4938301\alpha - 1720695)\\
 \tfrac{1}{6716184}(4552234\alpha^2 - 2529235\alpha - 674333)
\end{array}
\right)\,,
\ee
which has denominator $26864736=2^5\cdot3\cdot23^4$. If we instead apply the
correct Frobenius automorphism and then multiply by $\bM_{5}^{-1}$ we find
\be
\begin{aligned}
\left(
\begin{array}{c}
\gamma_{1,1,0}\\
\hline
\varphi_2(\gamma_{2,1,0})\\
\gamma_{1,2,0}\\
\hline
\varphi_3(\gamma_{3,1,0})\\
\varphi_3(\gamma_{3,1,1})\\
\gamma_{1,3,0}\\
\hline
\varphi_4(\gamma_{4,1,0})\\
\varphi_4(\gamma_{4,1,1})\\
\varphi_2(\gamma_{2,2,0})\\
\gamma_{1,4,0}\\
\end{array}
\right)
&\=
\frac{1}{23^4}\left(
\begin{array}{c}
 225\alpha^2 + 213\alpha + 142\\
 241\alpha^2 + 113\alpha + 74\\
 0\\
 279\alpha^2 + 132\alpha + 151\\
 0\\
 156\alpha^2 + 249\alpha + 153\\
 161\alpha^2 + 115\alpha + 4\\
 0\\
 0\\
 156\alpha^2 + 249\alpha + 153
\end{array}
\right)\pmod{288}\,,\\
\left(
\begin{array}{c}
a_{1,0}\\
\hline
a_{2,0}\\
a_{2,1}\\
\hline
a_{3,0}\\
a_{3,1}\\
a_{3,2}\\
\hline
a_{4,0}\\
a_{4,1}\\
a_{4,2}\\
a_{4,3}\\
\end{array}
\right)
&\=
\frac{1}{23^4}
\left(
\begin{array}{c}
 225\alpha^2 + 213\alpha + 142\pmod{288}\\
 4\alpha^2 + 47\alpha + 55\pmod{72}\\
 68\alpha^2 + 25\alpha + 17\pmod{72}\\
 0\pmod{4}\\
 2\alpha^2 + 2\alpha \pmod{4}\\
 2\alpha^2 + 3\alpha + 1\pmod{4}\\
 0\pmod{1}\\
 0\pmod{1}\\
 0\pmod{1}\\
 0\pmod{1}
\end{array}
\right)\,,
\end{aligned}
\ee
where now everything is integral away from the prime $23$.
\end{example}

\subsection{Some wilder Habiro-like elements}

In this final subsection we describe some exotic functions that behave like elements
of Habiro rings that involve different fields at each root of unity. On the face of
it, it could seem that this would make patching together series via $p$-adic
re-expansion impossible.
The reason that it nevertheless works is that the fields in question become
canonically isomorphic after completion and the gluing of equation~\eqref{gluef}
can take place.
We will explain this in a simple example, but it could be generalised \hbox{easily} by
increasing the dimensions of the sums involved.
These elements will generalise the construction given in Section~\ref{sub.sym}.

\medskip

Take a polynomial $P(X)\in\BZ[X]$ and set
\be
f_{P}(w,t;q)
\=
\sum_{k=0}^{\infty}\bigg(\prod_{j=0}^{k-1}P(q^jw)\bigg)\,t^k\,.
\ee
We would like to take $t=1$. However, the function does not converge there, just
as in the case of admissible series studied in Section~\ref{sec.adm}.
As in Theorem~\ref{thm.FGI2} of Section~\ref{sub.results}, the $x$-expansion
of $f_{P}(w,t;\z+x)$ for a root of unity $\z$ has coefficients contained in rational
functions in $t$~and~$w$. Therefore, we can specialise to $t=1$.
For example, near $\z=1$ we find that
\be
\begin{aligned}
f_{P}(w,t;1+x)
&\=
\sum_{k=0}^{\infty}\bigg(\prod_{j=0}^{k-1}P(q^jw)\bigg)t^k\\
&\=
\sum_{k=0}^{\infty}t^k\Big(P(w)^k+\binom{k}{2}P(w)^{k-1}\,P'(w)x+\mathrm{O}(x^2)\Big)
\\
&\=
\frac{1}{1-tP(w)}\+\frac{tP'(w)}{(1-tP(w))^3}x\+\mathrm{O}(x^2)\,,
\end{aligned}
\ee
in which we can set $t=1$ to find
\be
f_{P}(w,1;1+x)
\=
\frac{1}{1-P(w)}\+\frac{P'(w)}{(1-P(w))^3}x\+\mathrm{O}(x^2)\,.
\ee
Similarly, at any primitive $m$-th root of unity $\z_m$,
one can show that there are polynomials $A_{k,m}(w)\in\BZ[\z_m,w]$ so that
\be
f_{P}(w,1;\z_m+x)
\=
\sum_{k=0}^{\infty}\frac{A_{k,m}(w)}{(1-P_m(w^m))^{2k+1}}x^k\,,
\ee
where
\be
P_m(w)
\=
\prod_{j=1}^{m}P(\z_m^jw^{1/m})
\=
\prod_{u^m=w}P(u)\,.
\ee
Notice that if (and only if) $P(w)$ has the special form
\be
P(w)\=w^a(1-w)^b\,,
\ee
for some $a,b\in\BZ$, then
\be
P_{m}(w)\=P(w)\,,
\ee
for all $m$, giving functions similar to those considered in Section~\ref{sub.sym}.
Going back to the general case, if we expand $f_P(w,1;\z_m+x)$ near
$w=\z_m^j\a_m^{1/m}$, where $P_m(\a_m)=1$, we get a Laurent series.
Therefore, we can take the residue of these Laurent expansions, just as was done
in Section~\ref{sub.sym}, to define series
\be
f_{P,m}(x)
\=
\sum_{z^m=\a_m}\Res{w=z}
f_{P}(w,1;\z+x)
\frac{dw}{w}
\inn\mathcal{O}_{P,m}[\z]\llbracket x\rrbracket\,,
\ee
where $\mathcal{O}_{P,m}=\mathcal{O}_{\BQ(\a_m)}\big[\frac{1}{\mathrm{disc}(1-P_m)}\big]$.
The most important property is that for $p$ not dividing $\Delta$,
sending $\alpha_{pm}$ to $\alpha_m$ modulo $p$ gives a canonical isomorphism
\be
\label{eq:weird.elts.iso}
\calO_{P,pm}/p\calO_{P,pm}
\;\cong\;
\calO_{P,m}/p\calO_{P,m}\,.
\ee
Hensel's lemma implies that there exists a unique lift to an isomorphism from
$\calO_{P,pm}/p^n\calO_{P,pm}$ to $\calO_{P,m}/p^n\calO_{P,m}$ for all~$n$, which is
exactly what we need for the gluing of equation~\eqref{gluef}.

\begin{example}
Take $P(X)=-X^3+8$. Then
\be
P_{m}(X)
\=
\bigg\{
\begin{array}{cl}
-X^{3m}+8^m & \text{if }m\equiv 1,2\pmod{3}\,,\\
-X^{3m}+3\cdot2^mX^{2m}-3\cdot4^mX^{m}+8^m & \text{if }m\equiv 0\pmod{3}\,.
\end{array}
\ee
Therefore, we see that $\mathcal{O}_{P,m}$ is a cubic ring generated by $\xi$ satisfying
\be
\begin{array}{cl}
\xi^{3}-(8^m-1)\=0 & \text{if }m\equiv 1,2\pmod{3}\,,\\
\xi^{3}-3\cdot2^m\xi^{2}+3\cdot4^m\xi-(8^m-1)\=0 & \text{if }m\equiv 0\pmod{3}\,.
\end{array}
\ee
Note that
\be
\xi^{3}-3\cdot2^m\xi^{2}+3\cdot4^m\xi-(8^m-1)
\=
(\xi + 1-2^m)(\xi^2 - (2^{m+1} - 1)\xi + 4^{m} + 2^{m} + 1)\,.
\ee
This makes obvious the isomorphism of equation~\eqref{eq:weird.elts.iso}, since for
example $\xi^3-(8^{pm}-1)\equiv\xi^3-(8^{m}-1)$ modulo $p$. In this case we can
compute and surprisingly (though for the moment only conjecturally) find that
\be
f_{P,m}(x)
\=
\bigg\{
\begin{array}{cl}
\frac{1}{21} & \text{if }m\equiv 1,2\pmod{3}\,,\\
0 & \text{if }m\equiv 0\pmod{3}\,,
\end{array}
\ee
at least for small values of $m$.
If this is true, then $7f_{P}$ belongs to $\HH_{\BZ[1/3]}$ and its image under
$\iota$ to $\calH_{\BZ[1/3]}$.
\end{example}

\begin{example}
In the previous case, our ``exotic'' construction gave (at least conjecturally)
an element of an ordinary Habiro ring.
We take another example to see some more interesting behaviour. Choose
$P(X)=-X^3 + 7X^2 - 14$. Then we find that
\be
\begin{small}
\begin{aligned}
f_{P,1}(x)
&\=
\frac{1}{14505}(21\xi_1^2 - 49\xi_1 - 551)\\
&\quad+\frac{1}{983690367661125}(7160238883\xi_1^2 - 22515890236\xi_1
- 64413491205)x^2+\cdots\,,\\
f_{P,2}(x)
&\=
\frac{1}{15179085}(11711\xi_2^2 - 534814\xi_2 + 555516)+\cdots\,,
\end{aligned}
\end{small}
\ee
where $\xi_1$ and $\xi_2$ are cubic irrationalities defined by 
\be
\xi_1^3 - 7\xi_1^2 + 15\=0\,,\qquad
\xi_2^3 - 49\xi_2^2 + 196\xi_2 - 195\=0
\ee
Again we expect Ohtsuki congruences modulo $p^n$ between the series around roots
of unity whose orders are related by multiplication by the prime $p$. 
In this example we can check the first Ohtsuki congruence between the expansions
around $q=1$ and $-1$. This will be a congruence modulo $2$.
Specifically, this first Ohtsuki congruence is equivalent to the equality
\be
f_{P,1}(-2)
\equiv
\xi^2 + \xi + 1
\equiv
f_{P,2}(0)^{2}
\equiv
\varphi_2(f_{P,2}(0))\pmod{2}\,,
\ee
where $\xi^3 +\xi^2 + 1\equiv 0$ modulo $2$.
\end{example}

To prove identities of this kind one can use the simple $q$-holonomic methods
used to prove Theorem~\ref{thm.2} of Section~\ref{sub.results}.


\section*{Acknowledgements} 

The authors wish to thank Frank Calegari, Maxim Kontsevich, Yan Soibelman,
Matthias Storzer and Ferdinand Wagner for enlightening conversations.

P.S. was supported by a Leibniz Prize, and through the
Hausdorff Center for Mathematics (grant no. EXC-2047/1–390685813) at the MPIM,
Bonn. C.W. wants to thank the International Mathematics Center at SUSTech University,
Shenzhen for their hospitality. C.W. has been supported by the Huawei Young Talents
Program at Institut des Hautes Études Scientifiques, France.

\vspace{-0.05in}

\bibliographystyle{plain}
\bibliography{biblio}
\end{document}